\documentclass[12pt]{article}
\usepackage{ct}
\usepackage[english]{babel}
\usepackage[utf8]{inputenc}
\usepackage{listings}
\usepackage{caption}
\usepackage{graphicx}
\usepackage{mathtools}

\usepackage{refcount}
\usepackage{fp}

\usepackage{algorithm}
\usepackage{algpseudocodex}
\algnewcommand\Input{\textbf{Input: }}
\algnewcommand\Outpout{\textbf{Output: }}

\specs{n}{vol (issue)}{year}{}

\dateline{\today}{ ? }{ ? }

\keywords{enumerative combinatorics, random generation, random graphs, Git}

\MSC{05A15, 05A16, 05C80}

\title{Counting and Sampling Git Graphs}

\author[1]{Courtiel Julien}
\author[2]{Pépin Martin}

\affil[1,2]{
Normandie Univ, UNICAEN, ENSICAEN, CNRS, GREYC, 14000 Caen, France

\email{julien.courtiel@unicaen.fr}, \email{martin.pepin@unicaen.fr}%
}

\newcommand\softO{\widetilde O}

\newcommand\gitgraph{Git graph}
\newcommand\fullnamegitgraph{Git feature branch graph}
\newcommand\cyclarium{cyclarium}
\newcommand\labelled{labeled-main}
\newcommand\main{main commit}
\newcommand\feature{feature commit}

\newcommand\C{\mathcal{C}}

\newcommand\U{\mathcal{U}}
\newcommand\Z{\mathcal{Z}}
\renewcommand\digamma\psi

\newcommand\MetaOp[3]{\ensuremath{\textsc{#1}_{#2}\left(#3\right)}}
\newcommand\Cyc[2][]{\MetaOp{Cyc}{#1}{#2}}
\newcommand\Seq[2][]{\MetaOp{Seq}{#1}{#2}}
\newcommand\Set[2][]{\MetaOp{Set}{#1}{#2}}

\newcommand{\multiset}[2]{
\left.\mathchoice
  {\left(\kern-0.48em\binom{#1}{#2}\kern-0.48em\right)}
  {\big(\kern-0.30em\binom{\smash{#1}}{\smash{#2}}\kern-0.30em\big)}
  {\left(\kern-0.30em\binom{\smash{#1}}{\smash{#2}}\kern-0.30em\right)}
  {\left(\kern-0.30em\binom{\smash{#1}}{\smash{#2}}\kern-0.30em\right)}
\right.}
\newcommand\stirling[3][0]{\genfrac{[}{]}{0pt}{#1}{#2}{#3}}

\graphicspath{{images/}}

\begin{document}
%\writedatatofile

\maketitle

\begin{abstract} 
  Version Control Systems, like Git and Mercurial, represent the history of a project as a Directed Acyclic Graph,
  capturing divergences and merges throughout development.
  A widely adopted workflow, the \emph{feature branch workflow},
  imposes a particularly simple structure:
  a single main branch and features branches evolving in parallel without interfering with one another.

  This work presents an enumerative study of these \emph{Git graphs}. 
  We first establish a bijection between Git graphs and a simpler combinatorial class, which we term \emph{cyclariums},
  and derive their exact count at size $n$, namely
  \(
  \sum_{k=2}^n \sum_{f = 1}^{k-1} {k \brack f} { n - k - 1 \choose k  - f - 1}
  \). 
  Using a saddle-point method, we further show that this number is asymptotically equivalent to
  \[\frac{e^{1/8}}{2}
    \left(\frac{n}{2e}\right)^{\frac n 2}
    \exp\left(\frac 1 2 \ln\frac n 2 \sqrt{\frac n 2} + \sqrt{2 n} + \frac{(\ln\frac n 2)^2}{32} \right)
    \left(\frac n 2\right)^{-3/8}.\]
  Additionally, we analyze the limit laws of key parameters.

  Finally, we address random generation by designing both a uniform sampler and a Boltzmann sampler for these graphs.
\end{abstract}

\section{Introducing Git Graphs}

\subsection{Graph of Commits}

Version Control Systems (VCS, in short) such as Git and Mercurial have revolutionized
the way software development teams manage and track changes in their projects.
Notably they allow multiple contributors to work simultaneously without overwriting each other's work.
Because they maintain a history of changes,
it is possible to revert to previous versions, track the evolution of a project, and understand the context of modifications.

\emph{Commits} (or \emph{revisions}) refer to snapshots of the project's files at a specific point in time.
Unlike traditional linear versioning methods, VCS like Git or Mercurial utilize graph-based structures to represent the history of a project:
the commits are the vertices of a directed acyclic graph (DAG)
where edges indicate the parent-child relationships between commits (cf. Figure ~\ref{fig:graph_random_github}).
Developers can create new \emph{branches} to work on features or fixes,
and subsequently \emph{merge} these branches to integrate their changes into the overall codebase.
This graph-based approach not only allows for efficient tracking of modifications but also facilitates complex branching and merging strategies, enabling developers to work seamlessly.

\begin{figure}[ht!]
\begin{center}
\includegraphics[width=\textwidth]{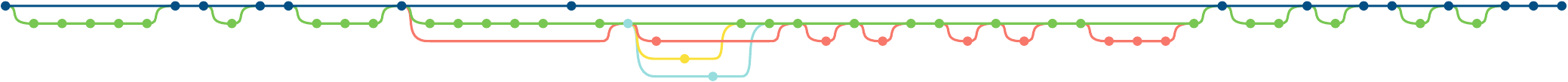}
\end{center}
\vspace{-7pt}
\caption{ Graph of commits extracted from a \textit{real} GitHub repository \cite{github}.  }
\label{fig:graph_random_github}
\end{figure}

Many algorithms designed for VCS exploit the graph structure to be more efficient. For example:
\begin{itemize}
  \item Testing \emph{reachability} from a commit to another, which is essential for VCS,
  can be solved by graph traversals, often used together with a heuristic~\cite{devblog2}.
  Additionally, to speed up traversal,
  each commit maintains an index (if $index(A) < index(B)$ then commit A cannot reach B),
  enabling a pruned search, operating in sublinear time complexity in practice~\cite{mercurial-index,devblog3,marnais1,marnais2}.
  \item At the heart of \emph{merging}
  (the process of integrating changes from different commits)
  is the identification of the lowest common ancestor of the branches involved.
  This ancestor acts as a base version against which both branches' changes are compared~\cite{marnais0,merge}.
  \item The search for a \emph{regression} (which refers to a commit that introduces a bug) is accomplished through a binary bisection of the graph~\cite{bisection,bisection2}
  implemented by \texttt{git bisect} in Git, and by \texttt{hg bisect} in Mercurial.
  This is a heuristic that has been shown to be non-optimal in the worst-case scenario~\cite{CDL2023}.
\end{itemize}

A key motivation for us is to develop a framework to analyze these algorithms,
whether implemented or theoretically conceived,
based on their average performance.
In the literature,
this type of analysis is usually conducted on benchmarks~\cite{benchmark-merging,bisection,marnais3}
and even on synthetic data generated by machine learning~\cite{ml-benchmark}.
However our overarching goal is rather to study the average complexity of these algorithms from a theoretical standpoint.

The challenge lies in determining an appropriate probability distribution for the graphs of commits.
VCS can generate any graph without form limitations\footnote{
  Exception for Mercurial: commits must have at most 2 parents.
  },
but this may not reflect real-world scenarios.

Moreover, once the probability distribution has been chosen,
it would be valuable to design a random generator for our graphs.
Indeed, such generators would facilitate the implementation of VCS algorithms
via \emph{property based testing}.
This software testing technique consists in verifying
the correctness of an algorithm
by checking if some upstream-defined properties are satisfied for a wide range of graph of commits
that are generated randomly during the test.
Additionally, a random generator can help in benchmarking and comparing different algorithms
by generating synthetic data that simulates real-world behavior.

To find a convincing probabilistic model for the graphs of commits,
we suggest examining the workflows commonly adopted by many projects.
In the context of a VCS, a \emph{workflow} refers to
the set of predefined rules and procedures
that guide developers in managing code changes,
including the processes for creating branches and integrating updates into the main codebase.

This paper will focus on a one of the simplest workflow,
yet one that is widely adopted in the corporate world:
the \emph{feature branch workflow}.
In this workflow,
each new feature or bug fix is developed in its own branch,
independently of the others,
and is integrated into the main branch once completed (see for example~\cite{feature_branch_website}).

This article is the continuation of a preliminary study whose short version was published at GASCOM~\cite{short-version}.

\subsection{Definition}

In this subsection,
we provide a formal definition of the graphs
that we will study throughout this paper.
The definition originally comes from Lecoq's PhD manuscript~\cite{LecoqThesis},
and aims to be consistent with the practices established by the feature branch workflow.
For the sake of brevity,
we will refrain in the future from
explicitly mentioning the feature branch workflow
and will refer to the graphs simply as \textit{\gitgraph{}s}.

\begin{figure}[ht!]
\begin{center}
\includegraphics[scale=1]{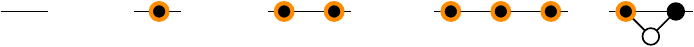}
\vspace{20pt}

\includegraphics[scale=1]{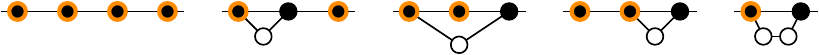}
\vspace{20pt}

\includegraphics[scale=1]{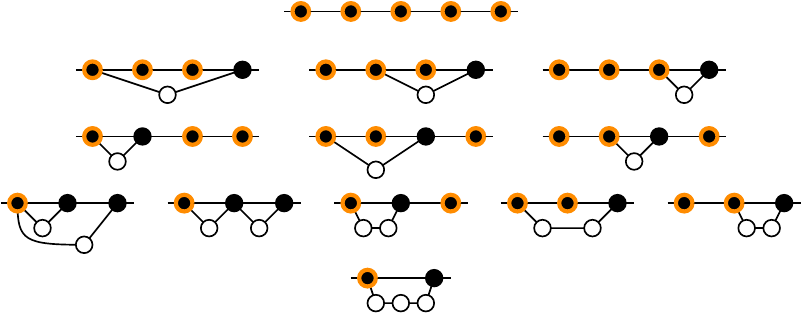}
\end{center}
\vspace{-7pt}
\caption{ All \gitgraph{}s with at most $5$ vertices.
Edges are oriented from left to right.
Free vertices are outlined in orange.
}
\label{fig:smallgraphs}
\end{figure}

\begin{definition}[\gitgraph]

  A \emph{\fullnamegitgraph} (or just \emph{\gitgraph})
  is a Directed Acyclic Graph that consists of:
  \begin{itemize}
    \item a \emph{main branch}, which is a directed path of black vertices called \emph{\main{}s};
    \item zero, one or several \emph{features branches}, which are directed paths:
    \begin{itemize}
      \item starting with a \main{},
      \item ending at a \main{},
      \item  with intermediary vertices --- at least one --- colored in white and called \emph{\feature{}s}.
    \end{itemize}
    We say that a feature branch \emph{forks} from a \main{}
    if this \main{} is the starting point of the branch
    --- this vertex is also called the \emph{base} of the branch ---
    and a feature branch \emph{merges} into a \main{}
    if this \main{} is the ending point of the branch.

    Moreover, two different feature branches cannot:
    \begin{itemize}
      \item share a \feature{} (this condition forces the indegree and the outdegree of any \feature{} to be $1$),
      \item merge into the same \main{} (this condition forces the indegree of any \main{} to be at most $2$).
    \end{itemize}
    (However two feature branches can have the same base. Furthermore, a feature branch can merge into a \main{} which is the base of other branches.)
  \end{itemize}
\end{definition}

The size of a \gitgraph{} is its number of vertices.
By convention, we assume that there is a unique \gitgraph{} of size $0$.
Figure~\ref{fig:smallgraphs} lists all \gitgraph{}s of size at most $5$.

We now define a special type of vertex amongst \main{}s, named \emph{free commits}. In Figure~\ref{fig:smallgraphs}, they are depicted in orange.

\begin{definition}[free commit]
  A \main{} is said to be \emph{free}
  if no feature branch merges into it.
  Equivalently, a free commit is a \main{} with indegree at most $1$.\footnote{The only vertex with indegree $0$ is the first \main{}, and is free.}
  \label{def:free}
\end{definition}

In summary,
the \textit{\fullnamegitgraph} model is characterized by
non-intersecting feature branches,
and the constraint that only one feature branch can be merged into the main branch at a time, never two or more.
Although Git supports multiple branches merges,
the latter restriction contributes to a clearer and more comprehensive project history,
minimizes the risk of conflicts,
and enhances traceability and maintainability.
This constraint is also discussed in~\cite{CDL2023}.

While this model is certainly simple and limited in comparison to real-world graphs,
it is compatible with a commonly used workflow.
Additionally, this model represents an initial step before delving into more intricate workflows.
In particular, it can serve as a basic building block for other models.

\subsection{Outline of the Article}

This paper aims to provide a comprehensive combinatorial and probabilistic understanding of \gitgraph{}s,
with a focus on exact counting, asymptotic behavior, and random generation.

We begin in Section~\ref{first:steps} by establishing a recursive decomposition of \gitgraph{}s.
We derive from this a suitable generating function, with a simple closed formula, to address the enumerative problem.
To advance combinatorial insight,
Section~\ref{s:bijection} introduces a family of combinatorial objects, 
which we term cyclariums, 
and establishes a bijection between these objects and \gitgraph{}s. 
As a corollary, we obtain an elegant formula for the number of \gitgraph{}s.

A key contribution of this article is the asymptotic estimate (Section~\ref{sec:asymptotic}) for the number of \gitgraph{}s of size \(n\), 
which reveals a stretched exponential behavior.
The (long and technical) proof relies on the saddle-point method.

We first determine the asymptotic behavior of \(g_{n,k}\), the number of \gitgraph{}s of size \(n\) with \(k\) \main{}s,
and then sum over all \(k\).
The main challenge lies in ensuring uniformity in our asymptotic estimates to guarantee the validity of the summation.
Efforts have been made to make the proof pedagogically accessible.

Section~\ref{s:limit_law} further analyzes the distribution of structural parameters in random \gitgraph{}s under the uniform distribution,
including the numbers of \main{}s and free commits,
exhibiting Gaussian behavior in both cases,
as well as the width parameter.

Finally, we address random generation by presenting a uniform sampler and a Boltzmann sampler. 
The Boltzmann sampler is particularly efficient, 
generating in linear time a \gitgraph{} of size \(n\) with a desired number of \main{}s 
with relative error proportional to \(n\).

\section{First Steps for Counting}
\label{first:steps}

\subsection{Recursive Decomposition}

First, let us  establish a recurrence relation
that arises naturally from the recursive decomposition of \gitgraph{}s.
The decomposition is illustrated by Figure~\ref{fig:decomposition}.

\begin{figure}[ht!]
\begin{center}
\includegraphics[scale = 1]{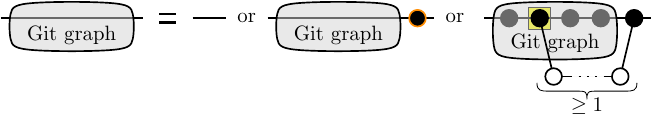}
\end{center}
\vspace{-7pt}
\caption{ Schematic illustration of the recursive decomposition of \gitgraph{}s }
\label{fig:decomposition}
\end{figure}

\begin{proposition}The numbers~$g_{n,k}$ of \gitgraph{}s with~$n$ vertices including~$k$ \main{}s satisfy \(g_{0,k} =  \begin{cases} 1 & \text{if }k = 0 \\ 0 & \textrm{otherwise} \end{cases}\), and for $n \geq 1$
  \begin{equation*}
    g_{n,k} = g_{n-1,k-1} + \sum_{\ell \geq 1} (k-1) g_{n-1-\ell,k-1}.
 \end{equation*}
 \label{prop:recurrence}
\end{proposition}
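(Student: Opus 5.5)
We decompose a non-empty \gitgraph{} $G$ of size $n\geq 1$ according to its \emph{last} \main{} $v$, the sink of the main branch. The main branch has $k\geq 1$ vertices, since any feature branch needs two \main{}s as endpoints. Because $v$ is the last \main{}, no feature branch can fork from $v$: its merge point would have to come strictly after $v$ on the main branch. So $v$ is either free, or exactly one feature branch merges into it, by the rule that two branches cannot merge into the same \main{}. These two situations give the two terms of the recurrence.

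\emph{Case 1: $v$ is free.} Then the only edge into $v$ comes from the previous \main{}, or $v$ has no incoming edge if $k=1$. Deleting $v$ leaves a valid \gitgraph{} with $n-1$ vertices and $k-1$ \main{}s. Conversely, appending a new free \main{} to the end of the main branch of any \gitgraph{} of size $n-1$ with $k-1$ \main{}s gives a valid \gitgraph{}, including the case of the empty graph when $n=k=1$. This is a bijection, and it accounts for the term $g_{n-1,k-1}$.

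\emph{Case 2: one feature branch $B$ merges into $v$.} Let $\ell\geq 1$ be the number of \feature{}s of $B$. Deleting $v$ together with the $\ell$ feature commits of $B$ removes $B$ entirely. No other branch is affected: the feature commits are private to $B$, and no other branch uses $v$. What remains is a \gitgraph{} $G'$ of size $n-1-\ell$ with $k-1$ \main{}s. We also record the base of $B$, which is one of the $k-1$ \main{}s of $G'$; any of them is allowed, since branches may share a base.

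Conversely, take any $G'$ of size $n-1-\ell$ with $k-1\geq 1$ \main{}s and any choice of base $b$ among its \main{}s. Append a new \main{} $v$, and attach a path of $\ell$ white vertices from $b$ to $v$. The result is a valid \gitgraph{}. It stays acyclic, the new branch shares no feature commit with any other branch, and it is the only branch merging into $v$. These two maps are mutually inverse, which gives $(k-1)\,g_{n-1-\ell,k-1}$ graphs for each $\ell$. Summing over $\ell\geq 1$ gives the second term. Terms with $n-1-\ell<0$ vanish, and when $k=1$ the factor $k-1=0$ correctly kills this case.

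Adding the two disjoint cases proves the recurrence for $n\geq 1$; the initial condition $g_{0,k}$ is the stated convention for the empty graph. The only step that needs care is checking that the deletion/insertion maps really preserve the Git-graph axioms. The key points there are that nothing forks from the last \main{}, and that removing $B$ leaves every other merge point with in-degree at most $2$. This is routine once the last \main{} is singled out.
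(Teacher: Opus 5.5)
Your proof is correct and follows the same decomposition as the paper: split on whether the last main commit is free (delete it, giving $g_{n-1,k-1}$) or receives a branch with $\ell$ feature commits (delete it together with that branch, recording the base among the $k-1$ remaining main commits). Your extra checks are details the paper leaves implicit: that acyclicity prevents any branch from forking at the last main commit, and the edge cases $k=1$ and $n-1-\ell<0$.
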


\begin{proof}
  The case $n=0$ is clear.
  We consider now the set of \gitgraph{}s of size $n$ with $k$ \main{}s,
  where $n \geq 1$ is fixed.

  There are two possibilities:
  either the last \main{} is free,
  or there is one feature branch merging into the last \main{}.
  (By definition of \gitgraph{}s,
  there cannot be two or more branches merging into it.)
  \begin{itemize}

    \item \textbf{\gitgraph{}s where the last \main{} is free} are
    in bijection with \gitgraph{}s of size $n-1$ with $k-1$ \main{}s
    (the operation of removing the last \main{} is reversible).
    Thus their number is $g_{n-1,k-1}$.

    \item In \textbf{\gitgraph{}s where a feature branch merges into the last \main{} and contains $\ell$ \feature{}s}, removing both the feature branch and the last \main{} results in \gitgraph{}s of size $n-1-\ell$ with $k-1$ \main{}s.
     Reconstructing the original graph requires to know which commit served as the base of the removed feature branch.
    Since choosing any of the~$k-1$ \main{}s would yield a valid \gitgraph{}, the number of such \gitgraph{}s is
    \( (k-1)g_{n-1-\ell,k-1} \).
  \end{itemize}
  Ultimately, we arrive at the formula for $g_{n,k}$ as stated in the proposition.
\end{proof}

\subsection{Finding the Good Generating Function}
\label{ss:generating_functions}

While deriving the recurrence from Proposition~\ref{prop:recurrence} was not particularly difficult,
it is less straightforward to figure out how to effectively utilize it.
In this subsection, we will discuss how to choose the most suitable generating function to address our enumerative problem.

Initially,  one might consider the ordinary generating function,
as it directly arises from the recurrence.
Unfortunately, the function is not analytic,
making it impractical for further analysis.

\begin{proposition} The ordinary generating function $G(z,u) := \sum_{n \geq 0, k\geq 0} g_{n,k} z^n u^k$ of \gitgraph{}s is solution of the differential equation
  \begin{equation*}
    G(z,u) = 1 + zu G(z,u) + \frac{z^2u^2}{1-z}\frac{\partial G}{\partial u}(z,u)
  \end{equation*}
but it is \emph{not} analytic at $(0,0)$.
\end{proposition}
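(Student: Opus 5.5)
The proof has two parts: derive the differential equation from the recurrence of Proposition~\ref{prop:recurrence}, then show the series has radius of convergence zero.

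\textbf{The equation.} Multiply the recurrence by $z^n u^k$ and sum over $n\ge 1$ and $k\ge 0$. Adding the term $g_{0,0}=1$ accounts for the constant $1$. The term $g_{n-1,k-1}$ gives $zu\,G(z,u)$. For the sum $\sum_{\ell\ge1}(k-1)g_{n-1-\ell,k-1}$, set $m=n-1-\ell$ and $j=k-1$. It becomes
\[
\sum_{\ell\ge1} z^{\ell+1}\sum_{m,j} j\, g_{m,j} z^m u^{j+1} \;=\; \frac{z^2}{1-z}\, u \cdot u\,\partial_u G(z,u),
\]
because $\sum_j j g_{m,j}u^{j}=u\,\partial_u$ applied to $\sum_j g_{m,j}u^j$. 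This yields the stated identity. The only care needed is that the factor $(k-1)$ becomes $u\partial_u$ after the shift $j=k-1$, which supplies the second $u$ in $z^2u^2$.

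\textbf{Non-analyticity.} It suffices to show that the one-variable series $G(z,1)=\sum_n g_n z^n$ has radius of convergence $0$. If $G$ were analytic at $(0,0)$, its coefficients would satisfy $g_{n,k}\le C R^{n+k}$ for some $C,R>0$, so $g_n=\sum_k g_{n,k}\le C' R'^n$. Hence it is enough to prove that $g_n$ grows faster than any exponential.

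\textbf{Lower bound.} Restrict to \gitgraph{}s in which every feature branch has exactly one \feature{}. Take size $n=2m$ with $k=m+1$ \main{}s and $m-1$ feature branches, merging into \main{}s $3,\dots,m+1$. Using only the terms $\ell=1$ of the recurrence gives $g_{n,k}\ge (k-1)g_{n-2,k-1}$. Iterating down to $g_{2,2}$ gives
\[
g_{2m,m+1}\ge m!\,g_{2,2}\big/1 .
\]
Combinatorially, the branch merging into \main{} $i$ may fork from any of the $i-1$ earlier \main{}s. Since $m!^{1/(2m)}\to\infty$, no bound $C'R'^{2m}$ can hold, so the radius of convergence is $0$.

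The only genuine obstacle is this lower bound, and it is immediate from the recurrence. The step where slips are most likely is the index bookkeeping in the derivation of the equation.
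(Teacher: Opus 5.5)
Your proof is correct and follows the paper's route: the differential equation comes from the same multiply-and-shift manipulation of the recurrence, and non-analyticity comes from a factorial lower bound on coefficients, obtained by counting Git graphs whose feature branches each contain a single feature commit. The only differences are minor: you get $g_{2m,m+1}\ge m!$ by iterating the $\ell=1$ term of the recurrence rather than via the paper's permutation injection giving $g_{3n,2n}\ge n!$, and you spell out the Cauchy-estimate step (through $g_n=\sum_{k\le n}g_{n,k}$) that the paper leaves implicit.
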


\begin{proof}
  The differential equation can be directly derived from the recursive decomposition outlined in the proof of Proposition~\ref{prop:recurrence},
  using the symbolic method described in~\cite{FS2009}.
  From those who are not familiar with this method,
  we can also use the recurrence from Proposition~\ref{prop:recurrence},
  multiply by $z^n\,u^k$,
  and sum over all $n \geq 1$ and $k \geq 1$ to obtain
  \[\sum_{ \substack{ n \geq 1 \\ k \geq 1} }g_{n,k}z^n\,u^k
  = zu\,\sum_{ \substack{ n \geq 1 \\ k \geq 1} }g_{n-1,k-1}z^{n-1}\,u^{k-1}
  + \sum_{ \substack{ n \geq 1 \\ k \geq 1 \\ \ell \geq 1} } (k-1)   g_{n-1-\ell,k-1}z^{n}\,u^{k}.
   \]
   The term on the left-hand side is $G(z,u)-1$,
   while the first term on the right-hand side is $zuG(z,u)$.
   As for the last term,
   use the variable change $n' \leftarrow n-1-\ell$ and $k' \leftarrow k+1$ to rewrite it as
   \begin{align*}
   \sum_{ \substack{ n \geq 1 \\ k \geq 1 \\ \ell \geq 1} } (k-1)   g_{n-1-\ell,k-1}z^{n}\,u^{k} & =  \sum_{ \substack{ n' \geq 0 \\ k' \geq 0 \\ \ell \geq 1} } k'  g_{n',k'}z^{n'+\ell+1}\,u^{k'+1} \\
    & =
    u^2  \times \left(\sum_{\ell \geq 1} z^{\ell+1}\right)
    \times
    \left(\sum_{ \substack{
    n' \geq 0 \\ k' \geq 0
   }} k' g_{n',k'} z^{n'}u^{k'-1} \right)
   \\ & =
   u^2 \times \frac{z^2}{1-z} \times \frac{\partial G}{\partial u}(z,u).
   \end{align*}
   This is how we arrive at the differential equation.

   As for the non-analyticity of $G(z,u)$,
   observe that we can injectively map permutations of size $n$
   to \gitgraph{}s of size $3n$ with $2n$ \main{}s.
   For example, a permutation $\sigma$ can be encoded as a \gitgraph{}
   where for each $i \in \{1,\dots,n\}$ a feature branch of length $1$
   forks from the $i$-th leftmost \main{} and merges into the $\sigma(i)$-th rightmost \main{}.
   Consequently $g_{3n,2n} \geq n!$,
   which implies that $\sum_{n \geq 0} g_{3n,2n} z^{3n}u^{2n}$ is not analytic at $(0,0)$,
   thereby indicating that $G(z,u)$ is also not analytic at $(0,0)$.
\end{proof}

The use of the ordinary generating function being hindered by its non-analytic nature,
we shift the focus to the exponential generating function,
which is analytic at $(0,0)$.
As it will be shown by the next proposition,
the exponential generating function satisfies a partial differential equation.
Although we have not been able to derive meaningful results from this equation,
we believe it is worth sharing,
as it represents a direction we pursued that turned out to be misleading,
yet may offer valuable insights.

\newcommand{\Gexp}{G_{\text{exp}}}

\begin{proposition} The \emph{exponential} generating function $\Gexp(z,u) = \sum_{n \geq 0, k\geq 0} g_{n,k} \frac{z^n}{n!} u^k$ of \gitgraph{}s is solution of the partial differential equation
  \begin{equation*}
    u\,\Gexp(z,u) + \dfrac{\partial^2 \Gexp}{\partial z^2}(z,u) = (1 + u) \dfrac{\partial \Gexp}{\partial z}(z,u)  + u^2 \frac{\partial \Gexp}{\partial u}(z,u),
  \end{equation*}
but it does \emph{not} seem to have a closed formula.
\end{proposition}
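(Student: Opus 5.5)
The plan is to turn the recurrence of Proposition~\ref{prop:recurrence} into one that no longer contains the inner sum over $\ell$. Its coefficients should then depend on $n$ only through shifts, which is exactly what the exponential generating function turns into derivatives in $z$. The obstruction is the sum $\sum_{\ell\geq 1} g_{n-1-\ell,k-1} = \sum_{m\leq n-2} g_{m,k-1}$, a prefix sum in $n$. I remove it by telescoping: I subtract the recurrence at $(n-1,k)$ from the recurrence at $(n,k)$, both valid as long as $n-1\geq 1$. The two prefix sums differ by the single term $g_{n-2,k-1}$, which gives, for $n\geq 2$,
\[
g_{n,k}-g_{n-1,k} = g_{n-1,k-1}-g_{n-2,k-1} + (k-1)\,g_{n-2,k-1}.
\]
After the shift $n\mapsto n+2$ this becomes, for all $n\geq 0$ and all $k$ (with $g_{\cdot,-1}=0$),
\[
g_{n+2,k} = g_{n+1,k} + g_{n+1,k-1} + (k-2)\,g_{n,k-1}.
\]
I will check the boundary cases separately: $k=0$, where everything vanishes for positive sizes, and small $n$, where $g_{0,0}=1$ enters only through the unshifted recurrence, which is used at $n\geq 1$.

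Next I multiply this recurrence by $\frac{z^n}{n!}u^k$ and sum over $n\geq 0$ and $k\geq 0$, using that in the exponential setting an index shift $n\mapsto n+1$ corresponds to $\partial/\partial z$. The left-hand side gives $\frac{\partial^2 \Gexp}{\partial z^2}$. The first two terms on the right give $\frac{\partial \Gexp}{\partial z}$ and $u\frac{\partial \Gexp}{\partial z}$. For the last term I substitute $j=k-1$:
\[
\sum_{n,j} (j-1)\, g_{n,j}\frac{z^n}{n!}u^{j+1} = u^2\frac{\partial \Gexp}{\partial u} - u\,\Gexp .
\]
Rearranging yields exactly
\[
u\,\Gexp + \frac{\partial^2\Gexp}{\partial z^2} = (1+u)\frac{\partial\Gexp}{\partial z} + u^2\frac{\partial\Gexp}{\partial u}.
\]
This identity can be manipulated at the level of formal power series. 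Analyticity at $(0,0)$ is not needed for it, and follows anyway from the crude bound $g_{n,k}\leq n!\,C^n$ implied by the recurrence.

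The main obstacle is not the computation but the bookkeeping at the boundary. I need to make sure the telescoped recurrence really holds for every $n\geq 0$ after the shift, with no stray correction terms coming from $g_{0,0}=1$ or from $k=0,1$. I will check it directly on the first few values, for instance $g_{2,2}=g_{1,1}=1$ and $g_{3,2}=g_{2,1}+g_{2,2}+0=1$, which agrees with Figure~\ref{fig:smallgraphs}. If stray terms did appear, they would show up as an explicit polynomial added to the PDE, and I would fold them in; I expect none.

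The second half of the statement, that $\Gexp$ does not seem to have a closed formula, is a heuristic remark rather than a claim to prove. I would only support it informally: standard attempts such as separation of variables or the method of characteristics for the first-order part in $u$ fail to produce a recognizable solution.
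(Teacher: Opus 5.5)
Your proof is correct, but you obtain the key recurrence $g_{n,k}=g_{n-1,k}+g_{n-1,k-1}+(k-2)\,g_{n-2,k-1}$ for $n\ge 2$ by a different route than the paper. The paper does not telescope Proposition~\ref{prop:recurrence}. Instead it uses a new combinatorial decomposition, credited to Wenjie Fang, which splits graphs into three classes according to what happens at the last main commit:
\begin{itemize}
\item the last main commit is free;
\item it receives a branch with exactly one feature commit;
\item it receives a branch with at least two feature commits.
\end{itemize}
Deleting one feature commit puts the last class in bijection with size-$(n-1)$ graphs whose last main commit is not free. There are $g_{n-1,k}-g_{n-2,k-1}$ of these, and that subtraction is where the coefficient $k-2=(k-1)-1$ comes from.

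Your telescoping is the algebraic shadow of exactly this. The prefix sum $(k-1)\sum_{\ell\ge1}g_{n-2-\ell,k-1}$ that cancels between the two instances of Proposition~\ref{prop:recurrence} equals $g_{n-1,k}-g_{n-2,k-1}$, the count of that same class. From there, both arguments perform the same translation into $\partial_z$, $\partial_z^2$ and $u^2\partial_u-u$. The only difference is that the paper multiplies by $z^{n-2}u^k/(n-2)!$ instead of shifting first.

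The paper's route gives each term of the PDE a combinatorial meaning. Yours needs no new bijection and is self-contained given Proposition~\ref{prop:recurrence}. It also makes the range of validity explicit, which the paper leaves implicit. In particular, at $(n,k)=(2,1)$ the negative coefficient $k-2=-1$ multiplies $g_{0,0}=1$, and this is exactly what yields $g_{2,1}=0$.
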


\begin{proof}
  This equation is derived from a recursive decomposition that closely resembles the one in the proof of Proposition~\ref{prop:recurrence},
  but is not exactly the same. It is illustrated by Figure~\ref{fig:decomposition2}.
  This approach was proposed by Wenjie Fang in a personal communication.

  \begin{figure}[ht!]
  \begin{center}
  \includegraphics[scale = 1]{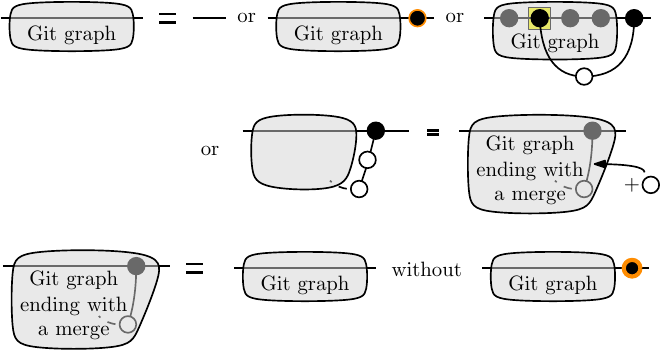}
  \end{center}
  \vspace{-7pt}
  \caption{ Alternative decomposition of \gitgraph{}s.}
  \label{fig:decomposition2}
  \end{figure}

  Let $n$ and $k$ be two positive integers.
  We find a formula for $g_{n,k}$, the number of \gitgraph{}s of size $n$ and with $k$ \feature{}s,
  by partitioning these graphs in three subfamilies:
  \begin{itemize}
    \item \textbf{\gitgraph{}s where the last \main{} is free. }
    These are counted by $g_{n-1,k-1}$,
    as established in the proof of Proposition~\ref{prop:recurrence}.
    \item \textbf{\gitgraph{}s where a feature branch containing exactly one \feature{} merges into the last \main{}. }
    This case, also shown in the proof of Proposition~\ref{prop:recurrence}, is counted by $(k-1) g_{n-2,k-1}$,
    \item \textbf{\gitgraph{}s where a feature branch containing at least two \feature{}s merges into the last \main{}.}
    These are in bijection with \gitgraph{}s of size $n-1$ and $k$ \feature{}s where the last \main{} is \emph{not} free
    (the operation of removing a \feature{} from the last feature branch is reversible).
    Therefore their number is obtained by subtracting $g_{n-2,k-1}$
    (the number of \gitgraph{} where the last \main{} is free)
    from $g_{n-1,k}$ (the number of unrestricted \gitgraph{}s).
  \end{itemize}

  We have just proved the recurrence
  \begin{align}
      g_{n,k} & = g_{n-1,k-1} + (k-1) g_{n-2,k-1} + g_{n-1,k} - g_{n-2,k-1} \label{rec:wenjie}\\
      & = g_{n-1,k-1} + (k-2) g_{n-2,k-1} + g_{n-1,k}. \nonumber
  \end{align}
      By multiplying \eqref{rec:wenjie} by $z^{n-2}u^{k}/(n-2)!$ and summing over all $n$'s and $k$'s, we obtain
      \begin{multline*}
          \sum_{\substack{n \geq 2 \\ k \geq 1}} g_{n,k} \frac{z^{n-2}u^{k}}{(n-2)!} =
          u \sum_{\substack{n \geq 2 \\ k \geq 1}} g_{n-1,k-1} \frac{z^{n-2}u^{k-1}}{(n-2)!}
          + u \sum_{\substack{n \geq 2 \\ k \geq 1}} (k-1) g_{n-2,k-1} \frac{z^{n-2}u^{k-1}}{(n-2)!} \\
          + \sum_{\substack{n \geq 2 \\ k \geq 1}} g_{n-1,k-1} \frac{z^{n-2}u^{k}}{(n-2)!}
          - u \sum_{\substack{n \geq 2 \\ k \geq 1}} g_{n-2,k-1} \frac{z^{n-2}u^{k-1}}{(n-2)!}.
      \end{multline*}

  We recognize from left to right
  $\dfrac{\partial^2 \Gexp}{\partial z^2}$,
  $u\dfrac{\partial \Gexp}{\partial z}$,
  $u^2 \dfrac{\partial \Gexp}{\partial u}(z,u)$,
  $\dfrac{\partial \Gexp}{\partial z}$
  and $-u\,\Gexp$.
\end{proof}

It turns out that the generating function yielding the most fruitful results is not exponential in the primary counting variable,
but rather in the secondary variable that counts \main{}s.
The following theorem presents a closed form for this generating function.

\begin{theorem}
  The generating function $\widetilde G (z,u)$ of \gitgraph{}s ordinary in $z$ (the variable counting vertices) and exponential in $u$ (the variable counting \main{}s),
  that is to say
  \[
  \widetilde G(z,u) :=
  \sum_{\substack{n \geq 0 \\ k \geq 0}} g_{n,k} \, z^n  \, \frac{u^k}{k!},
   \]
    is equal to
\[ \widetilde G (z,u) = {\left(1 - \frac{z^2 u}{1 - z}\right)}^{- \frac{1 - z}{z}}. \]
\label{theo:Gtilde}
\end{theorem}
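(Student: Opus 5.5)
We derive a differential equation for $\widetilde G$ from the recurrence of Proposition~\ref{prop:recurrence}, solve it, and check that the closed form is its unique solution. Write $\widetilde G(z,u) = \sum_{k\geq 0} \widetilde G_k(z)\, u^k/k!$, where $\widetilde G_k(z) = \sum_n g_{n,k} z^n$ is the ordinary generating function of \gitgraph{}s with exactly $k$ \main{}s.

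\textbf{Step 1: a recurrence in $k$.} Summing the recurrence of Proposition~\ref{prop:recurrence} over $n \geq 1$ against $z^n$ gives, for $k \geq 1$,
\[
\widetilde G_k(z) = z\,\widetilde G_{k-1}(z) + (k-1)\frac{z^2}{1-z}\,\widetilde G_{k-1}(z)
= \Bigl(z + (k-1)\frac{z^2}{1-z}\Bigr)\widetilde G_{k-1}(z),
\]
with $\widetilde G_0 = 1$. Iterating yields the product formula
\[
\widetilde G_k(z) = \prod_{j=0}^{k-1}\Bigl(z + j\frac{z^2}{1-z}\Bigr).
\]
This expression is really a rising factorial. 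Setting $a := z^2/(1-z)$ and $c := z/a = (1-z)/z$, we get
\[
\widetilde G_k = a^k\, c(c+1)\cdots(c+k-1).
\]

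\textbf{Step 2: resumming with the binomial series.} By the generalized binomial theorem, $\sum_{k\geq 0} \frac{c(c+1)\cdots(c+k-1)}{k!}x^k = (1-x)^{-c}$. Taking $x = au$ gives
\[
\widetilde G(z,u) = \Bigl(1 - \frac{z^2u}{1-z}\Bigr)^{-\frac{1-z}{z}},
\]
which is the claimed formula. Equivalently, in differential form: multiplying the recurrence by $u^{k-1}/(k-1)!$ and summing gives
\[
\partial_u \widetilde G = z\widetilde G + a\,u\,\partial_u\widetilde G.
\]
Hence $\partial_u \log \widetilde G = z/(1-au)$, and integrating from $u=0$ (where $\widetilde G = 1$) recovers the same expression, since $z/a = c$.

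\textbf{Main obstacle.} The only delicate point is interpreting the formula as a formal power series, because the exponent $-(1-z)/z$ has a pole at $z=0$. The plan is to treat the identity coefficientwise in $u$: the coefficient of $u^k/k!$ in the right-hand side is $a^k c^{\overline{k}}$, and each factor $a(c+j) = z + jz^2/(1-z)$ is a genuine power series in $z$. The expression is therefore well defined in $\mathbb{Q}[[z]][[u]]$, and matching it with Step 1 completes the proof. Analytically, the formula is also valid for small $|u|$ at fixed $0<z<1$.
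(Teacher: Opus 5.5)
Your proof is correct and essentially the paper's: the paper turns the recurrence of Proposition~\ref{prop:recurrence} directly into the ODE $\partial_u \widetilde G = z\widetilde G + \frac{z^2u}{1-z}\,\partial_u\widetilde G$ with $\widetilde G(z,0)=1$ and integrates $\partial_u \ln \widetilde G$, which is exactly your ``differential form''. Your main route (product formula for $\widetilde G_k$, then the binomial series) solves the same first-order recurrence coefficientwise; the paper obtains that product later, as Proposition~\ref{prop:Gkz}, in the reverse direction, and your check that the right-hand side is a genuine element of $\mathbb{Q}[[z]][[u]]$ despite the pole of the exponent at $z=0$ is a rigor point the paper leaves implicit.
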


\begin{proof}
  Similarly to what we did in the two previous proofs,
  we transform the recurrence from Proposition~\ref{prop:recurrence} into a differential equation satisfied by $\widetilde G(z,u)$.
  Indeed,
  by multiplying each side of the recurrence by $ \frac{z^{n} u^{k-1}}{(k-1)!}$
  and summing over all positive integers $n$ and $k$,
  we obtain
  \begin{equation*}
    \sum_{\substack{n \geq 1 \\ k \geq 1}} g_{n,k}  \frac{z^{n} u^{k-1}}{(k-1)!} =
    z \sum_{\substack{n \geq 1 \\ k \geq 1}} g_{n-1,k-1} \frac{z^{n-1} u^{k-1}}{(k-1)!}
    +  \sum_{\substack{n \geq 1 \\ k \geq 1 \\ \ell \geq 1}} z^{\ell+1} (k-1) g_{n-1-\ell,k-1} \frac{z^{n-1-\ell} u^{k-1}}{(k-1)!}.
 \end{equation*}
 Using variable changes,
 we identify from left to right the terms
  $\frac {\partial \widetilde G}{\partial u}$,
  $z\,\widetilde G$ and
  $\frac{z^2 u}{1 - z} \frac{\partial \widetilde G}{\partial u}$, thus forming the differential equation
   \begin{equation}
  \frac {\partial \widetilde G}{\partial u}(z,u) = z\,\widetilde G (z,u) + \frac{z^2 u}{1 - z} \frac{\partial \widetilde G}{\partial u}(z,u)
  \qquad
  \text{with} \qquad \widetilde G(z, 0) = 1.
  \label{eq:ode}
\end{equation}
This differential equation is ordinary with respect to $u$
and is of first order,
making it straightforward to solve,
notably by rewriting it as
\[
\frac{
  \dfrac {\partial \widetilde G}{\partial u}(z,u)
}
{
  \widetilde G(z,u)
}
= \frac z
{1 - \frac{z^2 u}{1-z} }
,
\]
which can be integrated between $0$ and $u$ into
\[ \ln\left( \widetilde G(z,u) \right) - \ln \left( \widetilde G(z,0) \right)
=
- \frac{ 1-z }{z}
\ln \left(
  1 - \frac{z^2}{1-z}
\right)
.
\]
Since $ \widetilde G(z,0) = 1$, we recover the equation stated in the proposition.
\end{proof}

% \todo[inline]{Peut-être donner une interprétation combinatoire de \eqref{eq:ode} en termes du box product?}

\section{Bijection with Cyclariums}
\label{s:bijection}

\subsection{Definition}

The closed formula for $\widetilde G$ featured in Theorem~\ref{theo:Gtilde} calls for a combinatorial interpretation.
To address this, we introduce a new family of combinatorial objects termed \cyclarium{}s.

\begin{definition}[cyclarium]
  A \emph{\cyclarium} is
  a set of cycles of $k$ black vertices labeled by $\{1,\dots,k\}$
  where each black vertex that has not the smallest label inside its own cycle
  carries a non-empty chain of white unlabeled vertices.
  \label{def:cyclarium}
\end{definition}

\begin{figure}[ht!]
\begin{center}
\includegraphics[scale=1.2]{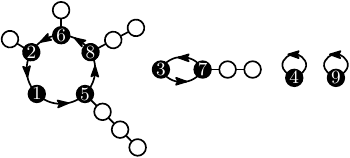}
\end{center}
\vspace{-7pt}
\caption{ Example of a cyclarium of size $18$ with $9$ black vertices. }
\label{fig:cyclarium}
\end{figure}

Let us explain this definition more informally.
If we remove the white vertices,
a cycla\-rium is just a permutation of $\{1,\dots,k\}$,
viewed as a set of cycles of black vertices.
This permutation may include cycles of length $1$.
The smallest vertices in each cycle
(vertices $1$, $3$, $4$, $9$ in Figure~\ref{fig:cyclarium})
are special
insofar as they are \emph{not} incident to a white vertex.
All other vertices are connected to chains of white vertices,
which may vary in length from $1$ to more.

\begin{figure}[ht!]
\begin{center}
\includegraphics[width = 0.9 \textwidth]{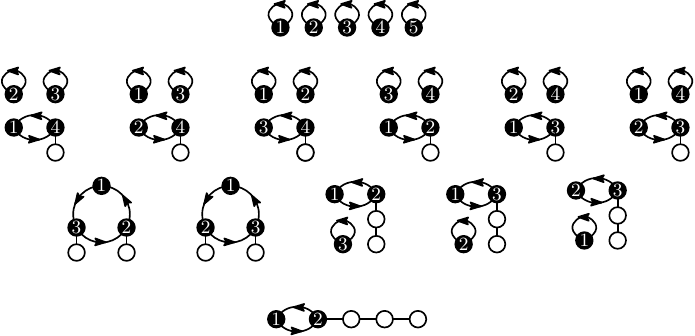}
\end{center}
\vspace{-7pt}
\caption{ All cyclariums of size $5$. }
\label{fig:smallcyclariums}
\end{figure}

The size of a cyclarium is defined by its number of vertices
(black or white).
Figure~\ref{fig:smallcyclariums} presents a complete list of all cyclariums of size 5.

The introduction of cyclariums is motivated by their natural combinatorial specification,
which is made to align with the formula of $\widetilde G$ (see Theorem~\ref{theo:Gtilde}).

In symbolic combinatorics (see \cite{FS2009}),
generating functions encode combinatorial structures,
and functions applied to generating functions, such as logarithm, exponential function or the transformation $X \mapsto \frac{1}{1-X}$,
translate to specific operators on the underlying combinatorial families.

\label{specificition_cyclariums}

Specifically,
let  $\mathcal Y$ represent the family of cyclariums and $\mathcal C$ the family of cyclariums with only one cycle.
We will retrieve the generating function  of $\mathcal Y$ by formulating a \emph{combinatorial specification} of $\mathcal Y$,
which is a description of $\mathcal Y$ in terms of fundamental operators.
This is illustrated by Figure~\ref{fig:specification_cyclarium}.

\begin{figure}[ht!]
\begin{center}
\includegraphics[scale = 1]{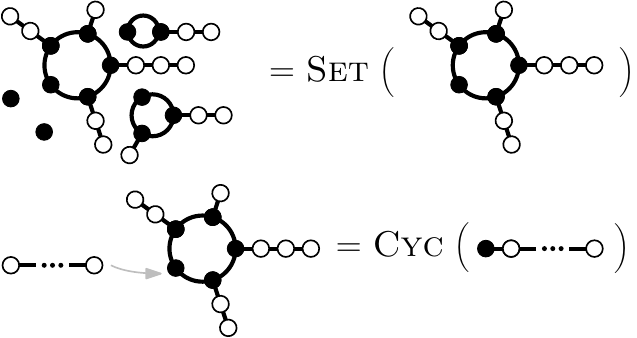}
\end{center}
\vspace{-7pt}
\caption{ Combinatorial specification of cyclariums }
\label{fig:specification_cyclarium}
\end{figure}

According to symbolic method,
we have $\mathcal Y =  \Set{\C}$, where $\Set{\cdot}$ is the set operator\footnote{
  If $\mathcal A$ denotes a combinatorial class that includes labeled elements,
  then $\Set{\mathcal A}$ is defined as
  the new class formed by all possible collections $\{a_1,\dots,a_\ell\}$ drawn from $\mathcal A$.
   Additionally in this construction, the labeled elements are relabeled while preserving their relative order within each $a_i$.
},
which corresponds to $\exp$ in the context of generating functions.
In other words,
if $\widetilde Y(z,u)$ and $\widetilde C(z,u)$ are the respective generating functions for $\mathcal Y$ and $\mathcal C$
(ordinary in $z$ --- the primary variable,
exponential in $u$ --- the variable counting black vertices),
we obtain the relationship
\[\widetilde Y(z,u) = \exp\left(\widetilde C(z,u)\right).\]

Furthermore,
since we can transform one-cycle cyclariums into real cyclical structures by  attaching a non-empty sequence of white vertices to vertex $\boldsymbol{1}$, we have the combinatorial specification
\[
      \Seq[\neq 0]{\Z} \times \C = \Cyc{\U\Z \Seq[\neq 0]{\Z}},
\]
where $\U\Z$ is the atom for black vertices,
 $\Seq[\neq 0]{\Z}$ a non-empty sequence of white vertices,
 and $\Cyc \cdot$ the cycle operator.
When considering generating functions, we find
\[ \frac z {1-z} \, \widetilde{C} (z,u) = -\ln\left( 1 - \frac{u\,z^2}{1-z} \right). \]
From the previous equations, we deduce that the generating function for cyclariums is given by
\[
\widetilde Y(z,u) = \exp\left(
   -\frac{1-z}{z} \ln\left( 1 - \frac{u\,z^2}{1-z} \right)
   \right)
\]
which matches the expression of $\widetilde G(z,u)$ from Theorem~\ref{theo:Gtilde}.

\subsection{Bijection}

Since their generating functions are equal,
\gitgraph{}s and cyclariums are counted by the same numbers.
In fact,
we can describe a bijection between these two families.

\begin{proposition}
  \label{prop:bijection}
  The \gitgraph{}s with $n$ vertices, $k$ \main{}s and $f$ free vertices
  are in bijection with the \cyclarium{}s with $n$ vertices, $k$ black vertices
  and $f$ cycles.
\end{proposition}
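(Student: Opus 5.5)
I plan to build the bijection directly, guided by the recursive decomposition of Proposition~\ref{prop:recurrence}, and to track the number of cycles along the way. First, label the \main{}s $1,\dots,k$ from left to right along the main branch. A Git graph is then fully determined by the following data for each \main{} $j\geq 2$: either $j$ is free, or some feature branch merges into $j$. In the latter case we record the length $\ell_j\geq 1$ of that branch and its base $b_j\in\{1,\dots,j-1\}$. Commit $1$ is always free. The constraints of the definition (distinct merge points, disjoint feature commits, several branches possibly sharing a base) say exactly that these choices are independent. So Git graphs of size $n$ with $k$ \main{}s are encoded by a sequence $(c_2,\dots,c_k)$, where each $c_j$ is either ``free'' or a pair $(b_j,\ell_j)$ with $b_j<j$, and $n=k+\sum \ell_j$. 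This is just the recurrence unfolded.

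Next, I will encode cyclariums by an analogous sequence, using the standard ``insert element $j$'' construction of permutations (the one behind the Stirling numbers of the first kind). A permutation of $\{1,\dots,k\}$ is built by inserting $j=1,2,\dots,k$ successively. Each element $j$ either starts a new cycle, in which case it is the smallest element of that cycle, or it is inserted right after some already present element $b_j<j$ in that element's cycle. Exactly $j-1$ such insertion positions exist, and this step-by-step choice is a bijection onto permutations. In a cyclarium, the black vertices carrying white chains are exactly those that are not cycle minima. In the insertion process these are exactly the elements $j$ inserted after some $b_j$. Such a vertex $j$ receives a chain of length $\ell_j\geq 1$, while cycle minima receive nothing.

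The map is then: a free \main{} $j$ corresponds to $j$ opening a new cycle, and a \main{} $j$ receiving a branch of length $\ell_j$ with base $b_j$ corresponds to inserting $j$ after $b_j$ with a white chain of length $\ell_j$ attached. Vertex counts match, since there are $k$ black vertices plus $\sum\ell_j$ white ones. Free \main{}s (including commit $1$) are sent to cycle minima, so $f$ free vertices become $f$ cycles.

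The main point to verify carefully is that the insertion encoding of permutations really is bijective, with each element $j$ having exactly $j-1$ ``insert after'' choices, matching the $j-1$ possible bases. Here I would insert $j$ as the image of $b_j$, i.e.\ set $\sigma(b_j)=j$ and $\sigma(j)=$ the old $\sigma(b_j)$. Its inverse comes from removing the largest element $k$ from its cycle. If $k$ is a fixed point it was a new cycle; otherwise $b_k=\sigma^{-1}(k)$. Applying this recursively yields the inverse map. I must also check that, after later insertions, ``$j$ is not the minimum of its cycle'' still holds exactly for the non-free $j$. This is true because later insertions only add larger elements.
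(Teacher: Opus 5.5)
Your proof is correct, and the bijection you construct is essentially the paper's, with the cycle orientation reversed. The paper's $\phi^{-1}$ is the same insertion process, except that it inserts $x$ just \emph{before} its base $y$, so that $y$ is the first smaller label met when walking forward from $x$ (the ``nearest smaller label''). You instead insert $j$ just \emph{after} $b_j$. Reversing every cycle turns one construction into the other, and this reversal preserves cycle minima, chain attachments and the number of cycles.

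The genuine difference is in how bijectivity is established.
\begin{itemize}
\item The paper only verifies $\phi(\phi^{-1}(G))=G$, using the invariance of nearest smaller labels when larger labels are inserted. It then concludes that $\phi$ is a bijection because Git graphs and cyclariums have the same generating function $\widetilde G$, hence the same counts for each $(n,k)$.
\item You obtain a two-sided inverse directly from the standard insertion code of permutations: delete the largest element and read off its predecessor. Your argument is therefore self-contained. It does not depend on Theorem~\ref{theo:Gtilde} or on the symbolic specification of cyclariums, and it actually gives a bijective proof that the two generating functions are equal.
\end{itemize}

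What the paper's presentation buys in return is an explicit, non-recursive description of the forward map via nearest smaller labels. That description is what underlies the linear-time computation in Remark~\ref{remark:complexity_bijection}. In your orientation the analogous description is that $b_j$ is the first label smaller than $j$ met when walking backward from $j$. This follows from the same observation you already make, namely that later insertions only add larger labels.
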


\begin{figure}[ht!]
\begin{center}
\includegraphics[scale=1.1]{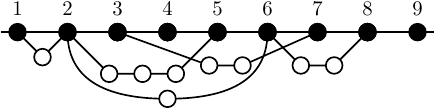}
\end{center}
\vspace{-7pt}
\caption{
  \gitgraph{} in bijection with the cyclarium of Figure~\ref{fig:cyclarium}.
 Numerical labels placed above the \main{}s indicate their position from left to right.
  These labels are included for clarity and are not part of the \gitgraph{} itself.
}
\label{fig:git_graph_bijection}
\end{figure}

\begin{proof}

  Given a cyclarium $C$ with $k$ black vertices, we construct a \gitgraph{} $\phi(C)$ with $k$ \main{}s as follows.
  The cyclarium of Figure~\ref{fig:cyclarium} and the \gitgraph{} of Figure~\ref{fig:git_graph_bijection} are presented to support the understanding of the bijection.
    For each $x \in \{1,\dots,k\}$:
  \begin{itemize}
    \item If $x$ is the smallest label within a cycle of $C$, then let the $x$th leftmost \main{} of $\phi(C)$ be free.

    For example, $1$, $3$, $4$ and $9$ are the minimal labels within their cycles in Figure~\ref{fig:cyclarium},
    and the $1$st, $3$rd, $4$th and $9$th \main{}s  are the only free commits in Figure~\ref{fig:git_graph_bijection}.
    \item  If $x$ is connected to a sequence of $\ell$ white vertices, then we merge a branch containing $\ell$ \feature{}s into the $x$th leftmost \main{} of $\phi(C)$.
    To determine the commit at which this branch forks,
    one traverses $C$ starting from the vertex labeled $x$
    until reaching a vertex with a label $y$ that is smaller than $x$.
    The label $y$ will be called the \emph{nearest smaller label} (refer to Figure~\ref{fig:bijection} for an illustration).
    The base vertex of the feature branch is then designated as the $y$th leftmost vertex of $\phi(C)$.

      \begin{figure}[ht!]
\begin{center}
\includegraphics[width=0.9\textwidth]{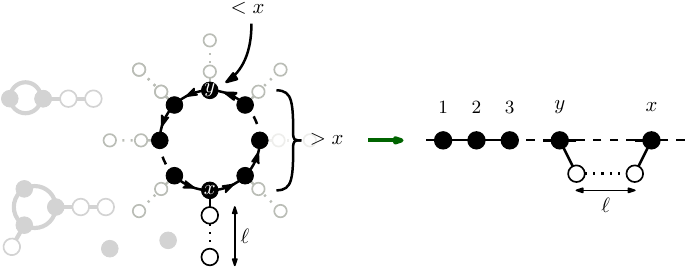}
\end{center}
\vspace{-7pt}
\caption{
  Finding the nearest smaller label, and utilizing it in the bijection from cyclariums to \gitgraph{}s
   }
\label{fig:bijection}
\end{figure}

    For example, consider label $x=5$ in Figure~\ref{fig:cyclarium}.
    It is incident to a chain of $\ell = 3$ white vertices.
     Moreover, since the numbers following $5$ in the cyclarium are $8$, then $6$ and $2$,
     and since $8$ and $6$ are greater than $5$,
    the nearest smaller label $y$ is $2$.
    In Figure~\ref{fig:git_graph_bijection},
    a feature branch merges into the $5$th \main{}: it has $\ell = 3$ \feature{}s,
    and forks from the \main{} at position $y = 2$.
  \end{itemize}

We will now describe the reverse transformation $\phi^{-1}$ by successively
inserting black vertices potentially with incident chains of white vertices.

Let $G$ be a \gitgraph{} with $k$ \main{}s.
We start with a cyclarium $Y$ of size $1$. Then, for each $x \in \{2,\dots,k\}$:
\begin{itemize}
  \item If the $x$th \main{} of $G$ is free, then add to $Y$ a new cycle of length $1$ consisting of a single vertex labeled $x$.
  \item If there is a feature branch merging into the $x$th \main{} of $G$,
  then consider the position $y$ of the base (from left to right), and the number $\ell$ of \feature{}s in it.
   Insert in $Y$ a black vertex labeled $x$ just before the vertex labeled $y$ and attach to it a chain of $\ell$ white vertices.
\end{itemize}
The final cyclarium $Y$ is defined as $\phi^{-1}(G)$.

We can see that $\phi^{-1}$ is the inverse function of $\phi$:
\begin{itemize}
  \item For any free commit at position $x$, label $x$ starts as the sole label in a cycle of length $1$.
   With the addition of larger labels,
   $x$ remains the smallest label within its cycle.
   Therefore, commit at position $x$ will be also free in $\phi(\phi^{-1}(G))$.
  \item For any non-free commit at position $x$,
  there is a feature branch merging into it.
  Let $y$ be the position of the base of the branch.
  Label $x$ is initially set as the predecessor of $y$ in $Y$, which means that $y$ is the nearest smaller label.
  But the nearest smaller labels remain invariant when $Y$ grows,
  since inserting larger labels has no effect on how the nearest smaller label is determined.
  Therefore $y$ is also the nearest smaller label in $\phi^{-1}(G)$.
  Thus in $\phi(\phi^{-1}(G))$, there is also a feature branch of length $\ell$ merging into the $x$th \main{} and forking from the $y$th \main{}.
\end{itemize}
We conclude that $\phi(\phi^{-1}(G)) = G$,
and since \gitgraph{}s and cyclariums share the same gene\-rating function,
it follows that $\phi$ is a bijection.

Furthermore, $\phi$ puts in correspondence minimal labels within their cycles and free commits.
 So, if there are $f$ cycles in the cyclarium $C$, there will be $f$ free commits in $\phi(D)$.
\end{proof}

\begin{remark}
  This bijection differs from the one outlined in the short version of this paper~\cite{short-version}.
  It is notably simpler.
\end{remark}

\begin{remark}
  From an algorithmic point of view, transformation $\phi$ from cyclariums  to \gitgraph{}s can be computed in $O(n)$, where $n$ denotes the size of the cyclarium.
  The main difficulty is to find all nearest smaller labels,
  but this is a classic algorithmic problem which can be solved in linear time~\cite{all_nearest_smaller_values}.
  \label{remark:complexity_bijection}
  Using a suitable encoding for the feature branches, the time complexity of~$\phi$ is actually linear in the size of the main branch.
\end{remark}

As a direct result of this bijection, we are able to obtain a closed formula
for the number of \gitgraph{}s,
as cyclariums can be viewed as permutations with extra structure,
and thereby can be counted via the Stirling numbers of the first kind.

\begin{corollary}%
  \label{cor:formula}
  The number of \gitgraph{}s $g_{n,k}$ of size $n$ and with $k$ \main{}s is $1$ if $k = n$ and
  \[g_{n,k} = \sum_{f = 1}^{k-1} {k \brack f} { n - k - 1 \choose k  - f - 1}\]
  for $ k < n$,
  where \({\cdot \brack \cdot}\) denotes the (unsigned) Stirling number of the first kind.
\end{corollary}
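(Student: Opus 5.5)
We count cyclariums instead of \gitgraph{}s, using Proposition~\ref{prop:bijection}: $g_{n,k}$ equals the number of cyclariums with $n$ vertices, $k$ of them black. We then refine by the number $f$ of cycles and sum over $f$.

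Fix $f$. A cyclarium with $k$ black vertices and $f$ cycles is obtained in two independent steps. First we choose the underlying permutation of $\{1,\dots,k\}$ with exactly $f$ cycles, in ${k \brack f}$ ways. Then we attach white chains: exactly one black vertex per cycle (its minimum) carries no chain, so $k-f$ black vertices each carry a non-empty chain. These non-empty chains must contain $n-k$ white vertices in total. Since the chains are attached to distinguished, labeled vertices, the number of ways is the number of compositions of $n-k$ into $k-f$ positive parts, namely $\binom{n-k-1}{k-f-1}$.

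Summing over $f$ gives $g_{n,k} = \sum_f {k \brack f}\binom{n-k-1}{k-f-1}$. It remains to check the range of $f$ and the boundary cases.

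\textbf{Case $k<n$.} Here $n-k\ge 1$, so there is at least one white vertex, hence at least one chain. This forces $k-f\ge 1$, i.e.\ $f\le k-1$. We also have $f\ge 1$ because $k\ge 1$: if $k=0$ the cyclarium would be empty and could not contain white vertices. Within $1\le f\le k-1$ the binomial $\binom{n-k-1}{k-f-1}$ has nonnegative entries and correctly vanishes when $k-f>n-k$. So the sum over $f$ from $1$ to $k-1$ is exactly right.

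\textbf{Case $k=n$.} There are no white vertices, so every cycle must be a fixed point: a non-minimal vertex would require a chain. The only such permutation is the identity, giving $g_{n,n}=1$. This matches the \gitgraph{} with no feature branch and also covers $n=0$. It is stated separately because the composition count $\binom{-1}{k-f-1}$ with $f=k$ degenerates here.

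There is no real obstacle; the only delicate point is this boundary bookkeeping, which we handle by treating $k=n$ on its own.
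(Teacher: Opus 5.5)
Your proposal is correct and follows essentially the paper's own proof. Via Proposition~\ref{prop:bijection} you count cyclariums with $f$ cycles as ${k \brack f}$ permutations times $\binom{n-k-1}{k-f-1}$ ways to split the $n-k$ white vertices into $k-f$ non-empty chains; the paper gets this binomial as a multiset coefficient, you as a count of compositions, which is the same thing. Your explicit justification of the range $1 \le f \le k-1$ and of the case $k=n$ (only the identity permutation) is slightly more careful than the paper, which simply calls $k=n$ obvious.
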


\begin{proof}
  To establish this formula, we focus on counting cyclariums rather than \gitgraph{}s,
  since these objects are in bijective correspondence (cf. Proposition~\ref{prop:bijection}).
  We only consider $k < n$ since the case $k=n$ is obvious.

  Let $g_{n,k,f}$ be the number of cyclariums of size $n$ with $k$ black vertices and $f$ cycles.
  The number of ways to form $f$ cycles from $k$ labeled black vertices,
  which is equivalent to determining a permutation of size $k$ with  $f$ cycles,
  is \({k \brack f}\),
   by definition of the Stirling numbers of the first kind.
  To form a cyclarium, we need to attach a chain of white vertices to every black vertex
  that do not have a minimal label within their respective cycles.
  Therefore we must distribute $n-k$ white vertices into $k-f$ non-empty chains.
  To ensure that every chain is non-empty, we first place $k-f$ white vertices,
  one per chain.
  The count of remaining configurations is given by the multiset coefficient \(  \multiset {k-f} {(n-k) - (k-f)} \),
  which can be rewritten as \(  { n-k-1 \choose k-f-1 }\).
  Thus, \(g_{n,k,f} = {k \brack f}  { n-k-1 \choose k-f-1 } \)
  and given that \(g_{n,k} = \sum_{f=1}^{k-1}  g_{n,k,f}\),
  we arrive at the desired result.
\end{proof}

\section{Asymptotic Estimates Using the Saddle-Point Method}\label{sec:asymptotic}

In this section, we establish an asymptotic estimate for the number of \gitgraph{}s.

\begin{theorem}\label{thm:equivalent}
  The number~$g_n$ of Git graphs of size~$n$ is asymptotically equivalent to
  \begin{equation*}
    g_n \underset{n \to \infty}\sim
    \frac{e^{1/8}}{2}
    \left(\frac{n}{2e}\right)^{\frac n 2}
    \exp\left(\frac 1 2 \ln\frac n 2 \sqrt{\frac n 2} + \sqrt{2 n} + \frac{(\ln\frac n 2)^2}{32} \right)
    \left(\frac n 2\right)^{-3/8}
    \left(1 + \widetilde O \left( n^{-1/2} \right)\right).
  \end{equation*}
\end{theorem}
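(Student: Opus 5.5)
The plan follows the outline announced in the introduction. First I estimate $g_{n,k}$ by the saddle-point method, uniformly in $k$ over a window around its dominant value. Then I sum over $k$ with a Gaussian (Laplace) approximation. The starting point is Theorem~\ref{theo:Gtilde}, which gives
\[
g_{n,k} = k!\,[z^n u^k]\,\widetilde G(z,u) = k!\,[z^n]\,\frac{1}{k!}\Big(-\tfrac{1-z}{z}\ln\big(1-\tfrac{z^2u}{1-z}\big)\Big)^{k}\Big|_{u^k} .
\]
Extracting the $u^k$ coefficient exactly is easier: $[u^k]\,(1-w u)^{-\alpha} = \binom{\alpha+k-1}{k} w^k$ with $\alpha = (1-z)/z$ and $w = z^2/(1-z)$. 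This yields
\[
g_{n,k} = [z^n]\, \frac{\Gamma(\frac1z + k - 1)}{\Gamma(\frac1z-1)}\,\frac{z^{2k}}{(1-z)^{k}},
\]
so the problem becomes a single Cauchy integral $\frac{1}{2\pi i}\oint \exp(h_{n,k}(z))\,dz/z$.

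\textbf{Heuristics and scaling.} Each commit is either main or white, and white vertices come in $k-f$ chains; the Stirling factor ${k \brack f}$ is maximal for $f \approx \ln k$. Hence $g_{n,k} \approx k!\binom{n-k-1}{k-2}$, which is maximized when $k \approx n/2 - c\sqrt n\ln n$. Since $\binom{n-k}{k}$ decays quickly once $n-k$ is near $k$, I set $k = n/2 - m$ with $m$ of order $\sqrt{n}\ln n$. The saddle point of $z^{-n}\Gamma(1/z+k-1)z^{2k}(1-z)^{-k}$ then sits near $z$ close to $1$ from below: the white chains have total length $n-k$ spread over about $k$ chains, each of length about $1 + O(m/n)$. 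Alternatively, writing the main term directly as $\sum_f {k\brack f}\binom{n-k-1}{k-f-1}$ and using the known asymptotics ${k\brack f}\sim (k-1)!\,(\ln k)^{f-1}/(f-1)!$ for $f=O(\ln k)$, one gets $g_{n,k}\approx (k-1)!\sum_f \frac{(\ln k)^{f-1}}{(f-1)!}\binom{n-k-1}{k-f-1}$. This avoids a two-dimensional saddle point, and I would use it as the primary route. The inner sum over $f$ is then a Poisson-like sum that can be evaluated in closed asymptotic form, with the binomial ratio $\binom{n-k-1}{k-f-1}/\binom{n-k-1}{k-2}\approx (\frac{k}{n-2k})^{f-1}$. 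The inner sum is therefore about $\exp\!\big(\ln k\cdot \frac{k}{n-2k}\big)$. With $n-2k=2m$ and $k\approx n/2$, the exponent is $\frac{n\ln(n/2)}{4m}$, and this term is what produces the $\frac12\ln\frac n2\sqrt{\frac n2}$ and $(\ln\frac n2)^2/32$ corrections after optimization.

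This only holds when $k/(n-2k)$ is small relative to $k$, so the Stirling asymptotics must be replaced by the saddle-point estimate of ${k\brack f}$ (Moser--Wyman) in the regime $f$ of order $\sqrt n\ln n$. I therefore plan to redo this step with the contour integral above: expand $h_{n,k}$ around its saddle $\zeta_{n,k}$ using Stirling's formula for $\Gamma(1/z+k-1)$, and obtain $g_{n,k} = \frac{e^{h(\zeta)}}{\sqrt{2\pi h''(\zeta)}}(1+O(n^{-1/2}\mathrm{polylog}))$ uniformly for $|m - m_0|\le n^{1/4+\epsilon}$.

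\textbf{Summation and the main obstacle.} The final step has two parts. First, locate the maximizer $m_0$ of $\log g_{n,n/2-m}$ by expanding in $m$. The quadratic part of the log, $-m^2/(n/2)\cdot(\cdots)$ from $k!$ and the binomial, balances against $\frac{n\ln(n/2)}{4m}$, giving $m_0\sim \frac14\sqrt{n/2}\,\ln\frac n2$-type values; plugging back produces the exponent stated in Theorem~\ref{thm:equivalent}. Second, sum the Gaussian in $m$ (width of order $n^{1/4}$ with polylog factors) to obtain the polynomial factor $(n/2)^{-3/8}$ and the constant $e^{1/8}/2$. The tails, $|m-m_0|>n^{1/4+\epsilon}$, need crude monotonicity or log-concavity bounds from Corollary~\ref{cor:formula}, for instance a ratio $g_{n,k+1}/g_{n,k}$ estimate. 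The main obstacle is making the saddle-point expansion of $h_{n,k}$ uniform in $k$ across the window, with an error $\widetilde O(n^{-1/2})$. This requires tracking several orders of the expansion, because the exponent has terms of sizes $n\ln n$, $\sqrt n\ln n$, $\sqrt n$, $(\ln n)^2$ and $\ln n$, all of which must be correct to $o(1)$. I would organize this by writing everything in the variable $t=m/\sqrt{n/2}$ and expanding in powers of $n^{-1/2}$ with $\ln n$ treated as a parameter.
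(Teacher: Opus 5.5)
Your final plan matches the paper's architecture: a one-variable Cauchy integral for $g_{n,k}$ built on the $\Gamma$-representation (which is exactly Proposition~\ref{prop:Gkz}), uniform saddle-point estimates on a window of $k$, Gaussian summation, then tails. But the scaling picture that is supposed to place the window is wrong, and the error is not cosmetic.

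The mass of $g_{n,k}$ sits \emph{above} $n/2$, around $k^\star=\frac n2+\bigl(\frac14\ln\frac n2-\frac12\bigr)\sqrt{n/2}$, not at $\frac n2-\frac14\sqrt{n/2}\,\ln\frac n2$. The culprit is your assumption $f\approx\ln k$. A typical Git graph has about $\frac12\ln(\frac n2)\sqrt{n/2}$ free commits, and this is exactly what allows $k>n/2$: one only needs $k-f\le n-k$. So nothing forces $n-k$ to stay away from $k$.

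Your own ingredients already refute your $m_0$. First, $k!\binom{n-k-1}{k-2}$ has consecutive ratio $\approx(n-2k)^2/(n-k)$, so it peaks at $m\approx\frac12\sqrt{n/2}$ (matching the $-\frac12$ in $k^\star$), not at $m\asymp\sqrt n\ln n$. Second, multiplying by your factor $\exp\bigl(\frac{n\ln(n/2)}{4m}\bigr)$ gives $\frac{d}{dm}\log g_{n,n/2-m}\approx-\ln x-2\ln(\frac n2)/x$ with $x=8m^2/n$, which is negative for every $m>0$. So there is no maximizer with $k<n/2$. The maximum is pushed into $n-2k<0$, where your approximation $(k/(n-2k))^{f-1}$ is meaningless.

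Likewise, the saddle point is not near $z=1$ but at $r_{n,k}\approx\sqrt{2/n}$. Your own remark that chains have length $\approx1$ forces $z\to0$, since a chain weighted $z^\ell$, $\ell\ge1$, has mean length $1/(1-z)$. Hence $\frac{1-z}{z}\approx\sqrt{n/2}$, and the denominator $\Gamma(\frac{1-z}{z})$ must be Stirling-expanded too, not just the numerator.

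A window centred at your $m_0$ lies about $\frac12\ln(\frac n2)\sqrt{n/2}$ away from $k^\star$, whereas essentially all the mass is within $O(n^{1/4}\ln n)$ of $k^\star$. So ``plugging back'' cannot give the stated exponent. Note also where the constants come from: $e^{1/8}$, $(\frac n2)^{-3/8}$ and $\frac1{32}(\ln\frac n2)^2$ arise from the order-$\ln^2 n$ part of the exponent at $r\sqrt{n/2}=1$ (the paper's $\mu_n(1)$). The Gaussian sum over $k$ only cancels the $n^{\pm1/4}\sqrt{\ln n}$ factors against the $1/\sqrt{2\pi\Delta_n}$ prefactor and contributes the $\frac12$.

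The tail step is also not covered by ``crude monotonicity or log-concavity from Corollary~\ref{cor:formula}''. The Stirling-number formula gives monotonicity of $g_{n,k}$ only on the far left, $k\le\frac n2-\sqrt{n/2}$, via the Stirling recurrence and a binomial ratio. Near the mode and to the right, unimodality of $g_{n,k}$ is not readily available; the paper notes it seems hard.

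The paper sidesteps this. It bounds $g_{n,k}\le\mathrm{SB}_{n,k}:=G_k(r_{n,k})\,r_{n,k}^{-n}$ and proves that $\mathrm{SB}_{n,k}$ is monotone on each side of the window. The key is $G_{k+1}(z)=z\bigl(1+\frac{kz}{1-z}\bigr)G_k(z)$, together with the fact that $r\mapsto G_k(r)r^{-n}$ is minimized at $r_{n,k}$. This gives $\mathrm{SB}_{n,k}\le\mathrm{SB}_{n,k+1}$ whenever $r_{n,k+1}\bigl(1+\frac{k\,r_{n,k+1}}{1-r_{n,k+1}}\bigr)\ge1$, and the reverse inequality whenever $r_{n,k}\bigl(1+\frac{k\,r_{n,k}}{1-r_{n,k}}\bigr)\le1$. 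These conditions are then checked using two facts: $r_{n,k}$ decreases in $k$, and it expands as $\sqrt{2/n}\,\bigl(1\pm4(\frac n2)^{-1/4}\bigr)$ at the left and right window edges respectively. Your proposal has no substitute for this device, so the tails remain open as written.

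The same monotonicity of $r_{n,k}$ in $k$ is also how the paper obtains the uniformity you flag as the main obstacle. Since $\beta=r\sqrt{n/2}$ has no usable closed form in $k$, one writes $\alpha=(k-\frac n2)/\sqrt{n/2}$ explicitly as a function of $\beta$. One then sandwiches $\beta$ between its values at the two window endpoints, which makes all error terms independent of $k$.
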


Throughout this paper, $\widetilde O$ (also known as the \emph{soft big oh notation}) is used to denote the standard big \(O\) notation,
extended to suppress polylogarithmic factors.  
Formally,
\(f(n) = \widetilde O(g(n))\) means that there exists a constant \(m\) such that \( f(n) = O\left( g(n) \ln(n)^m \right)\).

This asymptotic estimate was achieved through independent calculations
performed both manually and with the help of Maple software.
Additionally, this result was verified through numerical experimentation.

The proof of the theorem is technical and intricate, and will be developed in this section.

We adopt a deliberate pedagogical approach: 
we unfold the proof with technical transparency,
articulating our strategy at every juncture,
so that a graduate student encountering this material
can follow the argument in its entirety.

\subsection{Principle of the Proof}
\label{start:proof}

In a few words, our approach
%to derive an asymptotic estimate for the numbers $g_n$
starts by investigating the asymptotic behavior of $g_{n,k_n}$, the number of \gitgraph{}s of size $n$ with $k_n$ \main{}s,
where $k_n$ depends on the size $n$.
This is achieved using the saddle-point method.
Then,
we sum these contributions over all $k_n$
to obtain the asymptotic behavior of $g_n$.

In order to know how $g_{n,k_n}$ asymptotically behaves,
we rely on a generating function for \gitgraph{}s,
different from what we saw at Subsection~\ref{ss:generating_functions},
where this time we fix $k$ the number of \main{}s.

\begin{proposition}
  Let $k \geq 0$ be fixed.
  The ordinary generating function $G_k(z) := \sum_{n \geq 0} g_{n,k}\,z^n$ of \gitgraph{}s with $k$ \main{}s is given by
  \[ G_k(z)
  = z^k \, \prod_{j=0}^{k-1} \left(1 + \frac{jz}{1 - z}\right)
  = \frac{z^{2k}}{(1-z)^k}
   \frac{\Gamma(\frac{1-z}{z} + k)}{\Gamma(\frac{1-z}{z})}. \]
   \label{prop:Gkz}
\end{proposition}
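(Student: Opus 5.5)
We prove the product formula by induction on $k$, using the recurrence of Proposition~\ref{prop:recurrence} with $k$ held fixed.

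\textbf{Base case.} For $k=0$, the initial condition gives $G_0(z)=1$, which is the empty product.

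\textbf{Inductive step.} Fix $k\ge 1$. Multiply the recurrence $g_{n,k} = g_{n-1,k-1} + \sum_{\ell\ge1}(k-1)g_{n-1-\ell,k-1}$ by $z^n$ and sum over $n\ge1$. The left-hand side gives $G_k(z)$, since $g_{0,k}=0$ for $k\ge1$. The first term on the right gives $zG_{k-1}(z)$. The second term gives
\[(k-1)\sum_{\ell\ge1}z^{\ell+1}\,G_{k-1}(z)=(k-1)\frac{z^2}{1-z}\,G_{k-1}(z).\]
Hence
\[G_k(z)=z\left(1+\frac{(k-1)z}{1-z}\right)G_{k-1}(z).\]
This can also be seen directly from the decomposition: the last \main{} is either free, or a branch with $\ell\ge1$ \feature{}s and one of $k-1$ possible bases merges into it. Iterating from $G_0=1$ gives $G_k(z)=z^k\prod_{j=0}^{k-1}\bigl(1+\frac{jz}{1-z}\bigr)$. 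As a cross-check, the same formula follows by extracting $[u^k]\,k!$ from $\widetilde G$ in Theorem~\ref{theo:Gtilde}: expanding $(1-X)^{-a}$ with $a=\frac{1-z}{z}$ and $X=\frac{z^2u}{1-z}$ gives $k![u^k]\widetilde G=\frac{z^{2k}}{(1-z)^k}\,a(a+1)\cdots(a+k-1)$.

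\textbf{Gamma form.} Factor each term of the product as
\[1+\frac{jz}{1-z}=\frac{z}{1-z}\left(\frac{1-z}{z}+j\right).\]
Then
\[G_k(z)=z^k\frac{z^k}{(1-z)^k}\prod_{j=0}^{k-1}(a+j)\qquad\text{with } a=\frac{1-z}{z}.\]
The rising factorial satisfies $\prod_{j=0}^{k-1}(a+j)=\Gamma(a+k)/\Gamma(a)$, by repeated use of $\Gamma(x+1)=x\Gamma(x)$. This holds as an identity of rational functions in $z$ for $0<|z|<1$ with $a$ not a nonpositive integer, and it extends to all $z$ by treating the ratio as the polynomial in $a$.

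There is no real obstacle. The only points needing care are the boundary term $g_{0,k}=0$ and the formal meaning of the Gamma ratio at $z=0$, where it is best read as the polynomial rising factorial.
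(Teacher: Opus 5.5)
Your proof is correct and is essentially the paper's argument. The paper writes the product directly by the symbolic method: the $(j+1)$th \main{} contributes $z + jz^2/(1-z)$. That is exactly the unrolled form of your recursion $G_k = z\bigl(1+\frac{(k-1)z}{1-z}\bigr)G_{k-1}$ derived from Proposition~\ref{prop:recurrence}. The paper also mentions the extraction from $\widetilde G$, and it passes to the Gamma form by iterating $\Gamma(x+1)=x\Gamma(x)$, just as you do.

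One small point: your base case also needs $g_{n,0}=0$ for $n\geq 1$, not just the initial condition $g_{0,0}=1$. This is immediate, since a \gitgraph{} with no \main{}s has no feature branches and hence no vertices.
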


\begin{proof}
The relation
\begin{equation}
  \label{eq:Gkz}
G_k(z)
  = z^k \, \prod_{j=0}^{k-1} \left(1 + \frac{jz}{1 - z}\right)
\end{equation}
can be derived by expanding  \( \widetilde G (z,u) = {\left(1 - \frac{z^2 u}{1 - z}\right)}^{- \frac{1 - z}{z}} \) (see Theorem~\ref{theo:Gtilde}) and extracting the coefficient in $u^k/k!$.
Alternatively --- and this is the approach we are going to develop for this proof,
this relation can also be explained combinatorially
with symbolic method~\cite{FS2009}.

Indeed, to construct a \gitgraph{} with $k$ \main{}s, we start with a main branch made of $k$ commits.
Then, for each $j \in \{0,\dots,k-1\}$,
we determine whether the \main{} located at position $j+1$ (from left to right)
is either free (weight of a single vertex : $z$),
or not.
In the latter case, it is the commit where a feature branch is merged
(weight of a \main{} with a non-empty sequence of \feature{}s : $z^2/(1-z)$)
with $j$ possibilities for choosing a \main{} to its left.
If we aggregate all possibilities,
we see that the generating function is
\(\prod_{j=0}^{k-1} \left(z + j\frac{z^2}{1 - z} \right),\)
which is consistent with \eqref{eq:Gkz}.

To obtain the expression in terms of $\Gamma$,
we use the fundamental property of the Gamma function, namely \(\Gamma(x+1) = x \Gamma(x)\). Indeed, by a simple induction on $k$, we see that
\[
\Gamma\left( \frac{1-z} z + k \right)  = \prod_{j=0}^{k-1} \left( \dfrac{1-z} z + j \right)  \Gamma\left( \dfrac{1-z} z  \right)
\]
and so
\[
\left(\frac{z}{1-z}\right)^k
\Gamma\left( \frac{1-z} z + k \right)  =
\prod_{j=0}^{k-1} \left( 1 + j \frac{z}{1-z} \right)  \Gamma\left( \dfrac{1-z} z  \right).
\]
We thereby find the expression of $G_k$ as stated by the proposition by substituting \(\prod_{j=0}^{k-1} \left( 1 + j \frac{z}{1-z} \right)\) by  \( \frac{z^k \Gamma\left( \frac{1-z} z + k \right) }
{ (1-z)^k\Gamma\left( \frac{1-z} z  \right)  }
\) in \eqref{eq:Gkz}.
\end{proof}

From this proposition, we can express the coefficients $g_{n,k}$ of $G_k(z)$ as integrals over a contour, using Cauchy's integral formula:

\begin{corollary}
  For any $n \geq 0$ and $k \in \{1,\dots,n\}$, the number of \gitgraph{}s with $n$ commits and $k$ \main{}s is given by the formula
  \[
  g_{n,k} = \frac 1 {2 \pi i}
  \oint  \dfrac{ z^{2k-n-1}} {(1-z)^k}
  \dfrac{\Gamma(\frac{1-z}{z} + k)}{\Gamma(\frac{1-z}{z})} dz
  \]
  where the contour is any circle of radius $<1$ around the origin in the complex plane.
  \label{cor:cauchy}
\end{corollary}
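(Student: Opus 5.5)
The plan is to apply Cauchy's coefficient formula to the generating function $G_k(z)$ of Proposition~\ref{prop:Gkz}. The only things to check are that $G_k$ is analytic on a disc containing the contour, and that the two expressions for $G_k$ in that proposition agree at every point of the contour, not only as formal power series.

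First, $G_k(z) = z^k \prod_{j=0}^{k-1}\left(1 + \frac{jz}{1-z}\right)$ is a rational function whose only possible pole is $z=1$. So it is analytic in the open unit disc, and its Taylor expansion at $0$ is the power series $\sum_n g_{n,k} z^n$. Cauchy's integral formula on any circle $|z|=r<1$ then gives
\[ g_{n,k} = \frac{1}{2\pi i}\oint \frac{G_k(z)}{z^{n+1}}\,dz. \]

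Second, I substitute the Gamma form of $G_k$. The identity
\[ \frac{\Gamma(x+k)}{\Gamma(x)} = \prod_{j=0}^{k-1}(x+j) \]
holds for every complex $x$ that is not a nonpositive integer; if one prefers, the ratio can be read as this polynomial in $x$, which makes sense everywhere. Here $x = (1-z)/z = 1/z - 1$, and on the circle $|z|=r<1$ we have $|x+1| = 1/r > 1$. This rules out $x = 0$, and for $r$ close to $1$ it may still allow $x=-1,-2,\dots$ on a finite set of points. To avoid this issue entirely, interpret the Gamma ratio via the Pochhammer polynomial $\prod_{j}(x+j)$, which is exactly how the proof of Proposition~\ref{prop:Gkz} derived it. With that reading, the integrand equals
\[ \frac{z^{2k-n-1}}{(1-z)^k}\,\frac{\Gamma\left(\frac{1-z}{z}+k\right)}{\Gamma\left(\frac{1-z}{z}\right)} \]
pointwise on the contour.

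Third, I note that the apparent singularity at $z=0$, coming from the factor $z^{2k-n-1}$ and from the terms $(1-z)/z$, is handled correctly. Since $k\ge 1$, the product $\prod_j\left(\frac{1-z}{z}+j\right)$ has a pole of order $k$ at $0$. Combined with $z^{2k}$, this yields $z^k\times(\text{analytic})$, which is consistent with the first expression. Nothing is lost, because the integrand is just $G_k(z)/z^{n+1}$.

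There is no real obstacle. The only delicate point is justifying the pointwise use of the Gamma ratio on the contour, and reading it as a polynomial settles that.
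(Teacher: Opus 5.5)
Your proof is correct and follows the paper's argument: you apply Cauchy's coefficient formula to $G_k$, which is analytic on the open unit disc because its only pole is at $z=1$, and then substitute the Gamma form from Proposition~\ref{prop:Gkz}. One small slip in your side remark is that $x=\frac{1-z}{z}$ can never equal $-1$ and equals $-2$ only at $z=-1$, so the only problematic points are $z=-\frac{1}{m-1}$ for $m\ge 3$; these lie on the circles of radius $\frac12,\frac13,\dots$, not on circles with $r$ close to $1$, and your Pochhammer reading of the Gamma ratio (equivalently, the fact that these singularities are removable) disposes of them anyway.
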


\begin{proof}
  Straightforward by Cauchy's integral formula
  for derivatives with function $G_k$. Note that the radius of convergence of $G_k$ is $1$, which can be easily inferred from \eqref{eq:Gkz}.
\end{proof}

Thus, the proof is structured as follows
(as depicted in Figure~\ref{fig:plan}):

  \begin{figure}[ht!]
\begin{center}
\includegraphics[scale = 1.1]{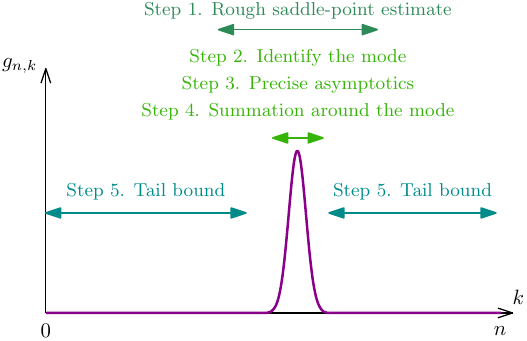}
\end{center}
\vspace{-7pt}
\caption{
  Plot of $g_{n,k}$, the number of \gitgraph{}s of size $n$ (here $n = 150$) and with $k$ \main{}s,
as a function of $k$.
The figure also outlines the different proof steps, associated with ranges of $k$ values.
}
\label{fig:plan}
\end{figure}

  \begin{enumerate}
    \item \textbf{Step 1.}
    Provide a rough estimate for $g_{n,k_n}$
    when $k_n$ is approximately equal to $\frac n 2$
    --- specifically, when $k_n = \frac{n}{2} + \varepsilon_n$
    with $\varepsilon_n = \widetilde O \left( \sqrt n \right)$.
    To achieve this, we use the fact that $ g_{n,k_n} \leq \frac{G_{k_n}(r)}{{r}^n}$ for any positive number $r$ and in particular for $r_n$, the solution of the saddle-point equation \( r_n G_{k_n}'(r_n) = n\, G_{k_n}(r_n) \).

    \item \textbf{Step 2.}
    Utilizing the previous estimate, optimize $\varepsilon_n$ to maximize~$g_{n,k_n}$.
    It will pinpoint the values of~$k$ that most significantly contribute to the sum~$g_n = \sum_k g_{n,k}$
     (specifically, whenever $k_n$ is at distance at most $\left(\frac n 2\right)^{\frac 1 4} \ln{\left(\frac n 2\right)}$
        from the mode).

    \item  \textbf{Step 3.}
    Use the full saddle-point method to give a precise estimate of $g_{n,k_n}$
    for the values of $k_n$ in this interval.

    \item  \textbf{Step 4.}
    Estimate the sum of those contributions.

    \item  \textbf{Step 5.}
    Establish upper bounds for the tails of the sum.
  \end{enumerate}

  Our approach uses standard techniques, but is quite technical due to the maximization process and the fact that we have to sum asymptotic estimates.

\subsection{Step 1. First-order Estimate at the Saddle Point}

\subsubsection{Locating the Saddle Point \texorpdfstring{\(\boldsymbol{r_n}\)}{r\_n} }

The initial step in the saddle-point method involves estimating \(r_n\),
the positive real solution to the equation \(r_n G_{k_n}'(r_n) = n G_{k_n}(r_n)\).

To understand why, remark that this equation is equivalent to
\( \frac{\partial } {\partial z} \left.\left(  \frac{G_{k_n}(z)}{z^n}\right)\right|_{z=r_n} = 0\).
By Corollary~\ref{cor:cauchy},
the number $g_{n,k_n}$, which we wish to estimate, corresponds to
the complex integral of \( \frac{G_{k_n}(z)}{z^{n+1}} \) over a circular contour --- see Figure~\ref{fig:plot3d} for an illustration.
We have the flexibility to choose the radius of the circle,
and aim to select it where the modulus is minimized
--- this is precisely what \( \frac{\partial } {\partial z} \left(  \frac{G_{k_n}(z)}{z^n}\right) = 0\) implies.\footnote{
  The discrepancy of a factor $z$ between \(\dfrac{G_{k_n}(z)}{z^n}\) in the saddle-point equation
  and  \(\dfrac{G_{k_n}(z)}{z^{n+1}}\) in Cauchy's formula (Corollary~\ref{cor:cauchy})
  is due to a change of variable in the integral.
  When we set \(z=re^{i\theta}\),
  the contour integral \(\oint  \dfrac{G_{k_n}(z)}{z^{n+1}}  dz\)
  becomes \(i \int_{-\pi}^\pi   \dfrac{G_{k_n}(re^{i\theta})}
  {   \left(r e^{i \theta}\right)^{n}      } d\theta\).
}
Thus, the solution $r_n$ to the saddle-point equation provides an optimal choice for the circle's radius.

\begin{figure}[ht!]
\begin{center}
\includegraphics[width=0.95\textwidth]{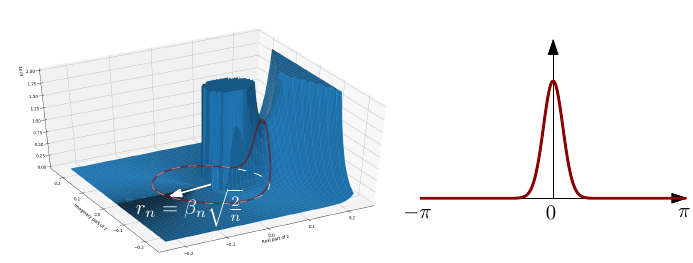}
\end{center}
\vspace{-7pt}
\caption{ \textit{Left.}
3D plot of the function
 $z \mapsto \left| \frac{G_k(z)}{z^{n+1}}
  \right|$
  for $n = 100$ and $k=54$.
  The integral contour we use to estimate $g_{n,k}$ will be the circle of center $0$ and radius $r_n = \beta_n \sqrt{ \frac 2 n}$.
  The curve (which is hand-drawn) represents what we integrate in Corollary~\ref{cor:cauchy}.
  \textit{Right.}
  Plot of $ \left| \frac{G_k(z)}{z^{n}}
  \right|$
for  $n = 100$, $k=54$. Here,
$z = r \exp(i \theta)$  with $\theta$ varying from $-\pi$ to $\pi$,
while $r$ is set to the approximate saddle-point value
$\sqrt{ \frac 2 n } = \frac 1 {\sqrt {50}}$
($\beta_n$ is expected to be closed to $1$ whenever $k_n$ is near the mode).
}
\label{fig:plot3d}
\end{figure}

  \begin{lemma}[Solution \(r_n\) of the saddle point equation]
  \label{lem:def_saddlepoint}
  Let~$k_n$ be any integer sequence.

  When \(1 < k_n \leq n\), there exists a unique number $r_n$ between $0$ and $1$ that satisfies the (saddle point) equation
  \begin{equation*}
    r_n G_{k_n}'(r_n) = n G_{k_n}(r_n).
  \end{equation*} 
  \end{lemma}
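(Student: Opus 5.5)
We use the product form of Proposition~\ref{prop:Gkz} and take logarithmic derivatives. For $0<r<1$ write $k=k_n$ and
\[
\ln G_k(r) = k\ln r + \sum_{j=0}^{k-1} \ln\!\left(1 + \frac{jr}{1-r}\right),
\]
which is legitimate because every factor is positive on $(0,1)$. The saddle-point equation $r G_k'(r) = n G_k(r)$ is then equivalent to $\psi(r)=n$, where
\[
\psi(r) := r\,\frac{G_k'(r)}{G_k(r)} = k + \sum_{j=1}^{k-1} \frac{j r}{(1-r)\bigl(1-r+jr\bigr)} .
\]
This comes from $\frac{d}{dr}\ln\bigl(1+\frac{jr}{1-r}\bigr) = \frac{d}{dr}\bigl[\ln(1-r+jr)-\ln(1-r)\bigr] = \frac{j-1}{1-r+jr}+\frac{1}{1-r}$. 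Multiplying by $r$ and combining over a common denominator gives $\frac{jr}{(1-r)(1-r+jr)}$.

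The plan is to show that $\psi$ is a continuous, strictly increasing bijection from $(0,1)$ onto $(k,\infty)$. The uniqueness and existence of $r_n$ for $k<n$ then follow from the intermediate value theorem.

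\textbf{Continuity and limits.} Each summand $h_j(r)=\frac{jr}{(1-r)(1-r+jr)}$ is continuous on $(0,1)$, since $1-r+jr>0$ there.
\begin{itemize}
\item As $r\to 0^+$, every $h_j(r)\to 0$, so $\psi(r)\to k$.
\item As $r\to 1^-$, the summand with $j=1$ equals $\frac{r}{1-r}$, which tends to $+\infty$. The other summands are nonnegative, so $\psi(r)\to+\infty$. This step needs $k\ge 2$, i.e.\ the hypothesis $k_n>1$, so that the sum is nonempty.
\end{itemize}

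\textbf{Monotonicity.} Write $h_j(r) = \frac{r}{1-r}\cdot \frac{j}{1-r+jr}$. The first factor is positive and strictly increasing. The second factor is positive and nondecreasing, because its denominator $1+(j-1)r$ is nonincreasing in $r$ for $j\ge 1$. Hence each $h_j$ is strictly increasing, and so is $\psi$.

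\textbf{Conclusion and boundary case.} When $k<n$, the value $n$ lies in $(k,\infty)$, so there is exactly one $r_n\in(0,1)$ with $\psi(r_n)=n$. The boundary case $k_n=n$ needs separate care. There $\psi>k=n$ on all of $(0,1)$, so no solution exists in the open interval, and the infimum $\psi(0^+)=n$ is attained only in the limit $r_n=0$. This matches $g_{n,n}=1$, the coefficient of $z^n$ in $G_n(z)=z^n(1+O(z))$.

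I expect this endpoint to be the only real obstacle. The lemma presumably intends either $1<k_n<n$, or the convention $r_n=0$ when $k_n=n$, with the interval read as $[0,1)$. I would state that convention explicitly and otherwise follow the monotonicity argument above.
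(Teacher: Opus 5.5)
Your route is the paper's: the same logarithmic-derivative formula for $rG_{k_n}'(r)/G_{k_n}(r)$, the same two limits, the intermediate value theorem for existence, and strict monotonicity for uniqueness. For monotonicity, the paper invokes the general fact that $z\mapsto zG'(z)/G(z)$ is strictly increasing for any power series with nonnegative coefficients; its derivative with respect to $\ln z$ is the variance of the associated Boltzmann distribution. You instead verify monotonicity summand by summand, and that step is wrong as written. For $j\ge1$ the denominator $1-r+jr=1+(j-1)r$ is \emph{nondecreasing} in $r$, strictly increasing once $j\ge2$, not nonincreasing. So $\frac{j}{1+(j-1)r}$ is nonincreasing. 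For every $j\ge2$ your factorization writes $h_j$ as an increasing factor times a decreasing one, which says nothing about $h_j$. The conclusion is still true, and the fix is one line: move the $r$ into the other factor,
\[
h_j(r)=\frac{1}{1-r}\cdot\frac{j}{(j-1)+1/r},
\]
where both factors are positive and strictly increasing on $(0,1)$. Equivalently, $(\ln h_j)'(r)=\frac{1}{r(1+(j-1)r)}+\frac{1}{1-r}>0$. You could also just cite the nonnegative-coefficient fact, as the paper does.

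Your remark on $k_n=n$ is correct and more careful than the paper, whose proof only produces a ``non-negative'' $r_n$. Every $h_j$ is positive on $(0,1)$ and the sum is nonempty, so your $\psi>n$ there and no root exists in the open interval. Be careful with the proposed convention, though. In the undivided form $rG_k'(r)=nG_k(r)$, the point $r=0$ is a trivial solution for every $k\ge1$, since both sides vanish because $G_k(0)=0$. Reading the interval as $[0,1)$ therefore destroys uniqueness for $k<n$. The convention is consistent only for your divided equation $\psi(r)=n$ with $\psi(0):=k$. The cleanest reading is $1<k_n<n$ and $r_n\in(0,1)$. That is all the paper uses, since later $k_n=\frac n2+\alpha_n\sqrt{n/2}$ with $\alpha_n$ at most polylogarithmic.
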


  \begin{proof}
    Let $n$ be fixed. We examine how the function \(z \mapsto \frac{z G_{k_n}'(z)}{G_{k_n}(z)}\) varies.

    We now suppose that $k_n$ takes values in \(\{2,\dots,n\}\). Using the expressions of $G_{k}(z)$ from Proposition~\ref{prop:Gkz}, we get
    \( \ln(G_{k_n}(z)) = k_n \ln(z) + \sum_{j=0}^{k_n-1} \ln \left(1 + \frac{jz}{1-z}\right)  \).
    And thus, by differentiation,
    \begin{equation}
      \frac{z G_{k_n}'(z)}{G_{k_n}(z)} = k_n
    + z \sum_{j=0}^{k_n-1} \frac j {\left((j-1)z+1\right)(1-z)}.
      \label{eq:zGp/G}
    \end{equation}
    This expression tends to $k_n$ as $z \rightarrow 0$, and goes to $+\infty$ whenever $z \rightarrow 1^-$.
    Consequently, by the intermediate value theorem,
    since $k_n \leq n$,
    there exists a non-negative number \(r_n\) such that
    \[\frac{r_n G_{k_n}'(r_n)}{G_{k_n}(r_n)} = n.
    \]
    This number \(r_n\) is unique since \(G_{k_n}(z)\) has non-negative coefficients, which implies that \(z \mapsto  \frac{z G_{k_n}'(z)}{G_{k_n}(z)} \) is strictly increasing (cf Remark IV.46 from \cite{FS2009}).
  \end{proof}

  \begin{figure}[ht!]
  \begin{center}
  \includegraphics[width=0.45\textwidth]{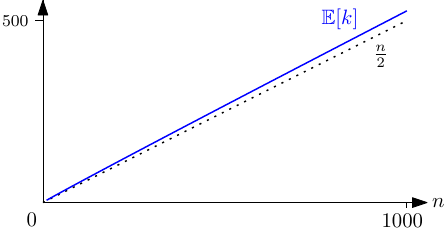}
  \hfill
  \includegraphics[width=0.45\textwidth]{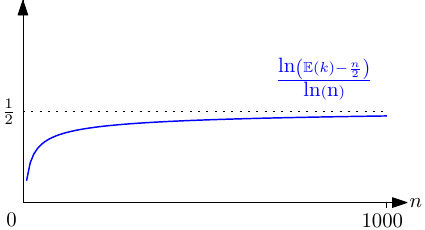}
  \end{center}
  \vspace{-7pt}
  \caption{ \textit{Left.} 
  The average \(k\)-value \( \mathbb E[k] = \frac {\sum_{k} k g_{n,k}} {\sum_{k} g_{n,k}}\) 
  (blue curve)
  remains close to \(\frac n 2\) (dashed line) as \(n\) grows.
  \textit{Right.} 
  The exponent in the deviation \(\mathbb E[k] - \frac n 2\)
  (computed as \(\frac{ \ln\left( \mathbb E[k] - \frac n 2 \right) } {\ln(n)}\))
  seems to converge to \(\frac 1 2\),
  consistent with \(\sqrt{n}\)-scaling.
  }
  \label{fig:mean}
  \end{figure}

  We orient now our study around the region where \(g_{n,k}\) concentrates most of its mass.
  Figure~\ref{fig:mean} provides empirical guidance.
  The left panel suggests that the mean \(k\)-value stays close to \(\frac n 2\),
  while the right panel reveals that the second-order term should scale as \(\sqrt n\).
  We therefore investigate the ansatz \(k_n = \frac n 2 + \alpha_n \sqrt{\frac n 2}\)
  with \(\alpha_n = \widetilde O(1)\).
  This choice is initially motivated by experiment,
  but we shall see that it captures the dominant behavior.

    \begin{lemma}[Estimation of the saddle point]
    \label{lem:estimation_saddlepoint}
     Let~$k_n$ be any integer sequence such that $k_n = \frac{n}{2} + \alpha_n \sqrt{\frac{n}{2}}$
  where $\alpha_n = \widetilde O(1)$.

    Set $\beta_n = r_n \sqrt{\frac n 2}$, 
    where \(r_n\) is the solution of the saddle point equation \(r_n G_{k_n}'(r_n) = n G_{k_n}(r_n)\).

    Then $\beta_n$ is a positive sequence such that
    \(\beta_n = \widetilde O(1)\) and \(\beta_n^{\ -1} = \widetilde O(1)\). 
    Moreover, we have the estimate
    \begin{equation*}
      \alpha_n =
        \frac{\ln\frac n 2}{4\beta_n} - \frac{\beta_n}{2} + \frac{\ln\beta_n}{2\beta_n}
        + E_n(\beta_n)\sqrt{\frac{2}{n}}
        + \widetilde O(n^{-1})
    \end{equation*}
    with 
    \[E_n(x) =
        \frac 1 8 \left( {x}^{-2} - 1\right) \ln  \left( {\frac n 2}
      \right) 
      - \frac {{x}^{2}} 4    
      + \frac {{x}^{-2}} 2   
      + \frac 1 2  
      - \frac {\ln  \left(x \right)} 4 
      + {\frac {\ln  \left( x \right) }{4 {x}^{2}}}.
    \]
  \end{lemma}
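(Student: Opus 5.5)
We will work directly with the saddle-point equation in the digamma form. Writing $w = \frac{1-z}{z}$, Proposition~\ref{prop:Gkz} gives $\ln G_k(z) = 2k\ln z - k\ln(1-z) + \ln\Gamma(w+k) - \ln\Gamma(w)$. Since $\frac{dw}{dz} = -z^{-2}$, we obtain
\[
\frac{zG_k'(z)}{G_k(z)} = 2k + \frac{kz}{1-z} - \frac{1}{z}\bigl(\digamma(w+k) - \digamma(w)\bigr).
\]
The saddle-point equation of Lemma~\ref{lem:def_saddlepoint} therefore becomes
\[
n = 2k_n + \frac{k_n r_n}{1-r_n} - \frac{1}{r_n}\bigl(\digamma(w_n+k_n)-\digamma(w_n)\bigr), \qquad w_n = \frac{1-r_n}{r_n}.
\]

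The first task is the a priori bounds on $\beta_n$. Substituting $k_n = \frac n2 + \alpha_n\sqrt{n/2}$ makes the left side, after subtracting $2k_n$, equal to $-2\alpha_n\sqrt{n/2}$. The equation then reads
\[
-2\alpha_n\sqrt{\tfrac n2} = \frac{k_n r_n}{1-r_n} - \frac{1}{r_n}\bigl(\digamma(w_n+k_n)-\digamma(w_n)\bigr).
\]
We use monotonicity, namely that $z \mapsto zG'/G$ is increasing (as in Lemma~\ref{lem:def_saddlepoint}). It then suffices to evaluate the right-hand side at $r = c\sqrt{2/n}$ for $c = (\ln n)^{-m}$ and $c = (\ln n)^m$ with $m$ large. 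One checks that it lies below and above the required value respectively. This gives $\beta_n = \widetilde O(1)$ and $\beta_n^{-1} = \widetilde O(1)$, so $r_n \to 0$ and $w_n \sim \beta_n^{-1}\sqrt{n/2} \to \infty$ uniformly.

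With these bounds in hand, we expand both digamma terms using Stirling's expansion $\digamma(x) = \ln x - \frac1{2x} - \frac1{12x^2} + O(x^{-4})$. This is legitimate since both $w_n$ and $w_n+k_n$ tend to infinity polynomially, and every error is $\widetilde O$ of a power of $n$ uniformly in $\beta_n$. Write $N = \frac n2$ and $r = \beta N^{-1/2}$, so that $w = \beta^{-1}N^{1/2} - 1$ and $w + k = N + \alpha N^{1/2} + \beta^{-1}N^{1/2} - 1$.

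The main logarithmic term is then
\[
\ln\frac{w+k}{w} = \ln\bigl(\beta N^{1/2}\bigr) + \ln\bigl(1 + (\alpha+\beta^{-1})N^{-1/2} + O(N^{-1})\bigr) - \ln\bigl(1 - \beta N^{-1/2}\bigr).
\]
Multiplying by $1/r = \beta^{-1}N^{1/2}$ gives a leading term $\beta^{-1}N^{1/2}\bigl(\tfrac12\ln N + \ln\beta\bigr)$. Next, $\frac{k r}{1-r} = \beta N^{1/2} + \alpha\beta + \beta^2 + O(N^{-1/2})$. Dividing the whole equation by $2N^{1/2}$ yields, at leading order,
\[
\alpha = \frac{\ln N}{4\beta} + \frac{\ln \beta}{2\beta} - \frac{\beta}{2} + O\bigl(\widetilde O(1)\,N^{-1/2}\bigr),
\]
which is exactly the leading part of the claim.

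The terms of order $N^{-1/2}$ collect contributions from three sources: the $(\alpha+\beta^{-1})$ correction, the $\frac1{2x}$ terms of $\digamma$, and $\alpha\beta + \beta^2$. These contain $\alpha$ itself. Substituting the leading-order expression for $\alpha$ back in, which is valid because the error is smaller by a factor $N^{-1/2}$, turns this part into a function of $\beta_n$ and $\ln N$ only. This should reproduce $E_n(\beta_n)$. For instance, the $\alpha\beta/2$ contributions produce the $\frac18(\beta^{-2}-1)\ln N$ and $\ln\beta$ terms after substitution. The remaining $O(N^{-1})$ pieces, multiplied by polylog factors, give the stated $\widetilde O(n^{-1})$.

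The main obstacle is this bookkeeping. One must track every term to relative order $N^{-1}$, and make sure each Taylor and Stirling remainder is uniform in $\beta_n$ over the range $[(\ln n)^{-m}, (\ln n)^m]$. The bootstrap substitution of $\alpha$ must also not lose a polylog factor. We would cross-check the coefficients of $E_n$ symbolically, as the authors did with Maple.
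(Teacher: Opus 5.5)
Your proposal is correct and follows essentially the same route as the paper: the digamma form of $zG_{k_n}'/G_{k_n}$, monotonicity checked at the two test radii $(\ln n)^{\pm m}\sqrt{2/n}$ to get the a priori bounds on $\beta_n$, then expansion of the three pieces to relative order $N^{-1}$, and a bootstrap of the leading-order $\alpha_n$ into the $O(N^{-1/2})$ terms, which does yield $E_n(\beta_n)$. One bookkeeping remark: your list of $O(1)$ contributions omits the term $-\beta^{-1}N^{1/2}\ln(1-\beta N^{-1/2}) = 1 + O(N^{-1/2})$ (already present in your display), which together with the $\frac{1}{2w}$ term gives the constant $\frac32$ and hence the $+\frac12$ in $E_n$.
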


  \begin{proof}  
     Since \(k_n\) eventually satisfies \(1 < k_n  \leq n \), 
    Lemma~\ref{lem:def_saddlepoint} guarantees the existence of \(r_n\) for \(n\) sufficiently large.

    \textbf{Proof of \(\boldsymbol{  \beta_n   = \widetilde O (1) }\).} 
    We use the expression of $G_k$ from Proposition~\ref{prop:Gkz} in terms of the Gamma function to derive
    \begin{equation}\label{eq:saddlepoint:function}
      \frac{z G_{k_n}'(z)}{G_{k_n}(z)} =  \frac{2-z}{1-z} k_n - \frac{1}{z}\left(\digamma\left(\frac{1-z}{z} + {k_n}\right) - \digamma\left(\frac{1-z}{z}\right)\right)
    \end{equation}
    where~$\psi$ is the digamma function~$\psi(x) = \frac{\Gamma'(x)}{\Gamma(x)}$.

    We have \(\alpha_n = \widetilde O(1)\), so there exists \(m>0\)
    such \(\alpha_n = O(\ln(n)^m)\).

    Set $z_n :=  \sqrt{\frac 2 n}  \ln\left( \frac n 2 \right)^{m+1}$.
     We wish to estimate Equation~\eqref{eq:saddlepoint:function} at \(z = z_n\).
    We use the asymptotic approximation \( \psi(x) \underset{x\to+\infty}\sim   \ln(x) + O\left(\frac 1 x \right) \)
    to get after routine algebra
    \[
    \frac{z_n G_{k_n}'(z_n)}{G_{k_n}(z_n)} - n \sim \sqrt{\frac n 2} \ln\left( \frac n 2 \right)^{m+1} \rightarrow +\infty.
    \]
    This is positive for large $n$. 
    By monotonicity of \(z \mapsto  \frac{z G_{k_n}'(z)}{G_{k_n}(z)} \), 
    we deduce that $r_n$, the solution of $\frac{r_n G_{k_n}'(r_n)}{G_{k_n}(r_n)} = n$, is smaller than $z_n = \sqrt{\frac 2 n}  \ln\left( \frac n 2 \right)^{m+1}$.
    Thus \(\beta_n := r_n \sqrt{n/2}\) is such that for~$n$ large enough~\(\beta_n < \ln\left( \frac n 2 \right)^{m+1} \), hence \(\beta_n = \widetilde O (1)\).

    \textbf{Proof of \(\boldsymbol{  \frac{1} {\beta_n}   = \widetilde O (1) }\).} 
    We approximate  Equation~\eqref{eq:saddlepoint:function}
    but this time for $z_n :=  \sqrt{\frac 2 n}  \ln\left( \frac n 2 \right)^{-m}$
    where \(m\) is such that \(\alpha_n = O(\ln(n)^m)\).
    Here, we get that 
    \[
    \frac{z_n G_{k_n}'(z_n)}{G_{k_n}(z_n)} - n \sim  - \frac 1 2 \sqrt{\frac n 2} \ln\left( \frac n 2 \right)^{m+1} \rightarrow - \infty.
    \]
    This shows that \( r_n > z_n = \sqrt{\frac 2 n}  \ln\left( \frac n 2 \right)^{-m} \) for $n$ sufficiently large, and so \(\beta_n >  \ln\left( \frac n 2 \right)^{-m}\), which implies \(1/\beta_n = \widetilde O (1)\).

    \textbf{Full expansion of $\boldsymbol{\alpha_n}$ in terms of $\boldsymbol{\beta_n}$.}
    By definition of $r_n$,
    \begin{equation}
      \frac{r_n G_{k_n}'(r_n)}{G_{k_n}(r_n)} = n.
      \label{eq:saddle}
    \end{equation}
    To derive an asymptotic relationship between $\alpha_n$ and $\beta_n$, 
    we substitute $z = r_n = \beta_n / \sqrt{n/2}$ and $k_n = n/2 + \alpha_n \sqrt{n/2}$ in Equation~\eqref{eq:saddlepoint:function}, and carry out an asymptotic analysis, keeping terms up to $\widetilde O(n^{-1/2})$. 
    (Recall that the $\widetilde O$ notation includes polylogarithmic factors, which suppresses $\alpha_n$ and $\beta_n$.)
    More specifically, we have
    \begin{eqnarray}
      \frac{2-r_n}{1-r_n}k_n  = & n
      + (2 \alpha_n + \beta_n) \sqrt{n/2}
      + \alpha_n\beta_n
      + \beta_n^2
      + \widetilde O\left(\dfrac 1 {\sqrt n}\right),
      \label{eq:first_saddle}
      \\
      \frac{ \sqrt{n/2} } {\beta_n}\ \psi\left( \frac{1-r_n}{r_n}+k_n \right) =
      & \dfrac{\ln(n/2)}{\beta_n}\sqrt{n/2}
      + \dfrac{\alpha_n}{\beta_n}
      + \dfrac 1 {\beta_n^2}
      + \widetilde O\left(\dfrac 1 {\sqrt n}\right),
      \label{eq:second_saddle}
      \\
      \frac{ \sqrt{n/2} } {\beta_n} \  \psi\left( \frac{1-r_n}{r_n} \right) =
      & \dfrac{\ln(n/2)}{2 \beta_n}\sqrt{n/2}
      - \dfrac{\ln(\beta_n)}{\beta_n}\sqrt{n/2}
      - \dfrac 3 2
      +  \widetilde O\left(\dfrac 1 {\sqrt n}\right).
      \label{eq:third_saddle}
    \end{eqnarray}
    (We have used the estimate \( \psi(x) \underset{x\to+\infty}\sim   \ln(x) - \frac 1 {2\,x} + O\left(\frac 1 {x^2} \right) \)
    (see~\cite[6.3.18]{abramowitz-stegun}) for Equation~\eqref{eq:third_saddle}.)
    Putting these three expansions together (cf.~\eqref{eq:saddlepoint:function}), we deduce from Equation~\eqref{eq:saddle}
    \begin{equation*}
      0 =
    \left( 2 \alpha_n
    + \beta_n
    - \dfrac{\ln(n/2)}{2 \beta_n}
      - \dfrac{\ln(\beta_n)}{\beta_n}
       \right)\sqrt{n/2}
      + \alpha_n\beta_n
      + \beta_n^2
      - \dfrac{\alpha_n}{\beta_n}
      - \dfrac 1 {\beta_n^2}
      - \dfrac 3 2
      +  \widetilde O\left(\dfrac 1 {\sqrt n}\right),
    \end{equation*}
    or equivalently
    \begin{equation}
      \alpha_n =
    \dfrac{\ln(n/2)}{4 \beta_n}
    + \dfrac{\ln(\beta_n)}{2\beta_n}
    - \frac{ \beta_n} 2
    + \left(
      - \frac {\alpha_n\beta_n } 2
      - \frac {\beta_n^2}  2
      + \dfrac{\alpha_n}{2\beta_n}
      + \dfrac 1 {2 \beta_n^2}
      + \dfrac 3 4
    \right) \dfrac 1 {\sqrt {n/2}}
      +  \widetilde O\left(\dfrac 1 {n}\right).
      \label{eq:another_saddle_equation}
    \end{equation}
    By omitting $ \widetilde O\left(\frac 1 {\sqrt n}\right)$ terms, we find that
    \begin{equation*}
      \alpha_n =
       \dfrac{\ln(n/2)}{4 \beta_n}
      + \dfrac{\ln(\beta_n)}{2 \beta_n}
      - \dfrac {\beta_n} 2
      + \widetilde O\left(\dfrac{1}{\sqrt n}\right).
    \end{equation*}
    Bootstrapping this into the parentheses of Equation~\eqref{eq:another_saddle_equation}, we recover the asymptotic expansion of $\alpha_n$ as specified by the lemma.
  \end{proof}

In Lemma~\ref{lem:estimation_saddlepoint},
we provide the asymptotic expansion of \( \alpha_n \) in terms of \( \beta_n \),
even though \( \alpha_n \) is our initial sequence
and \( \beta_n \) the sequence we wish to study.
This is motivated by the absence of a satisfactory asymptotic representation
of \( \beta_n \) in terms of \( \alpha_n \),
except for some specific \(\alpha_n\)'s.

The following corollary establishes a first-order approximation of \(\beta_n\) in terms of \(\alpha_n\)
for the regime \(\alpha_n \gg \sqrt{ \ln n}\)
that we will encounter.

\begin{corollary}
  Let $\alpha_n$ be a \( \widetilde O(1)\) sequence such that $\frac n 2 + \alpha_n \sqrt{\frac n 2}$ is an integer, and $\beta_n$ be as defined by Lemma~\ref{lem:estimation_saddlepoint}. We have the asymptotic estimate
  \begin{equation*}
    \beta_n \sim \sqrt{\alpha_n^2 + \frac{\ln(n)}{2}} - \alpha_n.
  \end{equation*}
  In particular, when \( \alpha_n \) dominates \(\sqrt{ \ln n } \) asymptotically,
  we have
  \begin{equation*}
    \beta_n \sim  \frac {\ln\left( n  \right)} {4 \alpha_n}.
  \end{equation*}
  \label{cor:beta_in_terms_of_alpha}
\end{corollary}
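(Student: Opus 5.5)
We start from the leading-order relation in Lemma~\ref{lem:estimation_saddlepoint}: since $\beta_n$ and $\beta_n^{-1}$ are $\widetilde O(1)$, the term $E_n(\beta_n)\sqrt{2/n}$ is $\widetilde O(n^{-1/2})$. This gives
\[
\alpha_n = \frac{\ln\frac n2}{4\beta_n} - \frac{\beta_n}{2} + \frac{\ln\beta_n}{2\beta_n} + \widetilde O(n^{-1/2}).
\]
Multiply by $2\beta_n$ and rearrange into the quadratic
\[
\beta_n^2 + 2\alpha_n\beta_n - \Bigl(\tfrac{\ln\frac n2}{2} + \ln\beta_n\Bigr) = \widetilde O(n^{-1/2}).
\]
Solve for the positive root to get
\[
\beta_n = \sqrt{\alpha_n^2 + L_n} - \alpha_n, \qquad L_n := \tfrac12\ln\tfrac n2 + \ln\beta_n + \widetilde O(n^{-1/2}).
\]
Positivity of $\beta_n$ selects this root.

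The key point is to show $L_n \sim \frac{\ln n}{2}$. Because $\beta_n$ and $\beta_n^{-1}$ are both $\widetilde O(1)$, we have $|\ln\beta_n| = O(\ln\ln n)$, which is $o(\ln n)$. Also $\ln\frac n2 \sim \ln n$. Hence $L_n = \frac{\ln n}{2}(1+\eta_n)$ with $\eta_n = O(\ln\ln n/\ln n)\to 0$.

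It remains to check that this relative perturbation of $L_n$ passes to $\beta_n$. Write
\[
\beta_n = \frac{L_n}{\sqrt{\alpha_n^2+L_n}+\alpha_n},
\]
which avoids cancellation when $\alpha_n>0$ is large. Define $\Phi(\alpha,L) = L/(\sqrt{\alpha^2+L}+\alpha)$.

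\textbf{Monotonicity in $L$.} For fixed $\alpha$, $\Phi$ is increasing in $L$. If $L' = L(1+\eta)$ with $|\eta|\le 1/2$, then the numerator changes by a factor $1+\eta$. The denominator changes by a factor between $1$ and $\sqrt{1+\eta}$, since $\sqrt{\alpha^2+L(1+\eta)}$ lies between $\sqrt{\alpha^2+L}$ and $\sqrt{1+\eta}\,\sqrt{\alpha^2+L}$, and $\alpha$ is unchanged. So the ratio changes by a factor within $[1-|\eta|, 1+|\eta|]$ up to second order.

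\textbf{Case $\alpha_n$ very negative.} The positive-root formula $\sqrt{\alpha^2+L}-\alpha$ is used directly, and there is no cancellation. Its relative change is again bounded by $O(|\eta|)$, because $\partial_L\sqrt{\alpha^2+L}\cdot L/\sqrt{\alpha^2+L}\le 1/2$.

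Combining both cases gives $\beta_n \sim \Phi(\alpha_n, \frac{\ln n}{2}) = \sqrt{\alpha_n^2+\frac{\ln n}{2}}-\alpha_n$, uniformly in the sign and size of $\alpha_n$. This is the main obstacle: the relative error must be controlled uniformly even though $\alpha_n$ is allowed to be polylogarithmically large of either sign. The rationalized form handles this cleanly.

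For the second claim, take $\alpha_n \gg \sqrt{\ln n}$, so in particular $\alpha_n>0$ eventually. Then $\sqrt{\alpha_n^2+\frac{\ln n}{2}}+\alpha_n \sim 2\alpha_n$, and
\[
\beta_n \sim \frac{\ln n/2}{2\alpha_n} = \frac{\ln n}{4\alpha_n}.
\]
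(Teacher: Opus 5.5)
Your proof is correct, and its skeleton is the paper's. Both start from the leading-order relation of Lemma~\ref{lem:estimation_saddlepoint}, observe that $\ln\beta_n=O(\ln\ln n)=o(\ln n)$, and solve the quadratic $\beta_n^2+2\alpha_n\beta_n\approx\frac{\ln n}{2}$. The difference is in how the error is carried through.

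The paper works directly with $\sim$. It writes $\alpha_n\sim\cdots$, which is ill-posed if $\alpha_n$ can be close to $0$. It then drops $\ln\beta_n$ inside that equivalence and passes from $\frac{\ln n}{2}\sim(\alpha_n+\beta_n)^2-\alpha_n^2$ to the result in one line, by ``adding $\alpha_n^2$ and taking the square root''. Read literally, that step only yields $\alpha_n+\beta_n\sim\sqrt{\alpha_n^2+\frac{\ln n}{2}}$. Subtracting $\alpha_n$ does not preserve equivalence when $\alpha_n\gg\sqrt{\ln n}$, since both sides are then $\alpha_n(1+o(1))$. That is exactly the regime of the second claim, and of the corollary's later uses, e.g.\ in Lemma~\ref{lem:beta_between_half_and_2}, where $\alpha_n\asymp\ln n$.

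Your version keeps additive errors, packs them into $L_n=\frac{\ln n}{2}(1+\eta_n)$, and then proves the perturbation estimate explicitly via the rationalized form. So it is longer, but it actually closes the step the paper leaves implicit; the paper's argument buys only brevity.

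Three small fixes.
\begin{itemize}
\item Your bound ``the denominator changes by a factor between $1$ and $\sqrt{1+\eta}$'' needs $\alpha\ge 0$. For $\alpha=-A$ with $A\gg\sqrt L$, the quantity $\sqrt{\alpha^2+L}+\alpha\approx\frac{L}{2A}$ changes by a factor close to $1+\eta$. So split by the sign of $\alpha_n$, not into ``very negative'' versus the rest. Your direct-form argument works verbatim for every $\alpha_n\le 0$.
\item Alternatively, drop the case split. For $\Phi(\alpha,L)=\sqrt{\alpha^2+L}-\alpha$ one has $\frac{\partial\ln\Phi}{\partial\ln L}=\frac{\sqrt{\alpha^2+L}+\alpha}{2\sqrt{\alpha^2+L}}\in(0,1)$ for every real $\alpha$ and $L>0$. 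Hence $\bigl|\ln\beta_n-\ln\Phi(\alpha_n,\frac{\ln n}{2})\bigr|\le|\ln(1+\eta_n)|$ directly.
\item Discarding the negative root requires $L_n>0$. You only establish this afterwards, from $L_n\sim\frac{\ln n}{2}$, so swap the order of those two steps.
\end{itemize}
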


Observe that the asymptotic behavior of \( \beta_n \) is very sensitive to the choice of \( \alpha_n \).
By example, if \(\alpha_n\) is negligible relatively to \(\sqrt{\ln(n)}\), then one can show that
$\beta_n$ is asymptotically equivalent to
\( \sqrt{ \frac{ \ln(n)} 2 }\).

\begin{proof}[Proof of Corollary~\ref{cor:beta_in_terms_of_alpha}]
  By Lemma~\ref{lem:estimation_saddlepoint}, we have
  \begin{equation*}
    \alpha_n \sim
        \frac{\ln\frac n 2}{4\beta_n} - \frac{\beta_n}{2} + \frac{\ln\beta_n}{2\beta_n}.
  \end{equation*}
  But \(\beta_n = \widetilde O(1)\) and \(\beta_n^{\ -1} = \widetilde O(1)\), 
  which implies that there exist three positive constants $C_1$, $C_2$ and $m$ with
  \( C_1 \ln(n)^{-m} < \ln\left( \beta_n \right) < C_2 \ln(n)^{m}\)
  for all \(n\) sufficiently large.
  Consequently, \(  - m \ln \ln(n) + \ln(C_1) < \beta_n <   m \ln \ln(n) + \ln(C_2) \).
  This means that
  \(\ln \left( \beta_n \right) \) is asymptotically bounded by \(\ln \ln(n)\), hence
  negligible compared to  \( \ln(n) \).
  Thus,
  \begin{equation*}
    \alpha_n \sim
        \frac{\ln (n)}{4\beta_n} - \frac{\beta_n}{2},
  \end{equation*}
  and so
  \begin{equation*}
    \frac {\ln(n)} 2
    \sim \beta_n^2 + 2 \alpha_n \beta_n = (\alpha_n + \beta_n)^2 - \alpha_n^2.
  \end{equation*}
   Adding \(\alpha_n^2\) to both sides and taking the square root provides the first estimate of the corollary.

   As for the second estimate, write
   \begin{equation*}
    \beta_n \sim
    \alpha_n \sqrt{ 1 + \frac{\ln\left( n \right)}{2 \alpha_n^2}  } - \alpha_n
    \sim
    \alpha_n \left(  1 +  \frac{\ln\left( n \right)}{4 \alpha_n^2 } + O\left( \frac{\ln(n)^2}{\alpha_n^4} \right) \right) - \alpha_n
     \sim  \frac {\ln\left( n  \right)} {4 \alpha_n}.
   \end{equation*}
   This completes the proof.
\end{proof}

We now prove that the saddle point parameter \(\beta_n\) is decreasing in \(k_n\),
a key property for controlling the uniformity in the error bounds.

\begin{lemma}[Monotonicity of \({\beta_n}\) as a function of \({k_n}\)]
Let \(k_n\) and \(\widetilde{k_n}\) be two integer sequences 
such that \(k_n \leq \widetilde{k_n}\) for every $n \geq 0$.

Let $r_n$ and $\widetilde{r_n}$ be the solutions of the respective saddle point equations,
and set \(\beta_n = r_n \, \sqrt{n/2} \) and \(\widetilde{\beta_n} = \widetilde{r_n} \,\sqrt{n/2}\).

For every \(n \geq 0\), if $r_n$ and \(\widetilde{r_n}\) are well-defined\footnote{
which happens whenever \(2 \leq k_n \leq n\) and \(2 \leq \widetilde{k_n} \leq n\)},
then \(r_n \geq \widetilde{r_n}\) and so \(\beta_n \geq \widetilde{\beta_n}\).
\label{lem:decreasing_beta}
\end{lemma}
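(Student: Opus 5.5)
The plan is to study, for fixed $n$ and $z\in(0,1)$, the function
\[
\Phi_k(z) := \frac{z G_k'(z)}{G_k(z)} = k + z \sum_{j=0}^{k-1} \frac{j}{\left((j-1)z+1\right)(1-z)}
\]
given by Equation~\eqref{eq:zGp/G}. Two monotonicity properties combine to give the claim. The first is that $\Phi_k(z)$ is strictly increasing in $z$; this was already used in Lemma~\ref{lem:def_saddlepoint}. The second, which I will establish, is that for every fixed $z\in(0,1)$, $\Phi_k(z)$ is nondecreasing in $k$.

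For the second property, note that
\[
\Phi_{k+1}(z) - \Phi_k(z) = 1 + \frac{kz}{\left((k-1)z+1\right)(1-z)}.
\]
Each term of the sum in \eqref{eq:zGp/G} is nonnegative for $0<z<1$ and $j\ge 0$, since $(j-1)z+1 \geq 1-z > 0$. So the difference is at least $1>0$, and $\Phi_k$ is pointwise strictly increasing in $k$.

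Now take $k_n \le \widetilde{k_n}$ with both saddle points defined, so that $\Phi_{k_n}(r_n) = n = \Phi_{\widetilde{k_n}}(\widetilde{r_n})$. Suppose for contradiction that $r_n < \widetilde{r_n}$. Using first the monotonicity in $z$ and then the monotonicity in $k$, we get
\[
n = \Phi_{k_n}(r_n) < \Phi_{k_n}(\widetilde{r_n}) \le \Phi_{\widetilde{k_n}}(\widetilde{r_n}) = n,
\]
a contradiction. Hence $r_n \ge \widetilde{r_n}$, and multiplying by $\sqrt{n/2}>0$ gives $\beta_n \ge \widetilde{\beta_n}$.

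The main point requiring care is the strict monotonicity of $z\mapsto \Phi_k(z)$, which the paper imports from \cite{FS2009}. If a self-contained argument is preferred, it also follows directly from \eqref{eq:zGp/G}, since each summand $\frac{jz}{((j-1)z+1)(1-z)}$ is increasing in $z$ on $(0,1)$. For $j\ge1$ the numerator increases while both factors of the denominator stay positive. The factor $1-z$ decreases, and $(j-1)z+1$ can increase, so a quick derivative check is needed. Writing the summand as $\frac{j}{(j-1)+1/z}\cdot\frac{1}{1-z}$ makes it clear: both factors are positive and increasing in $z$.
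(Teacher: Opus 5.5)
Your argument is correct and is essentially the paper's proof: the paper likewise reads off from Equation~\eqref{eq:zGp/G} that $\frac{zG_{\widetilde{k_n}}'(z)}{G_{\widetilde{k_n}}(z)} \geq \frac{zG_{k_n}'(z)}{G_{k_n}(z)}$ at $z = r_n$, and then invokes strict monotonicity in $z$. The only differences are cosmetic. You argue by contradiction and apply the $z$-monotonicity to $\Phi_{k_n}$ rather than $\Phi_{\widetilde{k_n}}$, and you prove that monotonicity directly instead of citing Flajolet--Sedgewick; this is valid because $k_n \geq 2$ guarantees a summand with $j \geq 1$.
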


\begin{proof}
By Equation~\eqref{eq:zGp/G}, we have
\begin{equation*}
  \frac{   r_n \, G'_{ \widetilde{k_n} } (r_n)} { G_{ \widetilde{k_n} }(r_n)}
  =  \frac{   r_n \, G'_{ k_n } (r_n)} { G_{ k_n }(r_n)   }
  + \widetilde{k_n} - k_n
  +  \sum_{j = k_n - 1}^{\widetilde{k_n} - 1}  \frac{r_n j}{((j-1)r_n + 1)\,(1-r_n)}.
\end{equation*}
By Equation~\eqref{eq:saddle}, \(\frac{   r_n \, G'_{ k_n } (r_n)} { G_{ k_n }(r_n)   }  = n.\)
Moreover,
\(
  \widetilde{k_n} - k_n \geq 0,
\)
and \(\frac{r_n j}{((j-1)r_n + 1)\,(1-r_n)} \geq 0\) (remember that \(0 \leq r_n < 1\)).
Consequently,
\begin{equation*}
  \frac{   r_n \, G'_{ \widetilde{k_n} } (r_n)} { G_{ \widetilde{k_n} }(r_n)}
  \geq n =  \frac{   \widetilde{r_n} \, G'_{ \widetilde{k_n} } (\widetilde{r_n} )} { G_{ \widetilde{k_n} }(\widetilde{r_n} )}.
\end{equation*}
By monotonicity of \(z \mapsto  \dfrac{z G_{\widetilde{k_n}}'(z)}{G_{\widetilde{k_n}}(z)} \) \cite[Remark IV.46]{FS2009},
we have \(r_n \geq \widetilde{r_n}\),
hence \(\beta_n \geq \widetilde{\beta_n}\).
\end{proof}

The following lemma extends Lemma~\ref{lem:estimation_saddlepoint} by addressing the uniform boundedness
(meaning that the constants hidden in the big-oh notation can be made independent of \(\alpha_n\)).
This property is essential as we will later require the ability to sum these asymptotic equivalents across different values of $(\alpha_n)$.
We can limit \(\alpha_n\) to being within $\ln(n)$ in absolute values,
given that the relevant range of \(\alpha_n\) will asymptotically cluster around $\ln(n)/4$.

\begin{lemma}\label{lem:saddlepoint:unif}
  There exist an integer \(n_0\) and
  a \(\widetilde O(1)\) sequence $\delta_n$ such that for every \(n \geq n_0\) and
  for every sequence \(\alpha_n\)
  satisfying \( \left| \alpha_n \right| \leq \ln (n) \),
  \begin{equation*}
    \left|\alpha_n
      -\frac{\ln\frac n 2}{4\beta_n} + \frac{\beta_n}{2} - \frac{\ln\beta_n}{2\beta_n}
      - E_n(\beta_n)\sqrt{\frac{2}{n}}
      \right| < \frac{ \delta_n }{n}
  \end{equation*}
    (see Lemma~\ref{lem:estimation_saddlepoint} for the definitions of \(\beta_n\) and \(E_n\)).\footnote{
      Equivalently, this lemma states that for all \(\alpha_n\) with \( \left| \alpha_n \right| \leq \ln (n) \),
    \begin{equation*}
      \alpha_n  = \frac{\ln\frac n 2}{4\beta_n} - \frac{\beta_n}{2} + \frac{\ln\beta_n}{2\beta_n}
      + E_n(\beta_n)\sqrt{\frac{2}{n}}
       + \widetilde{O}\left( \frac 1 n \right),
  \end{equation*}
  where the error term \(\widetilde{O}\left( \frac 1 n \right)\) is uniform in \(\alpha_n\).
    }
    \label{lem:uniform_bound_saddlepoint}
\end{lemma}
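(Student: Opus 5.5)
The plan is to redo the proof of Lemma~\ref{lem:estimation_saddlepoint}, keeping track of every constant so that it depends only on $n$ and never on the particular sequence $\alpha_n$. Since $|\alpha_n| \leq \ln(n)$, the whole argument can be run with a fixed exponent, say $m=1$. Then the bracketing argument of Lemma~\ref{lem:estimation_saddlepoint} gives deterministic bounds
\[
  \ln\left(\tfrac n2\right)^{-1} < \beta_n < \ln\left(\tfrac n2\right)^{2},
\]
valid for all $n \geq n_0$ and all admissible $\alpha_n$ simultaneously.

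The cleanest way to obtain these bounds is Lemma~\ref{lem:decreasing_beta}. It is enough to run the bracketing for the two extreme sequences $k_n^{\pm} = \frac n2 \pm \ln(n)\sqrt{n/2}$, rounded to integers. Every admissible $k_n$ lies between them, so by monotonicity its $\beta_n$ lies between $\beta_n^{+}$ and $\beta_n^{-}$. Thus uniformity for the localisation of $\beta_n$ reduces to two single sequences, for which Lemma~\ref{lem:estimation_saddlepoint} already applies.

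With $\beta_n$ confined to a compact-on-log-scale window $I_n = [\ln(n/2)^{-1}, \ln(n/2)^2]$, I would revisit expansions \eqref{eq:first_saddle}--\eqref{eq:third_saddle} and replace each $\widetilde O$ by an explicit remainder bound. First, $\frac{2-r}{1-r}k$ is an explicit rational function, and its Taylor remainder in $r=\beta\sqrt{2/n}$ is bounded by a polynomial in $(\beta, \beta^{-1}, |\alpha|)$ times $n^{-1}$. Second, for $\psi$ I would use the classical inequality $|\psi(x)-\ln x+\frac1{2x}| \le \frac{1}{12x^2}$ for $x>0$, which is uniform by nature. Its arguments satisfy $\frac{1-r}{r} \geq \sqrt{n/2}/\ln(n/2)^2 \to \infty$ uniformly. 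Third, the expansions of $\ln\!\left(\frac{1-r}{r}+k\right)$ and $\ln\frac{1-r}{r}$ are again elementary Taylor expansions with explicit remainders. In each case the remainder is bounded by $P(\ln n)/n$ (or $P(\ln n)/\sqrt n$ before the bootstrap) for a fixed polynomial $P$, because $\beta,\beta^{-1},|\alpha|$ are all at most polylogarithmic. Substituting into \eqref{eq:saddle} then yields \eqref{eq:another_saddle_equation} with a uniform error term $\delta'_n/n$, where $\delta'_n=\widetilde O(1)$ does not depend on $\alpha_n$.

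The bootstrap is the step I expect to be the main obstacle, because $\alpha_n$ appears on both sides of \eqref{eq:another_saddle_equation}. First, the crude relation $\alpha_n = \frac{\ln(n/2)}{4\beta_n}+\frac{\ln\beta_n}{2\beta_n}-\frac{\beta_n}2 + \rho_n$ holds with $|\rho_n|\le \delta''_n/\sqrt n$ uniformly. Next, this is substituted into the bracket $-\frac{\alpha_n\beta_n}2+\frac{\alpha_n}{2\beta_n}+\cdots$. The bracket is affine in $\alpha_n$ with coefficient $\frac12(\beta_n^{-1}-\beta_n)$, which is polylogarithmically bounded on $I_n$. The substitution error is therefore at most $\mathrm{polylog}(n)\cdot n^{-1/2}\cdot n^{-1/2}$, again uniform. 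The resulting bracket should be checked to simplify exactly to $E_n(\beta_n)$ times $\sqrt2\cdot\frac{1}{\sqrt2}$, matching Lemma~\ref{lem:estimation_saddlepoint}; this algebra was already done there. Finally, taking $\delta_n$ as the sum of the explicit polylogarithmic bounds collected along the way gives the stated inequality for all $n\ge n_0$.
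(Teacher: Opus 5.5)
Your proposal is correct and follows essentially the same route as the paper. You bracket $\beta_n$ between the saddle-point parameters of the two extreme sequences $\frac n2 \pm \ln(n)\sqrt{n/2}$ via Lemma~\ref{lem:decreasing_beta}, write each remainder in \eqref{eq:first_saddle}--\eqref{eq:third_saddle} as a bounded constant times a polynomial in $(\alpha_n,\beta_n,\beta_n^{-1},\ln n)$ over $\sqrt n$, and bound it using $|\alpha_n|\le \ln n$ and the bracketing; your explicit inequality $|\psi(x)-\ln x+\frac{1}{2x}|\le \frac{1}{12x^2}$ and your handling of the bootstrap (the bracket is affine in $\alpha_n$ with coefficient $\frac12(\beta_n^{-1}-\beta_n)$) are simply more concrete versions of the paper's bounded functions $C_\psi$, $C_{\ln}$ and of its one-line claim that the $\widetilde O(1/n)$ term in \eqref{eq:another_saddle_equation} has the same polynomial form.
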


\begin{proof}

  Consider any sequence \(\alpha_n\) such that \(|\alpha_n| \leq \ln(n)\).

  The sequences
  \(\lfloor n/2 - \ln(n) \sqrt{n/2} \rfloor\) and \(\lceil n/2 + \ln(n) \sqrt{n/2}\rceil\)
  satisfy the hypotheses of Lemma~\ref{lem:estimation_saddlepoint}
  %(in particular, starting from $n = 32$, both sequences belong to   \(\{2,3,\dots,n-2\}\)),
  so we can associate with them two \(\beta_n\) sequences,
  respectively denoted as \( \beta_n^+\) and  \( \beta_n^-\).
  (These two sequences do not depend on \(\alpha_n\).)

  Using the monotonicity established by Lemma~\ref{lem:decreasing_beta},
  we see that \( \beta_n^- \leq \beta_n \leq \beta_n^+\).

  Moreover, we can demonstrate that each \(\widetilde O \left( \frac 1 {\sqrt n}\right)\)
  in Equations~\eqref{eq:first_saddle}, \eqref{eq:second_saddle} and~\eqref{eq:third_saddle}
  can be written as \(C_n \, Pol(\alpha_n, \beta_n, 1/\beta_n, \ln n) \,n^{-1/2}\),
  where $C_n$ is a sequence bounded by a number that does not depend on the sequence \(\alpha_n\) nor \(\beta_n\),
  and $Pol$ is a polynomial.
  These polynomials can be computed explicitly;
  however the exact calculations are not particularly insightful.
  They arise from the expansions of the digamma function, the logarithm or a rational function.

  We carry out the argument for Equation~\eqref{eq:second_saddle} in detail;
  the other cases are analogous.

  Rewrite \(\psi\left( \frac{1-r_n}{r_n}+k_n \right)\) as
  \(\psi\left(\frac n 2 \left( 1 +  \epsilon_n \right)\right)\)
  with \(\epsilon_n = \frac{\alpha_n}{\sqrt{n/2}}  + \frac 1 {\beta_n \sqrt{n/2}} - \frac 2 {n}\).
  We observe that
  \begin{equation*}
    -\frac{\ln(n)}{\sqrt{n/2}} - \frac 2 {n} \leq \epsilon_n \leq \frac{\ln(n)}{\sqrt{n/2}} + \frac 1 {\beta_n^- \sqrt{n/2}}.
  \end{equation*}
  The lower and upper bound tend to \(0\).
  Therefore there exists an integer $n_0$
  such that for every \(n \geq n_0\),
  \begin{equation*}
    -\frac 1 2  < -\frac{\ln(n)}{\sqrt{n/2}} - \frac 2 {n} \quad \text{and} \quad
    \frac{\ln(n)}{\sqrt{n/2}} + \frac 1 {\beta_n^- \sqrt{n/2}} < \frac 1 2.
  \end{equation*}
  Hence $\epsilon_n  \in [- \frac 1 2 , \frac 1 2 ]$ for all $n \geq n_0$.

  The asymptotic estimate of $\psi(x)$ as \(x\) goes to \(+\infty\) can be written as  $\psi(x) = \ln(x) + C_{\psi}(x)/x$,
  where \(C_{\psi}\) is an asymptotically bounded function.
  Since \(C_{\psi} = x (\psi(x)-\ln(x))\),
  the function \(C_{\psi}\) is continuous on $[1/4,+\infty)$,
  and hence bounded there.
  We infer that \( \frac{C_\psi\left( \frac n 2 \left( 1 +  \epsilon_n\right) \right)}
     { \left( 1 +  \epsilon_n\right)} \) is bounded independently of \(\alpha_n\),
     since \( \frac n 2 \left( 1 +  \epsilon_n\right) \geq 1/4 \).

  Furthermore,
  \begin{equation*}
    \ln \left( \frac n 2 \left( 1 +  \epsilon_n\right) \right) =
    \ln \left( \frac n 2 \right)
    + \ln \left( 1 +  \epsilon_n\right)
    =  \ln \left( \frac n 2 \right) + \epsilon_n + C_{\ln}(\epsilon_n) \epsilon_n^2,
  \end{equation*}
  where \( C_{\ln}(x) := (\ln(1+x) - x)/x^2 \) is a function locally bounded near \(0\), and therefore, by continuity, bounded on \([-\frac 1 2, \frac 1 2 ]\).

  Combining these estimates yields
  \begin{equation*}
    \psi\left( \frac{1-r_n}{r_n}+k_n \right) =
     \ln \left( \frac n 2 \right) +
     \epsilon_n + C_{\ln}(\epsilon_n) \epsilon_n^2
     + \frac{C_\psi\left( \frac n 2 \left( 1 +  \epsilon_n\right) \right)}
     {\frac n 2 \left( 1 +  \epsilon_n\right)}
  \end{equation*}
and after multiplying through by $\sqrt{n/2}/\beta_n$ and substituting the definition of $\epsilon_n$,
\begin{equation*}
  \frac{ \sqrt{n/2} } {\beta_n}\ \psi\left( \frac{1-r_n}{r_n}+k_n \right) =
      \dfrac{\ln({\frac n 2})}{\beta_n}\sqrt{\frac n 2}
      + \dfrac{\alpha_n}{\beta_n}
      + \dfrac 1 {\beta_n^2}
      + R_n,
\end{equation*}
where \[R_n := - \frac {2 \beta_n} {\sqrt{n/2}} + C_{\ln}(\epsilon_n) \epsilon_n^2 {\sqrt{\frac n 2} }
     + \frac{C_\psi\left( \frac n 2 \left( 1 +  \epsilon_n\right) \right)}
     {\sqrt{n/2}  \left( 1 +  \epsilon_n\right)} . \]
  The remainder \(R_n\) is of the form
\(C_n \, Pol(\alpha_n, \beta_n, 1/\beta_n, \ln n) \,n^{-1/2}\), as claimed.

Having established the analogous statements for Equations~\eqref{eq:first_saddle} and~\eqref{eq:third_saddle} by the same me\-thod,
we deduce that the quantity
\(\alpha_n
        -\frac{\ln\frac n 2}{4\beta_n} + \frac{\beta_n}{2} - \frac{\ln\beta_n}{2\beta_n}
        - E_n(\beta_n)\sqrt{\frac{2}{n}}\),
        which is equal to
the \(\widetilde O\left( \frac 1 n \right)\) term in Equation~\eqref{eq:another_saddle_equation}, is of the form
\(C_n \, Pol(\alpha_n, \beta_n, 1/\beta_n, \ln n) \,n^{-1}\).

We then obtain the uniform bound
\begin{equation*}
  \left|C_n\, Pol(\alpha_n, \beta_n, 1/\beta_n, \ln n)  \,n^{-1}\right| \leq C \, Pol^+(\ln n, \beta_n^+, 1/\beta_n^-, \ln n) \,n^{-1},
\end{equation*}
where \(C\) is a constant bounding the sequence \(C_n\), and \(Pol^+\) is the polynomial formed by taking the absolute values of the coefficients of $Pol$.
Setting $\delta_n = C \ Pol^+(\ln (n)^3, \beta_n^+, 1/\beta_n^-, \ln n)$ defines the required \(\widetilde O(1)\) sequence.
\end{proof}

\subsubsection{Approximating \texorpdfstring{\(\boldsymbol{r_n^{-n}G_{k_n}(r_n)}\)}{r\_n\^{}(-n) G\_(k\_n)(r\_n)}}

We now proceed to estimate \(r_n^{-n}G_{k_n}(r_n)\).
This is a crucial step in the saddle-point method,
as this term will serve as the ``\textit{dominant}" factor in the expansion of \(g_{n,k_n}\).
Moreover, this quantity is also an upper bound for \(g_{n,k_n}\),
making this estimate at this juncture is both useful and indicative of \(g_{n,k_n}\)'s behavior.

\begin{proposition}\label{prop:evalr}
  Let~$\alpha_n$ be a \(\widetilde O(1)\) sequence such that $k_n = \frac{n}{2} + \alpha_n \sqrt{\frac{n}{2}}$ is an integer sequence.

  Let~\(r_n\) and $\beta_n$ be defined as in Lemma~\ref{lem:estimation_saddlepoint}.
  We have the estimate
    \begin{equation*}
      r_n^{-n}G_{k_n}(r_n) \underset{n\to\infty}\sim
      \left(\frac{n}{2e}\right)^{\frac{n}{2}}
      \exp\left(\lambda_n(\beta_n) \sqrt{\frac{n}{2}} + \mu_n(\beta_n)\right),
    \end{equation*}
    where
    \begin{equation*}
      \lambda_n(x) =
      \frac{1 + \ln(x)}{2x}\ln\left( \frac n 2 \right) + \frac{(\ln x)^2}{x} + \left(\frac{1}{x} - x\right)\ln x + \frac 1 x + x
    \end{equation*}
and
    \begin{multline*}
      \mu_n(x) =
      \frac{\ln\left( \frac n 2 \right)^2}{32 x^2}
      + \left(\left(\frac{3}{8 x^2} - \frac{1}{4}\right)\ln x - \frac 5 8 + \frac{1}{4x^2}\right)\ln\left( \frac n 2 \right) \\
      + \left(\frac{5}{8 x^2} - \frac 1 2\right) (\ln x)^2
      + \left(\frac{3}{2 x^2} - \frac 1 4 - \frac{x^2}{2}\right) \ln x
      + \frac{1}{2 x^2} -\frac 1 2 + \frac{x^2}8.
    \end{multline*}
    \label{prop:asympt_saddle_bound}
\end{proposition}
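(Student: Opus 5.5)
We take logarithms and expand everything using the Gamma form of Proposition~\ref{prop:Gkz}. Writing $x_n := \frac{1-r_n}{r_n}$, we have
\[
\ln\left(r_n^{-n}G_{k_n}(r_n)\right) = (2k_n - n)\ln r_n - k_n \ln(1-r_n) + \ln\Gamma(x_n + k_n) - \ln\Gamma(x_n),
\]
and we substitute $r_n = \beta_n\sqrt{2/n}$ and $k_n = \frac n2 + \alpha_n\sqrt{n/2}$. Each term is then expanded up to $o(1)$, treating $\alpha_n,\beta_n,1/\beta_n,\ln\beta_n$ as $\widetilde O(1)$ quantities, exactly as in the proof of Lemma~\ref{lem:estimation_saddlepoint}.

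The four pieces are handled as follows.
\begin{itemize}
\item $(2k_n-n)\ln r_n = 2\alpha_n\sqrt{n/2}\,\bigl(\ln\beta_n - \tfrac12\ln\tfrac n2\bigr)$, which is exact.
\item $-k_n\ln(1-r_n) = \beta_n\sqrt{n/2} + \alpha_n\beta_n + \beta_n^2/2 + \widetilde O(n^{-1/2})$.
\item For the two Gamma terms we use Stirling's formula $\ln\Gamma(y) = (y-\tfrac12)\ln y - y + \tfrac12\ln(2\pi) + O(1/y)$. The $\ln(2\pi)$ constants cancel between the two terms.
\item With $x_n = \frac{\sqrt{n/2}}{\beta_n} - 1$, the term $-\ln\Gamma(x_n)$ contributes $-\frac{\sqrt{n/2}}{\beta_n}\bigl(\tfrac12\ln\tfrac n2 - \ln\beta_n - 1\bigr)$ plus constants and logarithmic terms.
\item With $x_n + k_n = \frac n2\bigl(1+\epsilon_n\bigr)$ and $\epsilon_n = \frac{\alpha_n+1/\beta_n}{\sqrt{n/2}} - \frac 2n$, the term $\ln\Gamma(x_n+k_n)$ produces the leading part $\frac n2\ln\frac n2 - \frac n2$, which gives $\left(\frac{n}{2e}\right)^{n/2}$. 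It also produces $\bigl(\alpha_n + \tfrac1{\beta_n}\bigr)\sqrt{n/2}\,\ln\frac n2$, together with the quadratic term $\frac12\bigl(\alpha_n + \tfrac1{\beta_n}\bigr)^2$ coming from $\ln(1+\epsilon_n)$, and a $-\frac12\ln\frac n2$ term.
\end{itemize}

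Collecting these yields an expression of the form
\[
\left(\frac{n}{2e}\right)^{n/2}\exp\bigl(A_n(\alpha_n,\beta_n)\sqrt{n/2} + B_n(\alpha_n,\beta_n) + o(1)\bigr),
\]
which still depends on $\alpha_n$. The key step is to eliminate $\alpha_n$ using Lemma~\ref{lem:estimation_saddlepoint}, namely
\[
\alpha_n = a(\beta_n) + E_n(\beta_n)\sqrt{2/n} + \widetilde O(n^{-1}), \qquad a(x) = \frac{\ln\frac n2}{4x} - \frac x2 + \frac{\ln x}{2x}.
\]
Since $\alpha_n$ multiplies $\sqrt{n/2}$, the correction $E_n(\beta_n)\sqrt{2/n}$ contributes at order $O(1)$ and lands in $\mu_n$, while the $\widetilde O(n^{-1})$ error contributes only $\widetilde O(n^{-1/2}) = o(1)$. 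A useful simplification: the coefficient of $\alpha_n$ in $A_n$ should vanish to first order at $\alpha_n = a(\beta_n)$, because $r_n$ is a critical point. We will use this observation as a consistency check, and to see that only the combination of $E_n$ with the quadratic-in-$\alpha_n$ terms survives in $\mu_n$. Substituting $a(\beta_n)$ into $A_n$ and simplifying should give $\lambda_n(\beta_n)$. For example, the $\frac{\ln\frac n2}{2x}$ and $\frac{\ln x\ln\frac n2}{2x}$ terms arise from $-\alpha_n\ln\frac n2 + \alpha_n\ln\frac n2$ cross terms and from $\frac{1}{\beta_n}\cdot\frac12\ln\frac n2$. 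Substituting into $B_n$, and adding the $E_n$ contribution, should give $\mu_n(\beta_n)$. In particular, the $\frac{(\ln \frac n2)^2}{32x^2}$ term comes from $\frac12 a^2$-type terms minus the linear-in-$\alpha$ terms.

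The main obstacle is bookkeeping: making sure every term of order $\sqrt n$ is expanded accurately to $o(1)$. Stirling's formula must be applied with its $-\frac12\ln y$ corrections, $\ln(1+\epsilon_n)$ must be expanded to third order (since $\frac n2\epsilon_n^3$ is $\widetilde O(n^{-1/2})$ and negligible, while $\frac n2\epsilon_n^2$ is needed), and the $-\frac2n$ part of $\epsilon_n$ must be tracked, since it contributes $-\ln\frac n2$-type constants. We would do this expansion symbolically, in the same way the paper's Maple checks were done, and verify numerically for moderate $n$. Finally, because all the discarded remainders are $\widetilde O(n^{-1/2})$, exponentiating converts the $o(1)$ in the exponent into the stated $\sim$.
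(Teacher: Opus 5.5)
Your route is the paper's. You expand the four factors of $r_n^{-n}G_{k_n}(r_n)$ with Stirling's formula, collect an exponent $A_n\sqrt{n/2}+B_n$ that still depends on $\alpha_n$, and then eliminate $\alpha_n$ with Lemma~\ref{lem:estimation_saddlepoint}. Your four expansions are correct and reproduce Equation~\eqref{eq:asympt_with_mixed_alpha_and_beta}, namely $A_n=\frac{\ln(n/2)}{2\beta_n}+2\alpha_n\ln\beta_n+\beta_n+\frac{\ln\beta_n}{\beta_n}+\frac{1}{\beta_n}$ and $B_n=\alpha_n\beta_n+\frac{\beta_n^2}{2}+\frac12\bigl(\alpha_n+\frac{1}{\beta_n}\bigr)^2-\frac34\ln\frac n2-\frac32\ln\beta_n$. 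The $-\frac2n$ in $\epsilon_n$ is indeed what turns $-\frac12\ln\frac n2$ into $-\frac32\ln\frac n2$.

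What fails is your ``useful simplification'', and it affects exactly the delicate step that produces $\mu_n$.

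\begin{itemize}
\item After the $\pm\alpha_n\ln\frac n2$ cancellation, the coefficient of $\alpha_n$ in $A_n$ is $2\ln\beta_n$, which does not vanish.
\item The saddle-point equation expresses stationarity in $r$ at fixed $k$, i.e.\ $\partial_\beta A_n=0$ at leading order. It is this equation that $\alpha_n=a(\beta_n)$ solves, and that is the correct consistency check.
\item Stationarity in $\alpha_n$ (that is, in $k$) holds only at the mode, where $\ln\beta_n\approx 0$. This is the content of Lemma~\ref{lem:lambdan}, not the general-$\beta_n$ setting of the proposition.
\end{itemize}

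Consequently, the correction $E_n(\beta_n)\sqrt{2/n}$ enters $\mu_n$ precisely through the linear term, as $2E_n(\beta_n)\ln\beta_n$. Through the quadratic term $\frac12(\alpha_n+1/\beta_n)^2$ it only produces $\widetilde O(n^{-1/2})$. This is the opposite of what you assert.

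This term cannot be dropped. Compare the coefficients of $\ln x\ln\frac n2$, $(\ln x)^2$ and $\ln x$:
\begin{itemize}
\item $B_n(a(x),x)$ alone gives $\frac{1}{8x^2}$, $\frac{1}{8x^2}$ and $\frac{1}{2x^2}-\frac54$;
\item $\mu_n$ requires $\frac{3}{8x^2}-\frac14$, $\frac{5}{8x^2}-\frac12$ and $\frac{3}{2x^2}-\frac14-\frac{x^2}{2}$.
\end{itemize}
The differences are exactly the terms of $2E_n(x)\ln x$. With $\lambda_n(x)=A_n(a(x),x)$ and $\mu_n(x)=B_n(a(x),x)+2E_n(x)\ln x$, the bookkeeping closes and agrees with the paper; following your simplification would not. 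Similarly, the $\frac{(\ln\frac n2)^2}{32x^2}$ term comes solely from $\frac12\bigl(a(x)+\frac1x\bigr)^2$, with no compensation from linear-in-$\alpha_n$ terms.
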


\begin{proof}
  The proof, although computation-heavy,
  remains theoretically straightforward.
  By Pro\-position~\ref{prop:Gkz}, we have
  \begin{equation*}
    r_n^{-n}G_{k_n}(r_n) =
     \frac{{r_n}^{2 k_n - n}}{(1-r_n)^{k_n}}
   \frac
   {\Gamma(\frac{1-{r_n}}{r_n} + k_n)}
   {\Gamma(\frac{1-r_n}{r_n})}.
  \end{equation*}
  We then compute the following identities and estimates
    \begin{eqnarray*}
      r_n^{2k_n-n}
      = & \exp\left( 2\alpha_n\ln(\beta_n) \sqrt{n/2} - \alpha_n \ln(n/2) \sqrt{n/2}\right) \\
      (1-r_n)^{-k_n}
      = & \exp\left(\beta_n \sqrt{n/2}+ \frac{\beta_n^2}{2} + \alpha_n\beta_n+ \widetilde O \left( \frac 1 {\sqrt n}\right) \right) \\
      \Gamma\left(\frac{1-r_n}{r_n} + k_n\right)
      = & \sqrt{2\pi} \exp\left(
        \begin{aligned}
          \frac{n}{2}\ln \left( \frac{n}{2e} \right)
          + (\alpha_n + \beta_n^{-1}) \ln(n/2) \sqrt{n/2}
           + \frac{(a_n + \beta_n^{-1})^2}{2}
           \\
           - \frac 3 2 \ln(n/2) + \widetilde O \left( \frac 1 {\sqrt n}\right)
        \end{aligned}
      \right) \\
      \Gamma\left(\frac{1-r_n}{r_n}\right)
      = & \sqrt{2\pi} \exp\!\left(\!
        \left(\frac{\ln(n/2)}{2\beta_n}- \frac{\ln\beta_n}{\beta_n} - \frac{1}{\beta_n}\right)\sqrt{\frac{n}{2}}
        + \frac{3}{2}\ln\beta_n - \frac{3}{4}\ln \left( \frac n 2 \right)
        + \widetilde O \left( \frac 1 {\sqrt n}\right)\!
      \right)\!.
  \end{eqnarray*}
  For the last two equations, we have used Stirling's Formula
  \[ \Gamma(z) \underset{z \rightarrow +\infty}{\sim}  \sqrt{2\pi} z^{z-1/2} e^{-z} (1 + O(1/z)) \]
  valid for every \(z\) with \( \left| \arg z \right| \leq \frac \pi 2  \) (see \cite[6.1.37]{abramowitz-stegun}).

  Bringing everything together, we derive
  \begin{equation}
    r_n^{-n}G_{k_n}(r_n)
      =
      \left( \frac n {2e} \right)^{\frac n 2}
    \exp\left(
        \begin{aligned}
           \left(
            \frac{\ln(n/2)}{2\beta_n} + 2\alpha_n\ln\beta_n + \beta_n + \frac{\ln\beta_n}{\beta_n} + \frac{1}{\beta_n}\right)
            \sqrt{n/2}
          + \alpha_n\beta_n  \\
          + \frac{\beta_n^2}{2}
          + \frac{(\alpha_n + \frac{1}{\beta_n})^2}{2}
          - \frac{3}{4}\ln(n/2) - \frac{3}{2}\ln\beta_n
          +\widetilde O \left( \frac 1 {\sqrt n}\right)
        \end{aligned}
        \right).
    \label{eq:asympt_with_mixed_alpha_and_beta}
  \end{equation}
  We then use Lemma~\ref{lem:estimation_saddlepoint} to substitute \(\alpha_n\) by its expansion in terms of \(\beta_n\).
\end{proof}

\begin{corollary}
  For every sequence \(k_n\) satisfying Lemma~\ref{lem:estimation_saddlepoint}, we have
  \begin{equation*}
    g_{n,k_n} = O\left(\left(\frac{n}{2e}\right)^{\frac{n}{2}} \exp\left(\lambda_n(\beta_n) \sqrt{\frac{n}{2}} + \mu_n(\beta_n)\right)\right)
  \end{equation*}
  (see Lemma~\ref{lem:estimation_saddlepoint} for the definition of \(\beta_n\) and
  Proposition~\ref{prop:evalr} for the definitions of \(\lambda_n\) and \(\mu_n\)).
  \label{cor:upperbound}
\end{corollary}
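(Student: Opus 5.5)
The bound follows from the trivial saddle-point inequality combined with Proposition~\ref{prop:evalr}. First, since $G_{k_n}(z)=\sum_{m\ge 0} g_{m,k_n} z^m$ has non-negative coefficients, for every $r\in(0,1)$ (inside the radius of convergence, cf.\ Corollary~\ref{cor:cauchy}) we have
\[
g_{n,k_n}\, r^n \;\le\; \sum_{m\ge 0} g_{m,k_n} r^m \;=\; G_{k_n}(r),
\qquad\text{so}\qquad g_{n,k_n}\le \frac{G_{k_n}(r)}{r^n}.
\]
Alternatively, bound the modulus of the integrand in Corollary~\ref{cor:cauchy} on the circle of radius $r$ by its value at $z=r$, using the triangle inequality on the power series. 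This is Step~1 of the plan in Subsection~\ref{start:proof}.

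Second, specialise to $r=r_n$, the saddle point given by Lemma~\ref{lem:def_saddlepoint}. Since $k_n=\frac n2+\alpha_n\sqrt{n/2}$ with $\alpha_n=\widetilde O(1)$, we have $1<k_n\le n$ for $n$ large enough, so $r_n\in(0,1)$ exists. For small $n$ the statement is asymptotic, so finitely many indices are irrelevant to the $O$.

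Third, invoke Proposition~\ref{prop:evalr}:
\[
r_n^{-n}G_{k_n}(r_n)\sim \left(\frac{n}{2e}\right)^{n/2}\exp\!\left(\lambda_n(\beta_n)\sqrt{n/2}+\mu_n(\beta_n)\right).
\]
An asymptotic equivalence implies that the ratio is bounded by, say, $2$ for $n$ large. Combined with the first inequality, this gives the stated $O$-bound.

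There is essentially no obstacle: the only care needed is checking that the hypotheses of Proposition~\ref{prop:evalr} coincide with those of Lemma~\ref{lem:estimation_saddlepoint}, which they do. All the analytic work has already been done in that proposition. Note that the constant in the $O$ here depends on the sequence $\alpha_n$; uniformity in $\alpha_n$ would require the uniform version of the estimates (as in Lemma~\ref{lem:uniform_bound_saddlepoint}), but it is not claimed in this corollary.
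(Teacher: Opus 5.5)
Your proof is correct and follows the paper's argument: the paper likewise uses non-negativity of the coefficients to get $g_{n,k_n} r_n^n \le G_{k_n}(r_n)$ and then concludes with Proposition~\ref{prop:evalr}. Your remark that the implied constant may depend on $(\alpha_n)$ is also accurate; the paper obtains uniformity separately, in Lemma~\ref{lem:evalr:unif}.
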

\begin{proof}
  Since every $g_{m,k_n}$ is non-negative, we have \(g_{n,k_n} r_n^n \leq \sum_m g_{m,k_n} r_n^m = G_{k_n}(r_n)\).
  We then use Proposition~\ref{prop:evalr} to derive an asymptotic bound for \(g_{n,k_n}\).
\end{proof}

Just as Lemma~\ref{lem:uniform_bound_saddlepoint} extends Lemma~\ref{lem:estimation_saddlepoint},
the upcoming lemma seeks to generalize Proposition~\ref{prop:evalr} by establishing uniform convergence.

\begin{lemma}\label{lem:evalr:unif}
    There exist an integer \(n_0\) and
    a \(\widetilde O(1)\) sequence $\delta_n$ such that
  for every $n \geq n_0$, and for every sequence \(k_n = n/2 + \alpha_n \sqrt{n/2}\)
  satisfying \( \left| \alpha_n \right| \leq \ln (n) \),
      \begin{equation*}
      \left|r_n^{-n}G_{k_n}(r_n) -
      \left(\frac{n}{2e}\right)^{\frac{n}{2}}
      \exp\left(\lambda_n(\beta_n) \sqrt{\frac{n}{2}} + \mu_n(\beta_n)\right)\right|
      < \frac {\delta_n} {\sqrt{n}}
    \end{equation*}
    (see Lemma~\ref{lem:estimation_saddlepoint} for the definitions of \(r_n\) and \(\beta_n\), and
  Proposition~\ref{prop:evalr} for the definitions of \(\lambda_n\) and \(\mu_n\)).
\end{lemma}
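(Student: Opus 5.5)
The plan is to repeat, for the four factors of $r_n^{-n}G_{k_n}(r_n)$, the uniformization argument used in the proof of Lemma~\ref{lem:uniform_bound_saddlepoint}. I read the bound $\delta_n/\sqrt n$ as a relative error. Concretely, the target is
\[
r_n^{-n}G_{k_n}(r_n)=\left(\frac{n}{2e}\right)^{\frac n2}\exp\left(\lambda_n(\beta_n)\sqrt{\frac n2}+\mu_n(\beta_n)+\rho_n\right)\quad\text{with}\quad |\rho_n|\le \frac{\delta_n}{\sqrt n},
\]
uniformly in $\alpha_n$ with $|\alpha_n|\le\ln(n)$. The displayed inequality then follows after factoring out the main term and using $|e^{\rho}-1|\le 2|\rho|$ for $|\rho|\le 1$, which holds once $n\ge n_0$ since $\delta_n/\sqrt n\to 0$.

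\textbf{Step 1: sandwich $\beta_n$.} As in the proof of Lemma~\ref{lem:uniform_bound_saddlepoint}, I introduce the extremal sequences $\beta_n^-$ and $\beta_n^+$ attached to $\lceil n/2+\ln(n)\sqrt{n/2}\rceil$ and $\lfloor n/2-\ln(n)\sqrt{n/2}\rfloor$. By Lemma~\ref{lem:decreasing_beta}, every admissible $\beta_n$ satisfies $\beta_n^-\le\beta_n\le\beta_n^+$. Both bounds are $\widetilde O(1)$ with $\widetilde O(1)$ inverses, and they do not depend on $\alpha_n$.

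\textbf{Step 2: uniform remainders for the four factors.} I take logarithms of the four factors in the proof of Proposition~\ref{prop:evalr}: $r_n^{2k_n-n}$, $(1-r_n)^{-k_n}$, $\Gamma(\frac{1-r_n}{r_n}+k_n)$ and $\Gamma(\frac{1-r_n}{r_n})$.
\begin{itemize}
\item The first factor is exact.
\item For $-k_n\ln(1-r_n)$, I write $\ln(1-x)=-x-x^2/2-x^3C(x)$ with $C$ bounded on $[0,\frac12]$. This is legitimate because $r_n\le\beta_n^+\sqrt{2/n}<\frac12$ for $n\ge n_0$.
\item For the two Gamma factors, I use Stirling in the form $\ln\Gamma(x)=(x-\frac12)\ln x-x+\frac12\ln(2\pi)+C_\Gamma(x)/x$, where $C_\Gamma$ is bounded on $[\frac14,\infty)$ by continuity. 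Both arguments are large: the first is $\frac n2(1+\epsilon_n)$ with $\epsilon_n\in[-\frac12,\frac12]$, exactly as in the digamma computation, and the second is $\frac{1-r_n}{r_n}\ge\frac{1}{2r_n}\ge \sqrt{n/2}/(2\beta_n^+)$.
\item The logarithms $\ln(1+\epsilon_n)$ and $\ln(1-r_n)$ are expanded with bounded Taylor remainders $C_{\ln}$ as before.
\end{itemize}
Collecting terms, each $\widetilde O(n^{-1/2})$ in the proof of Proposition~\ref{prop:evalr} has the form $C_n\,Pol(\alpha_n,\beta_n,1/\beta_n,\ln n)\,n^{-1/2}$, with $C_n$ bounded independently of $\alpha_n$. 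This gives the mixed identity~\eqref{eq:asympt_with_mixed_alpha_and_beta} with a uniform remainder.

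\textbf{Step 3: eliminate $\alpha_n$, the main obstacle.} In~\eqref{eq:asympt_with_mixed_alpha_and_beta}, $\alpha_n$ appears with the large coefficient $2\ln(\beta_n)\sqrt{n/2}$, and also through $\alpha_n\beta_n$ and $(\alpha_n+1/\beta_n)^2/2$. I substitute the expansion of $\alpha_n$ from Lemma~\ref{lem:uniform_bound_saddlepoint}, whose error is at most $\delta'_n/n$ uniformly. Multiplied by the coefficient $2\ln(\beta_n)\sqrt{n/2}$, this error contributes at most $2\max(|\ln\beta_n^-|,|\ln\beta_n^+|)\,\delta'_n n^{-1/2}$, which is of the admissible form. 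In the quadratic term, the cross terms between the $E_n(\beta_n)\sqrt{2/n}$ correction and the leading part of $\alpha_n$ are again polynomial in $\beta_n,1/\beta_n,\ln n$ times $n^{-1/2}$. After simplification, the surviving algebra is precisely what produces $\lambda_n$ and $\mu_n$ in Proposition~\ref{prop:evalr}.

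Finally, I bound every polynomial remainder by $Pol^+(\ln n,\beta_n^+,1/\beta_n^-,\ln n)$, using $|\alpha_n|\le\ln n$ and $\beta_n^-\le\beta_n\le\beta_n^+$. This yields the $\widetilde O(1)$ sequence $\delta_n$, independent of $\alpha_n$. The delicate point is bookkeeping rather than ideas: one must make sure no remainder hides a factor growing faster than polylogarithmically, for instance $1/r_n$ appearing inside a Stirling error. Each such factor has to be checked to be controlled by $\beta_n^\pm$ times a power of $\sqrt n$ that is compensated by the corresponding prefactor.
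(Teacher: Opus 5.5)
Your proposal is correct and follows essentially the same route as the paper. You sandwich $\beta_n$ between extremal sequences via Lemma~\ref{lem:decreasing_beta}, check that every $\widetilde O(n^{-1/2})$ remainder in the proof of Proposition~\ref{prop:evalr} has the form $C_n\,Pol(\alpha_n,\beta_n,1/\beta_n,\ln n)\,n^{-1/2}$ with $C_n$ bounded independently of $\alpha_n$, and eliminate $\alpha_n$ from \eqref{eq:asympt_with_mixed_alpha_and_beta} using the uniform expansion of Lemma~\ref{lem:uniform_bound_saddlepoint}. Reading the bound as a relative error is also right: that is how the lemma is used in Lemma~\ref{lem:gnk_in_terms_of_diff_alpha}, and a literal absolute bound cannot hold for quantities of size $(n/2e)^{n/2}$.
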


\begin{proof}
  The proof follows the same reasoning as that employed in the proof of Lemma~\ref{lem:uniform_bound_saddlepoint}.

  We observe that each \(\widetilde O(n^{-1/2})\) term in the proof of Proposition~\ref{prop:evalr} is again of the form
  \( C_n Pol(  \alpha_n, \beta_n, 1/\beta_n, \ln(n) ) n^{-1/2} \)
  (where \(C_n\) is a sequence bounded by a number independent of \(\alpha_n\) and \(\beta_n\), and
  \(Pol\) is a polynomial),
  and so can be rewritten as \(O \left( \gamma_n n^{-1/2} \right) \),
  where the constant within the big-oh notation does not depend on \(\alpha_n\),
  and \(\gamma_n\) is a \(\widetilde O(1)\) sequence also independent of \(\alpha_n\).

  When we substitute \(\alpha_n\) by its expansion in terms of \(\beta_n\) into Equation~\eqref{eq:asympt_with_mixed_alpha_and_beta},
  we apply this time Lemma~\ref{lem:uniform_bound_saddlepoint} to achieve a uniform bound.
\end{proof}

\subsection{Step 2. Localizing the Mode}

Our goal is now to determine the range of values for $k$ for which $g_{n,k}$ is asymptotically relevant.
To identify this range,
we use the asymptotic upper bound for \(g_{n,k_n}\),
provided by
Corollary~\ref{cor:upperbound}
which is quite close to the true asymptotic value.\footnote{
This phenomenon is common when applying the saddle-point method.
To illustrate, consider
Cauchy's integral formula (as stated by Corollary~\ref{cor:cauchy}, and illustrated by Figure~\ref{fig:plot3d} Right):
\begin{equation*}
  g_{n,k_n} =   \int_{-1/2}^{1/2} r_n^{-n} G_{k_n}(r_n e^{2 \pi i \theta}) e^{-2 \pi i n \theta} d\theta.
\end{equation*}
In this expression,
 \(r_n^{-n} G_{k_n}(r_n)\) (which constitutes the upper bound) is the maximum of the integrand.
While it is true that the integral can be much smaller than the maximum of the function being integrated, it is not the case here.
By performing an asymptotic expansion around $\theta = 0$
(which we will elaborate on in more detail in  Subsection~\ref{ss:precise})
we will demonstrate that
\[g_{n,k_n} \approx \frac {r_n^{-n} G_{k_n} (r_n)} {2\pi} \int_{-\infty}^{+\infty} e^{-i \Delta_n \frac{\theta^2}2}  d\theta = \frac {r_n^{-n} G_{k_n} (r_n)} {\sqrt{2\pi \Delta_n}},
\]
where $\Delta_n = \widetilde O\left( \sqrt{n/2} \right)$.
}
Recall that this upper bound is of the form
\begin{equation*}
  \left( \frac{n} {2e}\right)^{\frac n 2} \exp\left(  \lambda_n \left( \beta_n \right)  \sqrt{n/2} + O\left( \ln(n)^2 \right)  \right)
\end{equation*}
where
\begin{equation*}
  \lambda_n(x) =
      \frac{1 + \ln(x)}{2x}\ln\left( \frac n 2 \right)+ \frac{(\ln x)^2}{x} + \left(\frac{1}{x} - x\right)\ln x + \frac 1 x + x.
\end{equation*}

In order to optimize the above quantity, we start by pinpointing the maximum of the $\lambda_n$ function.

\begin{lemma}[Study of the~$\lambda_n$ function]\label{lem:lambdan}
    For~$n$ large enough, the function~$\lambda_n(x)$ reaches a global maximum on the interval~$\left[n^{-1/4}, +\infty\right)$ at~$x=1$.

    Furthermore, near~$x=1$, we have
    \begin{multline*}
      \lambda_n(x) =
        \left(\frac 1 2 \ln\left( \frac n 2 \right) + 2 \right)
        - \left(\frac{1}{4}\ln\left( \frac n 2 \right) \right)(x-1)^2
        + \left(\frac 5 {12} \ln\left( \frac n 2 \right) - 1\right) (x-1)^3 \\
        + O\left(\ln\left( \frac n 2 \right) (x-1)^4\right),
    \end{multline*}
  \label{lem:study_lambda}
  where the \(O\)-term does not depend on \(n\).
\end{lemma}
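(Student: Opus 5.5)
The plan is to write $\lambda_n$ as $\lambda_n(x) = L\,a(x) + h(x)$, where $L := \ln\left(\frac n 2\right)$. The two functions
\[
a(x) = \frac{1+\ln x}{2x}, \qquad h(x) = \frac{(\ln x)^2}{x} + \left(\frac 1x - x\right)\ln x + \frac 1x + x
\]
do not depend on $n$, so all $n$-dependence sits in the single factor $L$. Both claims of the lemma will then follow from elementary one-variable calculus applied to $a$ and $h$.

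\textbf{Global maximum.} First I would compute $\lambda_n'$ in closed form. Routine differentiation gives $a'(x) = -\frac{\ln x}{2x^2}$. Collecting the three pieces of $h$ gives
\[
h'(x) = \frac{\ln x - (\ln x)^2}{x^2} - \ln x,
\]
so that
\[
\lambda_n'(x) = -\frac{\ln x}{x^2}\left(\frac L2 - 1 + \ln x + x^2\right).
\]
The key step is to show that the bracket $B_n(x) := \frac L2 - 1 + \ln x + x^2$ is positive on $[n^{-1/4},+\infty)$ once $n$ is large. On this interval $\ln x \geq -\frac14 \ln n = -\frac L4 - \frac14\ln 2$ and $x^2>0$. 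Hence $B_n(x) \geq \frac L4 - 1 - \frac{\ln 2}4$, which is positive for $n$ large. Consequently $\lambda_n'$ has the sign of $-\ln x$. So $\lambda_n$ is increasing on $[n^{-1/4},1]$ and decreasing on $[1,+\infty)$, and $x=1$ is the global maximum on the interval. This is also where the restriction $x\geq n^{-1/4}$ is needed: for much smaller $x$ the term $\ln x$ could overwhelm $L/2$ and flip the sign.

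\textbf{Expansion near $1$.} Since $a$ and $h$ are analytic near $x=1$ and independent of $n$, it suffices to expand each one to order three. First, $\lambda_n(1) = L\,a(1) + h(1) = \frac L2 + 2$. Next, $\lambda_n'(1)=0$ from the factorised form of $\lambda_n'$. For the second-order coefficient, differentiate $-\frac{\ln x}{x^2}B_n(x)$ at $1$; only the derivative of $\ln x$ survives, giving $\lambda_n''(1) = -B_n(1) = -\frac L2$, hence the coefficient $-\frac L4$. The third-order coefficient $\frac{5}{12}L - 1$ comes from $\lambda_n'''(1)/6$, computed the same way; this means expanding $\ln x = (x-1) - \frac{(x-1)^2}2 + \dots$, $x^{-2}$ and $B_n$ around $1$, and it is routine (and can be checked with Maple, as the authors do elsewhere).

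\textbf{Remainder.} By Taylor's theorem on a fixed neighbourhood of $1$, the remainder is bounded by $\left(L\sup|a^{(4)}| + \sup|h^{(4)}|\right)(x-1)^4/24$. This is $O(L\,(x-1)^4)$ with a constant independent of $n$, because $L\geq 1$ for large $n$ and $a,h$ do not involve $n$. That is exactly the uniformity claimed.

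I expect no step to be a real obstacle. The only delicate point is keeping track of the bracket $B_n$ at the lower endpoint $n^{-1/4}$ when proving the global maximum.
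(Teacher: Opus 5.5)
Your proposal is correct and follows essentially the same route as the paper. Your bracket $B_n(x) = \frac L2 - 1 + \ln x + x^2$ is exactly the paper's factor $\ln\left(\frac{x}{e}e^{x^2}\sqrt{n/2}\right)$, and your lower bound at $x = n^{-1/4}$ gives the same threshold $n \geq 4e^4$; the values $\lambda_n(1) = \frac L2 + 2$, $\lambda_n''(1) = -\frac L2$ and $\lambda_n'''(1) = \frac52 L - 6$ (which your product-rule computation does yield) are the ones the paper feeds into Taylor's theorem, and your splitting $\lambda_n = L\,a + h$ only makes the $n$-independence of the $O$-constant more explicit than the paper does.
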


 \begin{figure}[ht!]
 \begin{center}
 \includegraphics[scale=1]{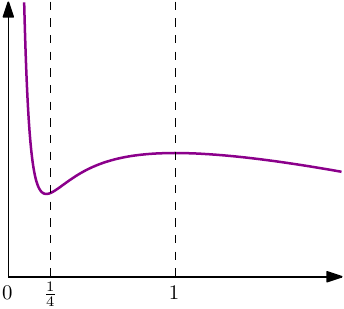}
 \end{center}
 \vspace{-7pt}
 \caption{ Plot of \(x \mapsto \lambda_{256}(x)\). Note that \(\lambda_{256}(x)\) is indeed maximal at $x=1$ over the interval
 \([ \protect\underbrace{1/4}_{=256^{-1/4}},  + \infty)\). }
 \label{fig:lambda_function}
 \end{figure}

The lemma focuses on \(x \geq n^{-1/4}\)
because we will evaluate \(\lambda_n\) at \(x = \beta_n\),
and since \(1/\beta_n = \widetilde O (1)\), we have \(\beta_n \gg n^{-1/4}\) asymptotically.

Moreover, we see at this point that our ansatz \(k_n = \frac n 2 + \alpha_n \sqrt{\frac n 2}\) with \(\alpha_n = \widetilde O(1)\)
was well-chosen:
the upper bound \(r_n^{-n} G_{k_n}(r_n)\) has a maximum at this regime,
which should corroborate that the mass of \(g_{n,k}\) concentrates here.

\begin{proof}[Proof of Lemma~\ref{lem:lambdan}] This proof relies on basic calculus techniques.
  Figure~\ref{fig:lambda_function} shows a plot of \(\lambda_n\) for $n = 256$.

  For all~$x > 0$, we have
    \begin{equation*}
      \lambda_n'(x)
      = - \frac{\ln (x)}{2x^2} \ln\left( \frac n 2 \right) - \ln (x) - \frac{\ln (x)^2}{x^2} + \frac{\ln (x)}{x^2}
      = -\frac{\ln (x) \ln\left(\frac x e e^{x^2}\sqrt{\frac n 2}\right)}{x^2}  .
    \end{equation*}
    When \(x\)  is greater than $n^{-1/4}$, the right-hand-side factor is ultimately positive.
    Indeed, using the fact that \( e^{x^2} > 1 \), we see that
    \begin{equation*}
      \ln\left(\frac x e e^{x^2}\sqrt{\frac n 2}\right) > \ln\left(\frac{n^{1/4}}{e \sqrt 2}\right) \geq 0 \quad\text{whenever } n \geq 4e^4 \approx 218.4.
    \end{equation*}
    It follows that,
    in this domain,~$\lambda_n'(x)$ as the same sign as~$-\ln (x)$ and thus reaches a maximum at~$x=1$.
    For this particular value of~$x$, we have
    \begin{equation*}
      \lambda_n(1) =  \frac 1 2  \ln \left( \frac n 2 \right)+ 2,
      \qquad
      \lambda_n''(1) = - \frac 1 2  \ln \left( \frac n 2 \right)
      \qquad\text{and}\qquad
      \lambda_n'''(1) = \frac 5 2 \ln \left( \frac n 2 \right)- 6.
    \end{equation*}
    The last statement of the lemma follows from Taylor's theorem.
  \end{proof}

\begin{remark}\label{rmk:heuristic}
  To build intuition, let us outline the implications of Lemma~\ref{lem:lambdan} using a heuristic approach.
  First, the maximum of~$g_{n,k}$ should be close to
  \begin{equation*}
    \left(\frac n {2e}\right)^{\frac n 2} \exp\left(\lambda_n(1)\sqrt{\frac n 2} + \mu_n(1)\right).
  \end{equation*}
  This is not necessarily the exact maximum since we have omitted the lower order terms in the exponential,
  but we will see that this incurs a negligible error.
  Moreover, this value is attained when~$\beta_n$ is close to~$1$.
  Using the second-order error term in Lemma~\ref{lem:lambdan}, we can see that in the vicinity of~$\beta_n=1$, we have
  \begin{equation*}
    r_n^{-n} G_{k_n}(r_n) \approx \left(\frac n {2e}\right)^{\frac n 2} \exp\left(\lambda_n(1)\sqrt{\frac n 2} - \frac{(\beta_n - 1)^2}{4}\ln \left( \frac n 2 \right)\sqrt{\frac n 2} + \mu_n(\beta_n)\right).
  \end{equation*}
  But, for all sequences $k_n$ such that \( \left| 1 - \beta_n \right| > 4 n^{-1/4} \), we have
    \begin{equation*}
      \exp \left( - \frac{(\beta_n - 1)^2}{4}\ln \left( \frac n 2 \right)\sqrt{\frac n 2} \right) <
      \exp \left( - \frac{16}{4 \sqrt n }\ln \left( \frac n 2 \right)\sqrt{\frac n 2} \right) = \left( \frac 2 n \right)^{ 2 {\sqrt 2}}  <
      \left( \frac 2 n \right)^2.
    \end{equation*}
  Consequently, \(  r_n^{-n} G_{k_n}(r_n) \) is suppressed by a factor of approximately \( n^2 \)
  when \(\left| 1 - \beta_n \right| > 4 n^{-1/4}\)
  compared to the critical value \( \beta_n = 1 \).
  It follows that contributions to the sum of all~$g_{n,k}$ should come from
  values of~$k$ corresponding to values of~$|1 - \beta_n|$ of order~$O(n^{-1/4})$
  --- this is a rough estimate.
  By Lemma~\ref{lem:estimation_saddlepoint},
  we expect these contributions to correspond to values of~$\alpha_n$
  that differ from~$\frac{\ln(n/2)}{4} - \frac 1 2$ by at most~$O(n^{-1/4} \ln n)$.
  \label{rem:where_mode}
\end{remark}

\subsection{Step 3. Precise Estimate Around the Mode}
\label{ss:precise}

By Remark~\ref{rem:where_mode},
it should be sufficient to compute
an asymptotic estimate of \(g_{n,k_n}\)
when \(k_n\) is near \(\frac{\ln(n/2)} 4\)
(we will show subsequently that contributions from the remaining values are negligible).
What follows is the resulting estimate.

\begin{proposition}\label{prop:gnk_asympt}
  Let~$\alpha_n$ be a \(\widetilde O(1)\) sequence such that $k_n = \frac{n}{2} + \alpha_n \sqrt{\frac{n}{2}}$ is an integer sequence.
  Let \(r_n\) and \(\beta_n\) be the two sequences defined by Lemma~\ref{lem:estimation_saddlepoint}.

  Suppose that for~$n \geq 4$, we have
  \begin{equation*}
    \frac { \ln(n/2) }  5
    \leq
    \alpha_n
    \leq
     \frac { \ln(n/2) }  3.
  \end{equation*}
  Then there exist
  an integer \(n_0\) and
  a \(\widetilde O(1)\) sequence \(\delta_n\) (both independent of \(\alpha_n\))  such that the estimate
  \begin{equation*}
    \left|
      g_{n,k_n}
      -
      \dfrac{r_n^{-n} G_{k_n}(r_n)}{\sqrt{2 \pi \Delta_n}}
    \right|
      <
      \frac{\delta_n} {\sqrt n} \times \dfrac{  r_n^{-n} G_{k_n}(r_n) } { \sqrt{2 \pi \Delta_n} }
  \end{equation*}
  holds for~$n \geq n_0$, where
  \begin{equation*}
    \Delta_n = \left(\frac{\ln\frac n 2}{2\beta_n} + \frac{\ln\beta_n}{\beta_n} - \frac{1}{\beta_n} + \beta_n\right)\sqrt{\frac n 2}.
  \end{equation*}
  \label{prop:asympt_cauchy_integral}
\end{proposition}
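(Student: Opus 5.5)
We will apply the standard Gaussian saddle-point method to the Cauchy integral of Corollary~\ref{cor:cauchy} on the circle $|z| = r_n$. Writing $z = r_n e^{i\theta}$, we get
\[
g_{n,k_n} = \frac{r_n^{-n}G_{k_n}(r_n)}{2\pi}\int_{-\pi}^{\pi} \exp\bigl(h_n(\theta)\bigr)\,d\theta,
\qquad
h_n(\theta) := \ln\frac{G_{k_n}(r_ne^{i\theta})}{G_{k_n}(r_n)} - i n\theta .
\]
By the saddle-point equation, $h_n(0)=h_n'(0)=0$. We will show that $-h_n''(0)=\Delta_n$, i.e.\ that $\Delta_n$ is the second derivative of $u\mapsto \ln G_{k_n}(r_ne^{u})$ at $u=0$. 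This amounts to computing $z\frac{d}{dz}\bigl(zG'/G\bigr)$ at $z=r_n$ from Equation~\eqref{eq:saddlepoint:function}, which involves $\psi$ and the trigamma function $\psi'$. We then expand with $\psi'(x)=1/x+O(x^{-2})$, exactly as in Lemma~\ref{lem:estimation_saddlepoint}, and keep the $\sqrt{n/2}$ order, which gives the displayed $\Delta_n$ up to a relative $\widetilde O(n^{-1/2})$ error. The hypothesis $\ln(n/2)/5\le\alpha_n\le\ln(n/2)/3$ forces $\beta_n\asymp 1$ by Corollary~\ref{cor:beta_in_terms_of_alpha}. Hence $\Delta_n\asymp \ln(n)\sqrt n$, and in particular $\Delta_n$ is positive.

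We split the integral at $\theta_0 := \Delta_n^{-1/2}\ln n$, so that $\theta_0 = \widetilde O(n^{-1/4})$.

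\textbf{Central range $|\theta|\le\theta_0$.} We bound the third derivative: $h_n'''(\theta) = \widetilde O(\sqrt n)$ uniformly there, which is the same digamma-type computation one order higher (using $\psi''$). Then $h_n(\theta) = -\Delta_n\theta^2/2 + \widetilde O(\sqrt n\,\theta^3)$, and the cubic error is $\widetilde O(n^{-1/4})$ on this range. To reach the claimed relative error $n^{-1/2}$, we keep the cubic term and the quartic term. The cubic term is odd, so it integrates to zero against the Gaussian. The square of the cubic term and the quartic term contribute $\widetilde O(\sqrt n\cdot\Delta_n^{-3}\cdot n)$ and $\widetilde O(\sqrt n\,\Delta_n^{-2})$ respectively, both relatively $\widetilde O(n^{-1/2})$. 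Completing the Gaussian integral to $\mathbb R$ costs $e^{-(\ln n)^2/2}$, which is negligible. This yields $\frac{1}{2\pi}\sqrt{2\pi/\Delta_n}\,(1+\widetilde O(n^{-1/2}))$.

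\textbf{Tails $\theta_0<|\theta|\le\pi$.} We use the product form \eqref{eq:Gkz}. Each factor satisfies
\[
\left|\frac{1-z+jz}{1-z}\right| \Big/ \frac{1-r_n+jr_n}{1-r_n} \le 1,
\]
with a quantitative decrease. Writing $w=jr_n/(1-r_n)$, the factor is at most $\bigl(1 - c\,w\,\theta^2/(1+w)^2\bigr)$ for $|\theta|$ small, and is uniformly $<1$ otherwise. Taking logarithms and summing over $j\le k_n$ gives $\ln|G_{k_n}(r_ne^{i\theta})/G_{k_n}(r_n)| \le -c\,\min(\theta^2,1)\,\sqrt n\,\ln n$, because $\sum_j w/(1+w)^2\asymp \sqrt n\ln n$ when $r_n\asymp n^{-1/2}$. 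At $\theta_0$ this is $-c(\ln n)^2$, which is superpolynomially small compared with $\Delta_n^{-1/2}$.

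\textbf{Uniformity.} The constants must be independent of $\alpha_n$. As in Lemmas~\ref{lem:uniform_bound_saddlepoint} and~\ref{lem:evalr:unif}, we sandwich $\beta_n$ between the saddle parameters of the extreme sequences $k_n^{\pm}$ (Lemma~\ref{lem:decreasing_beta}). Every error term then has the form $C\,\mathrm{Pol}(\alpha_n,\beta_n,1/\beta_n,\ln n)$, which is bounded uniformly by a $\widetilde O(1)$ sequence $\delta_n$.

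The main obstacle is the uniform third- and fourth-derivative bound on $h_n$ near $0$, together with the tail inequality for the product. The derivative bounds need expansions of polygamma functions at complex arguments $\frac{1-z}{z}$ with $|z|=r_n$. These arguments have large modulus, namely $\asymp\sqrt n$, and argument close to $0$, so Stirling-type expansions remain valid, but the bookkeeping is heavy. If needed, we would instead bound the derivatives directly from the sum in \eqref{eq:zGp/G}, which avoids the special-function asymptotics altogether.
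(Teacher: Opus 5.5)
Your proposal is correct. Its central-range argument is essentially the paper's Lemma~\ref{lem:dominating}:
\begin{itemize}
\item you Taylor-expand the logarithm of the integrand in $\theta$ and identify the quadratic coefficient with $\Delta_n$ up to an additive $\widetilde O(1)$, which is harmless because $\widetilde O(1)\,\theta_0^2=\widetilde O(n^{-1/2})$;
\item you bound the third and fourth derivatives by $\widetilde O(\sqrt n)$, as in Lemma~\ref{lem:Lnbullet};
\item you kill the cubic term by oddness and complete the Gaussian.
\end{itemize}
The only change there is your cutoff $\theta_0=\Delta_n^{-1/2}\ln n$ instead of $4n^{-1/4}$. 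It makes the completion error $e^{-(\ln n)^2/2}$ rather than the paper's $\widetilde O(n^{-\sqrt 2})$ erfc term.

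The genuine difference is the tail. The paper uses the qualitative monotonicity of Lemma~\ref{lem:unimodal} to reduce to the endpoints $\theta=\pm 4n^{-1/4}$. It then reruns the Stirling computation of Proposition~\ref{prop:evalr} at the complex point $\beta_n e^{4in^{-1/4}}$ to get a ratio at most $\delta_n (n/2)^{-\sqrt 2}$. You instead make the monotonicity quantitative. With $w=(j-1)r_n$ and $z=r_ne^{i\theta}$,
\[
\frac{|1+(j-1)z|^2}{|1-z|^2}\cdot\frac{(1-r_n)^2}{(1+(j-1)r_n)^2}\le 1-\frac{2w(1-\cos\theta)}{(1+w)^2}.
\]
Summing logarithms over $j$ gives a bound $\exp(-c\,\theta^2\sqrt n\ln n)$ on the whole circle. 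This uses $1-\cos\theta\ge 2\theta^2/\pi^2$ on $[-\pi,\pi]$ and $\sum_j w/(1+w)^2\asymp r_n^{-1}\ln(k_nr_n)$.

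What your route buys:
\begin{itemize}
\item it avoids complex-argument Stirling asymptotics entirely;
\item it is visibly uniform in $\alpha_n$ once $\beta_n\in[\frac12,2]$ (Lemma~\ref{lem:beta_between_half_and_2});
\item it gives superpolynomial rather than polynomial decay;
\item at $\theta=4n^{-1/4}$ it recovers the paper's exponent $-\frac{2\sqrt 2}{\beta_n}\ln\frac n2$ to leading order.
\end{itemize}
Your fallback of bounding the $\theta$-derivatives directly from $\ln G_k(z)=k\ln z-(k-1)\ln(1-z)+\sum_{m=1}^{k-2}\ln(1+mz)$ would likewise replace the paper's polygamma argument on the domain $U$. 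The paper's route, in exchange, reuses machinery it has already built for Proposition~\ref{prop:evalr}.

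Two small slips. First, the relative contribution of the squared cubic term is $n\,\Delta_n^{-3}=\widetilde O(n^{-1/2})$, not $\sqrt n\cdot n\cdot\Delta_n^{-3}$, which would only be $\widetilde O(1)$. Second, the per-factor ratio is not uniformly $<1$ for large $\theta$ independently of $j$, since factors with small $j$ tend to $1$. You do not need that claim, because the quadratic bound above already holds on all of $[-\pi,\pi]$.
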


\paragraph*{Proof idea.}
The proof is a classic application of the Laplace method
to Cauchy's integral from Corollary~\ref{cor:cauchy}, which can be rewritten as
\begin{equation}
  g_{n,k_n} = \frac 1 {2 \pi} \int_{-\pi}^{\pi} \left( r_n e^{i \theta} \right)^{-n} G_{k_n} \left( r_n e^{i \theta}\right)  d\theta.
  \label{eq:cauchy_integral}
\end{equation}

\begin{figure}[ht!]
\begin{center}
\includegraphics[scale = 1.2]{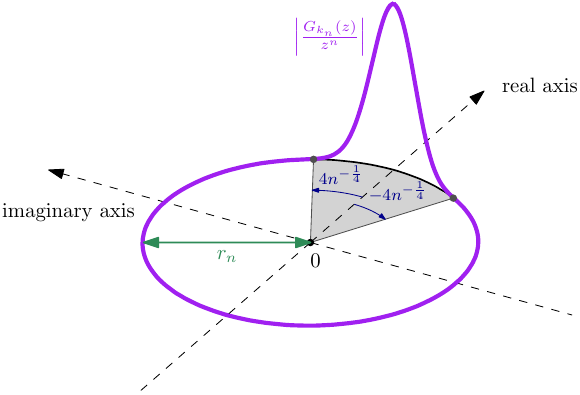}
\end{center}
\vspace{-7pt}
\caption{ Three-dimensional plot of
\(\left|  G_k(z) z^{-n}\right|\)
along the circle \( \left| z \right| = \sqrt{2/n}\).
Here the chosen parameters are $n = 4096$ and $k = 2111$.
The grey shaded region, defined by \(\left| \arg(z) \right| \leq 4 n^{-1/4}\),
shows where the integrand is well-approximated by a Gaussian and dominates.
The remaining region carries a negligible contribution.
}
\label{fig:laplace_method}
\end{figure}

The key observation is that the integrand exhibits a sharp peak at $\theta=0$,
which dominates the contribution.
More precisely, we decompose the integration contour into two regions (see Figure~\ref{fig:laplace_method}):
\begin{itemize}
  \item the neighborhood \([-4 n^{-1/4}, 4 n^{-1/4}]\) where the integrand behaves like a Gaussian,
  \item the complementary region
  \(\left\{ \theta \text{ such that } 4 n^{-1/4} \leq \left| \theta \right| \leq \pi \right\}\), which is negligible compared to the first region.
\end{itemize}
The threshold \(4n^{-1/4}\) is chosen to balance two requirements:
it must be small enough that the integrand remains close to \(\exp(-\Delta_n\theta^2/2)\),
yet large enough to render the tail contribution negligible.

The technical challenge is to establish bounds that are uniform in \(k\),
which is essential for summing the asymptotic estimates of \(g_{n,k}\) afterwards.

\subsubsection{Dominating Part of Cauchy's Integral}

We now prove the following lemma,
which quantifies the asymptotic contribution from the region \( \left| \theta \right| \leq 4 n^{-1/4}\), with explicit error bounds.

\begin{lemma}
  Under the hypotheses of Proposition~\ref{prop:asympt_cauchy_integral},
  there exist
  an integer \(n_0\) and
  a \(\widetilde O(1)\) sequence \(\delta_n\) such that for every $n \geq n_0$
  \begin{equation*}
    \left|
      \int_{-4 n^{-1/4}}^{4 n^{-1/4}}
      \left( r_n e^{i \theta} \right)^{-n} G_{k_n} \left( r_n e^{i \theta}\right)  d\theta
      -
      \sqrt{\dfrac {2 \pi} {\Delta_n}}
    r_n^{-n}G_{k_n}(r_n)
    \right|
      <
      \frac{\delta_n} {\sqrt n} \times \sqrt{\dfrac {2 \pi} {\Delta_n}}
    r_n^{-n}G_{k_n}(r_n).
  \end{equation*}
  \label{lem:dominating}
\end{lemma}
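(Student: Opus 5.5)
We write $h_n(\theta) := \ln G_{k_n}(r_n e^{i\theta}) - \ln G_{k_n}(r_n) - i n\theta$, with the branch of the logarithm fixed by continuity from $\theta=0$. This is legitimate because, by \eqref{eq:Gkz}, $G_{k_n}$ is a finite product of factors $z$ and $1+\frac{jz}{1-z}$, none of which vanishes on the disk $|z|\le r_n<1/2$. Taking the logarithm of that product gives
\[
h_n(\theta) = \sum_{j=1}^{k_n-1}\left[\ln\left(1+\tfrac{j r_n e^{i\theta}}{1-r_n e^{i\theta}}\right) - \ln\left(1+\tfrac{j r_n}{1-r_n}\right)\right] - i(n-k_n)\theta .
\]
We expand this in $\theta$. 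The linear term vanishes exactly, by the saddle-point equation \eqref{eq:saddle}. The quadratic coefficient is $-\frac12 b(r_n)$, where $b(z) = z\frac{d}{dz}\left(\frac{zG'_{k_n}(z)}{G_{k_n}(z)}\right)$.

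\textbf{Step 1: identify $b(r_n)$ with $\Delta_n$.} We differentiate \eqref{eq:saddlepoint:function} once more. This gives the trigamma values $\psi'\left(\frac{1-r_n}{r_n}+k_n\right)$ and $\psi'\left(\frac{1-r_n}{r_n}\right)$, which we estimate by $\psi'(x)=\frac1x+O(x^{-2})$. We substitute $r_n=\beta_n\sqrt{2/n}$ and $k_n=\frac n2+\alpha_n\sqrt{n/2}$, and use Lemma~\ref{lem:estimation_saddlepoint} to eliminate $\alpha_n$. The aim is
\[
b(r_n)=\Delta_n\left(1+\widetilde O(n^{-1/2})\right), \qquad \text{so } \Delta_n \asymp \ln(n)\sqrt n .
\]
This $\Delta_n$ is positive and of exact order $\sqrt n$ up to polylog factors, because the hypothesis $\frac{\ln(n/2)}5\le\alpha_n\le\frac{\ln(n/2)}3$ together with Corollary~\ref{cor:beta_in_terms_of_alpha} confines $\beta_n$ to a compact interval $[c_1,c_2]\subset(0,\infty)$. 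Uniformity in $\alpha_n$ is handled exactly as in Lemma~\ref{lem:uniform_bound_saddlepoint}: every error term has the form $C_n\,Pol(\alpha_n,\beta_n,1/\beta_n,\ln n)\,n^{-1/2}$, and $\beta_n$ is sandwiched by Lemma~\ref{lem:decreasing_beta}.

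\textbf{Step 2: uniform cubic remainder.} We must show
\[
\left|h_n(\theta)+\tfrac12\Delta_n\theta^2\right| \le \gamma_n\left(\sqrt n\,|\theta|^3 + n^{-1/2}\theta^2\sqrt n\right)
\]
for $|\theta|\le 4n^{-1/4}$, with $\gamma_n=\widetilde O(1)$ independent of $\alpha_n$. For this we bound the third derivative of $\theta\mapsto \ln G_{k_n}(r_ne^{i\theta})$ on a small complex neighbourhood. From the product, $\frac{d^3}{d\theta^3}$ of each factor $\ln\left(1+\frac{jz}{1-z}\right)=\ln(1+(j-1)z)-\ln(1-z)$ is $O\left(\frac{jr_n}{(1+jr_n)}\right)$ uniformly, since $|1+(j-1)r_ne^{i\theta}|\ge c(1+jr_n)$ for small $\theta$. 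Summing over $j\le k_n$ gives $O(k_n)=O(n)$, which is too crude. The better route, and the main obstacle, is to exploit cancellation: the $i(n-k_n)\theta$ term absorbs the bulk of the sum. Writing $\ln(1+(j-1)z)=\ln((j-1)z)+\ln\left(1+\frac1{(j-1)z}\right)$, the $\ln z$ parts are linear in $\theta$ and disappear after two derivatives. The remaining part $\sum_j \ln\left(1+\frac{1}{(j-1)z}\right)$ has third $\theta$-derivative bounded by
\[
\sum_j \frac{1/(jr_n)}{(1+1/(jr_n))^3}=O\!\left(\frac1{r_n}\ln\frac{k_nr_n}{1}\right)=\widetilde O(\sqrt n),
\]
and the $\ln(1-z)$ part contributes $O(k_nr_n)=\widetilde O(\sqrt n)$. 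Hence the cubic remainder is $\widetilde O(\sqrt n)|\theta|^3$, uniformly in $\alpha_n$.

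\textbf{Step 3: integrate.} On $|\theta|\le 4n^{-1/4}$ the remainder is $\widetilde O(n^{-1/4})$, which is small, so we may write $e^{h_n}=e^{-\Delta_n\theta^2/2}(1+R)$ with $|R|\le 2|\text{remainder}|$. Integrating $e^{-\Delta_n\theta^2/2}$ over the interval gives $\sqrt{2\pi/\Delta_n}$ minus a Gaussian tail of size $e^{-8\Delta_n n^{-1/2}}=e^{-c\ln n}$. This tail is below $n^{-1/2}$ once $c\ge 1$ (we check $8\Delta_n/\sqrt n\ge 8\cdot\frac{\ln(n/2)}{2c_2}\cdot\frac1{\sqrt2}$, which suffices for large $n$). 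The error term we must control is
\[
\int |R|\,e^{-\Delta_n\theta^2/2}\,d\theta .
\]
The purely cubic (odd) part of $R$ integrates to zero against the Gaussian. Controlling what is left therefore needs one more order in the expansion, with a quartic remainder of size $\widetilde O(\sqrt n)\theta^4$, which is proved exactly as in Step 2. The surviving contributions are then $\widetilde O(\sqrt n)\,\Delta_n^{-5/2}$ from the quartic term and $\widetilde O(n)\,\Delta_n^{-7/2}$ from the square of the cubic term, both relative to $\Delta_n^{-1/2}$. Each is $\widetilde O(n^{-1/2})\sqrt{2\pi/\Delta_n}$. Multiplying back by $r_n^{-n}G_{k_n}(r_n)$ yields the lemma, with $\delta_n$ collecting the polylogarithmic factors.
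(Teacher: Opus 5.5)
Your proposal is correct, and its skeleton matches the paper's proof:
\begin{itemize}
\item a third-order Taylor expansion of $L_n(r_ne^{i\theta})$ at the saddle point, with a quartic remainder;
\item the identification $L_n^{\bullet\bullet}(r_n)=\Delta_n+\widetilde O(1)$ through digamma/trigamma asymptotics;
\item $\widetilde O(\sqrt n)$ bounds on the cubic coefficient and on the fourth derivative along the arc;
\item cancellation of the odd cubic term on the symmetric interval;
\item a Gaussian integral whose tail is polynomially small.
\end{itemize}
You genuinely differ in how you obtain the derivative bounds of Lemma~\ref{lem:Lnbullet}. The paper works from the $\Gamma$-representation and needs polygamma asymptotics to hold uniformly on the complex region $U$. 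You instead bound $(z\frac{d}{dz})^m$ of $\ln(1+(j-1)z)$ and of $-\ln(1-z)$ factor by factor from the product formula. Your route is more elementary and makes uniformity in $\alpha_n$ automatic, since the bounds involve only $k_n$ and $r_n=\beta_n\sqrt{2/n}$ with $\beta_n\in[\frac12,2]$. The paper's route reuses machinery it already needs for the saddle-point estimates, and you still need that machinery for Step 1. You integrate against Gaussian moments where the paper uses sup bounds on $|\theta|\le 4n^{-1/4}$; this is an inessential variation, and both give a relative error $\widetilde O(n^{-1/2})$.

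Some details need fixing, though none affects the conclusion.

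\textbf{The logarithm.} $G_{k_n}$ does vanish inside $|z|\le r_n$. The factor $1+(j-1)z$ vanishes at $z=-1/(j-1)$, which lies in that disk once $j-1>1/r_n\approx\sqrt{n/2}$. Justify the logarithm on the right half-circle instead, where $\mathrm{Re}(1+(j-1)z)\ge 1$ and $|1+(j-1)z|\ge(1+(j-1)r_n)/\sqrt2$.

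\textbf{Step 2.} No cancellation with $i(n-k_n)\theta$ is involved; linear terms are killed by the third derivative anyway. We have $(z\frac{d}{dz})^3\ln(1+az)=\frac{az(1-az)}{(1+az)^3}$ and $(z\frac{d}{dz})^4\ln(1+az)=\frac{az(1-4az+a^2z^2)}{(1+az)^4}$. Hence each factor contributes $O\bigl(\frac{jr_n}{(1+jr_n)^2}\bigr)$ directly. Your first per-factor bound was merely lossy. Your displayed summand underestimates the true size $\asymp jr_n$ when $jr_n$ is small. In any case the sum is $O(r_n^{-1}\ln(k_nr_n))=\widetilde O(\sqrt n)$.

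\textbf{Step 3.} Keep the cubic term signed rather than inside $\int|R|\,e^{-\Delta_n\theta^2/2}\,d\theta$; only then does its odd symmetry make it vanish. Also list the quadratic correction $\frac12\bigl(L_n^{\bullet\bullet}(r_n)-\Delta_n\bigr)\theta^2$ among the surviving terms. It contributes $\widetilde O(1)\,\Delta_n^{-3/2}$, which is again relatively $\widetilde O(n^{-1/2})$.
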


\paragraph*{Proof idea.}
The proof will be decomposed into several lemmas.
The core strategy is to perform an asymptotic expansion of the logarithm of the integrand near \(\theta=0\).
More precisely,
we set
\(L_n(z) := \ln \left(   {G_{k_n}(z)} \right) -  n \ln(z)
\)
so that the integral from Lemma~\ref{lem:dominating} becomes
\[ \int_{-4 n^{-1/4}}^{4 n^{-1/4}}  \exp\left( L_n \left( r_n e^{i \theta} \right) \right) d\theta .\]
By Taylor's theorem, we obtain the approximation
\(  L_n \left( r_n e^{i \theta} \right)  \approx  L_n \left( r_n \right) - \Delta_n \frac{ \theta^2 } 2\)
(the first-order term vanishes by our choice of \(r_n\)).
After appropriate rescaling, the integral becomes asymptotically equivalent to
\( \frac{ L_n(r_n) } {\sqrt {\Delta_n}} \int_{-\infty}^{+\infty} e ^{- \frac{ t^2 } 2 } dt \), which yields the claimed asymptotic.

The first lemma provides a convenient way to compute the Taylor coefficients of \( L_n \left( r_n e^{i \theta} \right)\).

\begin{lemma}
  Let \(r\) be a positive number, and \(f\) be an analytic function
  over the half-circle
  \[
  \left\{ r e^{i\theta} \
  \left|\ \theta \in \left[ - \frac \pi 2, \frac \pi 2 \right] \right. \right\}.\]
  Define the differential operator ${}^\bullet$
  by
  \(
    f^\bullet : z \mapsto z f'(z)  \).
 The Taylor expansion of  \(f(r e^{i \theta})\) around \(\theta = 0\)
    \begin{equation*}
    f(r e^{i \theta}) = f(r)
    + i  f^\bullet(r) \,\theta
    - f^{\bullet \bullet}(r) \frac{\theta^2}2
    - i  f^{\bullet\bullet\bullet}(r) \frac{\theta^3}6 + \frac 1 6 \int_0^\theta f^{\bullet\bullet\bullet\bullet}(r e^{it})(\theta - t)^3 dt
  \end{equation*}
  holds for every number $\theta$ with \( |\theta| \leq \frac \pi 2 \).
  \label{lem:taylor_theorem}
\end{lemma}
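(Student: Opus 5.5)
The plan is to reduce the statement to the one-variable Taylor theorem with integral remainder, applied to the function \(g(\theta) := f(r e^{i\theta})\) of the real variable \(\theta\). The hypothesis that \(f\) is analytic on (a neighbourhood of) the half-circle \(\{ r e^{i\theta} : |\theta| \le \pi/2\}\) makes \(g\) infinitely differentiable on \([-\pi/2,\pi/2]\). For every \(\theta\) in that interval we then have
\[
g(\theta) = g(0) + g'(0)\,\theta + g''(0)\frac{\theta^2}{2} + g'''(0)\frac{\theta^3}{6} + \frac{1}{6}\int_0^\theta g''''(t)\,(\theta - t)^3\,dt .
\]
Since \(g\) is complex-valued, this identity is applied separately to its real and imaginary parts, or one simply notes that the integral-remainder form follows from repeated integration by parts and is linear. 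It therefore holds verbatim for complex-valued \(C^4\) functions, with no mean-value issue.

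The key step is identifying the derivatives of \(g\) with the operator \({}^\bullet\). By the chain rule, \(\frac{d}{d\theta} h(r e^{i\theta}) = i r e^{i\theta} h'(r e^{i\theta}) = i\, h^\bullet(r e^{i\theta})\) for any function \(h\) analytic near the arc. Since \(f^\bullet, f^{\bullet\bullet}, \dots\) are again analytic there (they are products of \(z\) with derivatives of analytic functions), a straightforward induction gives \(g^{(m)}(\theta) = i^m f^{\bullet\cdots\bullet}(r e^{i\theta})\), with \(m\) bullets. Evaluating at \(\theta = 0\) gives the coefficients \(i f^\bullet(r)\), \(i^2 f^{\bullet\bullet}(r) = -f^{\bullet\bullet}(r)\) and \(i^3 f^{\bullet\bullet\bullet}(r) = -i f^{\bullet\bullet\bullet}(r)\). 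For \(m=4\), the remainder integrand is \(g''''(t) = i^4 f^{\bullet\bullet\bullet\bullet}(r e^{it}) = f^{\bullet\bullet\bullet\bullet}(r e^{it})\). Substituting these into the displayed formula yields exactly the expansion claimed in Lemma~\ref{lem:taylor_theorem}.

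There is no real obstacle. The only point requiring a little care is the regularity needed to apply Taylor's theorem with integral remainder up to order four. Analyticity of \(f\) on an open neighbourhood of the arc, which is how the hypothesis should be read, ensures that \(g\) is \(C^\infty\) on the closed interval, including at its endpoints. It also ensures that the remainder integral is a Riemann integral of a continuous function, so the formula holds for all \(|\theta| \le \pi/2\), including the endpoints.
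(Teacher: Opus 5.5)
Your proposal is correct and matches the paper's proof essentially step for step: the paper also proves by induction, via the chain rule, that $\frac{\partial^k}{\partial\theta^k} f(re^{i\theta}) = i^k f^{\bullet (k)}(re^{i\theta})$, and then applies Taylor's theorem with integral remainder to $\theta \mapsto f(re^{i\theta})$. Your two remarks are sound clarifications of points the paper leaves implicit: the integral-remainder form holds for complex-valued functions, and the hypothesis should be read as analyticity on a neighbourhood of the arc.
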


\begin{proof}
  We first establish by induction that for every \(k \geq 0\),
  \[ \frac{ \partial^k} {\partial \theta^k} \left( f \left(r e^{i\theta} \right) \right) = i^k f^{\bullet (k)} \left( r e^{i \theta} \right),
  \]
  where \({}^{\bullet (k)}\) denotes the operator \({} ^\bullet\)
  iterated \(k\) times.
  Indeed,
  the identity is obviously true for \(k=0\),
  and for \(k>0\),
  \begin{align*}
    \frac{ \partial^{k}} {\partial \theta^{k}} \left( f \left(r e^{i\theta} \right) \right)
    &= \frac\partial{\partial\theta}\left( i^{k-1} f^{\bullet (k-1)}(r e^{i\theta})\right) & \text{(by induction)} \\
    &= i^{k-1} \cdot i r e^{i\theta} \cdot {f^{\bullet (k-1)}}'(r e^{i\theta}) & \text{(by chain rule)} \\
    & = i^k f^{\bullet (k)} \left( r e^{i \theta} \right) & \text{(by definition of \(^\bullet\))}.
  \end{align*}
  Thus, evaluating at \(\theta = 0\) gives
  \[
  \frac{ \partial^k} {\partial \theta^k} \left( f \left(r e^{i\theta} \right) \right)\bigg|_{\theta = 0}
  = i^k f^{\bullet (k)} \left( r \right).
  \]
  The lemma now follows from Taylor's theorem with integral remainder applied to function \(\theta \mapsto f(r e^{i \theta})\).
\end{proof}

The next lemma shows that \(\beta_n\) is uniformly bounded in the regime of \(\alpha_n\) we consider.

\begin{lemma}
  There exists an integer \(n_0\) such that
  for every sequence~$\alpha_n$
  such that for every integer~\( n \geq 4 \),
  $k_n = \dfrac{n}{2} + \alpha_n \sqrt{\dfrac{n}{2}}$ is an integer and
  \(
    \dfrac { \ln(n/2) }  5    \leq
    \alpha_n    \leq
     \dfrac { \ln(n/2) }  3
  \),
  the sequence \(\beta_n\) defined by Lemma~\ref{lem:estimation_saddlepoint} satisfies
  \(
    \dfrac 1 2 \leq \beta_n \leq 2\) for every \( n \geq n_0 \).
    \label{lem:beta_between_half_and_2}
\end{lemma}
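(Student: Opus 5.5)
The plan is to use the monotonicity of Lemma~\ref{lem:decreasing_beta} to reduce the statement to two fixed extremal sequences, and then read off the limits of their $\beta_n$ from the asymptotic relation of Lemma~\ref{lem:estimation_saddlepoint}. Let $k_n^- = \lfloor \frac n2 + \frac{\ln(n/2)}5\sqrt{n/2}\rfloor$ and $k_n^+ = \lceil \frac n2 + \frac{\ln(n/2)}3\sqrt{n/2}\rceil$. Any admissible $k_n$ satisfies $k_n^-\le k_n\le k_n^+$. For $n$ large all three lie in $\{2,\dots,n\}$, so Lemma~\ref{lem:decreasing_beta} gives $\beta_n^{+}\le \beta_n \le \beta_n^{-}$, where $\beta_n^{\pm}$ are attached to $k_n^{\pm}$. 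These two sequences do not depend on $\alpha_n$. It therefore suffices to show $\beta_n^->\ldots$, more precisely $\beta_n^-\le 2$ and $\beta_n^+\ge \frac12$ for $n\ge n_0$; this $n_0$ is automatically uniform.

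Each extremal sequence has $\alpha_n^{\pm}=\frac{\ln(n/2)}{c}+O(n^{-1/2})$ with $c\in\{5,3\}$. This is $\widetilde O(1)$, so Lemma~\ref{lem:estimation_saddlepoint} applies. Moreover $\alpha_n^\pm$ dominates $\sqrt{\ln n}$, so Corollary~\ref{cor:beta_in_terms_of_alpha} gives
\[
\beta_n^{\pm}\sim \frac{\ln n}{4\alpha_n^{\pm}} \longrightarrow \frac c4 .
\]
Concretely, $\beta_n^-\to 5/4$ and $\beta_n^+\to 3/4$.

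The one point that needs care is that Corollary~\ref{cor:beta_in_terms_of_alpha} is only a first-order equivalence. Its proof drops $\ln\beta_n$ against $\ln n$ and expands $\sqrt{1+\ln n/(2\alpha_n^2)}$. With $\alpha_n\asymp \ln n$, the ratio $\ln n/(2\alpha_n^2)$ tends to $0$, so this expansion is legitimate. Still, I would double-check the limit directly. Because $\beta_n,\beta_n^{-1}=\widetilde O(1)$, we have $\ln\beta_n=O(\ln\ln n)$. Dividing the relation $\alpha_n = \frac{\ln(n/2)}{4\beta_n}-\frac{\beta_n}2+\frac{\ln\beta_n}{2\beta_n}+o(1)$ by $\ln(n/2)$ then gives
\[
\frac1c = \frac{1}{4\beta_n} + O\!\left(\frac{\beta_n+\ln\ln n\,/\beta_n}{\ln n}\right).
\]
If $\beta_n$ stays bounded away from $0$ and $\infty$, this forces $\beta_n\to c/4$. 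Unboundedness is excluded, since it would make the right-hand side tend to $0$ or blow up, either way contradicting $1/c>0$.

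Since $5/4<2$ and $3/4>1/2$, strictly, there is an $n_0$ beyond which $\beta_n^-<2$ and $\beta_n^+>1/2$. This concludes the argument. The margins are comfortable, so only the limits matter, not the rates. The main step where care is needed is this rigorous passage to the limit for the two extremal sequences; the rest is monotonicity.
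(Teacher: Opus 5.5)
Your proposal is correct and matches the paper's proof. You sandwich $\beta_n$ between the saddle-point parameters of the two $\alpha_n$-independent extremal sequences via Lemma~\ref{lem:decreasing_beta}, then use Corollary~\ref{cor:beta_in_terms_of_alpha} to get the limits $5/4$ and $3/4$, which lie strictly inside $[1/2,2]$; your extra direct check of these limits, dividing the saddle-point relation by $\ln(n/2)$, is a harmless robustness step the paper omits.
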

\begin{proof}
  Consider the sequences
   \( \lfloor \frac{n}{2} + \frac {\ln(n/2)} 5  \sqrt{\frac{n}{2}} \rfloor\)
   and
   \( \lceil \frac{n}{2} + \frac {\ln(n/2)} 3  \sqrt{\frac{n}{2}} \rceil\),
  which satisfy the hypotheses of Lemma~\ref{lem:estimation_saddlepoint}.
  Let \(\beta_n^+\) and \(\beta_n^-\) denote
  the corresponding \(\beta_n\) sequences.
  By Lemma~\ref{lem:decreasing_beta}, we have \(
    \beta_n^- \leq \beta_n \leq \beta_n^+\).

  By Corollary~\ref{cor:beta_in_terms_of_alpha},
  \(\beta_n^-\) converges to \(3/4\) and
  \(\beta_n^+\) converges to \(5/4\).
  Therefore there exists an integer \(n_0\) such that for every \(n \geq n_0\),
  we have  \(\frac 1 2 < \beta_n^-\) and \(\beta_n^+ < 2 \).
  This yields the claimed bounds.
\end{proof}

The following lemma provides uniform bounds on the derivatives of $L_n$ up to order four.

\begin{lemma}
Let \(L_n(z) = \ln \left(  G_{k_n}(z) \right) - n \ln(z) \).
There exist an integer \(n_0\)
and a \(\widetilde O(1)\) sequence \(\delta_{n}\)
such that for all \(n \geq n_0\),
for all sub-sequence \(\alpha_n\) satisfying the hypotheses of Proposition~\ref{prop:asympt_cauchy_integral},
and for all \(\theta \in \left[ - \frac \pi 2 , \frac \pi 2 \right]  \)
the following inequalities hold:
\begin{equation*}
   \left|
    L_n^{\bullet \bullet}(r_n)
    + \Delta_n   
   \right|
    < \delta_{n} ,
\end{equation*}
\begin{equation*}
   \left|  L_n^{\bullet \bullet \bullet}(r_n) \right|
   < \delta_{n} \sqrt{ n},
\end{equation*}
\begin{equation*}
   \left|  L_n^{\bullet \bullet \bullet \bullet}\left( r_n e ^{i \theta} \right) \right|
   < \delta_{n} \sqrt{ n },
\end{equation*}
where ${}^\bullet$ denotes the differential operator
  \(   z \frac{\partial}{\partial z}  \) and 
  \begin{equation*}
    \Delta_n := \left(\frac{\ln\frac n 2}{2\beta_n} + \frac{\ln\beta_n}{\beta_n} - \frac{1}{\beta_n} + \beta_n\right)\sqrt{\frac n 2}.
  \end{equation*}
  \label{lem:Lnbullet}
\end{lemma}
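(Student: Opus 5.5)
The plan is to work with the explicit formula for $L_n$ coming from Proposition~\ref{prop:Gkz}. Since $G_{k_n}(z) = z^{k_n}\prod_{j=0}^{k_n-1}\bigl(1+\frac{jz}{1-z}\bigr)$, we have
\[
L_n(z) = (k_n - n)\ln z + \sum_{j=0}^{k_n-1} \ln\bigl(1+(j-1)z\bigr) - k_n \ln(1-z).
\]
The key observation is that the operator ${}^\bullet$ acts very simply on each building block. We have $(\ln z)^\bullet = 1$. For $h_a(z)=\ln(1+az)$, $h_a^\bullet = \frac{az}{1+az}$, and every further application of ${}^\bullet$ produces a rational function in $w=az$ with denominator $(1+w)^{m}$. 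Concretely, $h_a^{\bullet\bullet} = \frac{w}{(1+w)^2}$, $h_a^{\bullet\bullet\bullet} = \frac{w(1-w)}{(1+w)^3}$, and $h_a^{\bullet\bullet\bullet\bullet} = \frac{w(1-4w+w^2)}{(1+w)^4}$. The term $-\ln(1-z)$ is the case $a=-1$.

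\textbf{Second derivative.} By Lemma~\ref{lem:beta_between_half_and_2}, $r_n=\beta_n\sqrt{2/n}$ with $\frac12\le\beta_n\le 2$, uniformly in $\alpha_n$. I compute
\[
L_n^{\bullet\bullet}(r_n) = \sum_{j} \frac{(j-1)r_n}{(1+(j-1)r_n)^2} + k_n\frac{r_n}{(1-r_n)^2}.
\]
Both terms are positive, so the actual second derivative is $-L_n^{\bullet\bullet}$ in the sense of the Taylor expansion of Lemma~\ref{lem:taylor_theorem}; the sign convention is absorbed there. The second term equals $\beta_n\sqrt{n/2}+\widetilde O(1)$.

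For the sum, I compare it with the integral
\[
\int_0^{k_n} \frac{tr}{(1+tr)^2}\,dt = \frac1r\Bigl[\ln(1+k_n r) + \frac{1}{1+k_n r} - 1\Bigr].
\]
The comparison costs an error $O(1)$, because the summand has bounded total variation. With $k_n r_n = \beta_n\sqrt{n/2}\,(1+\widetilde O(n^{-1/2}))$, this gives
\[
\frac{\sqrt{n/2}}{\beta_n}\Bigl(\tfrac12\ln\tfrac n2 + \ln\beta_n - 1\Bigr) + \widetilde O(1),
\]
which reproduces exactly the remaining terms of $\Delta_n$. Every error term is a polynomial in $\ln n$, $\beta_n$, $1/\beta_n$, $\alpha_n$ times a bounded constant, as in the proof of Lemma~\ref{lem:uniform_bound_saddlepoint}. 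The bounds $\frac12\le\beta_n\le2$ and $|\alpha_n|\le\ln n$ then turn this into a uniform $\widetilde O(1)$ sequence $\delta_n$.

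\textbf{Third and fourth derivatives.} Only crude bounds are needed here, so I bound the building blocks directly. For $w\ge0$ real, $\bigl|\frac{w(1-w)}{(1+w)^3}\bigr|\le \frac{w}{(1+w)^2}$. Hence $|L_n^{\bullet\bullet\bullet}(r_n)|$ is at most the same sums as above with absolute values, which is $\widetilde O(\sqrt n)$.

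For the fourth derivative at $z=r_ne^{i\theta}$ with $|\theta|\le\pi/2$, set $w=(j-1)r_ne^{i\theta}$, which satisfies $\mathrm{Re}\,w\ge0$. This gives $|1+w|\ge\max(1,|w|)$. Consequently
\[
\Bigl|\frac{w(1-4w+w^2)}{(1+w)^4}\Bigr|\le \frac{6|w|}{\max(1,|w|)^2}.
\]
Summing over $j$ yields
\[
6\sum_{j\le 1/r_n} j r_n + 6\sum_{1/r_n<j\le k_n}\frac{1}{jr_n} = O\Bigl(\frac1{r_n}\bigl(1+\ln(k_nr_n)\bigr)\Bigr) = \widetilde O(\sqrt n).
\]
The terms $-k_n\ln(1-z)$ and $(k_n-n)\ln z$ contribute $O(k_n r_n)=O(\sqrt n)$ and $0$ respectively, since $|1-z|\ge 1/2$.

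\textbf{Main obstacle.} I expect the main obstacle to be the second derivative. The estimate has to be accurate to $O(\widetilde O(1))$, not merely to the leading order, and it has to be uniform. The Euler–Maclaurin comparison must therefore be done carefully enough that the $-\frac{1}{\beta_n}\sqrt{n/2}$ term and the $\ln\beta_n$ term come out exactly, with all error constants independent of $\alpha_n$.
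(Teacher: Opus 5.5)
Your argument is correct, but it takes a different route from the paper's.

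\textbf{The paper's route.} The paper passes to the Gamma form of Proposition~\ref{prop:Gkz}. It writes $L_n^{\bullet\bullet}$, $L_n^{\bullet\bullet\bullet}$ and $L_n^{\bullet\bullet\bullet\bullet}$ as combinations of $D\psi_m(z)/z^{m+1}$ plus rational terms in $k_n$. It then uses the large-argument expansions of the polygamma functions at $z=r_n$. For the fourth derivative on the half-circle, it argues that $\psi(x)/\ln x$ and $x^m\psi^{(m)}(x)$ stay bounded on a region $U$ containing both arguments.

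\textbf{Your route.} You keep the product form instead:
\begin{itemize}
\item Each factor $\ln(1+(j-1)z)$ has explicit rational $\bullet$-derivatives in $w=(j-1)z$.
\item The digamma asymptotics are replaced by a sum--integral comparison. Its error is controlled by the total variation of $t\mapsto tr/(1+tr)^2$, which equals $1/2$ whatever $r$ is.
\item The closed form $\frac1r\bigl[\ln(1+k_nr)+\frac{1}{1+k_nr}-1\bigr]$ gives exactly the $\bigl(\frac{\ln(n/2)}{2\beta_n}+\frac{\ln\beta_n}{\beta_n}-\frac1{\beta_n}\bigr)\sqrt{n/2}$ part of $\Delta_n$.
\end{itemize}

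\textbf{Comparison.} The two computations are the same sums in disguise, since $\sum_{j<k_n}\frac{z}{1+(j-1)z}=\psi(x+k_n)-\psi(x)$ with $x=\frac{1-z}{z}$. Yours is more elementary. Uniformity in $\alpha_n$ and $\theta$ is visible at each step. The inequality $|1+w|\ge\max(1,|w|)$ for $\mathrm{Re}\,w\ge 0$ also gives a cleaner half-circle bound than the domain-$U$ argument. The paper's route, in return, reuses the Gamma/polygamma machinery of Lemma~\ref{lem:estimation_saddlepoint} and Proposition~\ref{prop:evalr}. It would also deliver further terms of the expansion if they were needed.

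\textbf{The sign in the statement.} What you actually prove is $L_n^{\bullet\bullet}(r_n)=\Delta_n+\widetilde O(1)$, i.e.\ $|L_n^{\bullet\bullet}(r_n)-\Delta_n|<\delta_n$. This is also what the paper's proof yields and what the proof of Lemma~\ref{lem:dominating} uses. The plus sign in front of $\Delta_n$ in the statement is a typo. As written, the inequality is false, since both quantities are positive of order $\sqrt n\ln n$. You should say this plainly. Because ${}^\bullet$ is defined as $z\,\partial_z$, no sign convention from Lemma~\ref{lem:taylor_theorem} can absorb it.

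\textbf{The $j=0$ factor.} For this factor $w=-r_ne^{i\theta}$, so $\mathrm{Re}\,w\ge0$ fails and your fourth-derivative bound does not apply to it as stated. Treat it like the $-k_n\ln(1-z)$ term: it contributes only $O(r_n)$ to the third and fourth derivatives.
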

\begin{proof}
By Proposition~\ref{prop:Gkz}, we have for \(n \geq 4\),
\begin{equation*}
  L_n(z) =
  (2k_n-n) \ln(z) - k_n \ln(1-z)
  + \ln \Gamma \left( k_n + \frac {1-z} z \right) - \ln \Gamma \left(  \frac {1-z} z \right).
\end{equation*}
Direct differentiation gives
\begin{equation*}
  L_n^\bullet(z) =
  - \frac {D\digamma_0(z)} z  + k_n - n + \frac {k_n} {1-z},
\end{equation*}
where we introduce the notation
\begin{equation*}
  D\digamma_m(z) := \digamma^{(m)}\left(  k_n + \frac{1-z} z\right) - \digamma^{(m)}\left( \frac{1-z} z\right),
\end{equation*}
with \(\digamma^{(m)}\)  the $m$th polygamma function,
i.e. the \((m+1)\)th derivative of \(\ln \Gamma\).
This notation is convenient because it allows us to use the identity
\((D\digamma_m)'(z) = - D\digamma_{m+1}(z)/z^2  \).

Thus we get the expressions
\begin{equation}
    L_n^{\bullet \bullet}(z) =
  \frac {D\digamma_0(z)} z + \frac {D\digamma_1(z)} {z^2}  + \frac { z } {(1-z)^2} k_n,
  \label{eq:bullet2}
\end{equation}
\begin{equation}
    L_n^{\bullet \bullet \bullet}(z) =
  - \frac {D\digamma_0(z)} z - 3 \, \frac {D\digamma_1(z)} {z^2} - \frac {D\digamma_2(z)} {z^3}  +  \frac { z  (1 + z)} {(1-z)^3} k_n,
  \label{eq:bullet3}
\end{equation}
\begin{equation}
    L_n^{\bullet \bullet \bullet \bullet}(z) =
  \frac {D\digamma_0(z)} z + 7 \,\frac {D\digamma_1(z)} {z^2} + 6 \, \frac {D\digamma_2(z)} {z^3} + \frac {D\digamma_3(z)} {z^4} +  \frac {z  (z^2 + 4z + 1)} {(1-z)^4} k_n.
  \label{eq:bullet4}
\end{equation}
We now use the asymptotic estimates~\cite[6.3.18, 6.4.11]{abramowitz-stegun}
\begin{equation}
  \digamma (x) \underset{x \rightarrow +\infty}{\sim} \ln(x) + O\left(  \frac 1 x \right) \text{ and }  \digamma^{(m)} (x) \underset{x \rightarrow +\infty}{\sim} (-1)^{m-1} \frac{(m-1)!}{x^m} + O\left(  \frac 1 {x^{m+1}} \right) \text{for }m \geq 1
  \label{eq:asympt_polygamma}
\end{equation}
valid for all \(x\) with \(\left| \arg(x) \right| \leq \frac {3\pi} 4 \).
We then deduce that
\begin{equation*}
  \frac {D\digamma_0(r_n)} {r_n} \underset{n \rightarrow +\infty}{\sim}  \frac{ \ln(n/2) + 2 \ln(\beta_n) } { 2 \beta_n } \sqrt{\frac n 2} + \widetilde O(1)
\end{equation*}
and for $m \geq 1$,
\begin{equation*}
  \frac {D\digamma_m(r_n)} {r_n^{\ m+1}} \underset{n \rightarrow +\infty}{\sim}  (-1)^m\frac{ (m-1)! } { \beta_n } \sqrt{\frac n 2} + \widetilde O(1),
\end{equation*}
where the \(\widetilde O(1)\) terms are independent of \(\alpha_n\)
(similarly as we did in the proof of Lemma~\ref{lem:uniform_bound_saddlepoint}).
Likewise, we have the uniform bounds
\begin{equation*}
   \frac { r_n } {(1-r_n)^2} k_n
   \underset{n \rightarrow +\infty}{\sim}
    \beta_n \sqrt{n/2} + \widetilde O(1)
    \quad \text{and} \quad
    \frac { r_n  (1 + r_n)} {(1-r_n)^3} k_n
    \underset{n \rightarrow +\infty}{\sim}
    \widetilde O\left(  \sqrt{n} \right).
\end{equation*}
Plugging these asymptotic estimates into Equations~\eqref{eq:bullet2} and \eqref{eq:bullet3} yields the first two inequalities of the statement.

\begin{figure}[ht!]
\begin{center}
\includegraphics[width=0.5\textwidth]{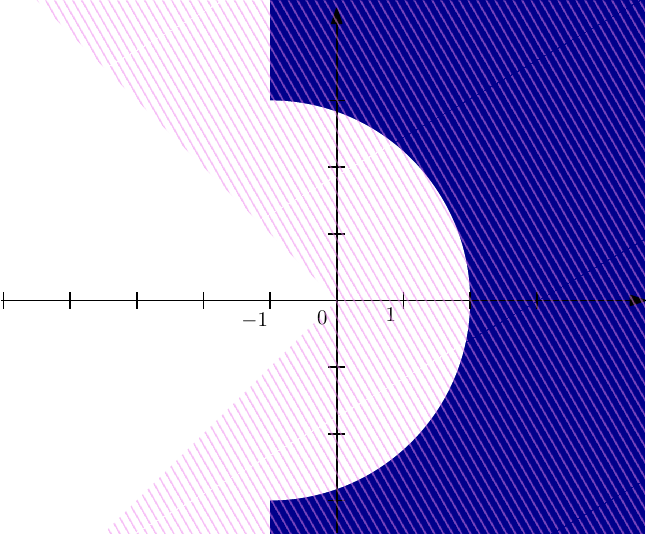}
\end{center}
\vspace{-7pt}
\caption{ The plain blue region depicts the domain \(U\),
where the functions \( \dfrac{ D\digamma_m(z) } { z^{m}  \ln(n) } \) are bounded.
The hatched purple region corresponds to the domain  \(\left| \arg(x) \right| \leq \frac {3\pi} 4 \) where Equation~\eqref{eq:asympt_polygamma} holds.
}
\label{fig:domain_psi}
\end{figure}

To establish the third inequality, we consider
 \(
  U := \left\{x \ | \  |x+1| \geq 3, |\arg (x+1)| \leq \frac \pi 2 \right\}
 \)
 (see Figure~\ref{fig:domain_psi}).
For all \(\theta \in \left[ - \frac \pi 2, \frac \pi 2 \right]\),
and for all \(n\) sufficient large (with the threshold independent of \(\alpha_n\)),
both
\(k_n + \frac{1 - r_n e ^{i \theta}} { r_n e ^{i \theta}} \)
and \(\frac{1 - r_n e ^{i \theta} } { r_n e ^{i \theta}} \) belong to \(U\) since they are of the form \(k + \frac {e^{-i \theta}} {r_n }\) where $k \geq -1$.
But \( \psi(x) / \ln(x) \) and \( x^m \times \psi^{(m)}(x) \) are bounded over
$U$:
this follows from the above asymptotic expansions of polygamma functions
(which are uniform in argument) and their continuity on \(U\).
Consequently, for any \( m \geq 0\),
\( \dfrac{ D\digamma_m(z) } { z^{m}  \ln(n) } \) is bounded  over the half-circle $|z| = r_n$ and $|\arg z| \leq \frac \pi 2 $, uniformly in \(n\).

For the rational term in \eqref{eq:bullet4},
note that
\(\frac{z^2 + 4z + 1}{(1-z)^4}\) is continuous with value \(1\) at \(z=0\).
Thus for sufficiently large \(n\),
we have \(\left| \frac {z^2 + 4z + 1} {(1-z)^4}\right|  < 2\)
for all \(\left| z \right| = r_n\).
This gives
\begin{equation*}
  \left|\frac{z^2(z^2 + 4z + 1)}{(1-z)^4} k_n\right| < 2 r_n^2 k_n = \frac{k_n}{n} \beta_n^2 \leq \beta_n^2 < 4
\end{equation*}
(we used Lemma~\ref{lem:beta_between_half_and_2} for the last inequality).

Combining these bounds in Equation~\eqref{eq:bullet4} shows that \(  \dfrac { L_n^{\bullet \bullet \bullet \bullet} \left(  r_n e^{i \theta} \right)  } {r_n \ln(n)} \) is uniformly bounded,
yielding the third inequality.
\end{proof}

We are now equipped to estimate the central part of Cauchy's integral with Taylor's Theorem.

\begin{proof}[Proof of Lemma~\ref{lem:dominating}]
  We apply Lemma~\ref{lem:taylor_theorem}
  to
  \(L_n(z) = \ln \left(  G_{k_n}(z) \right) - n \ln(z) \) with \(r = r_n\).
  Since \(     L_n^\bullet ( r_n ) = 0    \) by definition
  (equivalently,  \(r_n G_{k_n}'(r_n) = n G_{k_n}(r_n)\)),
  we obtain the Taylor expansion

  \begin{equation*}
    L_n(r_n e^{i \theta}) = L_n(r_n)
    - L_n^{\bullet \bullet}(r_n) \frac{\theta^2}2
    - i  L_n^{\bullet\bullet\bullet}(r_n) \frac{\theta^3}6 + \frac 1 6 \int_0^\theta L_n^{\bullet\bullet\bullet\bullet}(r_n e^{it})(\theta - t)^3 dt
  \end{equation*}
  for all \(\theta \in \left[ - 4 n^{1/4},  4 n^{1/4} \right]\).
  By Lemma~\ref{lem:Lnbullet},
  there exists a \(\widetilde O(1)\) sequence \(\delta_n\)
  such that for all $n$ sufficiently large, and for all \(\theta \in \left[ - 4 n^{1/4},  4 n^{1/4} \right]\),
  \begin{equation*}
    \left| \int_0^\theta L_n^{\bullet\bullet\bullet\bullet}(r_n e^{it})(\theta - t)^3 dt \right|
    < \delta_n \sqrt{n} \,\theta^4
    \leq 4^4 \delta_n \frac 1 {\sqrt{n}}.
  \end{equation*}
  Similarly, by the same lemma, we have for all \(\theta \in \left[ - 4 n^{1/4},  4 n^{1/4} \right]\),
  \begin{equation*}
    -L_n^{\bullet \bullet}(r_n) \frac{\theta^2}2 = - \Delta_n \frac {\theta^2} 2  + \widetilde{O}\left( \frac 1 {\sqrt n} \right)
    \quad \text{and} \quad
    - i L_n^{\bullet \bullet \bullet}(r_n) \frac{\theta^3}6 = \widetilde{O}\left( n^{-\frac 1 4} \right),
  \end{equation*}
  where the \(\widetilde{O}\) terms are independent of \(\alpha_n\) and \(\theta\).
  We get by combining these estimates
  \begin{equation}
   \frac{  G_{k_n} \left( r_n e^{i \theta}\right) }
   {\left( r_n e^{i \theta} \right)^{n} }
    = e^{ L_n(r_n e^{i \theta}) }
    = \exp\left( L_n(r_n)
    - \Delta_n \frac{\theta^2}2 \right) 
    \exp \left(  - i L_n^{\bullet \bullet \bullet}(r_n) \frac{\theta^3}6 +  \widetilde{O}\left( \frac 1 {\sqrt n} \right) \right).
    \label{eq:expansion_integrant}
  \end{equation}
  Since \( - i L_n^{\bullet \bullet \bullet}(r_n) \frac{\theta^3}6\) tends to zero, 
  we have the approximation
  \begin{equation*}
     \exp \left(  - i L_n^{\bullet \bullet \bullet}(r_n) \frac{\theta^3}6 +  \widetilde{O}\left( \frac 1 {\sqrt n} \right) \right) 
     =
      1 - i L_n^{\bullet \bullet \bullet}(r_n) \frac{\theta^3}6 +  \widetilde{O}\left( \frac 1 {\sqrt n} \right)
  \end{equation*}
  still uniformly in \(\alpha_n\) and \(\theta\).
  Using this estimate and 
  \(\exp\left( L_n\left( r_n \right) \right)  = G_{k_n}(r_n)r_n^{\ -n} \) 
  in \eqref{eq:expansion_integrant}, we obtain after integration
  \begin{multline*}
    \int_{-4 n^{-1/4}}^{4 n^{-1/4}}
      \frac{  G_{k_n} \left( r_n e^{i \theta}\right) }
   {\left( r_n e^{i \theta} \right)^{n} }  d\theta
   = \frac{
      G_{k_n} \left( r_n \right) 
  }
   {r_n^{\ n}}
   \left( 1 +  \widetilde{O}\left( \frac 1 {\sqrt n} \right) \right)
   \int_{-4 n^{-1/4}}^{4 n^{-1/4}} e^{-\Delta_n \frac{\theta^2}2} d\theta 
   \\
   - \frac{
      G_{k_n} \left( r_n \right) i L_n^{\bullet \bullet \bullet}(r_n)
  }
   {r_n^{\ n}}
   \int_{-4 n^{-1/4}}^{4 n^{-1/4}} 
    \frac{\theta^3}6 e^{-\Delta_n \frac{\theta^2}2} d\theta.
  \end{multline*}
  The second integral vanishes because the integrand \( \theta \mapsto \frac{\theta^3}6 e^{-\Delta_n \frac{\theta^2}2} \) is an odd function integrated over the symmetric interval  \(\left[ -4n^{-1/4}, 4n^{-1/4} \right]\).
  As for the first integral, it reduces by a variable change to
  \begin{equation*}
    \int_{-4  n^{-\frac 1 4}}^{4  n^{-\frac 1 4}} e^{-\Delta_n \frac{\theta^2}2} d\theta 
    = \sqrt{\frac 2 {\Delta_n}}
     \int_{-2 \sqrt 2  n^{-\frac 1 4} \sqrt{\Delta_n}}^{2 \sqrt 2  n^{-\frac 1 4} \sqrt{\Delta_n}} e^{-x^2} dx 
     = \sqrt{\frac {2\pi} {\Delta_n}} \left( 1 - \text{erfc}\left( 2 \sqrt 2  n^{-\frac 1 4} \sqrt{\Delta_n} \right) 
       \right),
  \end{equation*}
  where \(\text{erfc}\)
  is the complementary error function.
  Observe that \( n^{-1/4} \sqrt{\Delta_n} \) is of the same order as \(\sqrt{\ln (n)}\) 
  (remember that \(\beta_n\) ultimately lies between \(1/2\) and \(2\) by Lemma~\ref{lem:beta_between_half_and_2}), 
  hence diverges to \(+\infty\).
  But \(\text{erfc}\) admits the following expansion~\cite[7.1.23]{abramowitz-stegun}
  \begin{equation*}
    \text{erfc}(x)  \underset{x \rightarrow +\infty}{\sim} \frac{ 1 + O(x^2) } {x e^{x^2} \sqrt{\pi}}.
  \end{equation*}
  This yields the uniform estimate
    \begin{equation*}
    \text{erfc}\left( 2 \sqrt 2  n^{-\frac 1 4} \sqrt{\Delta_n} \right)
    = \widetilde O\left( n^{-\frac{2\sqrt2}{\beta_n}}\right)
    = \widetilde O\left( n^{-\sqrt 2} \right).
  \end{equation*}
  Substituting back into our integral formula, we get
  \begin{equation*}
    \int_{-4 n^{-1/4}}^{4 n^{-1/4}}
      \frac{  G_{k_n} \left( r_n e^{i \theta}\right) }
   {\left( r_n e^{i \theta} \right)^{n} }  d\theta
   = 
   \frac{
      G_{k_n} \left( r_n \right) 
  }
   {r_n^{\ n}}
   \sqrt{\frac {2\pi} {\Delta_n}}
   \left( 1 +  \widetilde{O}\left( \frac 1 {\sqrt n} \right) \right),
  \end{equation*}
  as claimed.
\end{proof}

\subsubsection{Non-dominating Part of Cauchy's Integral}

We now establish a uniform bound on the region
\( \left| \theta \right| \geq 4 n^{-1/4}\),
complementing Lemma~\ref{lem:dominating}.
%he error term is of order \(n^{-2\sqrt{2}-1/2}\),
%which leaves comfortable margin for the overall asymptotic analysis.

\begin{lemma}
  Under the hypotheses of Proposition~\ref{prop:asympt_cauchy_integral},
  there exist
  an integer \(n_0\) and
  a \(\widetilde O(1)\) sequence \(\delta_n\) such that for every $n \geq n_0$,
  \begin{equation*}
    \left|
      \int_{ 4 n^{-1/4} \leq \left| \theta \right| \leq \pi }
      \left( r_n e^{i \theta} \right)^{-n} G_{k_n} \left( r_n e^{i \theta}\right)  d\theta
    \right|
      <
      \frac{\delta_n} {n^{\sqrt{2}}} \times  r_n^{-n} G_{k_n}(r_n).
  \end{equation*}
  \label{lem:negligible_part_cauchy}
\end{lemma}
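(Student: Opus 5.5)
The plan is to bound the modulus of the integrand pointwise, using the product form of $G_{k_n}$ from Proposition~\ref{prop:Gkz}, namely
\[
\frac{|G_{k_n}(r_n e^{i\theta})|}{G_{k_n}(r_n)} = \prod_{j=0}^{k_n-1} \frac{\left|1-z+jz\right|}{(1-r_n+jr_n)}\cdot\frac{(1-r_n)}{|1-z|}, \qquad z=r_ne^{i\theta}
\]
(the factor $z^{k_n}$ has modulus $r_n^{k_n}$ and the $z^{-n}$ factor has modulus $r_n^{-n}$, so they cancel in the ratio). Write each factor of the product as $|z + \tfrac{1-z}{j}\cdot\ldots|$. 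More simply, work with $1+\frac{jz}{1-z}$. The expected key inequality is
\[
\left|1+\frac{jz}{1-z}\right|^2 \le \left(1+\frac{jr_n}{1-r_n}\right)^2\left(1 - c\,\frac{j r_n}{(1+jr_n)^2}(1-\cos\theta)\right)
\]
for some absolute $c>0$, up to $O(r_n)$ corrections uniform in $j$. This is obtained by expanding $|1-z+jz|^2 = (1-r_n\cos\theta + jr_n\cos\theta)^2 + (j-1)^2r_n^2\sin^2\theta$ and comparing with $(1+(j-1)r_n)^2$. The ratio equals $1-\frac{2(j-1)r_n(1-r_n)(1-\cos\theta)}{(1+(j-1)r_n)^2}$ exactly, up to the $|1-z|/(1-r_n)=1+O(r_n)$ factor, which contributes only $e^{O(k_nr_n^2)}=e^{O(1)}$ overall since $r_n=\beta_n\sqrt{2/n}$ with $\beta_n\in[1/2,2]$ by Lemma~\ref{lem:beta_between_half_and_2}.

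Taking logarithms and using $\ln(1-u)\le -u$, one gets
\[
\ln\frac{|G_{k_n}(z)|}{G_{k_n}(r_n)} \le O(1) - (1-\cos\theta)\sum_{j=1}^{k_n-1}\frac{(j-1)r_n(1-r_n)}{(1+(j-1)r_n)^2}.
\]
The sum is a Riemann sum with step $r_n$ for $\frac1{r_n}\int_0^{k_nr_n}\frac{t}{(1+t)^2}dt$. Since $k_nr_n\asymp\sqrt n$, this is $\frac{1}{r_n}(\ln(k_nr_n)+O(1))$, which is of order $\frac{\sqrt{n/2}}{\beta_n}\cdot\frac12\ln n$ and hence close to $\Delta_n$. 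So the integrand is bounded by $e^{O(1)}\exp(-\Delta'_n(1-\cos\theta))\,r_n^{-n}G_{k_n}(r_n)$ with $\Delta'_n\sim\Delta_n$ uniformly in the admissible $\alpha_n$. On $|\theta|\ge 4n^{-1/4}$ we have $1-\cos\theta\ge \frac{2}{\pi^2}\theta^2\ge \frac{32}{\pi^2}n^{-1/2}$, so the bound becomes $\exp(-C\frac{\ln n}{\beta_n})$, with $C$ explicit. Integrating over a set of length at most $2\pi$ gives the claimed $n^{-\sqrt2}$ decay provided the constant is large enough.

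The main obstacle is exactly this constant. The crude $\frac{2}{\pi^2}$ lower bound on $1-\cos\theta$ loses too much, so I would split the region into two parts. For $4n^{-1/4}\le|\theta|\le\theta_0$, with $\theta_0$ a small fixed constant, use $1-\cos\theta\ge\frac{\theta^2}{2}(1-\theta_0^2/12)$. Then the exponent is at least $\frac{\Delta_n}{2}\cdot16n^{-1/2}(1-\epsilon)\approx\frac{4\ln(n/2)}{\sqrt2\,\beta_n}(1-\epsilon)$. This is $\ge (2\sqrt2-\epsilon')\ln n$ when $\beta_n\le 1+o(1)$, mirroring the erfc computation in Lemma~\ref{lem:dominating}. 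For $|\theta|\ge\theta_0$ the exponent is of order $\sqrt n\ln n$, which is overwhelmingly small. Care is needed to make the Riemann-sum estimate and the $O(r_n)$ corrections uniform in $\alpha_n$, which I would handle via the monotonicity Lemma~\ref{lem:decreasing_beta}, as in Lemma~\ref{lem:beta_between_half_and_2}. If the constant comes out short for $\beta_n$ near $2$, I would tighten the window $[\ln(n/2)/5,\ln(n/2)/3]$ or absorb the loss into the $\widetilde O(1)$ factor $\delta_n$.
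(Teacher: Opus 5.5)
Your argument is sound in substance and reaches the same exponent $2\sqrt2/\beta_n$ as the paper, but by a different route. The paper first uses Lemma~\ref{lem:unimodal} (proved with exactly your factor-by-factor identity) to bound the whole tail by $2\pi r_n^{-n}|G_{k_n}(r_ne^{4in^{-1/4}})|$. It then estimates this single value by rerunning the Stirling expansion from the proof of Proposition~\ref{prop:evalr} with the complex parameter $\widehat\beta_n=\beta_ne^{4in^{-1/4}}$ and comparing real parts.

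You instead use $\ln(1-u)\le -u$ to prove directly the real-variable bound $|G_{k_n}(r_ne^{i\theta})|/G_{k_n}(r_n)\le\exp(-(1-\cos\theta)S_n)$ for every $\theta$. Here $S_n=\sum_{m=0}^{k_n-2}\frac{mr_n}{(1+mr_n)^2}$, which you handle by a Riemann sum. This is more elementary, since it proves only the one-sided inequality actually needed and avoids complex Stirling asymptotics and tracking which terms are purely imaginary. The paper's version is shorter on the page because it recycles expansions already established.

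Two small corrections:
\begin{itemize}
\item The exact identity is $|1+(j-1)z|^2/(1+(j-1)r_n)^2=1-\frac{2(j-1)r_n(1-\cos\theta)}{(1+(j-1)r_n)^2}$, with no factor $1-r_n$.
\item The factors $(1-r_n)/|1-z|$ are simply $\le 1$ and can be dropped. Their product is about $\exp(-k_nr_n(1-\cos\theta))$, not $e^{O(1)}$. It works in your favor, and it is exactly the $\beta_n\sqrt{n/2}$ term separating $S_n$ from $\Delta_n$.
\end{itemize}

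The one loose end is the constant, and neither fallback you mention is legitimate. The window $[\ln(n/2)/5,\ln(n/2)/3]$ is a hypothesis you cannot shrink. A loss of $n^{\epsilon}$ cannot be absorbed into a $\widetilde O(1)$ sequence.

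Fortunately nothing needs fixing. Since $1-\cos\theta$ is increasing on $[0,\pi]$, the whole tail satisfies $1-\cos\theta\ge 1-\cos(4n^{-1/4})=8n^{-1/2}(1+O(n^{-1/2}))$. So there is no need to split at $\theta_0$, and no factor $1-\theta_0^2/12$ is lost. Your Riemann sum gives $S_n=\frac{\sqrt{n/2}}{2\beta_n}\ln\frac n2+O(\sqrt n)$ uniformly. This additive precision is what you must use; a mere $\Delta_n'\sim\Delta_n$ would cost a factor $n^{o(1)}$. Combining the two, the exponent is $\frac{2\sqrt2}{\beta_n}\ln\frac n2+O(1)$, and $\beta_n\le 2$ gives exactly the factor $n^{-\sqrt2}$, as in the paper.

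Even your split version closes. The hypotheses actually force $\beta_n\le\frac54+o(1)$: the proof of Lemma~\ref{lem:beta_between_half_and_2} shows $\beta_n^+\to\frac54$. Hence $2\sqrt2/\beta_n\ge\frac{8\sqrt2}{5}-o(1)>\sqrt2$, with room to spare.
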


The proof strategy is to simply bound the integrand by its maximum value. 
The following lemma establishes that the integrand is unimodal for \(\theta \in [-\pi,\pi]\) with a peak at \(0\), 
which implies that the maximum on the tail region is attained at the endpoints \(\theta = \pm 4 n^{-1/4}\).

\begin{lemma}
  For any radius~$0 < r < 1$ and for any~$k \geq 2$, the function~$\theta \mapsto|G_k(r e^{i\theta})|$ is strictly increasing on~$[-\pi, 0]$ and strictly decreasing on~$[0, \pi]$.
  \label{lem:unimodal}
\end{lemma}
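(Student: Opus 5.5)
The approach is to use the product formula of Proposition~\ref{prop:Gkz},
\[
G_k(z) = z^k \prod_{j=0}^{k-1}\left(1+\frac{jz}{1-z}\right) = \frac{z^k}{(1-z)^k}\prod_{j=0}^{k-1}\bigl(1+(j-1)z\bigr).
\]
Taking moduli at $z = re^{i\theta}$ splits $|G_k(re^{i\theta})|$ into three parts. The first, $r^k$, does not depend on $\theta$. The second is $|1-re^{i\theta}|^{-k}$. The third is the product of the moduli $|1+(j-1)re^{i\theta}|$ for $j=0,\dots,k-1$. Since $|G_k(re^{-i\theta})| = |G_k(re^{i\theta})|$ (real coefficients), the function is even in $\theta$. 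It therefore suffices to prove strict decrease on $[0,\pi]$, and strict increase on $[-\pi,0]$ then follows by symmetry.

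For each factor I would write the squared modulus as a function of $\cos\theta$. For a real $c$ we have
\[
|1+c\,re^{i\theta}|^2 = 1 + c^2r^2 + 2cr\cos\theta .
\]
For $c>0$ this is strictly increasing in $\cos\theta$, hence strictly decreasing in $\theta$ on $[0,\pi]$. For $c=0$ it is constant, and for $c<0$ it is increasing in $\theta$.

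Applying this factor by factor:
\begin{itemize}
\item The factor $|1-re^{i\theta}|^2 = 1+r^2-2r\cos\theta$ is strictly increasing in $\theta$ on $[0,\pi]$. It enters with the negative exponent $-k/2$, so its contribution is strictly decreasing.
\item For $j\ge 2$ the factor $|1+(j-1)re^{i\theta}|$ has $c=j-1>0$, so it is strictly decreasing in $\theta$ on $[0,\pi]$.
\item For $j=1$ the factor equals $1$.
\end{itemize}

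The one delicate point is $j=0$, which gives the factor $|1-re^{i\theta}|$ with $c=-1$; it is increasing in $\theta$. It cancels one power of the denominator, so $|G_k(re^{i\theta})|$ becomes $r^k$ times $|1-re^{i\theta}|^{-(k-1)}$ times $\prod_{j=2}^{k-1}|1+(j-1)re^{i\theta}|$. Because $k\ge 2$, the exponent $k-1$ is at least $1$, so the remaining power of $|1-re^{i\theta}|^{-1}$ is still strictly decreasing. The remaining product is a product of positive, non-increasing (indeed strictly decreasing when $k\ge 3$) functions.

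The conclusion follows because a product of positive functions, one of which is strictly decreasing and the others non-increasing, is strictly decreasing. Positivity of every factor needs $1+(j-1)re^{i\theta}\neq 0$, which holds since $0<r<1$ and $j\ge 2$ give $(j-1)r>0$, and the modulus vanishes only if $(j-1)r=1$ and $\theta=\pi$. If $(j-1)r=1$ the factor is zero at $\theta=\pi$, so I would treat this non-strict endpoint separately: the factor is still non-increasing, and the strictly decreasing factor $|1-re^{i\theta}|^{-(k-1)}$ gives strictness on $[0,\pi)$. At $\theta=\pi$ the value is $0$, below every earlier value, so strictness holds on all of $[0,\pi]$.

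The key step is noticing the cancellation of the $j=0$ factor against one power of $(1-z)$; everything else is elementary.
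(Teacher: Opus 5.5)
Your proof is correct and follows essentially the same route as the paper: write each squared modulus $|1+c\,re^{i\theta}|^2$ as an affine function of $\cos\theta$ and multiply the resulting monotone factors. The only difference is bookkeeping: the paper drops the trivial $j=0$ factor and treats each $1+\frac{jz}{1-z}$, $j\ge 1$, as a ratio of a decreasing numerator over an increasing denominator, whereas you pool the powers of $|1-re^{i\theta}|$. Your separate treatment of the zero at $\theta=\pi$ when $(j-1)r=1$ makes explicit a point the paper leaves implicit.
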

\begin{proof}
  For any~$j \geq 1$, we have
  \begin{equation*}
    \left|1 + \frac{j r e^{i\theta}}{1 - r e^{i\theta}}\right|^2
    = \frac{|1 + (j-1)r e^{i\theta}|^2}{|1 - r e^{i\theta}|^2}
    = \frac{1 + (j-1)^2 r^2 + 2(j-1)r \cos(\theta)}{1 + r^2 - 2 r \cos(\theta)}
  \end{equation*}
  which is an even function of~$\theta$ and decreasing on~$[0, \pi]$
  as the ratio between a decreasing and an increasing function.
  Taking square roots preserves this monotonicity.
  We multiply those quantities for~$j \geq 1$, and deduce that
  \(
    |G_k(r e^{i\theta})| = r^k \prod_{j=1}^{k-1} \left|1 + \frac{j r e^{i\theta}}{1 - r e^{i\theta}}\right|
  \)
     is unimodal in~$\theta$.
\end{proof}

\begin{proof}[Proof of Lemma~\ref{lem:negligible_part_cauchy}]
  We start by giving an asymptotic estimate of
  \(  \left| G_{k_n} \left(  r_n e^{i 4 n^{-1/4}} \right) \right|\)
   relatively to \(  \left| G_{k_n} \left(  r_n  \right) \right|\).
To do so, we apply the same analysis as in the proof of Proposition~\ref{prop:evalr}, which yields\footnote{ this is the same calculation that led to Equation~\eqref{eq:asympt_with_mixed_alpha_and_beta}}
\begin{equation}
    \frac {G_{k_n} \left( r_n e^{i 4 n^{-1/4}}  \right)}
    {\left( r_n e^{i 4 n^{-1/4}}  \right)^{n}}
      =
      \left( \frac n {2e} \right)^{\frac n 2}
    \exp\left(
        \begin{aligned}
           \left(
            \frac{\ln\frac n 2 }{2 \widehat \beta_n} + 2\alpha_n\ln\widehat\beta_n + \widehat \beta_n + \frac{\ln \widehat \beta_n}{\widehat \beta_n} + \frac{1}{\widehat \beta_n}\right)
            \sqrt{\frac n 2}
          + \alpha_n\widehat \beta_n  \\
          + \frac{\widehat \beta_n^2}{2}
          + \frac{(\alpha_n + \frac{1}{\widehat \beta_n})^2}{2}
          - \frac{3}{4}\ln \frac n 2 - \frac{3}{2}\ln\widehat \beta_n
          +\widetilde O \left( \frac 1 {\sqrt n}\right)
        \end{aligned}
        \right)
        \label{eq:with_i_theta}
  \end{equation}
  where \(\widehat \beta_n := \beta_n e^{i 4 n^{-1/4}}\) and the \(\widetilde O \left( \frac 1 {\sqrt n}\right)\) term is independent of \(\alpha_n\).
  We now divide Equation~\eqref{eq:with_i_theta} by Equation ~\eqref{eq:asympt_with_mixed_alpha_and_beta}. 
  To simplify this quotient, we substitute \(\widehat \beta_n\) by its Taylor expansion
    \begin{equation*}
    \widehat \beta_n = 
    \beta_n \left( 1 + i 4 n^{-1/4} - 8 n^{-1/2} + O\left( n^{-3/4} \right)  \right) 
  \end{equation*}
  and form the differences between similar terms in the exponent.
  We observe that:
  \begin{itemize}
    \item the difference of terms scaling in \(O\left( \sqrt{n} \right)\) or less gives purely imaginary contributions, or \(O(1)\) terms;
    \item   the real part of \( 
  \frac{\ln (n/2)}{2 \widehat \beta_n} \sqrt{n/2}  - \frac{\ln (n/2)}{2 \beta_n} \sqrt{n/2}
  \) is equivalent to \( - \frac{2 \sqrt 2}{\beta_n} \ln \left( \frac n 2  \right)
  + O\left( \ln(n) n^{-1/4} \right)\);
  \item the difference \(2 \alpha_n \ln \widehat \beta_n \sqrt{n/2} - 2 \alpha_n \ln \beta_n  \sqrt{n/2} \) is equal to \(8 i \alpha_n  n^{-1/4} \sqrt{n/2} \) (it is purely imaginary). 
  \end{itemize} 
  After  these simplifications, we arrive at
  \begin{equation*}
    \frac {G_{k_n} \left( r_n e^{i 4 n^{-1/4}}  \right)}
    {G_{k_n} \left( r_n  \right) e^{i 4 n^{3/4}} }
      =  
      \exp 
      \left( i \times \text{real\_sequence}_n 
      - \frac{2 \sqrt 2}{\beta_n} \ln \left( \frac n 2  \right)+ O(1)
       \right)
  \end{equation*}
  where \(\text{real\_sequence}_n \) is some sequence of real numbers,
  and the \(O(1)\) term is uniform.
  By taking the modulus, 
  we deduce that there exists a bounded sequence $\delta_n$, independent of \(\alpha_n\), 
  such that for all sufficiently large \(n\),
  \begin{equation}
    \left| 
\frac {G_{k_n} \left( r_n e^{i 4 n^{-1/4}}  \right)}
    {G_{k_n} \left( r_n  \right) }
     \right|
     \leq  \delta_n  \exp\left(  - \frac{2 \sqrt 2}{\beta_n} \ln \left( \frac n 2   \right) \right) 
     \leq \delta_n \left( \frac n 2 \right)^{-\sqrt{2}}.
     \label{eq:bound_ratio}
  \end{equation}
  (For the last inequality, we have used Lemma~\ref{lem:beta_between_half_and_2} 
  which states that ultimately \(\beta_n \leq 2\).)
 We now bound
  the integral
  \begin{equation*}
     \left|
      \int_{ 4 n^{-1/4} \leq \left| \theta \right| \leq \pi }
      \frac{G_{k_n} \left( r_n e^{i \theta}\right)}
      {\left( r_n e^{i \theta} \right)^{n} }  d\theta
    \right|
    < 2 \pi r_n ^{\ -n}
    \max_{ 4 n^{-1/4} \leq \left| \theta \right| \leq \pi }
    \left| 
       G_{k_n} \left( r_n e^{i \theta}\right)
    \right|.
  \end{equation*}
  By Lemma~\ref{lem:unimodal},
  this maximum is reached either at \(\theta = -4 n^{-1/4}\), or at \(\theta =  4 n^{-1/4}\). 
  But since \(G_k(\overline z) = \overline{G_k(z)}\),
  the values at these points are equal:
  \(  \left| G_{k_n} \left( r_n e^{i 4 n^{-1/4}}\right) \right| =  \left| G_{k_n} \left( r_n e^{-i 4 n^{-1/4}}\right) \right|\).
  So, by Inequality~\eqref{eq:bound_ratio}, for all \(n\) sufficiently large,
  \begin{equation*}
    \left|
      \int_{ 4 n^{-1/4} \leq \left| \theta \right| \leq \pi }
      \frac{G_{k_n} \left( r_n e^{i \theta}\right)}
      {\left( r_n e^{i \theta} \right)^{n} }  d\theta
    \right|
    < 2 \pi r_n ^{\ -n} \left| 
       G_{k_n} \left( r_n e^{i 4 n^{-1/4}}\right)
    \right|
    \leq 2 \pi \delta_n \left( \frac n 2 \right)^{- \sqrt 2} \frac{G_{k_n}\left( r_n \right)} {r_n^{\ n}}.
  \end{equation*}
 Up to a constant factor that could be integrated to the definition of \(\delta_n\),
 we recover the inequality of the lemma.
\end{proof}

The proof of the main result of this subsection is now straightforward.

\begin{proof}[Proof of Proposition~\ref{prop:gnk_asympt}]
  By Corollary~\ref{cor:cauchy},
  using the contour \(z = r_n\,e^{i \theta}\) for \(\theta \in [-\pi,\pi ]\),
  we see that
  \begin{equation*}
    g_{n,k_n} = \frac 1 {2 \pi} \int_{-\pi}^{\pi} \left( r_n e^{i \theta} \right)^{-n} G_{k_n} \left( r_n e^{i \theta}\right)  d\theta.
  \end{equation*}
  We split the integration domain at \(\theta = \pm 4 n^{-1/4}\) 
  and
  apply Lemmas~\ref{lem:dominating} and~\ref{lem:negligible_part_cauchy}
  to infer
  \begin{equation*}
    \left| g_{n,k_n} - \dfrac{r_n^{-n} G_{k_n}(r_n)}{\sqrt{2 \pi \Delta_n}} \right|
    <
     \frac{\delta_n} {\sqrt n} \times \dfrac{r_n^{-n} G_{k_n}(r_n)}{\sqrt{2 \pi \Delta_n}}
    +
    \frac{\delta_n} {n^{\sqrt{2}}} \times  \dfrac{r_n^{-n} G_{k_n}(r_n)}{2 \pi}.
  \end{equation*}
  Since \begin{equation*}
    \sqrt{2\pi \Delta_n}
     = O \left( \sqrt{ \ln(n)} \, n^{-1/4} \right)
     = o \left( \frac{n^{\sqrt 2}}{\sqrt n} \right),
  \end{equation*}
  we deduce that \( \delta_n +  \frac{\sqrt{2 \pi \Delta_n} \sqrt n} {n^{\sqrt{2}}} \delta_n \) 
  is a \(\widetilde{O} (1) \) sequence satisfying the inequality of the lemma.
\end{proof}

\subsection{Step 4. Summation Around the Mode}

By the heuristic discussion of Remark~\ref{rmk:heuristic}, we expect the range of values of~$k$ such that~$g_{n,k}$ contributes significantly to the sum~$g_n = \sum_{k}g_{n,k}$ to correspond to values of~$\alpha_n$ that differ from~$\frac 1 4 \ln\left( \frac n 2 \right) - \frac 1 2$ by at most~$O(n^{-1/4} \ln n)$.
The purpose of this section is to estimate the sum of these contributions.
The final step of the proof will then be to show that the rest of the sum is indeed negligible.

\begin{proposition}\label{prop:central_sum}
  Let~$k_n^\star := \frac n 2 + \left( \frac 1 4 \ln\left( \frac n 2 \right) - \frac 1 2 \right) \sqrt{\frac n 2}$
  denote the asymptotic peak location of~\( \left(g_{n,k}\right)_k\).
  The following estimate holds:
  \begin{multline*}
    \sum_{\substack{k  \text{ such that}\\ {|k - k_n^\star| \leq (\frac n 2)^{\frac 1 4} \ln\frac n 2}}} g_{n,k} \underset{n \to \infty}\sim \\
    \frac{e^{1 / 8}}2
    \left(\frac n 2\right)^{- 3 / 8}
    \left(\frac n {2e}\right)^{\frac n 2}
    \exp\left(\frac 1 2 \ln\left( \frac n 2 \right) \sqrt{\frac n 2} + \sqrt{2n} + \frac{\ln\left( \frac n 2 \right)^2}{32}\right)
    \left( 1 + \widetilde O\left( n^{-1/2} \right) \right).
  \end{multline*}
\end{proposition}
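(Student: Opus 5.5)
We combine Proposition~\ref{prop:gnk_asympt} with Lemma~\ref{lem:evalr:unif}. This turns the sum into a Riemann sum of a Gaussian in the variable $\beta_n$, which we then evaluate. First note that for $|k-k_n^\star|\le (\frac n2)^{1/4}\ln\frac n2$ the corresponding $\alpha_n=(k-\frac n2)/\sqrt{n/2}$ equals $\frac14\ln\frac n2-\frac12+O(n^{-1/4}\ln n)$. It therefore lies in $[\frac{\ln(n/2)}5,\frac{\ln(n/2)}3]$ for $n$ large, so Proposition~\ref{prop:gnk_asympt} applies uniformly and gives
\[
g_{n,k}=\frac{r_n^{-n}G_{k}(r_n)}{\sqrt{2\pi\Delta_n}}\bigl(1+\widetilde O(n^{-1/2})\bigr).
\]
By Lemma~\ref{lem:evalr:unif} we may replace $r_n^{-n}G_k(r_n)$ by $(\frac n{2e})^{n/2}\exp(\lambda_n(\beta)\sqrt{n/2}+\mu_n(\beta))$, with $\beta=\beta_n(k)$ and a uniform error. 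The error there is additive; I would check that it is in fact relative, which should follow from the remainders in the proof of Proposition~\ref{prop:evalr} sitting inside the exponential.

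Next we change variables from $k$ to $\beta$. By Lemma~\ref{lem:saddlepoint:unif}, $\alpha$ is a smooth, uniformly controlled function of $\beta$:
\[
\alpha=\frac{\ln\frac n2}{4\beta}-\frac\beta2+\frac{\ln\beta}{2\beta}+\widetilde O(n^{-1/2}).
\]
Hence $d\alpha/d\beta=-\frac{\ln(n/2)}{4\beta^2}-\frac12+\frac{1-\ln\beta}{2\beta^2}+\widetilde O(n^{-1/2})$. At $\beta=1$ this is $-\frac14\ln\frac n2+O(1)$. Consecutive integers $k$ differ by $1/\sqrt{n/2}$ in $\alpha$, so the spacing in $\beta$ is $\asymp 1/(\sqrt n\ln n)$. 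This is far smaller than the Gaussian width $n^{-1/4}(\ln n)^{-1/2}$ coming from $-\frac14\ln\frac n2(\beta-1)^2\sqrt{n/2}$ in Lemma~\ref{lem:lambdan}. The sum thus becomes an integral:
\[
\sum_k g_{n,k}\approx \sqrt{\tfrac n2}\int \frac{(\frac n{2e})^{n/2}e^{\lambda_n(\beta)\sqrt{n/2}+\mu_n(\beta)}}{\sqrt{2\pi\Delta_n(\beta)}}\,\Bigl|\frac{d\alpha}{d\beta}\Bigr|\,d\beta ,
\]
with Euler--Maclaurin error $\widetilde O(n^{-1/2})$ relative, since the integrand's derivative is smooth at that scale. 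The range of $k$ maps to $|\beta-1|\lesssim n^{-1/4}$ times a constant times $\ln n/\ln n$, i.e.\ of order $n^{-1/4}$. I will double-check that the window in $\beta$ is wide enough, of order $n^{-1/4}\ln n/\ln n\cdot 4$, to capture all but a polynomially small fraction of the Gaussian mass. Since the Gaussian's standard deviation is $n^{-1/4}(\ln n)^{-1/2}$, this window is $\sqrt{\ln n}$ standard deviations, which should give tails $n^{-c}$. This is the point I expect to be delicate, and I may have to argue that the window is $\gg\sqrt{\ln n}$ deviations.

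Then we apply the Laplace method in $\beta$ with $x=\beta-1$. Lemma~\ref{lem:lambdan} gives
\[
\lambda_n\sqrt{n/2}=\bigl(\tfrac12\ln\tfrac n2+2\bigr)\sqrt{\tfrac n2}-\tfrac14\ln\tfrac n2\sqrt{\tfrac n2}\,x^2+\bigl(\tfrac5{12}\ln\tfrac n2-1\bigr)\sqrt{\tfrac n2}\,x^3+\dots
\]
The cubic term is $O(n^{-1/4}\,\mathrm{polylog})$ on the window and odd, so it contributes only $\widetilde O(n^{-1/2})$ after symmetrisation. For the slowly varying factors we take their values at $\beta=1$:
\begin{itemize}
\item $\mu_n(1)=\frac{(\ln\frac n2)^2}{32}+(\frac14-\frac58)\ln\frac n2+\frac18=\frac{(\ln\frac n2)^2}{32}-\frac38\ln\frac n2+\frac18$;
\item $\Delta_n(1)=(\frac12\ln\frac n2)\sqrt{n/2}$ up to $1+O(1/\ln n)$;
\item $|d\alpha/d\beta|(1)=\frac14\ln\frac n2$ up to similar factors.
\end{itemize}
Here the main obstacle is that these factors have relative corrections $O(1/\ln n)$, which are not $\widetilde O(n^{-1/2})$. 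They must be kept exactly: $\Delta_n(1)=(\frac12\ln\frac n2)\sqrt{n/2}$ exactly, since $-1+1=0$, and $|d\alpha/d\beta|(1)=\frac14\ln\frac n2+\frac12-\frac12=\frac14\ln\frac n2$ exactly. So these cancel cleanly, and the linear-in-$x$ corrections of $\mu_n$, $\Delta_n$ and the Jacobian average out by symmetry to the required order. The Gaussian integral is
\[
\sqrt{\pi/(\tfrac14\ln\tfrac n2\sqrt{n/2})}.
\]
Multiplying everything, the powers of $\ln n$ cancel:
\[
\sqrt{\tfrac n2}\cdot\frac{\frac14\ln\frac n2}{\sqrt{2\pi\cdot\frac12\ln\frac n2\sqrt{n/2}}}\cdot\sqrt{\frac{4\pi}{\ln\frac n2\sqrt{n/2}}}=\frac12 .
\]
Together with $e^{\mu_n(1)}$, which supplies $e^{1/8}(\frac n2)^{-3/8}e^{(\ln\frac n2)^2/32}$, and $2\sqrt{n/2}=\sqrt{2n}$, this yields the claimed formula.
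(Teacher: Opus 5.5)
Your argument is correct in outline, but it takes a different route from the paper.

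\textbf{How the two routes differ.} The paper never integrates in $\beta$. It first inverts the saddle-point relation (Lemma~\ref{lem:beta_in_terms_of_alpha}), writing $1-\beta_n$, $(1-\beta_n)^2$ and $(1-\beta_n)^3$ uniformly in powers of $\alpha_n-\alpha_n^\star$; this includes the cubic correction carried by $(1-\beta_n)^2$. It then substitutes these into $\lambda_n$, $\mu_n$ and $\Delta_n$ (Lemma~\ref{lem:gnk_in_terms_of_diff_alpha}). The result is $g_{n,k}$ as an explicit Gaussian in $k-k_n^\star$, times $1+u_n\sqrt{n/2}\,(\alpha_n-\alpha_n^\star)^3+v_n(\alpha_n-\alpha_n^\star)+\widetilde O(n^{-1/2})$. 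Euler--Maclaurin is then applied directly in $k$, and the odd corrections integrate to zero.

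You instead keep $\beta$ as the variable and let the Jacobian $|d\alpha/d\beta|(1)=\frac14\ln\frac n2$ do the work of the second-order inversion. Your constants are right: $\mu_n(1)$, the exact values $\Delta_n(1)=\frac12\ln\frac n2\sqrt{n/2}$ and $|d\alpha/d\beta|(1)=\frac14\ln\frac n2$, and the Gaussian width combine to exactly $\frac12$. The paper's route gives a summand that is an explicit function of $k$, so no implicit change of variables is needed. Yours avoids the inversion bookkeeping and gives a shorter Laplace computation.

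\textbf{First point to tighten: the change of variables.} The value $\beta_n(k)$ exists only at integers. Lemma~\ref{lem:saddlepoint:unif} gives only $\alpha_n=A_n(\beta_n)+O(\delta_n/n)$, where
\[
A_n(x)=\frac{\ln(n/2)}{4x}-\frac x2+\frac{\ln x}{2x}+E_n(x)\sqrt{2/n}.
\]
So you cannot differentiate the relation between $\alpha$ and $\beta$ together with its remainder. Instead:
\begin{itemize}
\item Replace $\beta_n(k)$ by $A_n^{-1}(\alpha_n)$. This is legitimate because $A_n$ is explicit and strictly decreasing near $1$, with $A_n'\approx-\frac14\ln\frac n2$.
\item This replacement moves $\beta$ by $\widetilde O(1/n)$. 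The logarithmic derivative of your integrand is $\widetilde O(n^{1/4})$ on the window, so the cost is only a relative $\widetilde O(n^{-3/4})$.
\item Apply Euler--Maclaurin in $k$ to this explicit smooth function, then substitute $\beta=A_n^{-1}(\alpha)$ in the integral. This produces exactly your Jacobian $\sqrt{n/2}\,|A_n'(\beta)|$.
\end{itemize}

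\textbf{Second point: the window is not delicate.} Its endpoints sit at $|\beta-1|\approx 4(n/2)^{-1/4}$, that is, $\sqrt{8\ln(n/2)}$ standard deviations. There the Gaussian factor is
\[
\exp\bigl(-\tfrac14\ln\tfrac n2\,\sqrt{n/2}\cdot 16\,(n/2)^{-1/2}\bigr)=(n/2)^{-4},
\]
which matches the paper's $f_n(k_n^\pm)\sim(n/2)^{-4}$. You cannot show the window is $\gg\sqrt{\ln n}$ deviations, and you do not need to: $(n/2)^{-4}$ is far below the $\widetilde O(n^{-1/2})$ tolerance.
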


\paragraph*{Proof idea.}
The proof consists of two main steps:
\begin{itemize}
  \item \textbf{Reparametrization.} 
  Proposition~\ref{prop:asympt_cauchy_integral},
  coupled with Proposition~\ref{prop:asympt_saddle_bound},
  gives a uniform asymptotic estimate of \(g_{n,k_n}\) 
  as a function of the saddle-point parameter \(\beta_n\).
  Since \(\beta_n\) is implicitly defined by \(k_n\) (via Lemma~\ref{lem:estimation_saddlepoint}),
  we rewrite the entire expression in terms of the deviation
  \(k_n - k_n^\star\).
  \item \textbf{Summation.} 
  With the coefficients now expressed as a function
  of the displacement from \(k_n^\star\),
  we sum over the window \(|k - k_n^\star| \leq \left( \frac n 2 \right)^{\frac 1 4} \ln\left( \frac n 2 \right)\)
  using the Euler-Maclaurin formula,
  which translates the sum into a Gaussian-type integral.
\end{itemize}

\subsubsection{Eliminating the Parameter \texorpdfstring{\(\boldsymbol{\beta_n}\)}{beta\_n} From the Asymptotic Formula}

Our heuristic analysis indicates that
\(\beta_n\) lies close to~\(1\) throughout the relevant range of \(\alpha_n\).
The following lemma quantifies this proximity by expressing~\(1 - \beta_n\)
as a \emph{uniform} asymptotic expansion in the parameter \(\alpha_n - \alpha_n^*\),
where \(\alpha_n^\star\) parametrizes the peak location \(k_n^\star\).

\begin{lemma}\label{lem:beta_expansion*}
  Let~$\alpha_n^\star = \frac 1 4 \ln\left( \frac n 2 \right) - \frac 1 2$ 
  and let~$\alpha_n$ be any sequence 
  such that~$k_n = \frac n 2 + \alpha_n \sqrt{\frac n 2}$ is an integer 
  and~$|\alpha_n - \alpha_n^\star| \leq \left(\frac n 2\right)^{-1/4} \ln\left(  \frac n 2 \right)$.

  If~$\beta_n$ is as defined in Lemma~\ref{lem:estimation_saddlepoint},
  then \(\beta_n\) tends uniformly to \(1\).
  More precisely, we have the approximations
  \begin{equation}
    1 - \beta_n = 
  \frac{4}{\ln\left( \frac n 2 \right)}(\alpha_n - \alpha_n^\star)
  + O \left( \frac 1 {\sqrt n} \right),
  \label{eq:estimate_epsilon_n}
\end{equation}
  \begin{equation*}
    (1 - \beta_n)^{\,2} = 
  \frac{16 (\alpha_n - \alpha_n^\star)^2}{\ln\left( \frac n 2 \right)^2} 
  + 128 \left( 3 - \ln\left( \frac n 2 \right) \right)  \frac{(\alpha_n - \alpha_n^\star)^3}{\ln\left( \frac n 2 \right)^4} 
  -24 \frac{\alpha_n - \alpha_n^\star}{\ln\left( \frac n 2 \right)^2} \sqrt{\frac 2 n}
  + O \left( \frac 1 n \right),
  \label{eq:epsilon_n^2}
\end{equation*}
  \begin{equation*}
    (1 - \beta_n)^{\,3} = 
  \frac{64}{\ln\left( \frac n 2 \right)^3}(\alpha_n - \alpha_n^\star)^3
  + O \left( \frac 1 n \right),
  \label{eq:epsilon_n^3}
\end{equation*}
where every O-term is independent of \(\left( \alpha_n \right)\).
\label{lem:beta_in_terms_of_alpha}
\end{lemma}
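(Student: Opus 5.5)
The plan is to invert the uniform relation between \(\alpha_n\) and \(\beta_n\) from Lemma~\ref{lem:uniform_bound_saddlepoint} around \(\beta = 1\). Write \(\varepsilon_n := 1 - \beta_n\) and \(L := \ln\left(\frac n 2\right)\). Since \(|\alpha_n - \alpha_n^\star| \le (n/2)^{-1/4} L\), we have in particular \(|\alpha_n| \le \ln(n)\). Lemma~\ref{lem:uniform_bound_saddlepoint} then gives, uniformly in \(\alpha_n\),
\[
\alpha_n = F_n(\beta_n) + E_n(\beta_n)\sqrt{\tfrac 2 n} + \widetilde O\!\left(\tfrac 1 n\right),
\qquad
F_n(x) := \frac{L}{4x} - \frac x 2 + \frac{\ln x}{2x}.
\]
One checks \(F_n(1) = \frac L 4 - \frac 1 2 = \alpha_n^\star\), so the equation becomes \(\alpha_n - \alpha_n^\star = F_n(\beta_n) - F_n(1) + E_n(\beta_n)\sqrt{2/n} + \widetilde O(1/n)\).

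\textbf{Step 1: \(\beta_n \to 1\) uniformly, with a crude rate.} By Lemma~\ref{lem:decreasing_beta}, \(\beta_n\) is squeezed between the \(\beta\)-values \(\beta_n^\pm\) of the two extreme integer sequences \(k_n = \frac n2 + (\alpha_n^\star \mp (n/2)^{-1/4}L)\sqrt{n/2}\) (rounded). For these two sequences, Lemma~\ref{lem:estimation_saddlepoint} applies. The function \(F_n\) is strictly decreasing near \(1\), with \(F_n'(1) = -\frac L4 - \frac 1 2 + \frac 1 2 = -\frac L4\). Together with the a priori bounds \(\beta_n^{\pm 1} = \widetilde O(1)\), this forces \(|\beta_n^\pm - 1| = O(n^{-1/4})\). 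Concretely, \(F_n(\beta) - F_n(1)\) has size of order \(L|\beta - 1|\) on a fixed neighbourhood of \(1\), and is bounded away from \(0\) by order \(L\) outside it. Hence \(\varepsilon_n = O(n^{-1/4})\) uniformly; in particular \(\varepsilon_n = O\left(\frac{|\alpha_n - \alpha_n^\star|}{L} + n^{-1/2}\right)\).

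\textbf{Step 2: Taylor inversion.} Expand \(F_n(1 - \varepsilon) - F_n(1)\) in powers of \(\varepsilon\), with coefficients explicit polynomials in \(L\):
\[
F_n(1-\varepsilon) - F_n(1) = \frac L 4 \varepsilon + c_2(L)\,\varepsilon^2 + c_3(L)\,\varepsilon^3 + O(L\varepsilon^4).
\]
Expand also \(E_n(1 - \varepsilon) = E_n(1) + O(L\varepsilon)\), where \(E_n(1) = \frac 1 4 + \frac 1 2 = \frac 3 4\). Since \(\varepsilon^4 = O(n^{-1})\) and \(L\varepsilon\sqrt{2/n} = \widetilde O(n^{-3/4})\), this yields the uniform identity
\[
\alpha_n - \alpha_n^\star = \frac L 4 \varepsilon_n + c_2\varepsilon_n^2 + c_3\varepsilon_n^3 + \frac 3 4\sqrt{\tfrac 2 n} + \widetilde O(n^{-3/4}).
\]
Dividing by \(L/4\) gives \eqref{eq:estimate_epsilon_n} at first order, since \(\varepsilon_n^2 / L = O(n^{-1/2})\) up to logs. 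To get the squared and cubed expansions with \(O(1/n)\) errors, bootstrap. Write \(\varepsilon_n = \frac 4 L \left(a - c_2\varepsilon_n^2 - \dots - \frac 3 4\sqrt{2/n}\right)\) with \(a := \alpha_n - \alpha_n^\star\). Substitute the first-order value into the right-hand side once, then square and cube, discarding terms of order \(a^4\), \(a\,n^{-1/2}\cdot a\), and \(n^{-1}\), all of which are \(O(1/n)\) up to logs. The cross term \(2\cdot\frac{4a}{L}\cdot\left(-\frac 4 L\cdot\frac 34\sqrt{2/n}\right) = -24\,\frac{a}{L^2}\sqrt{2/n}\) matches the stated coefficient. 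This is reassuring: \(E_n(1) = \frac 3 4\) is the right input. The \(a^3\) term in \((1-\beta_n)^2\) comes from \(-2\cdot\frac{4a}{L}\cdot\frac{4}{L}c_2\frac{16a^2}{L^2}\), so one needs \(c_2\). Computing it, \(c_2 = -\frac L4 + \frac 3 4\), which gives \(128(3 - L)a^3/L^4\) as claimed. I will verify this carefully.

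\textbf{Main obstacle.} The delicate point is uniformity and the bookkeeping of error sizes. The stated errors are plain \(O(1/n)\), whereas Lemma~\ref{lem:uniform_bound_saddlepoint} only gives \(\widetilde O(1/n)\). So I expect to need either that this remainder is genuinely \(O(1/n)\) at \(\beta\) bounded near \(1\) (its polynomial \(Pol\) in \(\alpha_n, \beta_n, 1/\beta_n, \ln n\) is divided by enough powers of \(L\) once we divide by \(L/4\)), or else to accept the \(\widetilde O\) version. Similarly, terms like \(a^4/L^4 \le n^{-1}\) are fine exactly because \(|a| \le n^{-1/4}L\). Each discarded term must be bounded using only \(|a| \le (n/2)^{-1/4}L\), \(1/2 \le \beta_n \le 2\), and constants independent of \(\alpha_n\). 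That is what makes the \(O\)-terms uniform, mirroring the proof of Lemma~\ref{lem:uniform_bound_saddlepoint}.
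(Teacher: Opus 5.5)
Your proposal is correct and is essentially the paper's proof. It uses a monotonicity sandwich (Lemma~\ref{lem:decreasing_beta}) for uniform control of \(1-\beta_n\), then Taylor-inverts the uniform relation of Lemma~\ref{lem:uniform_bound_saddlepoint} around \(\beta=1\) and bootstraps once for the square and cube. The only variation is in the \(O(n^{-1/4})\) rate. You obtain it by inverting \(F_n\) on the two extremal sequences, whereas the paper takes only \(\beta_n\to 1\) from the sandwich and gets the rate by absorption in the uniform inequality.

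Your \(\widetilde O(1/n)\)-versus-\(O(1/n)\) worry is harmless. That remainder is negligible against \(n^{-1/2}\) in the first estimate, and it enters the square and cube only through products with \(a/L=O(n^{-1/4})\) or with itself.

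There is one slip to correct: \(c_2=\frac L4-\frac34\), not \(-\frac L4+\frac34\). It is this value that, in your term \(-2\cdot\frac{4a}{L}\cdot\frac{4}{L}\,c_2\cdot\frac{16a^2}{L^2}\), produces \(128(3-L)a^3/L^4\); the value you wrote would give the opposite sign.
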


% J'essaie ma preuve
\begin{proof}
  Define 
  \begin{equation*}
    \epsilon_n := 1 - \beta_n.
  \end{equation*}

  \paragraph*{\(\boldsymbol{\epsilon_n}\) tends uniformly to \(\boldsymbol 0\).}
  The sequences \( \lfloor \frac n 2 + \alpha_n^\star \sqrt{\frac n 2} - \left(\frac n 2\right)^{-1/4} \ln\left( \frac n 2 \right) \rfloor \)
  and \( \lceil \frac n 2 + \alpha_n^\star \sqrt{\frac n 2} + \left(\frac n 2\right)^{-1/4} \ln\left( \frac n 2 \right) \rceil\)
  satisfy the requirements of Lemma~\ref{lem:estimation_saddlepoint},
  so we may define \(\beta_n^+\) and \(\beta_n^-\) as the respective saddle-point parameters.
  By Lemma~\ref{lem:decreasing_beta},
  we have \(\beta_n^- \leq \beta_n \leq \beta_n^+\), which implies
  \begin{equation*}
    \left| \epsilon_n \right| \leq \mu_n
  \end{equation*}
  where \(\mu_n := \max \left( \left| 1 - \beta_n^- \right|, \left| 1 - \beta_n^+ \right| \right)\).
  By Corollary~\ref{cor:beta_in_terms_of_alpha}, both \(\beta_n^-\) and \(\beta_n^+\) tend to \(1\). 
  Consequently \(\left| \epsilon_n \right|\) is bounded by a sequence \(\mu_n\) converging to \(0\) independent of \(\alpha_n\).

  \paragraph*{\(\boldsymbol{\epsilon_n = O\left( n^{-1/4} \right) }\)  uniformly.}
  For \(n\) sufficiently large,
  \(|\alpha_n| \leq \alpha_n^\star + \left(\frac n 2\right)^{-1/4} \ln \frac n 2 \leq \ln \left( n \right) \).
  % If~$n \geq 2 e^2$, then~$\alpha_n^\star \geq 0$ and~$|\alpha_n| \leq \alpha_n^\star + \bigl(\frac n 2\bigr)^{-1/4} \ln\frac n 2 \leq \bigl(\frac 1 4 + e^{-1/2}\bigr) \ln \frac n 2 \leq \ln n$.
  We can thus apply Lemma~\ref{lem:saddlepoint:unif}, which states that
  \begin{equation*}
    \left|\alpha_n
      -\frac 1 {4\beta_n} {\ln\left( \frac n 2 \right)} + \frac{\beta_n}{2} - \frac{\ln\beta_n}{2\beta_n}
      - E_n(\beta_n)\sqrt{\frac{2}{n}}
      \right| < \frac{ \delta_n }{n}
  \end{equation*}
  for some \(\widetilde O(1)\) sequence~$\delta_n$ which does not depend on~$\alpha_n$.
  Substituting \(\beta_n = 1 - \epsilon_n\) 
  and using \( 0 = - \alpha_n^\star + \frac 1 4 {\ln\left( \frac n 2 \right)} - \frac 1 2\),
  the inequality becomes
  \begin{equation*}
    \left|\alpha_n - \alpha_n^\star + \frac 1 4 {\ln\left( \frac n 2 \right)}
    - \frac 1 2   
    -\frac {\ln\left( \frac n 2 \right)} {4 \left( 1 - \epsilon_n \right) }  + \frac{1 - \epsilon_n }{2} - \frac{\ln\left( 1 - \epsilon_n \right)}{2\left( 1 - \epsilon_n \right)}
      - E_n\left( 1 - \epsilon_n \right)\sqrt{\frac{2}{n}}
      \right| < \frac{ \delta_n }{n}.
  \end{equation*}
  which reduces to
  \begin{equation}
    \left|\alpha_n - \alpha_n^\star 
      -\frac {\epsilon_n} {4 \left( 1 - \epsilon_n \right) } \ln\left( \frac n 2 \right)  - \frac{ \epsilon_n }{2} - \frac{\ln\left( 1 - \epsilon_n \right)}{2\left( 1 - \epsilon_n \right)}
      - E_n\left( 1 - \epsilon_n \right)\sqrt{\frac{2}{n}}
      \right| < \frac{ \delta_n }{n}.
      \label{eq:diff_alpha_alphastar_with_epsilon}
  \end{equation}
  By the definition of \(E_n\) (cf. Lemma~\ref{lem:estimation_saddlepoint}), we observe that
  \[E_n(1-x) =
    \frac {x \left( x - 2\right)} 8 \ln  \left( {\frac n 2}
  \right) 
  - \frac {{(1-x)}^{2}} 4    
  + \frac 1 {2{(1-x)}^{2}}   
  + \frac 1 2  
  - \frac {\ln  \left(1-x \right)} 4 
  + {\frac {\ln  \left( 1-x \right) }{4 {(1-x)}^{2}}}.
\]
This expression decomposes as \(E_n(1-x) = f(x) \ln\left( \frac n 2 \right) + g(x)\)
where \(f\) and \(g\) are continuous at \(x = 0\).
Consequently \( \sup_{\left| x \right| \leq \mu_n}  E_n(1-x) / \ln\left( \frac n 2 \right) \) is a convergent sequence, hence bounded.
Thus, \(E_n(1 - \epsilon_n) / \ln\left( \frac n 2 \right)\)
is uniformly bounded.

Then, by isolating the  \(\ln\left( \frac n 2 \right)\) term  in Equation~\ref{eq:diff_alpha_alphastar_with_epsilon} via the triangle inequality,
we get
\begin{equation*}
   \frac {\left|\epsilon_n \right|} {4 \left| 1 - \epsilon_n \right|}   
   \ln\left( \frac n 2 \right)
   <
  \left|\alpha_n - \alpha_n^\star\right|
  + \frac{  \left| \epsilon_n \right| }{2} 
  + \frac{\left| \ln\left( 1 - \epsilon_n \right) \right|}{2 \left| 1 - \epsilon_n \right|}
  + \left|E_n\left( 1 - \epsilon_n \right) \right| \sqrt{\frac{2}{n}}
  + \frac{ \delta_n }{n}
\end{equation*}
  We now use the inequalities 
  \(\left|\alpha_n - \alpha_n^\star\right| \leq 
  \left( \frac n 2 \right)^{-\frac 1 4} \ln \left( \frac n 2 \right)\),
  \( \left| \ln \left( 1 - \epsilon_n \right) \right|  
  \leq  \left| \epsilon_n \right|
  \)
  and \(\left| 1 - \epsilon_n  \right| < 1 + \mu_n \)
  to deduce
  \begin{equation*}
    \left| \epsilon_n \right| 
    < 
    4 \left( 1 + \mu_n \right) 
    \left( \frac n 2 \right)^{-\frac 1 4} 
    + \frac{2 (\mu_n + 2) \left| \epsilon_n \right| }{\ln\left( \frac n 2 \right)}
    +  \left| \frac{E_n\left( 1 - \epsilon_n \right)}
    {\ln\left( \frac n 2 \right)}  \right|4 (\mu_n + 1)
    \sqrt{\frac{2}{n}}
    + \frac{ 4 (\mu_n+1) \delta_n}
    {\ln\left( \frac n 2 \right) n }.
  \end{equation*}
    By absorbing the second term into the left-hand side (for \(n\) sufficiently large), we establish
    \begin{equation*}
    \left| \epsilon_n \right| 
    < 
    \frac 1 {1 - \frac{2 (\mu_n + 2) }{\ln\left( \frac n 2 \right)}}
    \left[ 4 \left( 1 + \mu_n \right) 
    \left( \frac n 2 \right)^{-\frac 1 4} 
    +  \left| \frac{E_n\left( 1 - \epsilon_n \right)}
    {\ln\left( \frac n 2 \right)}  \right|4 (\mu_n + 1)
    \sqrt{\frac{2}{n}}
    + \frac{ 4 (\mu_n+1) \delta_n}
    {\ln\left( \frac n 2 \right) n }  \right].
  \end{equation*}
  This shows that 
  \(\epsilon_n =  O\left( n^{-1/4} \right)\) uniformly in \(\alpha_n\).

  % Setting
  % \begin{multline*}
  %   F_n(x) := - \frac{1} {4 (1 - x)} \ln\left( \frac n 2 \right)
  %   - \frac 1 2
  %   - \frac 1 {2 (1-x)} \frac {\ln\left( 1-x \right)} x
  %   + \frac {x - 2} {8 (1-x)^2} \frac{ \ln\left( \frac n 2  \right)}{\sqrt{\frac n 2}} \\
  %   +  \frac{
  %     \left(  x - 2 \right) \ln(1-x)
  %     + x^3 - 4x^2 + 7 x - 6
  %   }
  %   {4 (1-x)^2} 
  %   \sqrt{\frac 2 n}    
  % \end{multline*},
  % we get after reorganizing all the terms
  % \begin{equation*}
  %   \left|\alpha_n - \alpha_n^\star + \epsilon_n F_n \left( \epsilon_n \right)
  %   - \frac 3 4 \sqrt {2 n}
  %     \right| < \frac{ \delta_n }{n}
  % \end{equation*}

\paragraph*{Approximating \(  \boldsymbol{1 - \beta_n } \)  in terms of \(\boldsymbol{\alpha_n - \alpha_n^\star}\).}
We perform a Taylor expansion around \(x=0\) of the function
\[
x \mapsto
-\frac {x} {4 \left( 1 - x \right) } \ln\left( \frac n 2 \right)  
- \frac{ x }{2} - \frac{\ln\left( 1 - x \right)}{2\left( 1 - x \right)}
- E_n\left( 1 - x \right)\sqrt{\frac{2}{n}}.
\] 
Substituting this expansion
into Equation~\eqref{eq:diff_alpha_alphastar_with_epsilon}
and discarding terms of order \(\widetilde{O} \left( n^{-\frac 3 4} \right)\)
yields the uniform bound
\begin{equation*}
  \alpha_n
    - \alpha_n^\star
    - \frac 1 4 \ln\left( \frac n 2 \right) \epsilon_n
    + \frac{3 - \ln\left( \frac n 2 \right)}{4} \epsilon_n^2
    - \frac 3 4 \sqrt{\frac 2 n}
    = O\left( \ln (n) \, n^{-3/4} \right)
\end{equation*}
and thus
\begin{equation}
  \epsilon_n =
      \frac{4(\alpha_n - \alpha_n^\star)}{\ln\left( \frac n 2 \right)}
      + \frac{3 - \ln\left( \frac n 2 \right)}{\ln\left( \frac n 2 \right)}\epsilon_n^2
      - \frac{3}{\ln\left( \frac n 2 \right)} \sqrt{\frac 2 n}
      + O\left( n^{-3/4} \right).
  \label{eq:epsilon_n}
\end{equation}
To leading order, this becomes Equation~\eqref{eq:estimate_epsilon_n}.

\paragraph*{Approximating  \(  \boldsymbol{  \left( 1 - \beta_n \right)^2 } \) in terms of \(\boldsymbol{\alpha_n - \alpha_n^\star}\).}
Squaring the estimate~\eqref{eq:epsilon_n}, we find
\begin{equation*}
  \epsilon_n^{\,2} = 
  \frac{16 \,(\alpha_n - \alpha_n^\star)^2}{\ln\left( \frac n 2 \right)^2} 
  + 8\left( 3 - \ln\left( \frac n 2 \right) \right)  \frac{\alpha_n - \alpha_n^\star}{\ln\left( \frac n 2 \right)^2} \epsilon_n^{\,2} 
  -24 \frac{\alpha_n - \alpha_n^\star}{\ln\left( \frac n 2 \right)^2} \sqrt{\frac 2 n}
  + O \left( \frac 1 n \right).
  \label{eq:epsilon2_not_approximated}
\end{equation*}
We now substitute \(\epsilon_n^{\,2}\) on the right side by the estimate given by  Equation~\eqref{eq:estimate_epsilon_n}. 
This recovers the expansion of \(\left( 1 - \beta_n \right)^2\) of the lemma.

\paragraph*{Approximating  \(  \boldsymbol{  \left( 1 - \beta_n \right)^3 } \) in terms of \(\boldsymbol{\alpha_n - \alpha_n^\star}\).}
The estimate for \( \left( 1 - \beta_n \right)^3\)
is straightforwardly inferred by cubing Equation~\eqref{eq:estimate_epsilon_n}.
\end{proof}

Equipped with these estimates on the proximity of~$\beta_n$ to~$1$, 
we can now express the asymptotic behavior of \(g_{n,n/2 + \alpha_n \sqrt{n/2}}\)
purely in terms of \(\alpha_n\) (rather than \(\beta_n\))
whenever \(\alpha_n\) lies near \(\alpha_n^\star\).

\begin{lemma}
    Let~$\alpha_n^\star = \frac 1 4 \ln\left( \frac n 2 \right) - \frac 1 2$ 
  and let~$\alpha_n$ be any sequence 
  such that~$k_n = \frac n 2 + \alpha_n \sqrt{\frac n 2}$ is an integer 
  and~$|\alpha_n - \alpha_n^\star| \leq \left(\frac n 2\right)^{-1/4} \ln\left(  \frac n 2 \right)$.

We have the uniform asymptotic estimate
% \noindent
% \textbf{(i) Leading-order approximation}  with relative error term \( \widetilde O \left(  n ^{-1/4}\right)\)
% \begin{multline}
%   g_{n,k_n} \sim
%   \frac 1 {\sqrt{2\pi \Delta_n}}
%   \left( \frac {n} {2e} \right)^{\frac n 2}
%   \exp\left[
%     \left(  \frac {\ln\left( \frac n 2 \right)} 2   + 2 \right) 
%     \sqrt{\frac n 2}
%     + \frac {\ln\left(  \frac n 2 \right)^2 } {32} 
%     - \frac {3 \ln \left( \frac n 2 \right)} 8 
%     + \frac 1 8     
%     \right]  
%     \times \\
%     \exp\left( 
%       - \frac 4 {\ln\left( \frac n 2 \right)} 
%       \left( \alpha_n - \alpha_n^\star \right)^2
%       \sqrt{ \frac n 2}
%       \right)
%       \times  \left( 1 + \widetilde O \left(  n ^{-1/4}\right) \right)
% \end{multline}
% where \(\Delta_n = \left(\frac{\ln\frac n 2}{2\beta_n} + \frac{\ln\beta_n}{\beta_n} - \frac{1}{\beta_n} + \beta_n\right)\sqrt{\frac n 2} \)
%
% \noindent
% \textbf{(ii) Refined approximation} with relative error term \( \widetilde O \left(  n ^{-1/2}\right)\).
\begin{multline*}
  g_{n,k_n} \sim 
  \frac { \left( \frac n 2 \right)^{-1/4} } {\sqrt{\pi \ln \left( \frac n 2 \right)  }}
  \left( \frac {n} {2e} \right)^{\frac n 2}
\exp\left[
    \left(  \frac {\ln\left( \frac n 2 \right)} 2   + 2 \right) 
    \sqrt{\frac n 2}
    + \frac {\ln\left(  \frac n 2 \right)^2 } {32} 
    - \frac {3 \ln \left( \frac n 2 \right)} 8 
    + \frac 1 8     
    \right]  
    \times \\
    \exp\left( 
      - \frac 4 {\ln\left( \frac n 2 \right)} 
      \left( \alpha_n - \alpha_n^\star \right)^2
      \sqrt{ \frac n 2}
      \right) \times
      \left(
        1 + u_n \sqrt{\frac n 2}(\alpha_n - \alpha_n^\star)^3 + v_n (\alpha_n - \alpha_n^\star) + \widetilde O\left( \frac 1 {\sqrt n} \right)
        \right)
\end{multline*}
where 
\( u_n := \frac{16} {3 \ln\left( n/2 \right) ^2} - \frac {32} { \ln\left( n/2 \right)^3 }\), 
\(v_n := 
\frac{\ln\left( n/2 \right)} 4
+ \frac 3 2
+ \frac 4 {\ln\left( n/2 \right)} 
+ \frac {12} {\ln \left( n/2 \right)^2}
\),\footnote{The exact values of \(u_n\) and \(v_n\) are not important here since they will disappear in the summation.}
and the \(\widetilde O\left( \frac 1 {\sqrt n} \right)\) term is independent of \(\alpha_n\).
\label{lem:gnk_in_terms_of_diff_alpha}
\end{lemma}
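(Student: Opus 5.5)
The plan is to combine Proposition~\ref{prop:asympt_cauchy_integral} with Lemma~\ref{lem:evalr:unif}, and then eliminate $\beta_n$ using Lemma~\ref{lem:beta_in_terms_of_alpha}. For $|\alpha_n-\alpha_n^\star|\le(\frac n2)^{-1/4}\ln\frac n2$, the sequence $\alpha_n$ lies between $\frac{\ln(n/2)}5$ and $\frac{\ln(n/2)}3$ for large $n$, uniformly. Hence Proposition~\ref{prop:asympt_cauchy_integral} gives
\[
g_{n,k_n}=\frac{r_n^{-n}G_{k_n}(r_n)}{\sqrt{2\pi\Delta_n}}\bigl(1+\widetilde O(n^{-1/2})\bigr)
\]
uniformly. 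Lemma~\ref{lem:evalr:unif}, read as a relative error (the absolute bound there should be understood relative to the main term, which is how it is used), gives
\[
r_n^{-n}G_{k_n}(r_n)=\left(\frac n{2e}\right)^{n/2}\exp\bigl(\lambda_n(\beta_n)\sqrt{n/2}+\mu_n(\beta_n)\bigr)\bigl(1+\widetilde O(n^{-1/2})\bigr).
\]
So it remains to expand three quantities in $\epsilon_n:=1-\beta_n$, which is $O(n^{-1/4})$ uniformly: $\lambda_n(\beta_n)\sqrt{n/2}$, $\mu_n(\beta_n)$, and $\Delta_n^{-1/2}$.

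\textbf{The $\lambda_n$ term.} I would use the Taylor expansion of Lemma~\ref{lem:study_lambda}:
\[
\lambda_n(\beta_n)=\tfrac12\ln\tfrac n2+2-\tfrac14\ln\tfrac n2\,\epsilon_n^2-\bigl(\tfrac5{12}\ln\tfrac n2-1\bigr)\epsilon_n^3+O(\ln n\,\epsilon_n^4).
\]
After multiplication by $\sqrt{n/2}$, the quartic remainder is $\widetilde O(n^{-1/2})$. In the quadratic and cubic terms I substitute the expansions of $\epsilon_n^2$ and $\epsilon_n^3$ from Lemma~\ref{lem:beta_in_terms_of_alpha}, whose errors $O(1/n)$ become $\widetilde O(n^{-1/2})$ after the factor $\ln n\sqrt n$. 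This produces:
\begin{itemize}
\item the Gaussian factor $-\frac4{\ln(n/2)}(\alpha_n-\alpha_n^\star)^2\sqrt{n/2}$;
\item a cubic term $c_n\sqrt{n/2}(\alpha_n-\alpha_n^\star)^3$, of size $\widetilde O(n^{-1/4})$;
\item a linear term in $\alpha_n-\alpha_n^\star$ of order $\widetilde O(n^{-1/4})$, coming from the $-24\frac{\alpha_n-\alpha_n^\star}{\ln(n/2)^2}\sqrt{2/n}$ piece of $\epsilon_n^2$.
\end{itemize}
Since these last two exponents are $o(1)$ uniformly, I linearize them with $e^x=1+x+O(x^2)$; the $O(x^2)$ error is $\widetilde O(n^{-1/2})$. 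This yields the $u_n$ and (part of) the $v_n$ contributions.

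\textbf{The $\mu_n$ and $\Delta_n$ terms.} I would expand $\mu_n(1-\epsilon)=\mu_n(1)+\mu_n'(1)(-\epsilon)+O(\ln^2 n\,\epsilon^2)$. A direct evaluation gives
\[
\mu_n(1)=\frac{\ln(n/2)^2}{32}-\frac38\ln\frac n2+\frac18 .
\]
The derivative term is $\widetilde O(n^{-1/4})$ and, via $\epsilon_n\approx\frac{4}{\ln(n/2)}(\alpha_n-\alpha_n^\star)$, it contributes to $v_n(\alpha_n-\alpha_n^\star)$. Similarly,
\[
\Delta_n=\tfrac12\ln\tfrac n2\sqrt{\tfrac n2}\,\bigl(1+c'_n\epsilon_n+O(\epsilon_n^2)\bigr),
\]
so that
\[
\frac1{\sqrt{2\pi\Delta_n}}=\frac{(n/2)^{-1/4}}{\sqrt{\pi\ln(n/2)}}\bigl(1-\tfrac{c'_n}2\epsilon_n+\widetilde O(n^{-1/2})\bigr),
\]
and this again feeds a linear term into $v_n$. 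Collecting everything and using $e^{2\sqrt{n/2}}$ from $\lambda_n(1)$ gives the stated formula.

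\textbf{Main obstacle.} The main difficulty is not conceptual but the bookkeeping of uniformity. Every Taylor remainder must be bounded using only $|\epsilon_n|\le\mu_n\to0$ and $|\alpha_n-\alpha_n^\star|\le(\frac n2)^{-1/4}\ln\frac n2$, with constants independent of $\alpha_n$. The delicate point is that $\ln n\sqrt n\,\epsilon_n^3$ is only $\widetilde O(n^{-1/4})$, so it must be kept to first order and not discarded. For it I would rely on the $(1-\beta_n)^3$ estimate of Lemma~\ref{lem:beta_in_terms_of_alpha}. The exact coefficients $u_n,v_n$ need not be checked carefully, since only their odd-function structure matters later.
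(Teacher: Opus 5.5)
Your proposal is correct and is essentially the paper's own proof. It combines Proposition~\ref{prop:asympt_cauchy_integral} with Lemma~\ref{lem:evalr:unif} (whose bound is indeed meant as a relative error), Taylor-expands $\lambda_n$, $\mu_n$ and $\Delta_n^{-1/2}$ around $\beta_n=1$, replaces the powers of $1-\beta_n$ via Lemma~\ref{lem:beta_in_terms_of_alpha}, and linearizes the uniformly $o(1)$ exponents. Carrying out your bookkeeping reproduces the stated coefficients: the cubic pieces from $\lambda_n$ sum to $u_n$, and the linear contributions $\frac{6}{\ln(n/2)}$ (from $\lambda_n$), $\frac{\ln(n/2)}{4}+\frac32$ (from $\mu_n$) and $-\frac{2}{\ln(n/2)}+\frac{12}{\ln(n/2)^2}$ (from $\Delta_n^{-1/2}$) sum to $v_n$.
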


\begin{proof}
  The proof, largely computational, consists of a chain of Taylor expansions.
  We start from the known asymptotic estimate of \(g_{n,k_n}\) (cf. Lemma~\ref{lem:evalr:unif} and Proposition~\ref{prop:gnk_asympt})
  \begin{equation}
    g_{n,k_n} \sim
    \frac 1 {\sqrt {2 \pi \Delta_n} }
    \left( \frac n {2 e} \right)^{\frac n 2}
    \exp\left(\lambda_n(\beta_n) \sqrt{\frac{n}{2}} + \mu_n(\beta_n)\right)
    \left( 1 + \widetilde O\left( \frac{1}{\sqrt n} \right) \right).
    \label{eq:general_asympt_gnk}
  \end{equation}
  By expanding this formula via Taylor series around \(\beta_n = 1\),
  we recast it as an expansion in \(1 - \beta_n\).
  Lemma~\ref{lem:beta_expansion*} then allows us to replace each power of \(1-\beta_n\) with its expansion in \(\alpha_n - \alpha_n^\star\),
  completing the proof.

  Crucially, all the estimates are uniform:
  \(\beta_n\) converges uniformly to \(1\),
  and since the functions appearing in Equation~\eqref{eq:general_asympt_gnk} are continuous at \(\beta_n =1\),
  the induced estimates inherit uniformity.

  % Proving the estimate from the statement of the lemma boils down to combining all of our previous estimates.
  % An important remark here is that this is only permitted because we have established that the errors terms are uniform in~$k$ (within a certain range).

  \paragraph*{Estimation of~$\boldsymbol{\lambda_n(\beta_n)}$.}
  By Lemma~\ref{lem:study_lambda}, we have
  \begin{equation*}
    \lambda_n\left( \beta_n \right) =
    \frac {\ln\left( \frac n 2 \right)} 2 + 2 
        - \frac{\ln\left( \frac n 2 \right)}{4} \left( 1 - \beta_n \right)^2
        + \left(1 - \frac {5 \ln\left( \frac n 2 \right) } {12} \right) \left( 1 - \beta_n \right)^3 
        + O\left(\ln\left( n \right) \left( 1 - \beta_n \right)^4\right).
  \end{equation*}
  By Lemma~\ref{lem:beta_in_terms_of_alpha},
  we can express \(\left( 1 - \beta_n \right)^2\) and \(\left( 1 - \beta_n \right)^3\) as a uniform expansion of \(\alpha_n - \alpha_n^\star\). Straightforward computation gives
\begin{equation*}
  \lambda_n\left( \beta_n \right) =
    \frac {\ln\left( \frac n 2 \right)} 2 + 2 
        - \frac{4}{\ln\left( \frac n 2 \right)} \left( \alpha_n - \alpha_n^\star \right)^2
        + u_n \left( \alpha_n - \alpha_n^\star \right)^3
        + \frac 6 {\ln\left( \frac n 2 \right)} \sqrt{\frac 2 n} \left( \alpha_n - \alpha_n^\star \right)
        + \widetilde O \left(  \frac 1 n \right).
\end{equation*}

  \paragraph*{Estimation of~$\boldsymbol{\mu_n(\beta_n)}$.}
  Similarly,
  the Taylor expansion of \(\mu_n\) function around $\beta_n=1$ is
  \begin{equation*}
    \mu_n(\beta_n) =
    \frac{ \ln \left( \frac n 2 \right)^2 }{32}
    - \frac {3 \ln \left( \frac n 2 \right)} 8
    + \frac 1 8 
    + \left( 
       \frac{\ln \left(  \frac n 2 \right)^2}{16}
       + \frac {3 \ln \left( \frac n 2 \right) } {8}
     \right)
     \left( 1 - \beta_n \right)
    + \widetilde O \left( \frac 1 {\sqrt n} \right).
  \end{equation*}
  Replacing \( 1 - \beta_n \) with the help of Lemma~\ref{lem:beta_in_terms_of_alpha}, we get
  \begin{equation*}
    \mu_n(\beta_n) =
    \frac{ \ln \left( \frac n 2 \right)^2 }{32}
    - \frac {3 \ln \left( \frac n 2 \right)} 8
    + \frac 1 8 
    + \frac {\ln \left( \frac n 2 \right) + 6 } 4
     \left( \alpha_n - \alpha_n^\star \right)
    + \widetilde O \left( \frac 1 {\sqrt n} \right).
  \end{equation*}

  \paragraph*{Estimation of~$\boldsymbol{\Delta_n}$.}
  Recall that~$\Delta_n = \left(\frac{\ln\frac n 2}{2\beta_n} + \frac{\ln\beta_n}{\beta_n} - \frac{1}{\beta_n} + \beta_n\right)\sqrt{\frac n 2}$.
  Taylor's theorem gives
  \begin{equation*}
    \Delta_n^{-1/2} 
    =
    \sqrt{ \frac { 2} { \ln \left( \frac n 2 \right)} }
    \left( \frac n 2 \right)^{-\frac 1 4}
    +
    \frac
    {6 - \ln \left( \frac n 2 \right) }
    { \sqrt 2 \ln \left( \frac n 2 \right)^{3/2}}
    \left( \frac n 2 \right)^{-\frac 1 4}
    \left( 1 - \beta_n \right)
    + \widetilde O \left( \frac 1 {\sqrt n} \right)
  \end{equation*}
  and so, by Lemma~\ref{lem:beta_in_terms_of_alpha},
  \begin{equation*}
    \Delta_n^{-1/2} 
    =
    \sqrt{ \frac { 2} { \ln \left( \frac n 2 \right)} }
    \left( \frac n 2 \right)^{-\frac 1 4}
    +
    2 \sqrt 2 \,
    \frac
    {6 - \ln \left( \frac n 2 \right) }
    { \ln \left( \frac n 2 \right)^{5/2}}
    \left( \frac n 2 \right)^{-\frac 1 4}
    \left( \alpha_n - \alpha_n^\star \right)
    + \widetilde O \left( \frac 1 {\sqrt n} \right).
  \end{equation*}

  \paragraph*{Combining everything.}
  We decompose the expansion 
  \begin{equation*}
  \lambda_n(\beta_n) \sqrt{\frac{n}{2}} + \mu_n(\beta_n)
  = \text{Principal}_n
  +
  \text{Correction}_n
  \end{equation*}
  into a principal term (non-vanishing as \(n \to \infty\)) 
  \begin{equation*}
    \text{Principal}_n
    =
  \left(  \frac {\ln\left( \frac n 2 \right)} 2   + 2 \right) 
    \sqrt{\frac n 2}
    + \frac {\ln\left(  \frac n 2 \right)^2 } {32} 
    - \frac {3 \ln \left( \frac n 2 \right)} 8 
    + \frac 1 8  
    - \frac 4 {\ln\left( \frac n 2 \right)} 
      \left( \alpha_n - \alpha_n^\star \right)^2
      \sqrt{ \frac n 2}
  \end{equation*}
  and correction terms (converging to \(0\))
  \begin{equation*}
    \text{Correction}_n
    = 
  u_n \sqrt{\frac n 2} \left( \alpha_n - \alpha_n^\star \right)^3 
  + 
  \left( \frac {\ln \left( \frac n 2 \right)} 4 + \frac 3 2 + \frac 6 {\ln \left( \frac n 2  \right)} \right)
  \left( \alpha_n - \alpha_n^\star \right)
  + \widetilde O \left( \frac 1 {\sqrt n}  \right).
  \end{equation*}
Some straight computations show that \(\frac {\exp\left( {\text{Correction}_n } \right)} {\sqrt {\Delta_n} } \) is indeed of the form
  \begin{equation*}
    \frac {e^{\text{Correction}_n }} {\sqrt {\Delta_n} } 
    = 
\sqrt{ \frac { 2} { \ln \left( \frac n 2 \right)} }
    \left( \frac n 2 \right)^{-\frac 1 4}
    \left(
        1 + u_n \sqrt{\frac n 2}(\alpha_n - \alpha_n^\star)^3 + v_n (\alpha_n - \alpha_n^\star) + \widetilde O\left( \frac 1 {\sqrt n} \right)
        \right).
  \end{equation*}  
  Injecting all these expansions back into Equation~\eqref{eq:general_asympt_gnk}
  yields the desired result.
\end{proof}

\subsubsection{Summation via the Euler-Maclaurin Formula}

We are finally computing the main term of the asymptotic expansion of \(g_n\).
We are going to sum the pointwise estimates using the Euler-Maclaurin formula.

\begin{proof}[Proof of Proposition~\ref{prop:central_sum}]
  The uniform estimate of Lemma~\ref{lem:gnk_in_terms_of_diff_alpha} applies to each $g_{n,k}$
  where $k$ is an integer such that \(|k - k_n^\star| \leq (\frac{n}{2})^{1/4}\ln\left( \frac{n}{2} \right)\):
    \begin{multline*}
  g_{n,k} \sim 
  \frac { \left( \frac n 2 \right)^{-1/4} } {\sqrt{\pi \ln \left( \frac n 2 \right)  }}
  \left( \frac {n} {2e} \right)^{\frac n 2}
\exp\left[
    \left(  \frac {\ln\left( \frac n 2 \right)} 2   + 2 \right) 
    \sqrt{\frac n 2}
    + \frac {\ln\left(  \frac n 2 \right)^2 } {32} 
    - \frac {3 \ln \left( \frac n 2 \right)} 8 
    + \frac 1 8     
    \right]  
    \times \\
    \exp\left( 
     - \frac 4 {\ln\left( \frac n 2 \right)} 
      \sqrt{ \frac 2 n}
      \left( k - k_n^\star \right)^2
      \right) \times
      \left(
        1 + \frac{ 2 u_n } n (k -  k_n^\star )^3 + v_n \sqrt{\frac 2 n} (k -  k_n^\star) + \widetilde O\left( \frac 1 {\sqrt n} \right)
        \right).
\end{multline*}
  We sum these asymptotic estimates 
  and factorize to obtain
\begin{multline}
  {
\sum_{\substack{k \text{ such that} \\ |k - k_n^\star| \leq (\frac{n}{2})^{1/4}\ln\frac{n}{2}}} g_{n,k} }
\sim 
 \frac{\left(\frac{n}{2}\right)^{-1/4}}{\sqrt{\pi\ln\frac{n}{2}}} 
\left( \frac {n} {2e} \right)^{\frac n 2}
e^{
    \left(  \frac 1 2 {\ln\left( \frac n 2 \right)}    + 2 \right) 
    \sqrt{\frac n 2}
    + \frac {1} {32} \ln\left(  \frac n 2 \right)^2 
    - \frac {3 } 8 \ln \left( \frac n 2 \right)
    + \frac 1 8     
} \times \\
\left( \sum_{\substack{k \text{ such that} \\ |k - k_n^\star| \leq (\frac{n}{2})^{1/4}\ln\frac{n}{2}}} f_n(k)  \right)
\times \left( 1 + \widetilde O \left( \frac 1 {\sqrt n} \right) \right)
\label{eq:sum_gnk_intermediate}
\end{multline}
where
\begin{equation*}
    f_{n}(t) :=
    \left(
        1 + \frac{ 2 u_n } n (t -  k_n^\star )^3 + v_n \sqrt{\frac 2 n} (t -  k_n^\star) 
        \right)
    \exp \left(  - \frac 4 {\ln\left( \frac n 2 \right)} 
      \sqrt{ \frac 2 n}
      \left( t - k_n^\star \right)^2
       \right).
  \end{equation*}
  Since the asymptotic expansion in Lemma~\ref{lem:gnk_in_terms_of_diff_alpha} is uniform over the range, the factorization by \(1 + \widetilde O \left( \frac 1 {\sqrt n} \right) \) is justified.\footnote{And there's the payoff for all the careful bookkeeping in the preceding lemmas.}

  \paragraph*{Euler-Maclaurin summation.}
  We need to estimate \(\sum_{\substack{k \text{ such that} \\ |k - k_n^\star| \leq (\frac{n}{2})^{1/4}\ln\frac{n}{2}}} f_n(k)\)
  to achieve the proof. 
  To do so, we use the Euler-Maclaurin formula
  (see for example~\cite[Theorem~9.2.2]{number-theory}):
 \begin{multline}\label{eq:eulermaclaurin}
    \sum_{\substack{k \text{ such that} \\ |k - k_n^\star| \leq (\frac{n}{2})^{1/4}\ln\left( \frac{n}{2} \right)}}  f_n(k)
    =
    \frac{f_n(k_n^-) + f_n(k_n^+)}2
    + \int_{k_n^-}^{k_n^+} f_n(t)dt
    + \frac{f_n'(k_n^+)- f_n'(k_n^-)}{12} \\
    + \int_{k_n^-}^{k_n^+} f_n''(t)
    \left( \left(t - \lfloor t \rfloor \right)^2 - \left(t -  \lfloor t \rfloor\right) + \frac 1 6  \right)
    dt
  \end{multline}
  where~$k_n^- :=  \left\lceil k_n^\star - \left( \frac n 2 \right)^{1/4}\ln \left( \frac n 2 \right) \right\rceil $ and~$k_n^+ := \left\lfloor k_n^\star + \left( \frac n 2 \right)^{1/4}\ln \left( \frac n 2 \right) \right\rfloor $.

 \paragraph*{Euler-Maclaurin, first fraction.}
  Since the floor and ceiling shift  by at most \(1\),
  we have 
  \[  
  k_n^\pm - k_n^\star \sim \pm \left( \frac n 2 \right)^{1/4}\ln \left( \frac n 2 \right)+ O(1).
  \]
  Using the definition of \(f_n\), we find that
 \begin{align}
    f_n \left( k_n^\pm \right) &
    \sim
    \left( 1 + \widetilde{O} \left( n^{-1/4} \right) \right)
     \exp \left(  - \frac 4 {\ln\left( \frac n 2 \right)} 
      \sqrt{ \frac 2 n}
       \left( \sqrt{\frac n 2} \ln\left( \frac n 2 \right)^2
        + \widetilde O \left( n^{1/4} \right)
       \right)
       \right) \notag \\ &
       \sim \exp \left( -4 \ln\left( \frac n 2 \right) \right)
        = \left( \frac n 2 \right)^{-4}.
        \label{eq:asympt_fn_kn_pm}
      \end{align}
  We deduce that \(\frac{f_n(k_n^-) + f_n(k_n^+)}2 \sim \left( \frac n 2 \right)^{-4}\).

  \paragraph*{Euler-Maclaurin, central integral.}
  The central integral of Equation~\eqref{eq:eulermaclaurin} can be approximated by an integral over a symmetrical domain:
  \begin{align*}
    \int_{k_n^-}^{k_n^+} f_n(t) \, dt
    &= \int_{
      k_n^\star - \left( \frac n 2 \right)^{1/4}\ln \left( \frac n 2 \right)
      }^{
      k_n^\star +\left( \frac n 2 \right)^{1/4}\ln \left( \frac n 2 \right)
      } f_n(t) \, dt
       +
        \int_{
      k_n^- 
      }^{
      k_n^- + O(1)
      } f_n(t) \, dt
      +
        \int_{
      k_n^+ + O(1) 
      }^{
      k_n^+ 
      } f_n(t) \, dt
    \\
    &= \int_{
      - \left( \frac n 2 \right)^{1/4}\ln \left( \frac n 2 \right)
      }^{
        \left( \frac n 2 \right)^{1/4}\ln \left( \frac n 2 \right)
        } f_n(k_n^\star + x)\, dx
       +
      \underbrace{ O\left( f_n(k_n^-) \right)}_{= \, O \left( n^{-4} \right)}
      +
      \underbrace{ O\left( f_n(k_n^+) \right)}_{= \, O \left( n^{-4} \right)}.
  \end{align*}
  Since
  \( x \mapsto \left(
        \frac{ 2 u_n } n x^3 + v_n \sqrt{\frac 2 n} x
        \right)
    \exp \left(  - \frac 4 {\ln\left( \frac n 2 \right)} 
      \sqrt{ \frac 2 n}
      x ^2
       \right)\)
       is an odd function, 
       its integral over a symmetric domain vanishes.
       We are left with a Gauss integral
  \begin{align*}
    \int_{k_n^-}^{k_n^+} f_n(t) \, dt
    & = 
    \int_{
      - \left( \frac n 2 \right)^{1/4}\ln \left( \frac n 2 \right)
      }^{
        \left( \frac n 2 \right)^{1/4}\ln \left( \frac n 2 \right)
        } 
    \exp \left(  - \frac 4 {\ln\left( \frac n 2 \right)} 
      \sqrt{ \frac 2 n}
      x ^2
       \right)
      dx
      + O\left( n^{-4} \right) \\
      & = 
      \frac {\sqrt{ \ln\left( \frac n 2 \right) }}
      2 \left( \frac n 2 \right)^{1/4}
      \int_{
      - 2 \sqrt { \ln \left( \frac n 2 \right) }
      }^{
         2 \sqrt { \ln \left( \frac n 2 \right) }
        } 
    \exp \left(  - 
      u ^2
       \right)
       du
       + O\left( n^{-4} \right) \\
      & = 
      \frac {\sqrt{ \ln\left( \frac n 2 \right) }}
      2 \left( \frac n 2 \right)^{1/4}
      \sqrt\pi \left( 
      1
      - \text{erfc}
      \left( 2 \sqrt { \ln \left( \frac n 2 \right) } \right)
       \right)
       + O\left( n^{-4} \right),
  \end{align*}
  where \(\text{erfc}\)
  is the complementary error function, whose asymptotic behavior is~\cite[7.1.23]{abramowitz-stegun}
  \begin{equation*}
    \text{erfc}(x)  \underset{x \rightarrow +\infty}{\sim} \frac{ 1 + O(x^2) } {x e^{x^2} \sqrt{\pi}}.
  \end{equation*}
  We conclude
  \begin{equation}
    \int_{k_n^-}^{k_n^+} f_n(t) \, dt \sim 
     \frac {\sqrt{\pi}}
      2 \sqrt{ \ln\left( \frac n 2 \right) } \left( \frac n 2 \right)^{1/4}
      + O\left( n^{- 4 + \frac 1 4 } \right).
      \label{eq:int_fn}
  \end{equation}

  \paragraph*{Bounding \(\boldsymbol{f'_n}\) and \(\boldsymbol f''_n\).} By a clear induction,
  \(f_n(t)\) and all of its derivatives are of the form \(p_n(t-k_n^\star)
      e^{  - \frac 4 {\ln\left( \frac n 2 \right)} 
      \sqrt{ \frac 2 n}
      \left( t - k_n^\star \right)^2
      }\),
where $p_n(x)$ is a polynomial where the degree does depend on \(n\).

  But observe that for any such polynomial \(p_n\)
  and~\(t\) any number such that \(\left| t - k_n^\star \right| \leq  \left( \frac n 2 \right)^{1/4} \ln\left( \frac n 2 \right) \),
  \begin{multline}
    \frac{\partial}{\partial t} \left( 
      p_n(t-k_n^\star)
      e^{  - \frac 4 {\ln\left( \frac n 2 \right)} 
      \sqrt{ \frac 2 n}
      \left( t - k_n^\star \right)^2
      }
      \right) 
      = \\
      p_n'(t-k_n)
      e^{  - \frac 4 {\ln\left( \frac n 2 \right)} 
      \sqrt{ \frac 2 n}
      \left( t - k_n^\star \right)^2
      }
      - 8 \frac
      { \left( t - k_n^\star \right) p_n\left( t - k_n^\star \right)
      }
      {
        \ln \left( \frac n 2 \right) \sqrt{ \frac n 2}
      }
      e^{  - \frac 4 {\ln\left( \frac n 2 \right)} 
      \sqrt{ \frac 2 n}
      \left( t - k_n^\star \right)^2
      }.
      \label{eq:partial_t_form_fn}
  \end{multline}
  Because 
  we have for any positive integer \(i\) 
  \begin{equation*}
    i \left( t - k_n^\star \right)^{i-1} = \left( t - k_n^\star \right)^{i} \times \widetilde O \left( n^{-1/4} \right),
  \end{equation*}
  we can deduce that \(p_n'(t-k_n^\star) = p_n(t-k_n^\star) \times \widetilde O \left(n^{-1/4}\right)\).
  Moreover, since
  \(- \frac
      { \left( t - k_n^\star \right)    }
      {
        \ln \left( \frac n 2 \right) \sqrt{ \frac n 2}
      }
      =
      \widetilde O \left( n^{-1/4}\right)
      \),
  we can infer from Equation~\eqref{eq:partial_t_form_fn} 
  \begin{equation*}
    \frac{\partial}{\partial t} \left( 
      p_n(t-k_n^\star)
      e^{  - \frac 4 {\ln\left( \frac n 2 \right)} 
      \sqrt{ \frac 2 n}
      \left( t - k_n^\star \right)^2
      }
      \right) 
      = p_n(t-k_n^\star)
      e^{  - \frac 4 {\ln\left( \frac n 2 \right)} 
      \sqrt{ \frac 2 n}
      \left( t - k_n^\star \right)^2}
      \times
      \widetilde O \left(  
      {n^{-1/4}}
       \right).
  \end{equation*}
  In particular, we conclude that \(f_n'(t) = f_n(t) \times  \widetilde O \left( n^{-1/4} \right) \) and
  \(f_n''(t) = f_n(t) \times \widetilde O \left( n^{-1/2} \right) \).
  Note that all the \(\widetilde O\)-terms in the above estimates are uniform in \(t\), since they all come from the inequality \(\left| t - k_n^\star \right| \leq  \left( \frac n 2 \right)^{1/4} \ln\left( \frac n 2 \right) \).

  \paragraph*{Euler-Maclaurin, second fraction.}
  Using the property we have just established,
  \[
    \frac{f_n'(k_n^+)- f_n'(k_n^-)}{12}
    = \widetilde O \left( \frac{ f_n(k_n^\pm)} {n^{1/4}} \right).
  \]
  By Equation~\eqref{eq:asympt_fn_kn_pm},
  \begin{equation*}
    \frac{f_n'(k_n^+)- f_n'(k_n^-)}{12}
    = \widetilde O \left( {n^{-4 - \frac 1 4 }} \right).
  \end{equation*}

  \paragraph*{Euler-Maclaurin, remainder integral.}
  Now using \(f_n''(t) = f_n(t) \times \widetilde O \left( n^{-1/2} \right) \) 
  and the fact that 
  \( t \mapsto \left(t - \lfloor t \rfloor \right)^2 - \left(t -  \lfloor t \rfloor\right) + \frac 1 6\) is bounded over \(\mathbb R\),
  we see that
  \begin{equation*}
  \int_{k_n^-}^{k_n^+} f_n''(t)
    \left( \left(t - \lfloor t \rfloor \right)^2 - \left(t -  \lfloor t \rfloor\right) + \frac 1 6  \right)
    dt
    =
    \int_{k_n^-}^{k_n^+} f_n(t) dt
    \times \widetilde O \left( \frac 1 {\sqrt n} \right).
  \end{equation*}
  By Equation~\eqref{eq:int_fn},
  \begin{equation*}
  \int_{k_n^-}^{k_n^+} f_n''(t)
    \left( \left(t - \lfloor t \rfloor \right)^2 - \left(t -  \lfloor t \rfloor\right) + \frac 1 6  \right)
    dt
    =
    \widetilde O \left( n^{-1/4} \right).
  \end{equation*}

  \paragraph*{Conclusion.}
  Gathering all these estimates in Equation~\eqref{eq:eulermaclaurin},
  we obtain
  \begin{equation*}
    \sum_{\substack{k \text{ such that} \\ |k - k_n^\star| \leq (\frac{n}{2})^{1/4}\ln\left( \frac{n}{2} \right)}}  f_n(k)
    \sim
     \frac {\sqrt{\pi}}
      2 \sqrt{ \ln\left( \frac n 2 \right) } \left( \frac n 2 \right)^{1/4}
     +  \widetilde O \left( n^{- \frac 1 4} \right).
  \end{equation*}
  We conclude by plugging this into Equation~\eqref{eq:sum_gnk_intermediate}.
\end{proof}

\subsection{Step 5. Upper Bounds for the Tails}\label{sec:tails}

We now address the tail behavior.
The objective is to establish that \(k\)-values 
outside the window \( |k - k_n^\star| \leq (\frac{n}{2})^{1/4}\ln\left( \frac{n}{2} \right) \)
yield a negligible contribution to \(g_n = \sum_{k=2}^n g_{n,k}\).

Classically, one proves such tail bounds via unimodality of \(g_{n,k}\):
a unimodal sequence immediately implies that 
all remote values are dominated by near-mode values,
rendering their total contribution asymptotically negligible.
Unfortunately, a direct proof of unimodality for \(g_{n,k}\) seems hard to come by 
since we lack sufficient information about \(g_{n,k}\) far from the mode.
To overcome this obstacle, we employ the following key idea:
rather than proving unimodality of the sequence \(g_{n,k}\) itself,
we prove unimodality of an upper bound, namely the saddle-point bound
\(
\frac { G_k(r_{n,k}) } { {r_{n,k}}^{n} } 
\),
where \(r_{n,k}\) is the unique solution of  \(r_{n,k} G_{k}'(r_{n,k}) = n G_{k_n}(r_{n,k})\) 
(this was denoted~$r_n$ in Lemma~\ref{lem:def_saddlepoint}: the dependence in \(k\) is now explicit).

The unimodality is simpler to prove, notably because of the following lemma.

\newcommand\SB{\text{SB}}

\begin{lemma}
Set \(n \geq 5\) and $k \in \{2,\dots,n-1\}$.
Let \(\SB_{n,k}\) be the saddle-point bound
\begin{equation*}
  \SB_{n,k} := \frac { G_k(r_{n,k}) } { {r_{n,k}}^{n} } 
\end{equation*}
where \(r_{n,k}\) denotes the unique solution of  \(r_{n,k} G_{k}'(r_{n,k}) = n G_{k_n}(r_{n,k})\),\footnote{cf. Lemma~\ref{lem:def_saddlepoint} for the existence and uniqueness of the solution} and \(G_k(z) = \sum_{n \geq 0} g_{n,k} z^n\) the generating function of \gitgraph{}s with \(k\) \main{}s.

The following properties hold:
\begin{enumerate}
  \item \(g_{n,k} \leq \SB_{n,k}\) ;
  \item if \(r_{n,k+1} \times \left( 1 + k \frac{r_{n,k+1}} {1-r_{n,k+1}}\right) \geq 1\),
then \(\SB_{n,k} \leq \SB_{n,k+1}\) ;
  \item if \(r_{n,k} \times \left( 1 + k \frac{r_{n,k}} {1-r_{n,k}}\right) \leq 1\),
then \(\SB_{n,k} \geq \SB_{n,k+1}\).
\end{enumerate}
\label{lem:unimodality}
\end{lemma}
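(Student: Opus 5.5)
Property 1 is the same argument as Corollary~\ref{cor:upperbound}. Since $G_k$ has non-negative coefficients, for every $0<r<1$ we have $g_{n,k} r^n \leq G_k(r)$. Taking $r=r_{n,k}$ gives $g_{n,k}\leq \SB_{n,k}$.

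For properties 2 and 3, the key observation is the product form of Proposition~\ref{prop:Gkz}:
\[
G_{k+1}(z) = G_k(z)\, h_k(z), \qquad h_k(z) := z\left(1 + \frac{kz}{1-z}\right).
\]
We also use that the saddle-point bound is a minimum. Indeed, $\SB_{n,k} = \min_{0<r<1} G_k(r)/r^n$. This holds because $r\mapsto \ln G_k(r) - n\ln r$ has derivative $(rG_k'(r)/G_k(r) - n)/r$. By Lemma~\ref{lem:def_saddlepoint} and the strict monotonicity of $rG_k'/G_k$ recalled in its proof, this derivative is negative on $(0,r_{n,k})$ and positive on $(r_{n,k},1)$. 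So $r_{n,k}$ is the unique global minimiser on $(0,1)$. Both $r_{n,k}$ and $r_{n,k+1}$ exist for $k\in\{2,\dots,n-1\}$, since then $2\le k<k+1\le n$.

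For property 2, we evaluate the minimum defining $\SB_{n,k}$ at the non-optimal point $r_{n,k+1}$:
\[
\SB_{n,k} \leq \frac{G_k(r_{n,k+1})}{r_{n,k+1}^{\,n}} = \frac{G_{k+1}(r_{n,k+1})}{r_{n,k+1}^{\,n}\, h_k(r_{n,k+1})} = \frac{\SB_{n,k+1}}{h_k(r_{n,k+1})} \leq \SB_{n,k+1},
\]
where the last step uses the hypothesis $h_k(r_{n,k+1})\geq 1$.

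Property 3 is symmetric. We evaluate the minimum defining $\SB_{n,k+1}$ at $r_{n,k}$:
\[
\SB_{n,k+1} \leq \frac{G_{k+1}(r_{n,k})}{r_{n,k}^{\,n}} = h_k(r_{n,k})\, \SB_{n,k} \leq \SB_{n,k},
\]
using $h_k(r_{n,k})\le 1$ and the positivity of $\SB_{n,k}$.

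The only step needing care is the identification of $\SB_{n,k}$ as a global minimum over $(0,1)$. This rests on the strict increase of $rG_k'(r)/G_k(r)$, already established, so I expect no real obstacle; the rest is the one-line factorisation.
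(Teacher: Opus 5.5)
Your proof is correct and is essentially the paper's: item~1 by non-negativity of the coefficients, and items~2 and~3 by evaluating the minimum defining $\mathrm{SB}$ at the other saddle point and using $G_{k+1}(z) = z\bigl(1 + \frac{kz}{1-z}\bigr) G_k(z)$. The only difference is that you derive the global minimality of $r_{n,k}$ from the strict increase of $rG_k'(r)/G_k(r)$, whereas the paper cites the convexity of $G_k(z)/z^n$ from Flajolet--Sedgewick.

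One caveat, inherited from Lemma~\ref{lem:def_saddlepoint}, concerns your claim that $r_{n,k+1}$ exists in $(0,1)$ for all $k \le n-1$. For $k=n-1$ there is no root $r_{n,n}$ in $(0,1)$, because \eqref{eq:zGp/G} gives $zG_n'(z)/G_n(z) > n$ there. So you must set $r_{n,n}:=0$ and $\mathrm{SB}_{n,n}:=\inf_{0<r<1}G_n(r)/r^n=1$. With that reading, item~2 is vacuous at $k=n-1$, and your argument for item~3 still applies, since it only uses that $\mathrm{SB}_{n,n}$ is an infimum.
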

\begin{proof}
  Item 1 is straightforward since \(g_{n,k} {r_{n,k}}^n \leq \sum_m g_{m,k} {r_{n,k}}^m = G_{k}(r_{n,k})\).

  \paragraph*{Item 2.} Since \(z \mapsto \frac{G_k(z)}{z^n} \) is upward convex (see \cite[VIII.4]{FS2009}) and 
  \( \frac{\partial } {\partial z} \left.\left(  \frac{G_{k}(z)}{z^n}\right)\right|_{z=r_{n,k}} = 0\) by definition of \(r_{n,k}\),
  the minimum of \(\frac{G_k(z)}{z^n}\) is attained at \(z = r_{n,k}\).
  So 
  \begin{equation*}
    \SB_{n,k} 
    = \frac{G_k\left( {r_{n,k}} \right)} {{r_{n,k}}^n}
    \leq \frac{G_k\left( {r_{n,k+1}} \right)} {{r_{n,k+1}}^n}.
  \end{equation*}
  But by Proposition~\ref{prop:Gkz}, 
  \(G_{k+1}(z) = z \left( 1 + k \frac z{1-z} \right) G_k(z)\),
  so
 \begin{equation*}
    \SB_{n,k} 
    \leq \frac{G_k\left( {r_{n,k+1}} \right)} {{r_{n,k+1}}^n}
    =  \frac 1 {  {r_{n,k+1}} \left( 1 + k \frac {r_{n,k+1}}{1-{r_{n,k+1}}} \right)     }
    \frac{G_{k+1}\left( {r_{n,k+1}} \right)} {{r_{n,k+1}}^n}
    = 
    \frac {\SB_{n,k+1}} {  {r_{n,k+1}} \left( 1 + k \frac {r_{n,k+1}}{1-{r_{n,k+1}}} \right)     }.
  \end{equation*}
  If the denominator is at least $1$,
  then \(\SB_{n,k} \leq \SB_{n,k+1}\).

  \paragraph*{Item 3.} The idea is similar to Item 2, but we use:
  \begin{equation*}
    \SB_{n,k+1}
    = \frac{G_{k+1}\left( {r_{n,k+1}} \right)} {{r_{n,k+1}}^n}
    \leq \frac{G_{k+1}\left( {r_{n,k}} \right)} {{r_{n,k}}^n}
    = r_{n,k} \left( 1 + k \frac{r_{n,k}} {1-r_{n,k}}\right) \SB_{n,k},
  \end{equation*}
  which implies \(\SB_{n,k+1} \leq \SB_{n,k}\) if \( r_{n,k} \left( 1 + k \frac{r_{n,k}} {1-r_{n,k}}\right) \leq 1\).
\end{proof}

Our strategy for bounding the tails is to show that
each \(g_{n,k}\) outside the central range is dominated by \(\SB_{n,k_n^\pm}\).
(\(k_n^-\) and \(k_n^+\) denote the endpoints of the central sum --- cf. Proposition~\ref{prop:central_sum}.)
To this end, we partition the tails into four regions:
\begin{itemize}
  \item Far left: \(k \leq \frac n 2 - \sqrt{\frac n 2}\),
  \item Near left: \(\frac n 2 - \sqrt{\frac n 2} \leq k \leq k_n^-\),
  \item Near right: \(k_n^+ \leq k \leq \frac n 2 + \ln\left( \frac n 2 \right) \sqrt{\frac n 2} \),
  \item Far right:  \(k \geq \frac n 2 + \ln\left( \frac n 2 \right) \sqrt{\frac n 2} \).
\end{itemize}

\subsubsection{Far Left Region}

  In the lower region~$0 \leq k \leq \frac n 2 - \sqrt{\frac n 2}$, we can prove that the sequence~$g_{n,k}$ is increasing, 
  exploiting the explicit formula  involving Stirling numbers of the first kind.

  \begin{lemma}
    For \(n \geq 3\) and for~$0 \leq k \leq \frac n 2 - \sqrt{\frac n 2}$, we have~$g_{n,k} \leq g_{n,k+1}$.
    \label{lem:farleft}
  \end{lemma}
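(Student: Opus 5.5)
The plan is to compare $g_{n,k}$ and $g_{n,k+1}$ term by term in $f$, using the closed formula of Corollary~\ref{cor:formula}:
\[
g_{n,k} = \sum_{f=1}^{k-1} {k \brack f}\binom{n-k-1}{k-f-1},
\qquad
g_{n,k+1} = \sum_{f=1}^{k} {k+1 \brack f}\binom{n-k-2}{k-f}.
\]
Both formulas apply because $k+1 < n$ in the range considered. The small cases $k=0$ and $k=1$ are trivial: for $n \geq 3$ the formula gives $g_{n,0}=g_{n,1}=0$, since the sum over $f$ is empty. So I would assume $k \geq 2$ from the start.

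\textbf{Step 1 (Stirling numbers).} Use the standard recurrence ${k+1 \brack f} = k{k \brack f} + {k \brack f-1}$. Since all terms are non-negative, it gives ${k+1 \brack f} \geq k{k \brack f}$. Dropping the term $f=k$, which is non-negative, this yields
\[
g_{n,k+1} \geq \sum_{f=1}^{k-1} k{k \brack f}\binom{n-k-2}{k-f}.
\]
It then suffices to show, for each $1 \leq f \leq k-1$,
\[
k\binom{n-k-2}{k-f} \geq \binom{n-k-1}{k-f-1}.
\]

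\textbf{Step 2 (binomial ratio).} Set $m := n-2k$. The hypothesis $k \leq \frac n2 - \sqrt{\frac n2}$ gives $m \geq \sqrt{2n}$. In particular $m \geq 2$ for $n\geq 3$, so $n-k-2 \geq k-f$ and both binomials are genuine positive quantities. Writing both as factorials, the ratio is
\[
\frac{\binom{n-k-2}{k-f}}{\binom{n-k-1}{k-f-1}} = \frac{(m+f)(m+f-1)}{(k-f)(n-k-1)}.
\]
The required inequality becomes $k(m+f)(m+f-1) \geq (k-f)(n-k-1)$. Since $k-f \leq k$ and $f \geq 1$, it is enough to check
\[
(m+1)m \geq n-k-1.
\]
This holds because $(m+1)m \geq m^2 \geq 2n \geq n-k-1$.

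\textbf{Expected obstacle.} There is no real analytic difficulty here. The only care needed is bookkeeping at the boundaries: the ranges of $f$ differ between the two sums, the lost term $f=k$ is harmless because it is non-negative, and the degenerate cases where a binomial coefficient could vanish or the formula for $k=n$ intervenes do not occur when $m\geq 2$. I would double-check the factorial simplification of the ratio, since that is where an off-by-one error would break the final inequality.
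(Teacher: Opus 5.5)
Your proof is correct and follows the paper's argument: the same term-by-term comparison via ${k+1 \brack f} \geq k{k \brack f}$, the same reduction to $k\binom{n-k-2}{k-f}\ge\binom{n-k-1}{k-f-1}$, and the same factorial ratio closed off with $n-2k\ge\sqrt{2n}$. Instead of the paper's ``increasing in $f$'' step you bound $k-f\le k$ and $f\ge 1$ directly. You also treat $k\in\{0,1\}$ separately, which avoids the division by $k-1$ that the paper's minimal-at-$f=1$ bound would meet at $k=1$.
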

  \begin{proof}
    By Corollary~\ref{cor:formula}, we have
    \begin{equation*}
      g_{n,k} = \sum_{f = 1}^{k - 1} \stirling k f \binom{n-k-1}{k-f-1}
    \end{equation*}
    Thus, we have
    \begin{equation*}
      g_{n,k+1}
      = \sum_{f = 1}^k \stirling{k+1}{f} \binom{n-k-2}{k-f}
      \geq \sum_{f=1}^{k-1} \stirling{k+1}{f} \binom{n-k-2}{k-f}
      \geq \sum_{f=1}^{k-1} k \stirling k f \binom{n-k-2}{k-f}
    \end{equation*}
    using the recurrence formula for the Stirling numbers of the first kind:~$\stirling[1]{k+1}{f} = k \stirling[1] k f + \stirling[1] k {f-1}$.
    Hence, it is enough to show  
    \( k  \binom{n-k-2}{k-f}  \geq {\binom{n-k-1}{k-f-1} } \) to prove~$g_{n,k+1}\geq g_{n,k}$.
    But the quantity
    \begin{equation*}
      k \frac{ \binom{n-k-2}{k-f} } {\binom{n-k-1}{k-f-1} } = \frac{k(n-2k+f)(n-2k+f-1)}{(n-k-1)(k-f)}
    \end{equation*}
     is increasing in~$f$ so it is minimal at~$f=1$, hence
    \begin{align*}
      k  \frac{ \binom{n-k-2}{k-f} } {\binom{n-k-1}{k-f-1} }
      &\geq \frac{k(n-2k)(n-2k+1)}{(n-k-1)(k-1)} \\
      &\geq \frac{(n-2k)(n-2k+1)}{(n-k-1)}       &\text{because~$\frac{k}{k-1} \geq 1$} \\
      &\geq \frac{\sqrt{2n}(\sqrt{2n}+1)}{(n-2)} \geq \frac{2n}{n-2} &\text{because~$2k \leq n - \sqrt{2n}$ and~$k\geq 1$} 
    \end{align*}
    and the last quantity is larger than~$1$ for~$n \geq 3$.
  \end{proof}

\subsubsection{Near Left Region}

We now analyze the region just before the mode,
where proximity to the peak allows us
to obtain a precise estimate of  \(\left( 1 + k \frac{r_{n,k}} {1-r_{n,k}}\right)  r_{n,k}\).

\begin{lemma} There exists an integer \(n_0\) such that 
  for all \(n_0 \leq n\)
  and \[
    \frac n 2 - \sqrt{\frac n 2} - 1 \leq k < k_n^-,
  \]
  where \( k_n^- =
  \left\lceil
  \frac n 2 + \left( \frac 1 4 \ln\left( \frac n 2 \right) - \frac 1 2 \right) \sqrt{\frac n 2} - \left( \frac n 2 \right)^{\frac 1 4}\ln \left( \frac n 2 \right)
  \right\rceil
  \),
  we have \(\SB_{n,k} \leq \SB_{n,k+1}\)
  (see Lem\-ma~\ref{lem:unimodality} for the definition of \(\SB\)).
  \label{lem:central_left}
\end{lemma}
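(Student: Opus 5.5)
We will apply Item~2 of Lemma~\ref{lem:unimodality}: it suffices to show that, for every $k$ in the stated range, writing $r = r_{n,k+1}$,
\begin{equation*}
  r\left(1 + k\frac{r}{1-r}\right) \geq 1 .
\end{equation*}
Set $k+1 = \frac n 2 + \alpha_n\sqrt{\frac n 2}$ and $\beta_n = r\sqrt{n/2}$. In this range $-1-\sqrt{2/n} \leq \alpha_n \leq \alpha_n^\star - (\frac n 2)^{-1/4}\ln\frac n 2 + O(n^{-1/2})$, so in particular $|\alpha_n| \leq \ln n$. Expanding with $r = \beta_n\sqrt{2/n}$ and $k = \frac n 2 + \alpha_n\sqrt{\frac n2} - 1$ gives
\begin{equation*}
  r\left(1 + k\frac{r}{1-r}\right) = \beta_n^2 + \widetilde O\left(n^{-1/2}\right),
\end{equation*}
where the error term is uniform thanks to the bounds $\beta_n^-\leq\beta_n\leq\beta_n^+$ obtained by monotonicity (Lemma~\ref{lem:decreasing_beta}) at the two endpoints of the range. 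The condition therefore reduces to showing that $\beta_n \geq 1 + c\,n^{-1/4}$ for some margin that dominates the $\widetilde O(n^{-1/2})$ error.

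Monotonicity handles this. Since $k+1 \leq k_n^-$, Lemma~\ref{lem:decreasing_beta} gives $\beta_n \geq \beta(k_n^-)$, where $\beta(k_n^-)$ denotes the saddle-point parameter attached to $k_n^-$. Its offset is $\alpha = \alpha_n^\star - (\frac n2)^{-1/4}\ln\frac n2 + O(n^{-1/2})$, so Lemma~\ref{lem:beta_in_terms_of_alpha}, equation~\eqref{eq:estimate_epsilon_n}, yields
\begin{equation*}
  1-\beta(k_n^-) = \frac{4}{\ln\frac n2}(\alpha-\alpha_n^\star) + O(n^{-1/2}) = -4\left(\frac n2\right)^{-1/4} + O(n^{-1/2}).
\end{equation*}
Hence $\beta_n \geq 1 + 4(\frac n2)^{-1/4} - O(n^{-1/2})$, and then $\beta_n^2 \geq 1 + 8(\frac n2)^{-1/4} + O(n^{-1/2})$. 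This exceeds the error term, and so the product is at least $1$ for $n\geq n_0$.

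The main obstacle is uniformity in the expansion of $r(1+kr/(1-r))$ across the whole range, including near the left end where $\alpha_n\approx -1$ and $\beta_n$ is no longer close to $1$. To handle it, I would write the product exactly as $\beta_n^2\frac{k}{n/2}\frac{1}{1-r} + r$. This gives the explicit bound $|\cdot - \beta_n^2| \leq C(\beta_n^+)^2(|\alpha_n|+1)\sqrt{2/n} + r$, so the only input needed is that $\beta_n^+$ (the value at $k=\frac n2-\sqrt{\frac n2}$) is $\widetilde O(1)$, which is given by Lemma~\ref{lem:estimation_saddlepoint}. As a fallback, away from $k_n^-$ one can simply use $\beta_n\geq\beta(k_n^-)$ directly, since all that is needed is $\beta_n^2 \geq 1+\Omega(n^{-1/4})$.
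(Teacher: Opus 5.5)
Your argument is correct and is essentially the paper's. You use the same ingredients: Item~2 of Lemma~\ref{lem:unimodality}, the bound $r_{n,k+1}\geq r_{n,k_n^-}$ from Lemma~\ref{lem:decreasing_beta}, and the estimate $\beta = 1+4(\frac n2)^{-1/4}+O(n^{-1/2})$ at $k_n^-$ from Lemma~\ref{lem:beta_in_terms_of_alpha}, which give the margin $8(\frac n2)^{-1/4}$.

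The only difference is how uniformity is handled. The paper never needs your two-sided uniform expansion or $\beta_n^+$. It observes that $r\bigl(1+k\frac{r}{1-r}\bigr)$ is increasing in both $r$ and $k$, then substitutes $r_{n,k_n^-}$ and $k=\frac n2-\sqrt{\frac n2}-1$ directly. This yields a single $k$-independent lower bound, which is the rigorous form of your ``fallback''.
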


\begin{proof}
  By Lemma~\ref{lem:unimodality}, it is sufficient to prove 
  \( \left( 1 + k \frac{r_{n,k+1}} {1-r_{n,k+1}}\right) r_{n,k+1}  \geq 1\).

  By Lemma~\ref{lem:decreasing_beta},
  we have \(r_{n,k+1} \geq r_{n,k_n^-}\).
  Since \(x \mapsto x\left( 1 + k \frac{x} {1-x}\right)\) is increasing on \([0,1)\),
  we get
  \begin{equation*}
    \left( 1 + k \frac{r_{n,k+1}} {1-r_{n,k+1}}\right) r_{n,k+1} 
      \geq
     \left( 1 + k \frac{r_{n,k_n^-}} {1-r_{n,k_n^-}}\right) r_{n,k_n^-} 
     \geq
     \left( 1 +  \frac{r_{n,k_n^-} \left( \frac n 2 - \sqrt{\frac n 2} - 1\right) } {1-r_{n,k_n^-}}\right) r_{n,k_n^-}.
  \end{equation*}
  We now proceed to an asymptotic asymptotic analysis of the latter expression
  to prove that it eventually exceeds $1$.

  Set \(\beta_{n}^- = r_{n,k_n^-} \sqrt{\frac 2 n} \). 
  Lemma~\ref{lem:beta_in_terms_of_alpha} is applicable since
  \(k_n^- - k_n^\star = -\left( \frac n 2 \right)^{\frac 1 4}\ln \left( \frac n 2 \right) + O(1) \),
  so we have
  \begin{equation*}
    \beta_{n}^- = 1 + 4 \left( \frac n 2 \right)^{-\frac 1 4} +  O \left( \frac 1 {\sqrt{n}} \right),
  \end{equation*}
  hence
  \begin{equation*}
    r_{n,k_n^-} =  \left( \frac n 2 \right)^{-\frac 1 2}  + 4 \left( \frac n 2 \right)^{-\frac 3 4} +  O \left( \frac 1 {n} \right).
  \end{equation*}
  After routine algebra, we see that
  \begin{equation*}
    \left( 1 +  \frac{r_{n,k_n^-} \left( \frac n 2 - \sqrt{\frac n 2} - 1 \right)} {1-r_{n,k_n^-}}\right) r_{n,k_n^-} 
    = 
    1 + 8 \left( \frac n 2 \right)^{-\frac 1 4} + O\left( \frac 1  {\sqrt n} \right).
  \end{equation*}
  This shows that \(\left( 1 + k \frac{r_{n,k+1}} {1-r_{n,k+1}}\right) r_{n,k+1} \), which is larger than \(\left( 1 +  \frac{r_{n,k_n^-} \left( \frac n 2 - \sqrt{\frac n 2} - 1 \right)} {1-r_{n,k_n^-}}\right) r_{n,k_n^-}\), will ultimately surpass \(1\).
\end{proof}

\subsubsection{Near Right Region}

We repeat the previous proof to the region just after the mode.

\begin{lemma} There exists an integer \(n_0\) such that 
for all \(n \geq n_0\)
  and \(k\) such that
  \begin{equation*}
     k_n^+ \leq k < \frac n 2 + \ln \left( \frac n 2 \right) \sqrt{\frac n 2},
  \end{equation*}
  where \( k_n^+ =
  \left\lfloor
  \frac n 2 + \left( \frac 1 4 \ln\left( \frac n 2 \right) - \frac 1 2 \right) \sqrt{\frac n 2} + \left( \frac n 2 \right)^{\frac 1 4}\ln \left( \frac n 2 \right)
  \right\rfloor
  \),
  we have \(\SB_{n,k} \geq \SB_{n,k+1}\)
  (see Lem\-ma~\ref{lem:unimodality} for the definition of \(\SB\)).
  \label{lem:central_right}
\end{lemma}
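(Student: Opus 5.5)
By Item~3 of Lemma~\ref{lem:unimodality}, it suffices to prove that
\[
\left( 1 + k \frac{r_{n,k}}{1-r_{n,k}}\right) r_{n,k} \leq 1
\]
for every $k$ in the stated range, once $n$ is large enough. The argument mirrors the proof of Lemma~\ref{lem:central_left}, with all inequalities reversed.

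\textbf{Reduction to a single value of $r$.} By Lemma~\ref{lem:decreasing_beta}, $r_{n,k}\leq r_{n,k_n^+}$ because $k\geq k_n^+$. The map $x\mapsto x\left(1+k\frac{x}{1-x}\right)$ is increasing on $[0,1)$, and the factor $k$ enters increasingly. We therefore bound $k$ from above by $\frac n2+\ln\left(\frac n2\right)\sqrt{\frac n2}$ and obtain
\[
\left( 1 + k \frac{r_{n,k}}{1-r_{n,k}}\right) r_{n,k}
\leq \left( 1 + \frac{r_{n,k_n^+}\left(\frac n2+\ln\left(\frac n2\right)\sqrt{\frac n2}\right)}{1-r_{n,k_n^+}}\right) r_{n,k_n^+}.
\]

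\textbf{Expansion at $k_n^+$.} Lemma~\ref{lem:beta_in_terms_of_alpha} applies at $k_n^+$, since $k_n^+-k_n^\star=\left(\frac n2\right)^{1/4}\ln\left(\frac n2\right)+O(1)$. It gives
\[
\beta_n^+=1-4\left(\tfrac n2\right)^{-1/4}+O\!\left(n^{-1/2}\right),
\qquad
r_{n,k_n^+}=\left(\tfrac n2\right)^{-1/2}-4\left(\tfrac n2\right)^{-3/4}+O\!\left(n^{-1}\right).
\]
Substituting, the right-hand side becomes
\[
r+r^2\,\frac n2+\dots
=\beta_n^+\sqrt{\tfrac 2n}+(\beta_n^+)^2+\ln\left(\tfrac n2\right)\sqrt{\tfrac2n}\,(\beta_n^+)^2+\dots
\]
This equals $1-8\left(\frac n2\right)^{-1/4}+\widetilde O\!\left(n^{-1/2}\right)$, which is eventually strictly below $1$.

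\textbf{Main obstacle.} The difficulty lies in the error terms rather than the reduction. Enlarging $k$ to the upper endpoint $\frac n2+\ln\left(\frac n2\right)\sqrt{\frac n2}$ adds a term of order $\ln(n)\,n^{-1/2}$. This must stay negligible against the leading deficit $8\left(\frac n2\right)^{-1/4}$, and it does, since $\ln(n)\,n^{-1/2}=o\!\left(n^{-1/4}\right)$.

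A subtler point is the asymmetry with the left side. If $k$ were taken much larger, the $k r^2$ term would dominate. This is exactly why the range stops at $\frac n2+\ln\left(\frac n2\right)\sqrt{\frac n2}$ and the far right region is treated separately.

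Finally, the $O$-terms of Lemma~\ref{lem:beta_in_terms_of_alpha} are uniform and are used only at the single sequence $k_n^+$. Hence the threshold $n_0$ does not depend on $k$.
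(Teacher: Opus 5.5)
Your proposal is correct and follows the paper's proof essentially step for step. You reduce via Item~3 of Lemma~\ref{lem:unimodality}, then use Lemma~\ref{lem:decreasing_beta} and the monotonicity of $x\mapsto x\left(1+k\frac{x}{1-x}\right)$ to pass to the single sequence $k_n^+$ with $k$ replaced by the upper endpoint, and finally apply Lemma~\ref{lem:beta_in_terms_of_alpha} to obtain $1-8\left(\frac n2\right)^{-1/4}+\widetilde O\left(n^{-1/2}\right)$. Your observation that uniformity in $k$ comes for free, because only the one sequence $k_n^+$ is ever expanded, is exactly why the paper's short argument suffices.
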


\begin{proof}
  This is basically the same demonstration as Lemma~\ref{lem:central_left}, 
  but where this time we show that
  \( \left( 1 + k \frac{r_{n,k}} {1-r_{n,k}}\right)  r_{n,k} \leq 1\)
  ultimately.

  Indeed, we have
  \begin{equation*}
    \left( 1 + k \frac{r_{n,k}} {1-r_{n,k}}\right)  r_{n,k} 
    \leq
    \left( 1 +  \frac{r_{n,k_n^+}} {1-r_{n,k_n^+}} \left(  \frac n 2 + \ln \left( \frac n 2 \right) \sqrt{\frac n 2}\right)\right)  r_{n,k_n^+}. 
  \end{equation*}
  We deduce from Lemma~\ref{lem:beta_in_terms_of_alpha}
  \begin{equation*}
    r_{n,k_n^+} =  \left( \frac n 2 \right)^{-\frac 1 2}  - 4 \left( \frac n 2 \right)^{-\frac 3 4} +  O \left( \frac 1 {n} \right)
  \end{equation*}
  and so
  \begin{equation*}
    \left( 1 +  \frac{r_{n,k_n^+}} {1-r_{n,k_n^+}} \left(  \frac n 2 + \ln \left( \frac n 2 \right) \sqrt{\frac n 2}\right)\right)  r_{n,k_n^+}
    =
    1 - 8 \left( \frac n 2 \right)^{-\frac 1 4} + \widetilde O\left( \frac 1  {\sqrt n} \right).
  \end{equation*}
  We conclude by Lemma~\ref{lem:unimodality}.
\end{proof}

\subsubsection{Far Right Region}

In the final region, we employ the same strategy as before
but with coarser bounds.

\begin{lemma} There exists an integer \(n_0\) such that 
  for all \(n \geq n_0\) 
  and all \(k\) with
  \begin{equation*}
     \frac n 2 + \ln \left( \frac n 2 \right) \sqrt{\frac n 2} \leq k \leq n - 1 ,
  \end{equation*}
  we have \(\SB_{n,k} \geq \SB_{n,k+1}\)
  (see Lem\-ma~\ref{lem:unimodality} for the definition of \(\SB\)).
  \label{lem:far_right}
\end{lemma}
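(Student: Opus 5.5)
By Item 3 of Lemma~\ref{lem:unimodality}, it suffices to show that for every $k$ in the range
\[
\frac n 2 + \ln\left(\frac n 2\right)\sqrt{\frac n 2} \leq k \leq n-1,
\]
we have $\left(1 + k\frac{r_{n,k}}{1-r_{n,k}}\right) r_{n,k} \leq 1$. The asymptotic machinery above (Lemma~\ref{lem:estimation_saddlepoint} and the uniform lemmas) only covers $|\alpha_n| \leq \ln n$, which is not enough here since $k$ can go up to $n-1$. So we cannot repeat the argument of Lemma~\ref{lem:central_right} verbatim. Instead, I would work directly from the saddle-point equation~\eqref{eq:zGp/G}, written at $z = r = r_{n,k}$:
\[
n = k + \sum_{j=0}^{k-1}\frac{j r}{((j-1)r+1)(1-r)}.
\]
The quantity to control is $Q(r) := r + k\frac{r^2}{1-r}$. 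The natural idea is to compare $Q$ with the sum above.

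\textbf{Main step.} For $0 \leq j \leq k-1$ we have $(j-1)r + 1 \leq 1 + kr$, so
\[
\frac{jr}{((j-1)r+1)(1-r)} \geq \frac{jr}{(1+kr)(1-r)}.
\]
Summing over $j$ gives
\[
n - k \geq \frac{k(k-1)r}{2(1+kr)(1-r)}.
\]
This is an explicit inequality in $r$ alone. Since $k \geq \frac n2 + \ln(\frac n2)\sqrt{\frac n2}$, the left side $n-k$ is at most $\frac n2 - \ln(\frac n2)\sqrt{\frac n2}$, which is small compared to $k$. I would then solve this inequality for $r$.

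Write $r = \frac{c}{k}$. When $kr$ stays bounded, the inequality reads roughly
\[
n - k \gtrsim \frac{kc}{2(1+c)},
\qquad\text{that is}\qquad
\frac{c}{1+c} \lesssim \frac{2(n-k)}{k}.
\]
This gives $c \lesssim \frac{2(n-k)}{2k-n}$ roughly, and hence
\[
Q(r) \approx \frac{c + c^2}{k}.
\]
I would verify that $Q(r) \leq 1$ in the two regimes separately.

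\textbf{Regime 1: $k$ close to $n$.} Here $n-k = O(\sqrt n)$ or smaller, so $c$ is small, say $c = O(1)$. Then $Q = O(1/k)$, which is clearly at most $1$.

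\textbf{Regime 2: $k$ near the lower end.} Here $n - k \approx \frac n2$ and $2k - n \approx 2\ln(\frac n2)\sqrt{\frac n2}$. This gives
\[
c \lesssim \frac{\sqrt{n/2}}{2\ln(n/2)},
\qquad
c^2/k \approx \frac{1}{4\ln(n/2)^2}\cdot\frac{n/2}{n/2} = \frac{1}{4\ln(n/2)^2} < 1.
\]
Since $c/k \to 0$, we get $Q(r) < 1$ with room to spare.

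\textbf{Expected obstacle.} The main difficulty is turning these heuristics into a clean, uniform inequality valid for every $k$ in the range, with an $n_0$ independent of $k$. I would handle this by monotonicity in $k$. Lemma~\ref{lem:decreasing_beta} shows that $r_{n,k}$ is decreasing in $k$. However, $Q$ depends on $k$ explicitly, so monotonicity alone is not sufficient, and I would instead bound directly.

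From the displayed inequality, set $x = kr$. It reads
\[
\frac{x(1-1/k)}{2(1+x)(1-r)} \leq \frac{n-k}{k},
\]
from which we get
\[
x \leq \frac{2(n-k)(1-r)}{k(1-1/k) - 2(n-k)(1-r)}.
\]
This is valid as long as the denominator is positive, which holds because $2k - n \gg 1$. Then $Q = r + xr/(1-r)$, and substituting $r = x/k$ gives
\[
Q \leq \frac{x + x^2/(1-r)}{k}.
\]
One must also check that $r$ is bounded away from $1$. This follows because $r \leq r_{n,k_n^+} \to 0$ by Lemma~\ref{lem:decreasing_beta}.

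Finally, I would show that
\[
\frac{x^2}{k} \leq \frac{4(n-k)^2}{k(2k-n-2)^2}(1+o(1)).
\]
The right-hand side is a decreasing function of $k$ on the range. Evaluating it at the left endpoint gives $O(1/\ln(n)^2)$, which concludes the proof.
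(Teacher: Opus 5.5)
\textbf{Gap in the main step.} Your reduction to Item~3 of Lemma~\ref{lem:unimodality} and the identity $n-k=\sum_{j=0}^{k-1}\frac{jr}{((j-1)r+1)(1-r)}$ are correct. The problem is that your main inequality is too weak in exactly the part of the range where the lemma has content. Bounding every denominator $(j-1)r+1$ by $1+kr$ turns the sum into $\frac{(k-1)x}{2(1+x)(1-r)}$ with $x=kr$. This never exceeds $\frac{k-1}{2(1-r)}$, however large $x$ is. So for every $k$ below roughly $\frac{2n}{3}$ (where $n-k>\frac{k-1}{2}$), the inequality is satisfied by all small $r$ and gives no bound on $x$.

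Your algebra reflects this. From $\frac{c}{1+c}\le\frac{2(n-k)}{k}$ you get $c\le\frac{2(n-k)}{3k-2n}$, not $\frac{2(n-k)}{2k-n}$. The denominator $k-1-2(n-k)(1-r)\approx 3k-2n$, which you assert is positive ``because $2k-n\gg1$'', is in fact negative near the left end of the range.

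Your Regime~2 bound is also false, not merely unproven. At $k\approx\frac n2+\ln(\frac n2)\sqrt{\frac n2}$, Corollary~\ref{cor:beta_in_terms_of_alpha} gives $\beta_n\to\frac14$. Hence $kr\approx\frac14\sqrt{n/2}$, which exceeds your claimed $\frac{\sqrt{n/2}}{2\ln(n/2)}$, and $Q\approx kr^2\to\frac1{16}$ rather than $O(\ln(n)^{-2})$. What pins $r$ down here is the logarithmic behaviour $\sum_j\frac{jr}{1+jr}\approx k-\frac{\ln(1+kr)}{r}$, and your uniform bound throws it away. Only the regime $k\ge(\frac23+\varepsilon)n$ survives your argument.

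\textbf{The missing idea.} You dismissed monotonicity too quickly. Once $r_{n,k}$ is controlled by monotonicity, the explicit dependence of $Q$ on $k$ is only the factor $k\le n$, and replacing $k$ by $n$ costs at most a factor $2$. Set $\widetilde{k_n}:=\lfloor\frac n2+\ln(\frac n2)\sqrt{\frac n2}\rfloor$. For $k\ge\widetilde{k_n}$, Lemma~\ref{lem:decreasing_beta} gives $r_{n,k}\le r_{n,\widetilde{k_n}}$, and $x\mapsto x(1+n\frac{x}{1-x})$ is increasing on $[0,1)$. Therefore
\[
r_{n,k}\Bigl(1+k\frac{r_{n,k}}{1-r_{n,k}}\Bigr)\le r_{n,\widetilde{k_n}}\Bigl(1+n\frac{r_{n,\widetilde{k_n}}}{1-r_{n,\widetilde{k_n}}}\Bigr)\longrightarrow\frac18<1.
\]
The limit uses Corollary~\ref{cor:beta_in_terms_of_alpha} for the single sequence $\widetilde{k_n}$. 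Its $\alpha_n\approx\ln(\frac n2)$ is $\widetilde O(1)$, so the restriction $|\alpha_n|\le\ln n$ you worried about never enters. This is the paper's proof. The paper states the limit as $\frac14$, but $nr^2\to\frac18$; either way it is below $1$.

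\textbf{Repairing your direct approach.} If you want to stay self-contained, use the sharper bound $((j-1)r+1)(1-r)\le 1+jr$ together with $\sum_{j=1}^{k-1}\frac1{1+jr}\le\frac{\ln(1+kr)}{r}$. This gives $r(2k-n-1)\le\ln(1+kr)$. Since $\ln(1+x)/x$ is decreasing, this bounds $kr$ by $\frac14\sqrt{n/2}\,(1+o(1))$ uniformly on the range, and yields $Q\le\frac1{16}(1+o(1))$.
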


\begin{proof} 
  Let \(\widetilde {k_n} = \lfloor \frac n 2 + \ln \left( \frac n 2 \right) \sqrt{\frac n 2}\rfloor\).
  By Corollary~\ref{cor:beta_in_terms_of_alpha},
  \(
  r_{n,\widetilde {k_n}} \sim \frac{1} 4 \left( \frac n 2 \right)^{-1/2}.
    \)

  Consequently,
  \begin{equation*}
    \left( 1 + k \frac{r_{n,k}} {1-r_{n,k}}\right)  r_{n,k} 
    \leq
    \left( 1 + n \frac{r_{n,\widetilde {k_n}}} {1-r_{n,\widetilde {k_n}}}  \right)  r_{n,\widetilde {k_n}} \to \frac 1 4,
  \end{equation*}
  which proves that  \( \left( 1 + k \frac{r_{n,k}} {1-r_{n,k}}\right)  r_{n,k} \)
  is less than \(1\) eventually.
\end{proof}

\subsubsection{Conclusion}

We can now establish a uniform bound for \(g_{n,k}\) when $k$ is at least at distance \((\frac n 2)^{\frac 1 4} \ln\frac n 2\) from the mode.

\begin{lemma}
  Let \(k\) be such that \(|k - k_n^\star| > (\frac n 2)^{\frac 1 4} \ln\frac n 2\)  where \(k_n^\star = \frac n 2 + \left( \frac 1 4 \ln\left( \frac n 2 \right) - \frac 1 2 \right) \sqrt{\frac n 2}\).
  Then we have the uniform bound
  \[ g_{n,k} \leq \SB_{n,k_n^-} + \SB_{n,k_n^+} = g_n \times O\left( n^{-4}  \right)\]
 where
\(k_n^- =  \left\lceil  k_n^\star -(\frac n 2)^{\frac 1 4} \ln\frac n 2 \right\rceil \) 
and
\(k_n^+ =  \left\lfloor k_n^\star +(\frac n 2)^{\frac 1 4} \ln\frac n 2 \right\rfloor\) .
  \label{lem:far_from_mode}
\end{lemma}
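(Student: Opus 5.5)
The proof has two parts. First, $g_{n,k}$ is dominated by $\SB_{n,k_n^-}$ or $\SB_{n,k_n^+}$ for every $k$ outside the central window. Second, these two saddle-point bounds are a factor $O(n^{-4})$ smaller than $g_n$.

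For the first part, split according to the four tail regions.

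\emph{Near left.} Take $\frac n2-\sqrt{\frac n2}-1\le k<k_n^-$. Lemma~\ref{lem:central_left} says $\SB_{n,\cdot}$ is nondecreasing up to $k_n^-$. Combined with item 1 of Lemma~\ref{lem:unimodality}, this gives $g_{n,k}\le \SB_{n,k}\le \SB_{n,k_n^-}$.

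\emph{Far left.} Take $k\le \frac n2-\sqrt{\frac n2}$. Lemma~\ref{lem:farleft} lets us climb $g_{n,k}\le g_{n,k+1}\le\dots\le g_{n,k'}$ to some $k'$ lying in the near-left region. Then $g_{n,k'}\le \SB_{n,k'}\le\SB_{n,k_n^-}$ as before. The slack of $-1$ in the range of Lemma~\ref{lem:central_left} is exactly what guarantees that the two regions overlap at an integer.

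\emph{Right side.} Take $k>k_n^+$. Chain Lemma~\ref{lem:central_right} and Lemma~\ref{lem:far_right}: $\SB_{n,\cdot}$ is nonincreasing from $k_n^+$ up to $n$. Hence $g_{n,k}\le\SB_{n,k}\le\SB_{n,k_n^+}$.

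The endpoint $k=n$ ($g_{n,n}=1$) needs a separate check, or we can note $\SB_{n,n-1}\ge \SB_{n,n}$ from Lemma~\ref{lem:far_right}. The case $k\le 1$ is trivial. In every case $g_{n,k}\le \SB_{n,k_n^-}+\SB_{n,k_n^+}$.

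For the second part, compare $\SB_{n,k_n^\pm}$ with $g_n$. Since $k_n^\pm$ lies in the range of Lemma~\ref{lem:gnk_in_terms_of_diff_alpha}, combine Proposition~\ref{prop:asympt_cauchy_integral} with Lemma~\ref{lem:evalr:unif}. This gives
\[\SB_{n,k_n^\pm}\sim \sqrt{2\pi\Delta_n}\, g_{n,k_n^\pm}.\]
Lemma~\ref{lem:gnk_in_terms_of_diff_alpha} then expresses $g_{n,k_n^\pm}$ as the peak value $P_n$ times $\exp\bigl(-\frac{4}{\ln(n/2)}(\alpha_n-\alpha_n^\star)^2\sqrt{n/2}\bigr)$ times a bounded correction. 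Here
\[P_n=\frac{(n/2)^{-1/4}}{\sqrt{\pi\ln(n/2)}}\left(\frac n{2e}\right)^{n/2}\exp[\cdots].\]
At $|\alpha_n-\alpha_n^\star|=(\frac n2)^{-1/4}\ln\frac n2+O(n^{-1/2})$, the Gaussian factor equals $(\frac n2)^{-4}(1+o(1))$; this is the same computation as in \eqref{eq:asympt_fn_kn_pm}.

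It remains to check the correction factor $1+u_n\sqrt{n/2}(\alpha_n-\alpha_n^\star)^3+v_n(\alpha_n-\alpha_n^\star)$ at these points. We have $u_n\sqrt{n/2}\,(n/2)^{-3/4}\ln^3=\widetilde O(n^{-1/4})$ and $v_n(\alpha_n-\alpha_n^\star)=\widetilde O(n^{-1/4})$, so the factor is $1+o(1)$. Hence
\[\SB_{n,k_n^\pm}=\sqrt{2\pi\Delta_n}\,P_n\,O(n^{-4}).\]

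On the other hand, Proposition~\ref{prop:central_sum} shows that the central sum is $\asymp P_n\sqrt{\ln n}\,n^{1/4}$, and this is a lower bound for $g_n$. Also $\sqrt{\Delta_n}\asymp \sqrt{\ln n}\,n^{1/4}$, because $\beta_n\to1$. The two factors match, giving $\SB_{n,k_n^\pm}=g_n\,O(n^{-4})$.

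The main obstacle is this last ratio. We must verify that the factors $\sqrt{2\pi\Delta_n}$, $(n/2)^{-1/4}$ and $\sqrt{\ln}$ cancel exactly rather than leaving a stray $\ln n$ or a power of $n$. We also need $k_n^\pm$, which are off from the window ends by rounding, to stay inside the uniform regime of Lemma~\ref{lem:beta_expansion*}. If the bookkeeping leaves an extra polylogarithmic factor, we would state the bound as $\widetilde O(n^{-4})$, which still suffices for the tails once multiplied by the at most $n$ values of $k$.
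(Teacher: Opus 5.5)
Your proposal is correct and follows the paper's proof essentially step by step. It uses the same domination of $g_{n,k}$ by $\mathrm{SB}_{n,k_n^\pm}$ over the tail regions via Lemmas~\ref{lem:farleft}, \ref{lem:central_left}, \ref{lem:central_right} and~\ref{lem:far_right}. It then makes the same comparison $\mathrm{SB}_{n,k_n^\pm}\sim\sqrt{2\pi\Delta_n}\,g_{n,k_n^\pm}\sim 2\left(\frac n2\right)^{-4}\sum_{|k'-k_n^\star|\le(\frac n2)^{1/4}\ln\frac n2}g_{n,k'}$ via Proposition~\ref{prop:gnk_asympt}, Lemma~\ref{lem:gnk_in_terms_of_diff_alpha} and Proposition~\ref{prop:central_sum}. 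The bookkeeping you worried about closes exactly, with no stray polylogarithmic factor. Lemma~\ref{lem:evalr:unif} is not needed, since $\mathrm{SB}_{n,k}$ is by definition $r_{n,k}^{-n}G_k(r_{n,k})$.
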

\begin{proof} Let us show the inequality \( g_{n,k} \leq \SB_{n,k_n^-} + \SB_{n,k_n^+}\) depending on the value of \(k\):
  \begin{itemize}
  \item If \(\frac n 2 - \sqrt{\frac n 2} - 1 \leq k \leq k_n^-  \), 
  then \( g_{n,k} \leq \SB_{n,k} \leq \SB_{n,k_n^-} \) by Lemma~\ref{lem:central_left}.
  \item If \(k \leq \frac n 2 - \sqrt{\frac n 2} \), 
  then  \( g_{n,k} \leq g_{n, \left\lfloor \frac n 2 - \sqrt{\frac n 2} \right\rfloor}\) by Lemma~\ref{lem:farleft}, 
  and so by the previous point,  \( g_{n,k} \leq \SB_{n,k_n^-} \).
  \item If \(k \geq k_n^+\), then \(g_{n,k} \leq \SB_{n,k}  \leq \SB_{n,k_n^+}\)
  since by Lemmas~\ref{lem:central_right} and~\ref{lem:far_right}, \(\SB_{n,k}\) is decreasing in \(k\) over that range.
\end{itemize}
Let us now estimate \(\SB_{n,k_n^\pm}\) compared to $g_n$.
\begin{align*}
  \SB_{n,k_n^\pm} = 
  \frac{ G_{k_n^\pm}\left( r_{n,k_n^\pm} \right)
  }
  { {r_{n,k_n^\pm}}^n }
  & \sim \sqrt{ \pi \ln\left( \frac n 2 \right)}\left( \frac n 2 \right)^{\frac 1 4}
  g_{n,k_n^\pm} 
  & \text{by Proposition~\ref{prop:gnk_asympt},} \\
  & \sim 2 e^{
    - \frac 4 {\ln\left( \frac n 2 \right)} \sqrt{\frac 2 n}
     \left( k_n^\pm - k_n^\star \right)^2 
  } \sum_{\substack{k'  \text{ such that}\\ {|k' - k_n^\star| \leq (\frac n 2)^{\frac 1 4} \ln\frac n 2}}} g_{n,k'}
  & \begin{aligned}
    \text{by Lemma~\ref{lem:gnk_in_terms_of_diff_alpha}} 
    \\ \text{ and Proposition~\ref{prop:central_sum},}
  \end{aligned} \\
  & \sim 2 \left( \frac n 2 \right)^{-4} \sum_{\substack{k'  \text{ such that}\\ {|k'- k_n^\star| \leq (\frac n 2)^{\frac 1 4} \ln\frac n 2}}} g_{n,k'}.
\end{align*}
Since \(\sum_{\substack{k'  \text{ such that}\\ {|k' - k_n^\star| \leq (\frac n 2)^{\frac 1 4} \ln\frac n 2}}} g_{n,k'} \leq \sum_{k'=2}^{n}g_{n,k'} = g_n\), we obtain our uniform bound for \(g_{n,k}\).
\end{proof}

We have reached the end of our marathon proof.

\begin{proof}[Proof of Theorem~\ref{thm:equivalent}]
By Lemma~\ref{lem:far_from_mode},
\begin{align*}
  g_n = \sum_{k=2}^{n} g_{n,k} & = 
    \sum_{\substack{k  \text{ such that}\\ {|k - k_n^\star| > (\frac n 2)^{\frac 1 4} \ln\frac n 2}}} g_{n,k}
    +
  \sum_{\substack{k  \text{ such that}\\ {|k - k_n^\star| \leq (\frac n 2)^{\frac 1 4} \ln\frac n 2}}} g_{n,k}
  \\
  & = g_n \times O \left( n^{-3} \right)
  +
  \sum_{\substack{k  \text{ such that}\\ {|k - k_n^\star| \leq (\frac n 2)^{\frac 1 4} \ln\frac n 2}}} g_{n,k}.
\end{align*}
Proposition~\ref{prop:central_sum} yields an estimate for the sum, and thus concludes the proof. \label{end:proof}
\end{proof}

\section{Distribution of Some Relevant Parameters}
\label{s:limit_law}

Beyond an asymptotic estimate of the total number of \gitgraph{}s,
we investigate the asymptotic distribution of several structural parameters.
We establish limit theorems for three quantities in a uniform \gitgraph{}:
the number of \main{}s,
the number of free commits (both asymptotically Gaussian),
and the so-called width (for which we compute the expectation)
\subsection{Number of Commits on the Main Branch}

In Section~\ref{sec:asymptotic},
we estimated the total number of \gitgraph{}s of size~$n$ by analyzing~$g_{n,k}$ 
%, where \(k\) refers to the number of \main{}s,
and then summing over~$k$.
This approach revealed Gaussian behavior for the number~$K_n$ of \main{}s in a uniform Git graph of size~$n$
(cf. Figure~\ref{fig:plan}).

We now formalize this as a convergence in distribution and a local limit law.

\begin{theorem}
  Let~$K_n$ denote the number of \main{}s of a uniform \gitgraph{} of size~$n$.

  The random variable~$K_n$ 
  is asymptotically normal with mean $k_n^* = \frac n 2 + \left(\frac 1 4 \ln\left( \frac n 2  \right)- \frac 1 2\right)\sqrt{\frac n 2}$ 
  and standard deviation $\sigma_n = \sqrt{\frac 1 8 \ln\left( \frac n 2  \right) }\left(\frac n 2\right)^{1/4}$. More precisely, two kinds of convergence hold:
  \begin{itemize}
    \item \textbf{Convergence in distribution.} We have
      \begin{equation*}
     \frac{K_n - k_n^\star}{\sigma_n} \to \mathcal N(0, 1).
     \end{equation*}
    \item \textbf{Local limit law} (in the sense of~\cite[Chapter IX.9]{FS2009}).
    There exists a \(\widetilde O (1)\) sequence \(\delta_n\) such that for all~$n \geq 2$
  \begin{equation*}
    \sup_{x \in \mathbb R} \left| \sigma_n \mathbb P(K_n = \lfloor k_n^\star + x \sigma_n \rfloor ) - \frac{1}{\sqrt{2\pi}} e^{-\frac{x^2}{2}}\right| \leq \frac{\delta_n}{n^{1/4}}.
  \end{equation*}    
  \end{itemize}
  \label{thm:expected_main}
\end{theorem}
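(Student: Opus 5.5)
The plan is to read both statements off the pointwise estimates from Section~\ref{sec:asymptotic}. The local limit law comes first, and convergence in distribution then follows from it together with the tail bound of Lemma~\ref{lem:far_from_mode}. We have $\mathbb P(K_n = k) = g_{n,k}/g_n$. For $k$ in the central window $|k-k_n^\star| \leq (\frac n2)^{1/4}\ln\frac n2$, Lemma~\ref{lem:gnk_in_terms_of_diff_alpha} gives $g_{n,k}$ uniformly. We divide it by the estimate of $g_n$ from Theorem~\ref{thm:equivalent}, which equals the central sum of Proposition~\ref{prop:central_sum}. The common prefactors $\left(\frac n{2e}\right)^{n/2}\exp(\cdots)$ cancel. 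The remaining constant is $\frac{(n/2)^{-1/4}}{\sqrt{\pi\ln(n/2)}}\cdot\frac{2}{(n/2)^{\cdot}}$ times the ratio of Gaussian normalisations, and it reduces to $\frac{2}{\sqrt{\pi\ln(n/2)}}(\frac n2)^{-1/4} = \frac{1}{\sqrt{2\pi}\,\sigma_n}$ with $\sigma_n^2=\frac18\ln(\frac n2)(\frac n2)^{1/2}$. We also check that the exponent $-\frac{4}{\ln(n/2)}\sqrt{\frac 2n}(k-k_n^\star)^2$ equals $-\frac{(k-k_n^\star)^2}{2\sigma_n^2}$, which holds since $2\sigma_n^2 = \frac14\ln(\frac n2)\sqrt{\frac n2}$.

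Next we write $k=\lfloor k_n^\star + x\sigma_n\rfloor$, so that $k-k_n^\star = x\sigma_n + O(1)$. This gives
\[
\sigma_n\mathbb P(K_n=k) = \frac{1}{\sqrt{2\pi}}e^{-x^2/2}\Bigl(1+u_n\sqrt{\tfrac n2}(\alpha_n-\alpha_n^\star)^3+v_n(\alpha_n-\alpha_n^\star)+\widetilde O(n^{-1/2})\Bigr)\bigl(1+\widetilde O(n^{-1/4})\bigr),
\]
where the last factor accounts for the $O(1)$ shift inside the exponent, whose effect is of order $|x|/\sigma_n$. In the window we have $\alpha_n-\alpha_n^\star = \widetilde O(n^{-1/4})$, so $v_n(\alpha_n-\alpha_n^\star)=\widetilde O(n^{-1/4})$. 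The cubic term is also $\widetilde O(n^{-1/4})$, because $u_n=\widetilde O(1)$ and $(\alpha_n-\alpha_n^\star)^3\sqrt n = \widetilde O(n^{-1/4})$. Since $e^{-x^2/2}$ is bounded, the supremum over the window is $\widetilde O(n^{-1/4})$, and the uniformity in $\alpha_n$ proved in the earlier lemmas is exactly what makes this supremum legitimate. Outside the window, i.e.\ for $|x| \gtrsim \sqrt{\ln n}$, Lemma~\ref{lem:far_from_mode} gives $\sigma_n\mathbb P(K_n=k)\leq \sigma_n O(n^{-4})$. The Gaussian density there is at most $e^{-c\ln(n)^{\dots}}$; more precisely, at the window edge $x^2/2 = 4\ln(n/2)$, so the density is $O(n^{-4})$. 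Both quantities are therefore $o(n^{-1/4})$, and this yields the local limit law.

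For convergence in distribution, we sum the local law. For fixed $a<b$, $\mathbb P(a\leq \frac{K_n-k_n^\star}{\sigma_n}\leq b)$ is a Riemann sum with mesh $1/\sigma_n$ of $\frac1{\sqrt{2\pi}}e^{-x^2/2}$. It has about $(b-a)\sigma_n$ terms, each with error $\widetilde O(n^{-1/4})/\sigma_n$, so the total error is $\widetilde O(n^{-1/4})\to0$ and the sum converges to $\Phi(b)-\Phi(a)$. Alternatively, one can reuse the Euler--Maclaurin computation from the proof of Proposition~\ref{prop:central_sum} on a subinterval. The main obstacle is bookkeeping rather than ideas. The key point is to confirm that the error of Lemma~\ref{lem:gnk_in_terms_of_diff_alpha} and of Theorem~\ref{thm:equivalent} is uniform in $k$, and that the odd correction terms $u_n,v_n$, which cancelled in the total sum, remain only $\widetilde O(n^{-1/4})$ pointwise. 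This is why the bound is $n^{-1/4}$ and not $n^{-1/2}$. For small $n$ (below the various $n_0$), $\delta_n$ is simply enlarged to cover the finitely many cases.
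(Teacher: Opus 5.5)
Your proposal is correct and follows the paper's proof. Both arguments get the uniform estimate $g_{n,k}/g_n = \frac{1}{\sqrt{2\pi}\,\sigma_n}e^{-(k-k_n^\star)^2/(2\sigma_n^2)}\bigl(1+\widetilde O(n^{-1/4})\bigr)$ in the central window from Lemma~\ref{lem:gnk_in_terms_of_diff_alpha} and Theorem~\ref{thm:equivalent}, and control everything outside the window with Lemma~\ref{lem:far_from_mode}; the only difference is order, since the paper proves convergence in distribution first (Euler--Maclaurin on a partial window) and then the local law, whereas you derive the local law first and recover the distributional limit as a Riemann sum of it, which is equivalent.
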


\begin{proof}  These convergences are a byproduct of our proof of Theorem~\ref{thm:equivalent}.

  \paragraph*{Convergence in distribution.}
 
    For~$x \in \mathbb R$, we have 
  \begin{equation*}
    \mathbb P\left(\frac{K_n - k_n^\star}{\sigma_n} \leq x\right)
    = \sum_{k \leq k_n^\star + x \sigma_n} \frac{g_{n,k}}{g_n}
    = \sum_{k=k_n^-}^{\lfloor k_n^\star + x \sigma_n\rfloor} \frac{g_{n,k}}{g_n} + O(n^{-3})
  \end{equation*}
  where Lemma~\ref{lem:far_from_mode} permits neglecting terms below \(k_n^-\).

  When $n$ is sufficiently large, all integers \(k\) between \(k_n^-\) and \(\lfloor k_n^\star + x \sigma_n\rfloor\) satisfy the inequality
\(  \left| k - k_n^\star \right| \leq \left( \frac n 2 \right)^{1/4} \ln \left( \frac n 2 \right) \).
  So we can apply
  Lemma~\ref{lem:gnk_in_terms_of_diff_alpha} and Theorem~\ref{thm:equivalent}
 to deduce that for any~$k$ between $k_n^-$ and \(\lfloor k_n^\star + x \sigma_n\rfloor \),
  \begin{align}
    \frac{g_{n,k}}{g_n} & \sim 
    2
    \frac {\left( \frac n 2 \right)^{-1/4}}
    {\sqrt{\pi \ln \left( \frac n 2 \right)}}
   \exp
   \left(  
   {- \frac 4 {\ln\left( \frac n 2 \right)} 
      \sqrt{ \frac 2 n}
      \left( k - k_n^\star \right)^2} 
    \right)  
      \left( 1 + \widetilde O \left( n^{-1/4} \right) \right) 
      \notag
      \\
    & = \frac{1}{\sqrt{2\pi} \sigma_n} \
    \exp
   \left(  {-\frac{(k - k_n^\star)^2}{2 {\sigma_n}^2}}
   \right)  
    \left(1 + \widetilde O \left(n^{-1/4}\right)\right)
    \label{eq:asympt_proba}
  \end{align}
 uniformly with respect to $k$.
  
  Let $f_n(t)$ denote $\exp\left(- \frac{(t - k_n^\star)^2}{2 \sigma_n^2}\right)$.
  The Euler-Maclaurin formula gives
  \begin{equation*}
    \sum_{k=k_n^-}^{\mathclap{\lfloor k_n^\star + x \sigma_n\rfloor}} f_n(k)
    =
    \frac{f_n(k_n^-) + f_n(\lfloor k_n^\star + x \sigma_n\rfloor)}2
    ~+~ \int\limits_{k_n^-}^{\mathclap{\lfloor k_n^\star + x \sigma_n \rfloor}} f_n(t) dt
    ~+~ \int\limits_{k_n^-}^{\mathclap{\lfloor k_n^\star + x \sigma_n \rfloor}} f_n'(t)\left(t - \lfloor t \rfloor - \frac 1 2\right) dt
  \end{equation*}
  The quantities
  $f_n(k_n^-)$ and ~$f_n(\lfloor k_n^\star + x \sigma_n\rfloor)$ are bounded by \(1\).
  In addition, since~$f_n'(t) = - \frac{t - k_n^\star}{\sigma_n^2} f_n(t)$,  the bound ~$f_n'(t) = \softO(n^{-1/4}) f_n(t)$ holds uniformly over the integration domain,
  making the second integral $\softO(n^{1/4})$ times smaller than the first one.
  For this integral, substitution $s = \frac{t - k_n^\star}{\sigma_n}$ yields
  \begin{equation*}
    \int_{k_n^-}^{\lfloor k_n^\star + x \sigma_n \rfloor} f_n(t) dt
    = \sigma_n \int_{\frac{k_n^- - k_n^\star}{\sigma_n}}^{\frac{\lfloor k_n^\star + 
    x \sigma_n \rfloor - k_n^\star}{\sigma_n}} e^{-\frac{s^2}{2}} ds
    = \sigma_n \int_{-\infty}^x e^{-\frac{s^2}{2}} ds + O(1). 
  \end{equation*}
  Combining these estimates, \(\mathbb P\left(\frac{K_n - k_n^\star}{\sigma_n} \leq x\right)\)
  converges to 
  \(\frac 1 {\sqrt{2 \pi}}
  \int_{-\infty}^x e^{-\frac{s^2}{2}} ds 
  \),
  the CDF of $\mathcal{N}(0,1)$.

    \paragraph*{Local limit law.}

  Let~$x \in \mathbb R$ and set~$k_n(x) = \lfloor k_n^\star + x\sigma_n\rfloor$.
  
  If \(|k_n(x) - k_n^\star| > \ln\left( \frac n 2 \right) \left( \frac n 2 \right)^{1/4}\), 
  then
  $|x| > \sqrt{8 \ln\left( \frac n 2 \right)} - 1$, so
  \(\frac{1}{\sqrt{2\pi}} e^{-\frac{x^2}{2}} \) decays as 
  \(\widetilde O\left( n^{-4} \right)\).
  The term  \(\sigma_n \frac{g_{n,k_n(x)}} {g_n}   \) 
  decays similarly
  by virtue of Lemma~\ref{lem:far_from_mode}. 
  So the local limit law holds trivially in this regime.

  Now assume that $|k_n(x) - k_n^\star| \leq \ln\frac n 2 \left( \frac n 2 \right)^{1/4} = \sqrt{8 \ln\frac n 2} \sigma_n$.
  By definition of the floor function, the estimate \(
  k_n(x) = k_n^\star + x \sigma_n + O(1)
  \) holds uniformly.
  Thus, 
  \begin{equation*}
    \left( \frac{k_n(x) - k_n^\star}{\sigma_n} \right)^2
  = x^2 + \widetilde O(n^{-1/4}).
  \end{equation*}
  By Equation
  \eqref{eq:asympt_proba},  
  \begin{equation*}
    \sigma_n \frac{g_{n,k_n(x)}}{g_n}
    = \frac 1 {\sqrt{2\pi}} e^{-\frac {x^2} 2}+ \widetilde O \left( n ^{-1/4} \right)
  \end{equation*}
  holds uniformly, which establishes the local limit law.
\end{proof}

\subsection{Number of Free Commits}

\begin{theorem}\label{thm:freecommits}
  Let~$F_n$ be the number of free commits in a uniform Git graph of size~$n$ (cf. Definition~\ref{def:free}).

  The random variable~$F_n$ is asymptotically normal. More precisely, 
\(
    \dfrac{F_n - \mathbb E \left( F_n \right)  }
    { \sqrt{  \mathbb V \left( F_n \right) } }
 \)
  converges in distribution to  \(\mathcal N(0, 1)\),
  where \[\mathbb E \left( F_n \right) \sim  \frac {\ln\left( \frac n 2 \right)}{2} \sqrt{\frac n 2} + \widetilde O \left( n^{\frac 1 4} \right)
  \quad \text{ and } \quad
  \mathbb V \left( F_n \right) \sim  \left( \frac { \ln\left( \frac n 2 \right)} 2 - 1 \right) \sqrt{\frac n 2}   +\widetilde O \left( n^{\frac 1 4} \right).\footnotemark\]
\end{theorem}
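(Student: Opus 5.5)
We plan to condition on the number $K_n$ of \main{}s and exploit the product structure of $G_k(z)$ refined by free commits. By the combinatorial proof of Proposition~\ref{prop:Gkz}, marking free commits by a variable $w$ gives
\[
G_k(z,w) = z^k \prod_{j=0}^{k-1}\left( w + \frac{jz}{1-z}\right),
\]
since the $(j+1)$th \main{} is free (weight $zw$) or receives a branch ($j$ choices, weight $z^2/(1-z)$). Equivalently, by Proposition~\ref{prop:bijection} and Corollary~\ref{cor:formula}, $g_{n,k,f} = {k \brack f}\binom{n-k-1}{k-f-1}$. The key observation is that, for fixed $k$ and at the saddle-point radius $r=r_{n,k}$, the weighted measure $g_{m,k,f} r^m$ on $(m,f)$ makes $F$ a sum of independent Bernoulli variables $B_j$ with $\mathbb P(B_j=1) = p_j := \frac{1}{1 + jr/(1-r)}$ (independent of the white lengths, which are geometric). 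Conditioning on total size $m=n$ couples these, so we will treat the conditioning via a bivariate saddle point: write $\mathbb P(F_n = f \mid K_n=k)$ as a Cauchy integral in $z$ of $[w^f]G_k(z,w)$, and apply the same Laplace method as in Proposition~\ref{prop:asympt_cauchy_integral}, now with $w = e^{it}$ perturbations, to obtain a local Gaussian law for $F_n$ given $K_n = k$, uniformly for $|k-k_n^\star| \le (\frac n2)^{1/4}\ln\frac n2$.

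Concretely, set $\Lambda_k(z,s) = \sum_{j=0}^{k-1}\ln(e^s + jz/(1-z))$. The conditional mean and variance are, to leading order, $\partial_s\Lambda_k$ and the Schur complement $\partial_s^2\Lambda - (\partial_s\partial_{\bullet}\Lambda)^2/\partial_\bullet^2 L_n$ at $(r_{n,k},0)$. With $r = \beta\sqrt{2/n}$ and $k\approx n/2$, $\sum_j p_j \approx \sum_{j<k} \frac{1}{1+j\beta\sqrt{2/n}} \approx \frac{1}{\beta}\sqrt{\frac n2}\ln\!\big(1+\beta\sqrt{n/2}\big)\approx \frac{\ln(n/2)}{2\beta}\sqrt{\frac n2}$, which at $\beta=1$ gives the claimed mean; note that the $\ln\beta_n$ correction and the $O(n^{1/4})$ fluctuation of $\beta$ over the central window (Lemma~\ref{lem:beta_in_terms_of_alpha}) contribute only $\widetilde O(n^{1/4})$. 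Similarly $\sum_j p_j(1-p_j)\approx \frac{\ln(n/2)}{2}\sqrt{n/2} - \sqrt{n/2}$, and we then subtract the covariance correction coming from conditioning on size, which we expect to be $O(1)\cdot\sqrt{n}$ times lower order, or else to combine to the stated constant $-1$; this bookkeeping will be checked by the same expansions as in Lemma~\ref{lem:Lnbullet}.

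Finally, we mix over $k$. Lemma~\ref{lem:far_from_mode} discards $k$ outside the window at cost $O(n^{-3})$. Inside the window, the conditional mean varies with $k$ through $\beta_n$, and by Lemma~\ref{lem:beta_in_terms_of_alpha} $\partial_k \mathbb E[F\mid k] = O(1/\ln n)\cdot$ bounded, so the shift across a window of width $\sigma_n \asymp \sqrt{\ln n}\,n^{1/4}$ is $\widetilde O(n^{1/4})$. This is negligible against the conditional standard deviation $\asymp \sqrt{\ln n}\,n^{1/4}$ only up to polylog factors, so the mixture is still Gaussian with variance equal to the conditional variance plus $\mathrm{Var}(\mathbb E[F\mid K])=\widetilde O(n^{1/2}/\ln n)$. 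This is of the same order $\sqrt n$ but is absorbed in the statement's error if indeed it is lower order by a $\ln n$ factor, which we must verify. Convergence in distribution follows since a mixture of uniformly Gaussian conditionals with Gaussian-distributed means is Gaussian.

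The main obstacle is the uniform bivariate saddle-point estimate, since the $w$-direction must be handled jointly with $\theta$ and non-peak regions bounded as in Lemmas~\ref{lem:unimodal} and~\ref{lem:negligible_part_cauchy}. First we would try instead a quasi-powers/Lévy continuity argument: show $\mathbb E[e^{sF_n}\mid K_n=k] = G_k(r_s,e^s)r_s^{-n}/G_k(r,1)r^{-n}\cdot(1+o(1))$ uniformly for $s = O(n^{-1/4})$, reusing Proposition~\ref{prop:asympt_cauchy_integral} with the perturbed product.
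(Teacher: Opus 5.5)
Your set-up (condition on $K_n=k$, handle the size constraint by a bivariate saddle point, then mix over $k$) is a legitimate alternative to the paper's route, and your conditional mean $\sum_j p_j\approx\frac{\ln(n/2)}{2\beta}\sqrt{n/2}$ is correct. The gap is in the variance, where both of your order-of-magnitude estimates are wrong.

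First, conditioning on the size is \emph{not} a lower-order correction. Take your weighted measure at $r=r_{n,k}$, let $M$ be the size, and set $V_B:=\sum_j p_j(1-p_j)$. Then $\mathrm{Cov}(F,M)=-\frac{1}{1-r}V_B$ and $\mathbb V(M)\approx\Delta_n\approx V_B+\beta\sqrt{n/2}$, hence
\[
\mathbb V(F\mid K=k)\approx V_B-\frac{V_B^2}{V_B+\beta\sqrt{n/2}}=\frac{V_B\,\beta\sqrt{n/2}}{V_B+\beta\sqrt{n/2}}.
\]
At $\beta=1$ this equals $\bigl(1-\frac{2}{\ln(n/2)}\bigr)\sqrt{n/2}$, so the whole $\ln(n)\sqrt n$ part of $V_B$ cancels. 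Combinatorially this is clear: $k-f$ branches carry $n-k$ feature commits, so $f=2k-n+\sum_b(\ell_b-1)$. At $r\approx\sqrt{2/n}$ the excess lengths total about $\beta_k\sqrt{n/2}$, with Poisson-like fluctuations.

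Second, for the same reason $\mathbb E[F\mid K=k]\approx 2k-n+\beta_k\sqrt{n/2}$. This is your $\sum_j p_j$ rewritten with the $\alpha_n$--$\beta_n$ relation of Lemma~\ref{lem:estimation_saddlepoint}. Its slope in $k$ is $\frac{2V_B}{\Delta_n}\approx 2-\frac{4}{\ln(n/2)}$, not $O(1/\ln n)$. Even your own leading term gives $\partial_\beta\bigl(\frac{\ln(n/2)}{2\beta}\sqrt{n/2}\bigr)\cdot\frac{d\beta}{dk}=\frac{2}{\beta^2}$ once you use $\frac{d\beta}{dk}\approx-\frac{4}{\ln(n/2)}\sqrt{2/n}$ from Lemma~\ref{lem:beta_in_terms_of_alpha}. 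With $\mathbb V(K_n)\approx\sigma_n^2=\frac{\ln(n/2)}{8}\sqrt{n/2}$, the between-$k$ term is therefore the \emph{dominant} one:
\[
\mathbb V\bigl(\mathbb E[F\mid K]\bigr)\approx\Bigl(2-\frac{4}{\ln(n/2)}\Bigr)^2\frac{\ln(n/2)}{8}\sqrt{\frac n2}=\Bigl(\frac{\ln(n/2)}{2}-2+\frac{2}{\ln(n/2)}\Bigr)\sqrt{\frac n2}.
\]
So $F_n$ and $K_n$ are asymptotically perfectly correlated. The two errors compensate (even the $\pm\frac{2}{\ln(n/2)}\sqrt{n/2}$ terms cancel), which is why your final constants agree with the statement. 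The argument as written does not establish them, however. Also, even on your own estimate, a term of size $\sqrt n/\ln n$ could never be absorbed into the $\widetilde O(n^{1/4})$ error you are aiming for.

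The conditioning route can be repaired, but then the mixture term must be computed to absolute precision $\widetilde O(n^{1/4})$. That requires $\mathbb V(K_n)$ and the slope of $k\mapsto\mathbb E[F\mid K=k]$ to relative precision $\widetilde O(n^{-1/4})$, uniformly over the window, on top of the uniform bivariate Laplace estimate. The agreement of your naive $V_B$ with the answer is structural: if you treat $k$ as a second saddle variable alongside $\ln r$, the corresponding Schur complement differs from $V_B$ only by polylogarithmic terms.

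The paper performs this joint computation implicitly. It puts $v$ into $G_k(z,v)=z^k\prod_{j<k}\bigl(v+\frac{jz}{1-z}\bigr)$ and reruns the saddle-point, Euler--Maclaurin and tail arguments for $v$ near $1$. The mode then moves with $v$, $k_n^\star(v)=\frac n2+\bigl(\frac v4\ln\frac n2-\frac12-\frac v2\ln v\bigr)\sqrt{n/2}$, which is exactly where the $F$--$K$ coupling is absorbed. The resulting uniform estimate $\mathbb E[v^{F_n}]=e^{h_n(v)}\bigl(1+\widetilde O(n^{-1/4})\bigr)$ feeds the generalized quasi-powers theorem, which gives mean $h_n'(1)$ and variance $h_n'(1)+h_n''(1)$ directly.
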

\footnotetext{
  Even if we did not prove it, the true error terms should be \(\widetilde O (1)\).
}

%Compute the number of pages
\FPeval{\result}{clip(\getpagerefnumber{end:proof}-\getpagerefnumber{start:proof})}

\begin{proof}
The idea here is to apply the Generalized quasi-powers theorem~\cite[Theorem IX.13]{FS2009}
by incorporating to the generating function \(G_k\) a new variable \(v\) that tracks the number of free commits. 

We should revisit the demonstration of Theorem~\ref{thm:equivalent} with this new variable.
However, rather than reproducing the \result{} pages of proofs (a prospect we shall describe as "inadvisable"),
we summarize the main results already obtained while keeping track of the additional variable \(v\) throughout.
All the saddle-point estimation and asymptotic machinery work similarly.

First of all, let \(G_k(z,v)\) be the generating function of \gitgraph{}s with \(k\) \main{}s,
where~\(z\) still refers to the size, and \(v\) counts the number of free commits.
Proposition~\ref{prop:Gkz} extends to 
\[ G_k(z,v)
  = z^k \, \prod_{j=0}^{k-1} \left(v + \frac{jz}{1 - z}\right)
  = \frac{z^{2k}}{(1-z)^k}
   \frac{\Gamma\left( \frac{v(1-z)}{z} + k \right)}{\Gamma\left( \frac{v(1-z)}{z} \right)}. \]
For remainder, the \(\left(v + \dfrac{jz}{1 - z}\right)\) factor in the first expression
translates the possibility for the \({(j+1)}\)th \main{} to be either free (contribution: \(v\)) or the merging point of a feature branch (contribution: \(\frac{jz}{1 - z}\)). Consider $v$ as a complex number close to $1$.

From there, we define \(r_n\) as the unique solution of the saddle-point equation
\(r_n G_{k_n}'(r_n,v) = n\, G_{k_n}(r_n,v)\),
 and \(\beta_n\) as \(r_n \sqrt{\frac n 2}\). 
Following Proposition~\ref{prop:asympt_saddle_bound},
we prove that if $k_n - \frac n 2 = \widetilde O\left( {\sqrt{n}} \right)$, then 
\begin{equation*}
      r_n^{-n}G_{k_n}(r_n,v) \underset{n\to\infty}\sim
      \left(\frac{n}{2e}\right)^{\frac{n}{2}}
      \exp\left(\lambda_n(\beta_n,v) \sqrt{\frac{n}{2}} + \mu_n(\beta_n,v)\right),
    \end{equation*}
where \(\lambda_n(x,v)\) is a function with a local maximum at $x=1$, for any $v$. 
The expressions of \(\lambda_n\) and \(\mu_n\) being quite involved, let us make this asymptotic estimate explicit while $k_n$ is near the mode.

Let \(k_n^\star\) be 
\(\frac n 2 + \left( \frac v 4 \ln\left( \frac n 2 \right) - \frac 1 2 - \frac v 2  \ln(v) \right)\sqrt{\frac n 2}\), 
the approximate location of the mode. 
If \(k_n\) satisfies
\(\left| k_n - k_n^\star \right| < \left( \frac n 2 \right)^{1/4} \ln \left( \frac n 2 \right)  \), then the estimate (in line with Lemma~\ref{lem:gnk_in_terms_of_diff_alpha})
\begin{equation*}
  r_n^{-n}G_{k_n}(r_n,v)
  \sim
  \left( \frac n {2e} \right)^{\frac n 2}
  \exp\left(
        \begin{aligned}
          \left( \frac v 2 \ln\left( \frac n 2 \right)
           + v + 1 - v\ln\left( v \right)
          \right) 
          \sqrt{\frac n 2} \\
          - \frac 4 
          { v \ln\left( \frac n 2 \right) + 2 -2v -2v\ln(v)} 
          \left( k_n - k_n^\star \right)^2 \sqrt{\frac 2 n} \\
          + \frac {v^2}{32} \ln\left( \frac n 2 \right)^2
          - \frac{2+3v-2v^2 + v^2\ln(v)} 8\ln\left( \frac n 2 \right) \\
          + \frac 1 8 - \frac v 2 + \frac {v^2} 2 + \frac{v^2} 8 \ln(v)^2
          + \frac{2 + 3v - 2 v^2} 4 \ln(v)
        \end{aligned}
        \right)
\end{equation*}
holds with a (uniform) error factor of \(1 + \widetilde O\left( n^{-\frac 1 4} \right)\).

Then, we can evaluate \(g_{n,k_n}\) when \(k_n\) is within this range by Cauchy's integral formula. 
Mirroring Proposition~\ref{prop:gnk_asympt}, we can establish that
\begin{equation*}
  g_{n,k_n} = \frac{1}{\sqrt{ 2 \pi \Delta_n(v)  }} r_n^{-n}G_{k_n}(r_n,v)
  \left( 1 + \widetilde O \left( \frac 1 {\sqrt n}\right) \right)
\end{equation*}
uniformly in $k_n$ and $v$,
where 
\begin{equation*}
  \Delta_n(v) = \left(\frac{v}{2\beta_n} \ln\left( \frac n 2 \right)+ v \frac{\ln\beta_n}{\beta_n} - \frac{v}{\beta_n} + \beta_n -\frac { v \ln(v) } {\beta_n} \right)
  \sqrt{\frac n 2}.
\end{equation*}

Now, we use the Euler-Maclaurin formula to sum the \(g_{n,k}\)'s as with Proposition~\ref{prop:central_sum},
then proceed to bound the tails. We arrive at the generalization of Theorem~\ref{thm:equivalent} 
  \begin{equation*}
    g_n \underset{n \to \infty}\sim
    \frac 1 2
      \left( \frac n {2e} \right)^{\frac n 2}
  \exp\left(
        \begin{aligned}
          \left( \frac v 2 \ln\left( \frac n 2 \right)
           + v + 1 - v\ln\left( v \right)
          \right) 
          \sqrt{\frac n 2} \\
          + \frac {v^2}{32} \ln\left( \frac n 2 \right)^2
          - \frac{2+3v-2v^2 + v^2\ln(v)} 8\ln\left( \frac n 2 \right) \\
          + \frac 1 8 - \frac v 2 + \frac {v^2} 2 + \frac{v^2} 8 \ln(v)^2
          + \frac{2 + 3v - 2 v^2} 4 \ln(v)
        \end{aligned}
        \right)
        \left( 1 + \widetilde{O}\left( n^{-\frac 1 4} \right) \right),
  \end{equation*}
where the estimate is uniform in $v$. 

Using the notation of~\cite[Theorem IX.13]{FS2009}, we deduce that 
the probability generating function of \(F_n\) admits a representation of the form \(\exp\left( h_n(v) \right)  \left( 1 +  \widetilde{O}\left( n^{-\frac 1 4} \right)  \right)\)
uniformly in $v$, where
\begin{multline}
  h_n(v) = 
  \frac{
    (v-1) \ln\left( \frac n 2 \right) 
    + 2v - 2 - 2 v \ln(v)
  } 2
  \sqrt{\frac n 2}
  +
  \frac{v^2 - 1}{32}
  \ln\left( \frac n 2 \right)^2 \\
  + \frac {
  1 - 3v + 2v^2 - v^2 \ln(v)
  } 8
  \ln\left( \frac n 2 \right) 
  - \frac v 2 + \frac {v^2} 2 + \frac{v^2} 8 \ln(v)^2
  + \frac{2 + 3v - 2 v^2} 4 \ln(v).
  \label{eq:representation}
\end{multline}
The hypotheses of the Generalized quasi-powers theorem are clearly satisfied, 
implying that the random variable \(
\dfrac{F_n - h_n'(1)}
{\sqrt{h_n'(1) + h_n''(1)}}
\) 
converges a Gaussian of mean and variance \(1\),
where \(h_n'(1) = \frac 1 2 \ln \left( \frac n 2 \right) \sqrt{\frac n 2} + \widetilde O \left( n^{\frac 1 4} \right) \)
and
\(h_n''(1) = \left( \frac 1 2 \ln \left( \frac n 2 \right) - 1 \right) \sqrt{\frac n 2} + \widetilde O \left( n^{\frac 1 4} \right) \).
\end{proof}

\subsection{Width of Uniform Git Graphs}

\newcommand\randg{\ensuremath{\boldsymbol{G}_n}}
\newcommand\randgg{\ensuremath{ \overline{\boldsymbol{ G}_n  } } }

In his master thesis~\cite{stage-lucas}, Blondel introduced a parameter termed \emph{width} to study real \gitgraph{}s from GitHub repositories, 
and investigated its behavior.

\begin{definition}[Width]
  Let~\(G\) be a \gitgraph{} with \(k\) \main{}s.
  Label its \main{}s \(1,2,\dots,k\) from oldest to newest.
The width~$W(G, j)$ at position~$j \geq 1$ is defined as the number of branches
that fork off from a \main{} with label \(\leq j\) and merge into a commit with label \(>j\).
%In particular, if~$j$ is larger than the number of main commits of~$g$, we have~$W(g, j) = 0$.
\end{definition}

Blondel's experimental study compared the width profiles of uniform \gitgraph{}s with those of real \gitgraph{}s, revealing substantial differences.
 We prove here what he observed experimentally for our model:

\begin{theorem}
  Let~$\randg$ be a uniform random Git graph of size~$n$ and \(K_n\) be its number of \main{}s.  
  For~$t > 0$, we have
  \begin{equation*}
    \lim_{n \rightarrow + \infty}\mathbb E\left(\frac 1 n W(\randg, \left\lfloor t K_n\right\rfloor)\right) 
      = - \frac t 2 \ln(t).
  \end{equation*}
\end{theorem}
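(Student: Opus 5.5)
Conditionally on $K_n = k$, a uniform \gitgraph{} of size $n$ with $k$ \main{}s can be analysed through the product structure of Proposition~\ref{prop:Gkz}: $G_k(z) = \prod_{j=0}^{k-1}\left(z + j\frac{z^2}{1-z}\right)$. The factor for the $(j+1)$-th \main{} records whether it is free or whether a feature branch merges into it. In the latter case there are $j$ equally likely bases among the $j$ earlier \main{}s, independently of all other choices. So, conditionally on which \main{}s are non-free (and on the branch lengths), the base of the branch merging into commit $m$ is uniform on $\{1,\dots,m-1\}$. This independence is the key point. For fixed position $p$, the branch merging into $m > p$ contributes to $W(G,p)$ exactly when its base is $\le p$, which has conditional probability $p/(m-1)$. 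Hence
\[
\mathbb E\bigl(W(\randg,p)\mid \text{non-free set } N\bigr) = \sum_{m\in N,\ m>p} \frac{p}{m-1}.
\]

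The next step is to control the non-free set. By Theorem~\ref{thm:freecommits}, $F_n = \widetilde O(\sqrt n)$ in expectation, and by Theorem~\ref{thm:expected_main}, $K_n = \frac n2 + \widetilde O(\sqrt n)$ with concentration. Therefore all but $\widetilde O(\sqrt n)$ of the positions $m \in \{2,\dots,K_n\}$ are non-free. Writing $p = \lfloor tK_n\rfloor$ with $0<t\le 1$, the sum above equals $p\sum_{m=p+1}^{K_n}\frac1{m-1}$ minus the contribution of free positions. That correction is at most $F_n\cdot p/p = F_n$ when $p\ge 1$, which is $o(n)$ in expectation. The main part is $p\,(\ln K_n - \ln p) + O(1) = -tK_n\ln t + O(1)$. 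Dividing by $n$ and using $K_n/n\to 1/2$ in $L^1$ (with $K_n\le n$, so dominated convergence applies) gives $\mathbb E\bigl(W(\randg,\lfloor tK_n\rfloor)\bigr)/n \to -\frac t2\ln t$. For $t>1$ both sides are $0$ for large $n$ in the natural reading: the width vanishes past the last \main{}, and the formula should be read with $t\le1$. I would note this in the write-up.

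The main obstacle is justifying the conditional uniformity of bases rigorously under the uniform measure on size-$n$ graphs, not on fixed-$k$ products. The size constraint couples branch lengths across positions. However, the base choice for each non-free commit contributes a factor $j$ that does not affect size, so conditioning on $k$, on the non-free set and on all branch lengths leaves the bases independent and uniform. I would make this explicit via the bijection-free product description above, or via the recursive decomposition of Proposition~\ref{prop:recurrence}. After that, only routine bounds on $\mathbb E(F_n)/n\to0$ and on the harmonic sum remain.
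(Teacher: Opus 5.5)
Your proof is correct and is essentially the paper's argument. The paper phrases the free-commit correction as a coupling: it adds to each free commit a one-vertex branch with uniformly chosen base, obtaining a graph $\overline{G}$ with $W(\overline{G},p)-F_n\le W(G,p)\le W(\overline{G},p)$, and then evaluates $\mathbb E\bigl(\lfloor tK_n\rfloor(H_{K_n-1}-H_{\lfloor tK_n\rfloor})\bigr)$ using $\mathbb E(F_n)=O(\sqrt n\ln n)$ and $K_n/n\to 1/2$. That is exactly your conditional-expectation computation over the non-free set. Your explicit justification that the bases are conditionally independent and uniform, given $K_n$, the non-free set and the branch lengths, is a step the paper leaves implicit.

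Your restriction to $0<t\le 1$ is right, but your phrase ``both sides are $0$'' is wrong. For $t>1$ the left-hand side tends to $0$ while $-\frac t2\ln t<0$, so the statement as written is false there. The paper's harmonic-number expression silently becomes negative for $t>1$.
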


 \begin{figure}[ht!]
\begin{center}
\includegraphics[scale=1]{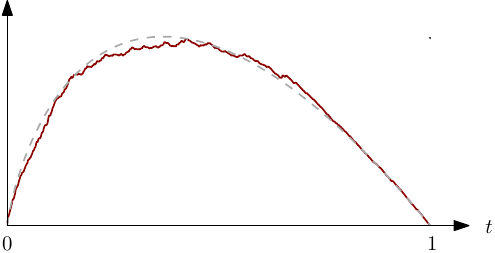}
\end{center}
\vspace{-7pt}
\caption{ Red: a realization of the profile \(W(\randg, \left\lfloor t K_n\right\rfloor)\) where \(\randg\) is a uniform \gitgraph{} of size \(2048\). Dashed: the asymptotic theoretical prediction \(- \frac t 2 \ln t\).}
\label{fig:profile_plot}
\end{figure}

Figure~\ref{fig:profile_plot} provides an evidence of the previous theorem.

\begin{proof}
  We employ a coupling argument to relate the width of an arbitrary random \gitgraph{} to one with no free commits.

  Consider a uniform random Git graph~$\randg$ of size \(n\),
  and denote by $K_n$ its number of \main{}s and \(F_n\) its numbers of free commits.
  Now construct a new \gitgraph~$\randgg$ as follows:
  for each free commit of \(\randg\),
  attach a branch made of a single \feature, 
  with the branch's base chosen uniformly at random from earlier \main{}s.
  Thus,~$\randgg$ has no free commit.

  The width at any given position may increase from \(\randg\) to \(\randgg\) by at most \(F_n\). 
  Consequently, we have for any~$t > 0$
 \begin{equation*}
    W(\randgg, \left\lfloor t K_n\right\rfloor) - F_n  
    \leq
    W(\randg, \left\lfloor t K_n\right\rfloor) 
    \leq 
    W(\randgg,\left\lfloor t K_n\right\rfloor).
  \end{equation*}
  By Theorem~\ref{thm:freecommits},
  we have \(\mathbb E \left( F_n \right) = O\left( \ln (n) \sqrt n \right)\).
  Therefore, if we take expectations, 
  \begin{equation*}
    \mathbb E \left( 
       W(\randg, \left\lfloor t K_n\right\rfloor)
     \right)
    =
    \mathbb E \left(
     W(\randgg, \left\lfloor t K_n\right\rfloor)
    \right)
    +  O\left( \ln (n) \sqrt n \right),
  \end{equation*}
  reducing the problem to analyzing the width of~\randgg.

By linearity of expectation, the expected width in \randgg at position \(\left\lfloor t K_n\right\rfloor\)
equals the sum over all branches of the probability that each branch crosses position
\(\left\lfloor t K_n\right\rfloor\):
\begin{equation*}
     \mathbb E \bigg( 
     W(\randgg, \left\lfloor t K_n\right\rfloor)
    \bigg)= 
    \sum_{k \geq 2} 
    \mathbb P \left( 
       K_n = k \right)
       \sum_{j=\left\lfloor t k\right\rfloor + 1}^k
    \mathbb P\left( \begin{aligned}
      \text{branch merging at \main{} } j \\
      \text{forks from a \main} \leq \left\lfloor t k\right\rfloor
    \end{aligned}
      \right)
\end{equation*}
 For a branch merging at \main{}~\(j\), 
 its base is a uniform \main{} among $\{1, \ldots, j-1\}$,
so
\begin{equation*}
  \mathbb E \bigg( 
     W(\randgg, \left\lfloor t K_n\right\rfloor)
    \bigg) = 
    \sum_{k \geq 2} 
    \mathbb P \left( 
       K_n = k \right)
     \sum_{j=\left\lfloor t k\right\rfloor + 1}^k
     \frac {\left\lfloor t k\right\rfloor} { j - 1}
    =
     \mathbb E 
    \bigg( 
      \left\lfloor t K_n\right\rfloor
    \left( H_{K_n - 1} - H_{\left\lfloor t K_n\right\rfloor} \right)
     \bigg).
\end{equation*}
where $H_n = \sum_{j=1}^n \frac{1}{j}$ is the $n$-th harmonic number.

  Set \[
  f_n(x) :
  = 
  \frac {  \left\lfloor t \,n \,x\right\rfloor}  n
   \left( H_{\lfloor nx \rfloor - 1} - H_{\left\lfloor t n x\right\rfloor} \right)
  \]
  so that
  \begin{equation*}
    \frac 1 n \,
    \mathbb E \bigg( 
     W(\randgg, \left\lfloor t K_n\right\rfloor)
    \bigg)
    = 
    \mathbb E 
    \left( 
    f_n\left(\frac {K_n}{n}
    \right)
     \right).
  \end{equation*}
  By Theorem~\ref{thm:expected_main}, \(K_n / n\) converges in distribution to \(1/2\).
  Moreover, \(f_n(x)\) is bounded for \(n \geq 2\) and \(x \in [0,1]\),\footnote{
    Using an  integral test for convergence,
    we prove that \(f_n(x) <  2 - \ln(t) \).
  }
  and converges over a neighborhood of $1/2$
  to a continuous function:
  \begin{equation*}
    f_n(x) 
    \underset{n \to \infty}\sim
   \left(t x + O\left( \frac 1 n \right)\right) 
   \left( \ln(nx) + \gamma - \ln(nxt) - \gamma  + O\left( \frac 1 n \right) \right)
    \rightarrow -t\,x\, \ln(t).
  \end{equation*}
  This guarantees us\footnote{We use the easy-to-prove property
  that if $X_n$ converges in law to a constant \(\ell\),
  and $f_n$ are functions, uniformly bounded, converging pointwise to a function \(f\) continuous at \(\ell\),
  then \(\lim_{n \to \infty}\mathbb E \left( f_n \left( X_n \right) \right) = f(\ell)\).}
   that \(\mathbb E 
    \left( 
    f_n\left(\frac {K_n}{n}
    \right)
     \right)\) converges to \(-\frac t 2\, \ln(t)\).
\end{proof}

\section{Random Generation}
\label{s:generation}

As discussed in the introduction,
random generators of \gitgraph{}s
should be valuable for property-based testing
and benchmarking of VCS algorithms.
In this section, we study two families of random generators:
uniform samplers and Boltzmann samplers.

%We will quickly treat uniform samplers since two factors limit its practical scope: 

While a conference paper by the same authors~\cite{short-version}
already addressed random generation of \gitgraph{}s,
the algorithms and analyses have been improved in the present work.

\subsection{Uniform Random Sampler}

The enumeration formula of Corollary~\ref{cor:formula}
naturally suggests a \emph{uniform random sampler}, 
i.e., an algorithm that, given an integer \(n\)
outputs any \gitgraph{} of size \(n\) with probability \(\dfrac{1}{g_n}\).

\begin{figure}[ht!]
\begin{center}
\includegraphics[width=0.45\textwidth]{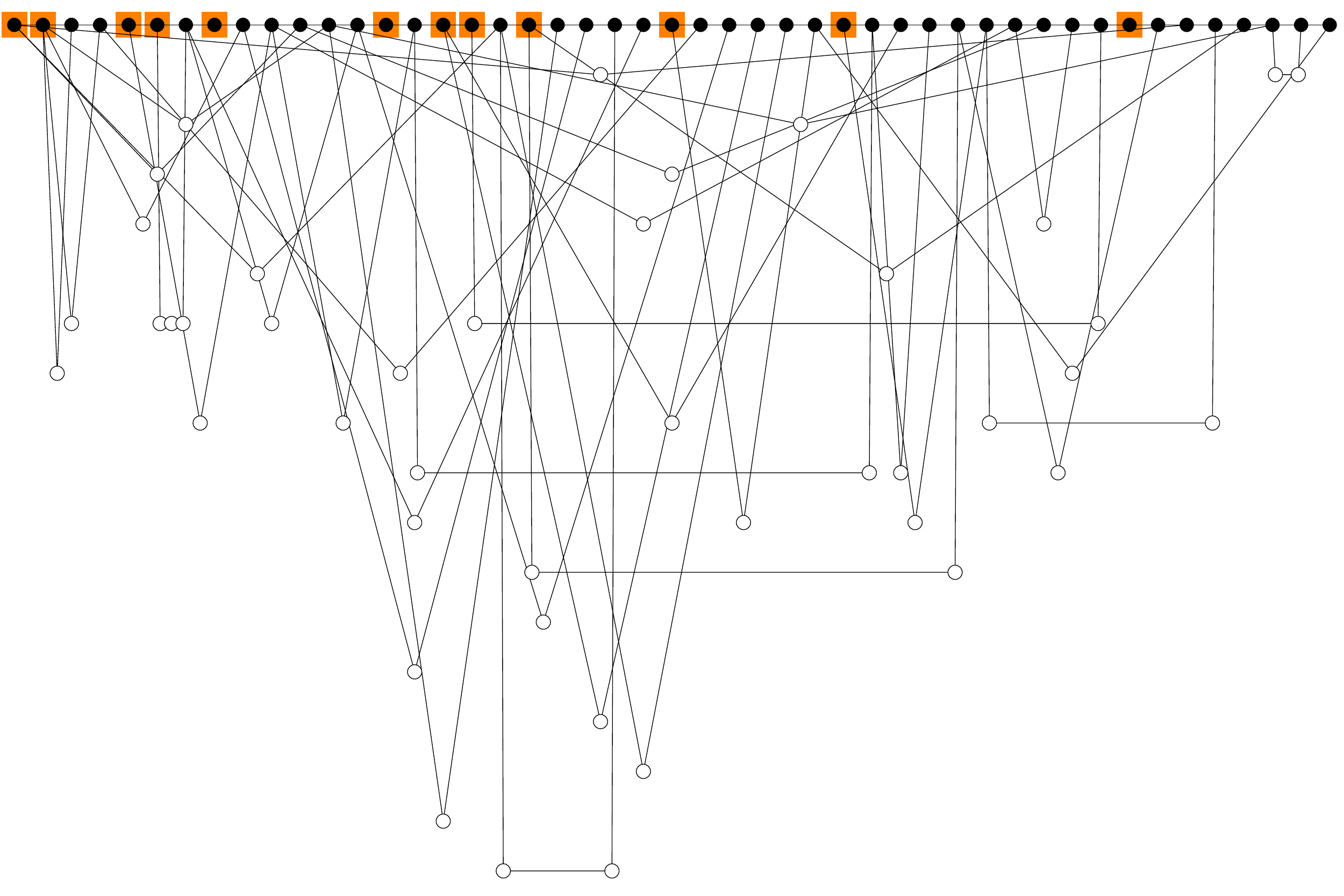}
\hfill
\includegraphics[width=0.45\textwidth]{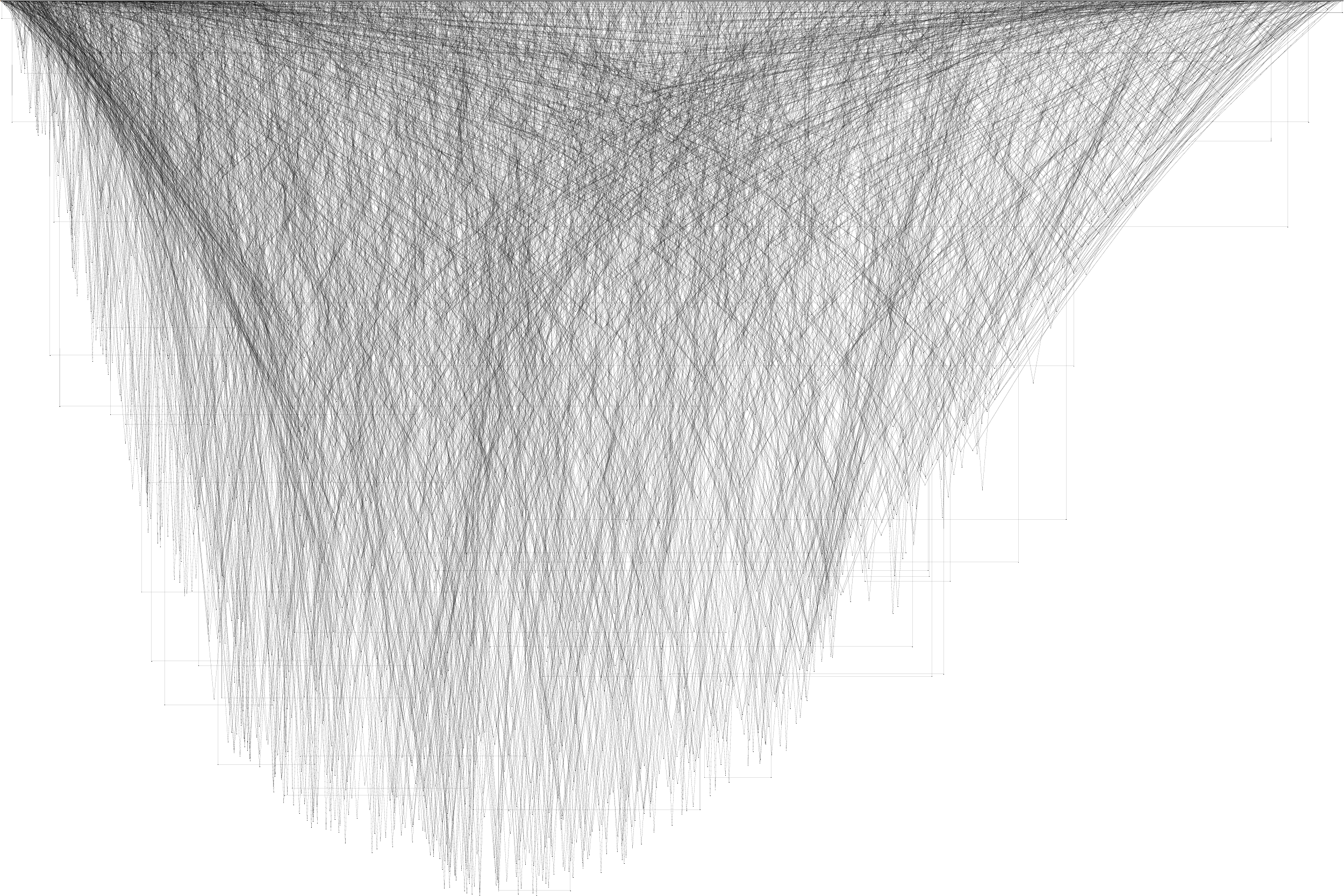}
\end{center}
\vspace{-7pt}
\caption{ Uniform \gitgraph{}s of sizes $90$ and $4000$. }
\label{fig:uniform}
\end{figure}

The key observation is that the probability
that a uniform \gitgraph{} of size \(n\)
have \(k\) \main{}s and \(f\) free commits is given by
\(\dfrac{{k \brack f} 
{n-k-1 \choose k-f-1}}{g_n}\).
The algorithm then proceeds in three steps:
\begin{enumerate}
  \item we precompute Stirling numbers of the first kind 
  and the probability distribution over pairs \( (k,f) \);
  \item we sample \(k\) and \(f\) the numbers of \main{}s  and free commits 
  according to this distribution;
  \item we construct the corresponding \gitgraph{}.
\end{enumerate}

\paragraph{Precomputation of probabilities.}

Before running the random sampler \textit{per se}, we compute and store, 
for all the relevant pairs \((k,f)\),
the Stirling numbers of the first kind
\( { k \brack f} \),
as well as the ratios \( {k-1 \brack f-1}/{{k \brack f}}\),
using the well-known recurrence
\begin{equation*}
  {k \brack f} = (k-1) {k-1 \brack f} + {k-1 \brack f-1}.
\end{equation*}
%Despite being a classical number sequence, 
%computing all necessary Stirling numbers dominates the runtime.
After this, we perform a second pass to compute 
the probability \({{k \brack f} 
{n-k-1 \choose k-f-1}}/{g_n}\) of each pair $(k, f)$.

\paragraph*{Sampling \( \boldsymbol{ (k,f) }  \) and constructing the \gitgraph{}.}
The procedure is given by Algorithm~\ref{algo:uniform}.
Two random \gitgraph{}s, drawn with this algorithm, are plotted in Figure~\ref{fig:uniform}.

% \paragraph{Step 2. Sampling parameters \(k\) and \(f\).}

% We sample a pair \((k,f)\) by 
% drawing a random number~\(x\) between \(0\) and \(1\), 
% and by subtracting probabilities  to \(x\)  (iterating through the probability table)
% until \(x\) becomes negative.
% For efficiency purposes, the table has been sorted in decreasing order.

% \paragraph{Step 3. Constructing the \gitgraph{}.}

% Once the pair \((k,f)\) has been fixed,
% we wish to build a \gitgraph{} of size \(n\), with \(k\) \main{}s and \(f\) free commits, uniformly at random.

% We start by determining the positions of the \(f\) free commits among the \(k\) \main{}s.
% To do so, we exploit the bijection with cyclariums (see Proposition~\ref{prop:bijection} and its proof).
% A uniform random cyclarium yields, 
% upon removing white vertex chains,
% a uniform random permutation of size \(k\) with \(f\) cycles.
% The positions of the free commits on the main branch of a random \gitgraph{}
% can then be retrieved
% from the smallest labels within the $f$ cycles of such a random permutation,
% since these two statistics are in correspondence through the bijection of Proposition~\ref{prop:bijection}.

% Algorithmically, we start from a chain of \(k\) black vertices. 
% Then we iterate from right to left over these vertices by considering positions
% \(x = k, k-1, \dots, 1\).
% We mark the commit at position \(x\) as free with probability
% \( { x- 1 \brack f - 1 } / { x \brack f  } \) (these probabilities have been precomputed). 
% We decrement \(f\) each time a commit is marked free.

\begin{algorithm}[h!]
  \caption{Uniform random sampler for \gitgraph{}s\label{algo:uniform}}
  \Input{ an integer \(n\) } \\
  \Outpout{ a \gitgraph{} of size \(n\), drawn uniformly at random }
  \begin{algorithmic}[1]
    \LComment{We start by sampling parameters \(k\) and \(f\). \label{start:samplingkf}}
    \State{ \(x \gets\) random number between \(0\) and \(1\) }
    \State{ \(P_n \gets\) precomputed table \((k,f) \mapsto {{k \brack f} 
{n-k-1 \choose k-f-1}}/{g_n}\), sorted by decreasing probability}

    \ForAll{ \( \left( k,f \right) \) in \(P_n\) } 
      \State{ 
        \(x \gets x - P_n \left[ \left( k,f \right) \right]\)
      }
      \If{ \(x < 0\)}
        \State{save \(k\) and \(f\); \textbf{break}}
      \EndIf
    \EndFor
    \label{end:samplingkf}
    \LComment{Once the pair \((k,f)\) has been fixed, we build a uniform \gitgraph{} with these parameters.\label{start:rest}}
    \State{
      \(G \gets \) branch of \(k\) \main{}s
    }
    \State{
      \( \ell \gets \) integer composition of \(n - k\) into \(k - f\) positive terms, taken uniformly at random\footnotemark \label{line:composition}
    }
    \State{ \(j \gets 0 \)}
    \For{ \(i\) from \(k\) to \(2\)}
      \State{ \(v \gets i\)-th \main{} of \(G\)}
      \If{ \( \left( \text{random number between }0\text{ and }1 \right) 
      <  \dfrac {{i-1 \brack f-1}} {{i \brack f}} \) \label{start:free}}
        \Comment{ \(\dfrac {{i-1 \brack f-1}} {{i \brack f}} \) is precomputed.}
        \LComment{ Case where \(v\) is a free commit}
        \State { \( f \gets f - 1\) \label{end:free}}
      \Else
        \LComment{ Case where a feature branch forks into \(v\)}
        \State { $base$ \( \gets \) random \main{} between the first and the \((i-1)\)-th} \label{line:base}
        \State { attach in \(G\) a feature branch of length \(\ell[j]\) from $base$ to \(v\)}
        \State{ \(j \gets j + 1\)}
      \EndIf
    \EndFor
    \label{end:rest}
    \State{\Return \(G\)}
  \end{algorithmic}
\end{algorithm}
\footnotetext{
  An \emph{integer composition} of \(n\) into \(p\) parts 
  is an ordered \(p\)-tuple of positive integers summing to \(n\).
  To sample this uniformly, we use a classical bijection:
  a composition of \(n\) into \(p\) parts
  corresponds to placing \(n - p\) indistinguishable stars amongst \(p-1\) bars.
  
  The algorithm starts with a sequence of \(p-1\) bars, 
  then inserts the \(n-p\) stars one at a time using Fisher-Yates sampling
  (i.e., it swaps each newly appended star with a random position in the current list).
  Finally, it counts the consecutive stars between bars to obtain the composition.
}

\begin{theorem}
  Algorithm~\ref{algo:uniform} samples a uniformly random \gitgraph{} of size~\(n\)
  with worst-case time complexity \(O(n^2)\)
  but expected time complexity \(O(n)\) (excluded precomputation).
  The precomputation phase requires \( O\left( n^2 \right) \) arithmetic operations
and \(\widetilde{O} \left( n^3 \right)\) bit operations.
\end{theorem}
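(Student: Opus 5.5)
Three things need proof: correctness (uniformity), the worst-case and expected running times, and the cost of precomputation.

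\textbf{Uniformity.} I would argue through the bijection of Proposition~\ref{prop:bijection}. Lines~\ref{start:samplingkf}--\ref{end:samplingkf} draw the pair $(k,f)$ with probability ${k \brack f}\binom{n-k-1}{k-f-1}/g_n$, which is exactly the probability of these parameters in a uniform Git graph. It then remains to show that, conditional on $(k,f)$, the output is uniform among the ${k \brack f}\binom{n-k-1}{k-f-1}$ Git graphs with those parameters. The recursive decomposition (proof of Proposition~\ref{prop:recurrence}) says such a graph is determined by three independent pieces of data:
\begin{itemize}
\item the set of free positions, which by the bijection $\phi$ has the law of the set of cycle minima of a uniform permutation of size $k$ with $f$ cycles;
\item for each non-free position $i$, a base chosen among $\{1,\dots,i-1\}$;
\item the sequence of branch lengths, which is a composition of $n-k$ into $k-f$ positive parts.
\end{itemize}
The number of choices for the second and third pieces, given the free set, is $\prod_{i\text{ non-free}}(i-1)\times\binom{n-k-1}{k-f-1}$.

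Lines~\ref{start:free}--\ref{end:free} scan $i=k,\dots,2$ and make $i$ free with probability ${i-1\brack f-1}/{i\brack f}$. By the recurrence ${i\brack f}=(i-1){i-1\brack f}+{i-1\brack f-1}$, the probability that a given free set is produced telescopes to $\prod_{i\text{ non-free}}(i-1)\big/{k\brack f}$. Here I take position $1$ to be always free; when $i=1$ is reached with one cycle left, this happens with probability $1$. The base choice at line~\ref{line:base} is uniform and contributes $\prod 1/(i-1)$, and the composition is uniform and contributes $1/\binom{n-k-1}{k-f-1}$. Multiplying these three factors gives the constant $1/\big({k\brack f}\binom{n-k-1}{k-f-1}\big)$, which proves uniformity. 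One detail to check is that the composition entries $\ell[j]$ are consumed in a fixed order, which is harmless since the composition is uniform.

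\textbf{Time complexity.} The construction phase (lines~\ref{start:rest}--\ref{end:rest}) costs $O(n)$: the Fisher--Yates composition is $O(n)$, and each iteration of the loop is $O(1)$ given the precomputed ratios. With a suitable branch encoding (as in Remark~\ref{remark:complexity_bijection}), it is even linear in the main branch. The sampling loop examines entries of $P_n$ in decreasing order of probability, and the table has $O(n^2)$ entries, which gives the $O(n^2)$ worst case. For the expected cost, the number of iterations is the rank $R$ of the sampled pair in the sorted table, and $\mathbb E(R)=\sum_j j\,p_{(j)}$. This is the step I expect to be the main obstacle.

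By Theorem~\ref{thm:expected_main} and Theorem~\ref{thm:freecommits}, the mass concentrates on $k$ in a window of width $\widetilde O(n^{1/4})$ and $f$ in a window of width $\widetilde O(n^{1/4})$. I would argue that in a sorted list the rank of a pair is at most the number of pairs with probability at least as large. Pairs within $c$ standard deviations in both coordinates form roughly $c^2\sigma_k\sigma_f=\widetilde O(c^2\sqrt n)$ entries, and the probability of lying outside decays like a Gaussian tail. This would give $\mathbb E(R)=\widetilde O(\sqrt n)=O(n)$, with polylogarithmic slack absorbed. Rigorously, I would bound $\mathbb E(R)\le \sum_{(k,f)} p_{k,f}\,\#\{(k',f'):p_{k',f'}\ge p_{k,f}\}$ using only a crude joint local estimate, or simply Markov-type counting: at most $1/p$ entries have probability at least $p$. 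This needs a two-dimensional local limit for $(K_n,F_n)$, derivable from the uniform estimates with $v$ in the proof of Theorem~\ref{thm:freecommits}. I would try first to get away with a weaker statement, namely that the pairs with $|k-k_n^\star|\le n^{1/2}$ and $|f-\mathbb E F_n|\le n^{1/2}$ carry all but $O(1/n)$ of the mass, which already yields $\mathbb E(R)=O(n)$.

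\textbf{Precomputation.} Filling the Stirling table for $k,f\le n$ by the recurrence takes $O(n^2)$ operations, and so do the ratios and the $O(n^2)$ probabilities. The binomials can be updated incrementally along rows, and $g_n$ is obtained as the sum of the entries. The entries are at most $g_n\le n!$, so they have $O(n\log n)$ bits. Each operation is an addition, or a multiplication by a small integer, on such numbers, costing $\widetilde O(n)$ bits. The total is therefore $\widetilde O(n^3)$ bit operations; the divisions producing probabilities need only fixed precision beyond this. Sorting the table adds $O(n^2\log n)$ comparisons.
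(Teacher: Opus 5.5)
Your correctness argument (the telescoping product over lines~\ref{start:free}--\ref{end:free}, uniform bases, uniform composition), your worst-case bound and your precomputation count match the paper's proof. The telescoping computation is a more explicit version of the paper's appeal to sampling the cycle minima of a uniform permutation with $f$ cycles.

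The gap is in the expected $O(n)$ bound, which you rightly flag as the obstacle but do not close. Your final reduction is the paper's. If a set of $N=O(n)$ pairs carries mass $1-O(1/n)$, then so do the first $N$ entries of the table sorted by decreasing probability. Hence $\mathbb P(R>N)=O(1/n)$ and $\mathbb E(R)\le N+O(n^2)\cdot O(1/n)=O(n)$. But nothing you cite proves that tail bound.
\begin{itemize}
\item Theorems~\ref{thm:expected_main} and~\ref{thm:freecommits} are central limit theorems. They give tail mass $o(1)$ outside a window of a few standard deviations, not $O(1/n)$ outside a window of width $\sqrt n$. They also do not say the tails decay like Gaussian tails that far out, so the heuristic $\mathbb E(R)=\widetilde O(\sqrt n)$ is unsupported.
\item The two-dimensional local limit law for $(K_n,F_n)$ is not proved anywhere.
\item The counting bound ``at most $1/p$ entries have probability $\ge p$'' alone only yields $\mathbb E(R)\le$ (number of entries) $=O(n^2)$.
\end{itemize}

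The missing ingredient is a large-deviation estimate for each coordinate, combined by a union bound.

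For $K_n$, Lemma~\ref{lem:far_from_mode} gives $g_{n,k}\le g_n\,O(n^{-4})$ uniformly for every $k$ with $|k-k_n^\star|>\left(\frac n2\right)^{1/4}\ln\frac n2$. Summing over at most $n$ such values gives $\mathbb P(|K_n-k_n^\star|\ge\sqrt n)=O(n^{-3})$.

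For $F_n$, use the large-deviation form of the quasi-powers theorem \cite[Theorem~IX.15]{FS2009}. Apply it to the representation $\mathbb E(v^{F_n})=\exp(h_n(v))\,(1+\widetilde O(n^{-1/4}))$ of Equation~\eqref{eq:representation}, which is uniform for $v$ near $1$. Concretely, take the Chernoff bound $\mathbb P(F_n\ge\mathbb E F_n+\sqrt n)\le\mathbb E(v^{F_n})\,v^{-\mathbb E F_n-\sqrt n}$ with $v=1+1/\ln n$, and $v=1-1/\ln n$ for the lower tail. This already gives a bound of order $\exp(-c\sqrt n/\ln n)$.

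Together these yield the $1-o(1/n)$ concentration on $O(n)$ pairs that your reduction needs.
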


\begin{proof}
  Let us justify first the correctness of the algorithm, i.e., 
  each \gitgraph{} of size \(n\) is indeed drawn with the same probability.

  The first phase (lines \ref{start:samplingkf}--\ref{end:samplingkf}) which consists in sampling \(k\) and \(f\) is quite standard:
  it enables to recover the correct distribution \({{k \brack f} 
{n-k-1 \choose k-f-1}}/{g_n}\).

  Once \((k,f)\) is drawn,
  the algorithm samples uniformly from all \gitgraph{}s of size \(n\)
  with \(k\) \main{}s and \(f\) free commits  
  (lines \ref{start:rest}--\ref{end:rest}).
  But such a \gitgraph{} is uniquely determined by three components:
  \begin{itemize}
    \item[(i)] the choice of which \(f\) of the \(k\) \main{}s are free;
    \item[(ii)] how the \(n - k\) white vertices are distributed amongst the \(k - f\) feature branches;
    \item[(iii)] the base assignment for each feature branch.
  \end{itemize}
  To justify why lines \ref{start:free}--\ref{end:free} induce the correct distribution for (i),
  we use the bijection between \gitgraph{}s and cyclariums
  (see Proposition~\ref{prop:bijection}).
  Under this bijection, the positions of the free vertices on the main branch
  correspond precisely to the smallest labels within each cycle.
  Thus, sampling component~(i) reduces to sampling a uniform permutation of  \(\{1,\dots,k\}\) with $f$ cycles,
  and recording the minimum elements of each cycle.
  Such a permutation can be sampled inductively using the recurrence relation
  \({k \brack f} = (k-1) {k-1 \brack f} + {k-1 \brack f-1}\): 
  element \(k\) is either inserted into an existing cycle of a uniform permutation of \(\{1,\dots,k-1\}\) with $f$ cycles 
  (which occurs with probability \(  (k-1) {k-1 \brack f} / {k \brack f} \)),
  or it is added as a fixed point to a uniform permutation of \(\{1,\dots,k-1\}\) with $f-1$ cycles
  (which occurs with probability  \(  {k-1 \brack f-1} / {k \brack f} \)).
  Crucially, when an element is added as a fixed point, 
  it remains the smallest element within its cycle during the construction of the permutation.
  This correspondence is exactly what lines \ref{start:free}--\ref{end:free} implement,
  thereby guaranteeing the good distribution for (i).

  It is not hard to see that the distribution for (ii) is uniform, and is handled by Line~\ref{line:composition}.
  The distribution for (iii) is also uniform, once (i) is fixed. This is consistent with Line~\ref{line:base}.

  This establishes correctness; we now turn to time complexity.

  The worst-case time complexity is straightforward. Indeed,
  if we are unlucky during the execution of lines \ref{start:samplingkf}--\ref{end:samplingkf},
  we need to go through all possible pairs \((k,f)\),
  which takes \(O\left( n^2 \right)\) iterations.
  The remaining part of the algorithm can be clearly run in \(O(n)\) time.

  The average-case time complexity is harder to establish formally
  and requires a concentration argument.
  What we want to prove is that if \(K_n\) and \(F_n\) denote the random variables which count \main{}s and free commits,
  then \(\left( K_n, F_n \right)\)
  belongs to the \(n\) most probable pairs \((k,f)\) with probability \(1 - O\left( \frac 1 n \right)\).
  With this assumption, the loop at lines \ref{start:samplingkf}--\ref{end:samplingkf}
  stops before \(n\) iterations with probability \(1 - O\left( \frac 1 n \right)\), and does at most \(n^2\) iterations with probability \(O\left( \frac 1 n \right)\), yielding an \(O(n)\) average time.

  Theorems~\ref{thm:expected_main} and~\ref{thm:freecommits} show that 
  \(K_n\) and \(F_n\) are Gaussian-concentrated around their expectations 
  with deviations   \(\widetilde O \left( n^{1/4} \right)\).
  These theorems corroborate that only \(\widetilde O \left( \sqrt n \right)\) values
  of pairs \((k,f)\) are likely to be drawn, but they do not yield the required \(1 - O\left( \frac 1 n \right)\) probability.
  We must appeal to large deviation principles: 
  the tail probability of \(K_n\) can be bounded by 
  Lemma~\ref{lem:far_from_mode},
  and the tail probability of \(F_n\) can be handled by the large deviation variant of the Quasi-Power Theorem~\cite[Theorem ~IX.15]{FS2009}, applied to Equation~\eqref{eq:representation}  (it requires some work). 
  Eventually we prove that 
  \begin{multline*}
    \mathbb P \left( \left| K_n - \mathbb E \left( K_n \right) \right| < \sqrt n \text{ and }  \left| F_n - \mathbb E \left( F_n \right) \right| < \sqrt n  \right) 
    \geq \\ 1 - 
     P \left( \left| K_n - \mathbb E \left( K_n \right) \right| \geq \sqrt n \right) 
     - P \left( \left| F_n - \mathbb E \left( F_n \right) \right| \geq \sqrt n\right) 
     = 1 - o \left( \frac 1 n \right),
  \end{multline*}
  which suffices to establish the announced \(O(n)\) average-case time complexity.

  Finally, let us address the precomputation cost.
  The table of Stirling numbers is computed via the recurrence in \(O(n^2)\) arithmetic operations.
  Since these numbers have \(\Theta\left( n \log n \right)\) bits,
  the total costs \(\Theta\left( n^3 \log n \right)\)  operations.
\end{proof}

This precomputation step is the computational bottleneck of the uniform sampler.
It limits the practical ranges to sizes below \(5000\) on standard laptops.
If we wish to sample \gitgraph{}s with a million vertices,
we have to turn to Boltzmann sampling.

\subsection{Boltzmann Sampler}

To address the limitations of the uniform sampler, 
we introduce a probabilistic model for random \gitgraph{}s
that leverages the simple form of the generating function \(\widetilde G \left( z,u \right)\) (see Theorem~\ref{theo:Gtilde}). 
This model not only simplifies sampling,
but also enables precise control over the number of \main{}s.

\begin{definition}[\labelled{} distribution]
  Given fixed positive parameters \(z\) and \(u\) (lying in the convergence domain of \(\widetilde G\)),
  the \emph{\labelled\ distribution} assigns to a \gitgraph\ of size $n$ with $k$ black vertices 
  the probability\footnote{This is indeed a probability distribution, as the probabilities sum to \(1\) by Theorem~\ref{theo:Gtilde}.}
  \begin{equation*}
    \dfrac{z^n \, u^k   }{k!\, \widetilde G(z,u)},
  \end{equation*}
  where 
  \[ \widetilde G (z,u) = {\left(1 - \frac{z^2 u}{1 - z}\right)}^{- \frac{1 - z}{z}}.
  \]
\end{definition}

The \labelled{} distribution is a multivariate Boltzmann model (exponential in~$u$ and ordinary in~$z$). 
Samplers based on this distribution belongs to the category of \emph{Boltzmann generators}, 
or which a large number of results exist, 
enabling efficient generation of large objects~\cite{DFLS2004}.

Any \gitgraph{} has a positive probability to be drawn, regardless of its size. 
The choice of parameters \(z\) and \(u\) is crucial to maximize
the probability of generating \gitgraph{}s with a specific size and a specific number of \main{}s.
This will be the subject of Subsection~\ref{ss:tune}.

\begin{algorithm}[h!]
  \caption{Boltzmann sampler for \gitgraph{}s\label{algo:boltzmann}}
  \Input{ positive numbers \(z\) and \(u\) in the convergence domain of \(\widetilde G \) } \\
  \Outpout{ a \gitgraph{} drawn under the \labelled{} distribution of  parameters \(z\) and \(u\) }
  \begin{algorithmic}[1]
      \State{$f \gets$ \Call{Poisson}{\( - \frac{1 - z}{z} \ln(1 - \frac{z^2 u}{1 - z})}\)}      
      \Comment{Poisson distribution}      \label{line:startboltz}
      \State{$Y \gets $ empty cyclarium}
      \Comment{cf. Definition~\ref{def:cyclarium}}
      \For{ \(i\) \textbf{from} \(1\) \textbf{to} \(f\)}
      \State{ insert into \(Y\) a cycle of ~\Call{Loga}{$\frac{uz^2}{1 - z}$} black vertices}
      \Comment{Logarithmic series distribution}
      \EndFor{}
      \State{$k \gets$ number of black vertices in \(Y\)}
      \State{randomly label the black vertices of \(Y\) with \(1, 2, \dots, k\)}

       \For{ \(j\) \textbf{from} \(2\) \textbf{to} \(k\)}
        \If{vertex \(j\) in \(C\) is \emph{not} the minimum within its cycle }%
        \State{ Attach to \(j\) a chain of  $(1 + \text{\Call{Geom}{$z$}})$ white vertices}
        \Comment{Geometric distribution}

        \EndIf{}
      \EndFor{}
      \label{line:endboltz}
    \State{\Return \(\phi(Y)\)}
    \Comment{\(\phi\) is the bijection from cyclariums to \gitgraph{}s described by Prop.~\ref{prop:bijection}.}

  \end{algorithmic}
\end{algorithm}

\begin{theorem}
  Let  \(z\) and \(u\)  be the positive numbers in the convergence domain of \(\widetilde G\).
  Algorithm~\ref{algo:boltzmann} samples a uniformly random \gitgraph{} 
  under the \labelled{} distribution of parameters \(z\) and \(u\) with time complexity \(O(n)\),
  where \(n\) is the size of the output graph.
\end{theorem}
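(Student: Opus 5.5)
The plan is to show that, before applying $\phi$, Algorithm~\ref{algo:boltzmann} produces a cyclarium $Y$ with $n$ vertices and $k$ black vertices with probability $\frac{z^n u^k}{k!\,\widetilde G(z,u)}$. Since $\phi$ is a size- and black-vertex-preserving bijection (Proposition~\ref{prop:bijection}), the output then follows the \labelled{} distribution. The complexity bound will come from Remark~\ref{remark:complexity_bijection} together with the observation that each loop does constant work per produced vertex.

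\textbf{Correctness.} I will follow the specification $\mathcal Y=\Set{\C}$ and the standard Boltzmann rules of~\cite{DFLS2004}, but verify the probability directly rather than appeal to them abstractly. Write $a=\frac{uz^2}{1-z}$ and $\Lambda=-\frac{1-z}{z}\ln(1-a)$, so that $\widetilde G=e^{\Lambda}$. Fix a target cyclarium $Y_0$ with $f$ cycles of lengths $c_1,\dots,c_f$, $k=\sum c_i$ black vertices and $n$ total vertices. I will compute the probability of producing $Y_0$ as a product of four factors.
\begin{itemize}
\item The Poisson step returns $f$ with probability $e^{-\Lambda}\Lambda^f/f!$.
\item Each logarithmic draw returns $c$ with probability $\frac{a^c}{c\,(-\ln(1-a))}$.
\item The random labelling, together with the cycle structure, must be matched. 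For an unordered set of unlabelled cycles there are $k!/\prod c_i$ labelled outcomes per ordered sequence of lengths, and $f!$ orderings of the cycles give the same cyclarium.
\item Each of the $k-f$ chains of length $\ell\ge1$ has probability $(1-z)z^{\ell-1}$.
\end{itemize}
Multiplying these, the factors $f!$ and $\prod c_i$ cancel. What remains is
\[
e^{-\Lambda}\Lambda^f\prod_i\frac{a^{c_i}}{-\ln(1-a)}\cdot\frac{1}{k!}\,(1-z)^{k-f}z^{\,n-k-(k-f)} .
\]
Using $\Lambda/(-\ln(1-a))=\frac{1-z}{z}$, this becomes
\[
e^{-\Lambda}\Big(\tfrac{1-z}{z}\Big)^f\frac{u^k z^{2k}}{(1-z)^k k!}(1-z)^{k-f}z^{n-2k+f}=\frac{z^nu^k}{k!\,\widetilde G(z,u)},
\]
as required. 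The step I expect to be the main obstacle is the bookkeeping of labelled versus unlabelled cycles: the labelling must be uniform over $k!$ and must be interpreted correctly against cycle rotations. I will handle this carefully by counting, for the fixed $Y_0$, the pairs (sequence of lengths, labelling) that produce it, namely $f!\prod c_i/\prod c_i\cdots$. Equivalently, I can check that the count of labelled outcomes per cyclarium is $f!\prod_i c_i$ out of $k!\prod_i c_i$ sequences of rotations, which reduces to the cancellation above.

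\textbf{Complexity.} The Poisson, logarithmic and geometric variates are sampled in expected time proportional to their values plus one, for instance by sequential inversion. Summed over all draws this gives $O(f+k+(n-k))=O(n)$. A uniform labelling is a Fisher--Yates shuffle, costing $O(k)$. Identifying the cycle minima takes one pass per cycle, costing $O(k)$. Finally, $\phi$ is linear by Remark~\ref{remark:complexity_bijection}. Altogether the running time is $O(n)$ in the output size $n$.
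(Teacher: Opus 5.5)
Your proof is correct, but it takes a different route from the paper's. The paper computes nothing. It observes that the sampling lines of Algorithm~\ref{algo:boltzmann} are the textbook translation of the specification $\mathcal Y=\Set{\C}$, $\Seq[\neq 0]{\Z}\times\C=\Cyc{\U\Z\Seq[\neq 0]{\Z}}$ (Set to Poisson, Cyc to logarithmic, Seq to geometric). It then invokes the general correctness of Boltzmann samplers, transports the distribution through the size- and black-vertex-preserving bijection $\phi$, and gets linearity from Remark~\ref{remark:complexity_bijection}.

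You instead unroll the sampler and compute the probability of each cyclarium directly. Your product and the cancellation via $\Lambda/(-\ln(1-a))=\frac{1-z}{z}$ are right and give exactly $z^nu^k/(k!\,\widetilde G(z,u))$. The paper's route is shorter and modular. Yours is self-contained, and it settles a point the paper leaves implicit. The second relation only specifies $\C$ up to an independent factor $\Seq[\neq 0]{\Z}$, so one must check that drawing a cycle and attaching chains only at non-minimal labels really is a Boltzmann draw from $\C$. Your cost accounting is also more complete than the paper's, which only mentions $\phi$: you cover variate generation by sequential inversion, the shuffle, and finding cycle minima before $\phi$.

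One thing to tidy is the counting sentence, which is garbled as written. Write $\mu_c$ for the number of cycles of length $c$ in $Y_0$. There are $f!/\prod_c\mu_c!$ ordered length sequences compatible with $Y_0$. For each, there are $\prod_c\mu_c!\,\prod_i c_i$ labellings producing the labelled cycles of $Y_0$. This gives $f!\prod_i c_i$ equally likely (sequence, labelling) outcomes, each of probability $\frac{e^{-\Lambda}\Lambda^f}{f!}\prod_i\frac{a^{c_i}}{c_i(-\ln(1-a))}\cdot\frac{1}{k!}$. That is exactly where the $f!$ and $\prod_i c_i$ cancel.
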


\begin{proof}
  Lines~\ref{line:startboltz}-\ref{line:endboltz} of Algorithm~\ref{algo:boltzmann} are a textbook application of Boltzmann generators~\cite{DFLS2004} to sample cyclariums \(Y\) under the distribution
  \begin{equation*}
    \dfrac{z^{\text{size of } Y} \, u^{\text{number of black vertices in }Y}   }
    {\left( \text{number of black vertices in }Y \right)!\, \widetilde G(z,u)}.
  \end{equation*}
  Indeed, at the end of Subsection~\ref{specificition_cyclariums} (page \pageref{specificition_cyclariums}),
  we saw that the class of cyclariums, denoted \(\mathcal Y\), 
  admits the combinatorial specification
  \begin{equation}
    \mathcal Y =  \Set{\C}
    \quad\text{ and }\quad
    \Seq[\neq 0]{\Z} \times \C = \Cyc{\U\Z \Seq[\neq 0]{\Z}}.
    \label{eq:specification}
  \end{equation} 
  In the theory of Boltzmann generators,
  \(\Set{\cdot}\) corresponds to Poisson laws,
  \(\Cyc{\cdot}\) to logarithmic series distributions,
  and \(\Seq{\cdot}\) to geometric distributions.
  Thus, Lines~\ref{line:startboltz}-\ref{line:endboltz} of  Algorithm~\ref{algo:boltzmann} are the straightforward translation of the specification~\eqref{eq:specification}.

  The fact that \(\phi\) is a bijection
  ensures that the distribution of the output \gitgraph{}s
  is the \labelled{} distribution.

  The \(O(n)\) time complexity follows from Remark~\ref{remark:complexity_bijection},
  which justifies that \(\phi\) can be implemented in linear time. 
\end{proof}

In our Python program, we opt for a different (but equivalent) implementation of the Boltzmann sampler.
We directly gene\-rated \gitgraph{}s 
by transferring the parameter distributions from cyclariums to \gitgraph{}s,
thus avoiding the explicit implementation of the bijection~\(\phi\).

\subsection{How to Tune the Boltzmann Sampler}
\label{ss:tune}

The choice of parameters \(z\) and \(u\) is key to controlling the size of generated \gitgraph{}s.
Fortunately, this is straightforward here due the simple form of the generating function \(\widetilde G (z,u)\).

In this subsection, we provide an algorithm to compute these parameters for given target sizes, and we prove a concentration result 
ensuring a good success rate for the rejection algorithm.

\begin{algorithm}[h!]
  \caption{Tuning the Boltzmann sampler for \gitgraph{}s\label{algo:tune}}
  \Input{ positive integers \(n\) and \(k\) such that \(1 \leq k \leq n\)} \\
  \Outpout{two parameters \(z\) and \(u\) such that if \(\gamma\) is drawn under the \labelled{} distribution of parameters \(z\) and \(u\), then \(\mathbb E\left( \text{size of }\gamma \right) = n\) and   \(\mathbb E\left( \text{number of \main{}s in }\gamma \right) = k\) }
  \begin{algorithmic}[1]
      \State{Numerically solve for \(z \in \left( 0,1 \right)\) the equation \((1 - z) \ln\left(\frac{1 - z}{1 + z k - z}\right) - k z^2 + 2 k z + n z^2 - n z = 0\)}
      \label{line:eq}
      \State{\(u \gets \dfrac{k (1 - z)}{z (1 + z k - z)}\)}
      \State{\Return \((z, u)\)}
  \end{algorithmic}
\end{algorithm}

\begin{theorem}
  Algorithm~\ref{algo:tune} computes, 
  for given positive integers \(n\) and \(k\) satisfying \(1 \leq k \leq n\),
  para\-meters \(z\) and \(u\) such that a \gitgraph{} \(\gamma\) sampled under the \labelled{} distribution with these parameters satisfies
  \begin{equation*}
    \mathbb E\left( \text{size of }\gamma \right) = n \quad\text{ and }\quad   \mathbb E\left( \text{number of \main{}s in }\gamma \right) = k.
  \end{equation*}
  In particular,
  \begin{itemize}
    \item there exists a unique positive solution to \(f(z) = 0\)
    where \(f(z)\) is defined as
  \begin{equation*}
    f(z) := (1 - z) \ln\left(\frac{1 - z}{1 + z k - z}\right) - k z^2 + 2 k z + n z^2 - n z 
  \end{equation*}
  (see Line~\ref{line:eq} of Algorithm~\ref{algo:tune}).
  \item the pair \((z,u)\) lies in the convergence domain of \(\widetilde G\).
  \end{itemize}
  \label{theo:tune}
\end{theorem}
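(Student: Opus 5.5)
Under a Boltzmann distribution, expected parameters are logarithmic derivatives of the generating function. So I will write $\ln \widetilde G(z,u) = -\frac{1-z}{z}\ln\bigl(1 - \frac{z^2u}{1-z}\bigr)$ and use
\[
\mathbb E(\text{size}) = z\,\partial_z \ln\widetilde G(z,u), \qquad \mathbb E(\text{main commits}) = u\,\partial_u \ln\widetilde G(z,u).
\]
Both identities follow by differentiating $\sum_\gamma z^{n}u^{k}/k! = \widetilde G$ termwise, which is legitimate inside the domain of convergence.

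First, the $u$-equation. A direct computation gives
\[
u\,\partial_u \ln\widetilde G = \frac{(1-z)}{z}\cdot\frac{z^2u/(1-z)}{1-z^2u/(1-z)} = \frac{zu(1-z)}{1-z-z^2u}.
\]
Setting this equal to $k$ and solving the linear equation in $u$ yields $u = \frac{k(1-z)}{z(1-z+zk)}$, which is the formula of Algorithm~\ref{algo:tune}. Substituting back gives $1 - \frac{z^2u}{1-z} = \frac{1-z}{1-z+zk}$. This lies in $(0,1)$ for every $z\in(0,1)$ and $k\geq1$, so $(z,u)$ is automatically in the convergence domain. That domain is $0<z<1$ and $z^2u<1-z$, as read off from Theorem~\ref{theo:Gtilde} or Proposition~\ref{prop:Gkz}.

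Next, the $z$-equation. I will write $L = \ln(1-z^2u/(1-z))$ and differentiate $-\frac{1-z}{z}L$ in $z$, then multiply by $z$. This gives $\frac1z L - \frac{1-z}{z}\cdot z\,\partial_z L$. After simplifying $z\,\partial_z L$ with the expression for $u$ above, I expect the condition $\mathbb E(\text{size})=n$ to become $f(z)=0$ once it is multiplied by $z$ (up to sign and clearing of denominators). This step is routine algebra, though it needs care.

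The main obstacle is existence and uniqueness of the root of $f$ in $(0,1)$. Here $f(0)=0$, so I must find a root in the open interval. As $z\to0$, we have $(1-z)\ln\frac{1-z}{1+z(k-1)} = -kz + O(z^2)$. Hence $f(z) = (2k - n - k)z + O(z^2) = (k-n)z+O(z^2)$, which is negative when $k<n$. At $z\to1^-$, the log term tends to $0$ and $f(1) = -k+2k+n-n = k > 0$. The intermediate value theorem then gives a root in $(0,1)$. The edge case $k=n$ needs separate treatment, for instance by looking at the $z^2$ term, or by noting that the expected size is strictly larger than the expected $k$, so it may be excluded or treated as a limit.

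For uniqueness, I will not study $f$ directly. Instead I will argue that, along the curve $u=u_k(z)$ on which the expected number of main commits equals $k$, the expected size increases strictly from $k$ (as $z\to0$) to $+\infty$ (as $z\to1$). Equivalently, $n/k$ equals the conditional mean size per main commit, which is monotone in $z$. If that turns out to be messy, I will fall back on analyzing $f$ itself: show that $f$ is convex, or that $f'$ changes sign only once on $(0,1)$, by computing $f''$ explicitly. Convexity together with $f(0)=0$, $f'(0)<0$ and $f(1)>0$ would give exactly one root in $(0,1)$.
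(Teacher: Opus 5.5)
Your proposal is correct and is essentially the paper's proof. The paper likewise writes both expectations as logarithmic derivatives and solves the second for $u=\frac{k(1-z)}{z(1-z+kz)}$. It turns the first into $f(z)=0$; your ``routine algebra'' is just multiplication by $z(1-z)$. It gets existence from $f(z)\sim(k-n)z$ at $0^+$ and $f(1^-)=k$. Uniqueness comes exactly from your fallback, since $f''(z)=\frac{k^2}{(1-z)(1-z+kz)^2}+2(n-k)>0$ on $(0,1)$.

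Your worry about $k=n$ is justified, and you are sharper than the paper there, whose sign argument silently requires $k<n$. For $k=n$ one has $f(0)=f'(0)=0$ and $f''>0$, so $f>0$ on $(0,1)$ and no admissible $z$ exists. Your own remark already shows this: $\mathbb E(\text{size})-\mathbb E(\text{number of main commits})$ is the expected number of feature commits, which is positive. So that case must be excluded from the statement, not treated as a limit.
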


\begin{proof}
Under the \labelled{} distribution, we have
  \begin{equation*}
    \mathbb E\left( \text{size of }\gamma \right) 
    = \sum_{\substack{n \geq 0 \\ k \geq 0}} n \dfrac{g_{n,k}\,z^n \, u^k   }{k!\, \widetilde G(z,u)} 
    =  z \frac{\partial \widetilde G}{\partial z} (z,u) \times \frac{1}{\widetilde G(z,u)}
    = z \dfrac{\partial}{\partial z} \left( \ln \left( \widetilde{G} (z,u) \right) \right)
  \end{equation*}
  and similarly,
  \begin{equation*}
    \mathbb E\left( \text{number of \main{}s in }\gamma \right)
    = u \dfrac{\partial}{\partial u} \left( \ln \left( \widetilde{G} (z,u) \right) \right).
  \end{equation*}
  But Theorem~\ref{theo:Gtilde} gives
  \begin{equation*}
    \ln \left( \widetilde{G} (z,u) \right) =
    - \frac {1-z} z \ln \left( 1 - \frac{z^2 u}{1-z} \right).
  \end{equation*}
  So, to satisfy the conditions of the expected values, \(z\) and \(u\) must solve the system
  \begin{equation}
    n =   \frac 1 z \ln  \left(  \frac {1 - z - z^2u} {1-z}\right) +{\frac {uz
 \left( 2 - z \right) }{1-z-{z}^{2}u}}, 
 \quad
 k =  {\frac {uz \left( 1-z \right) }{1-z-{z}^{2}u}}.
 \label{eq:system}
  \end{equation}
  By inversion, we get the equivalent system
  \begin{equation*}
    f(z) = 0, \quad u = \dfrac{k (1 - z)}{z (1 + z k - z)},
  \end{equation*}
  which is consistent with Algorithm~\ref{algo:tune}.
  
  Differentiating  \(f\) twice shows that
  \begin{equation*}
    {f}''(z)
    = \frac {{k}^2}
    {\left( 1-z+ k z \right)^2 \left( 1 - z \right)}
    +2 \left( n - k \right)
  \end{equation*}
  which is positive for \(0 \leq z < 1\), 
  and so \(f\) is convex on this interval.
  Furthermore, as \(z \rightarrow 0^+\), \(f\) is negative since \(f(z) \underset{z \rightarrow 0}{\sim} \left( k - n \right) z  \), and as  \(z \rightarrow 1^-\), \(f(z) \underset{z \rightarrow 1}{\to} k >0\).
  This guarantees a unique solution \( z \in (0,1)\) to \(f(z) = 0\).

  Finally, to verify that \(\left( z,u \right)\) lies in the convergence domain of \(\widetilde G (z,u)\), we note that
  \begin{equation*}
    1 - \frac{z^2 u}{1 - z}  = \frac{1 - z}{ 1 + (k-1 )z} >0
  \end{equation*}
 This confirms that \(\left( z,u \right)\) is indeed in the domain of convergence.
\end{proof}

Interestingly, the parameters \((z,u)\) exhibit a phase transition at \(\alpha = \frac 1 2\)
when \(\alpha\) is fixed as the ratio \(k / n\).

\begin{lemma}
  Fix $\alpha \in (0,1)$.
  Let \(n\) and \(k = \alpha \,n\) be the target sizes for the Boltzmann sampler.
  Denote $(z_n,u_n)$ the output of Algorithm~\ref{algo:tune}. Then, as \(n \to \infty\),
  \begin{equation*}
    z_n \sim \begin{cases}
      (1 - 2 \alpha)/(1 - \alpha) & \text{if }\alpha < \frac 1 2 \\
      \sqrt{\ln (n) / {n} } & \text{if }\alpha = \frac 1 2 \\
      C_\alpha /  n & \text{if }\alpha > \frac 1 2
    \end{cases}
    \quad
    \text{and}
    \quad
    u_n \sim \begin{cases}
      \alpha (1 - \alpha) / (1 - 2 \alpha)^2 & \text{if }\alpha < \frac 1 2 \\
      n/ \ln(n) & \text{if }\alpha = \frac 1 2 \\
      \frac{\alpha}{C_\alpha\left( 1 + \alpha C_\alpha  \right)}  n^2 & \text{if }\alpha > \frac 1 2
    \end{cases}
  \end{equation*}
  where \(C_\alpha\) is the unique solution $C$ of the equation \( C + \ln \left( 1 + \alpha\,C  \right) = 2 \alpha \,C\).
\end{lemma}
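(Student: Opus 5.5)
The plan is to locate the unique root $z_n\in(0,1)$ of $f$ from Theorem~\ref{theo:tune} by a sign argument, and then read off $u_n$ from the explicit formula $u_n=\frac{k(1-z_n)}{z_n(1+(k-1)z_n)}$. With $k=\alpha n$ (rounded; the rounding only perturbs $n-2k$ by $O(1)$), I rewrite
\[
f(z)=(1-z)\ln\frac{1-z}{1+(k-1)z}+(n-k)z^2-(n-2k)z .
\]
The proof of Theorem~\ref{theo:tune} shows that $f$ is convex on $[0,1)$, with $f(0)=0$, $f<0$ near $0^+$ and $f(1^-)=k>0$. Hence $f<0$ on $(0,z_n)$ and $f>0$ on $(z_n,1)$. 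So for each regime it suffices to exhibit a candidate $\zeta_n$ and to check that, for every fixed $\varepsilon>0$ and $n$ large, $f((1-\varepsilon)\zeta_n)<0<f((1+\varepsilon)\zeta_n)$. This gives $z_n\sim\zeta_n$. In each case I first find $\zeta_n$ by balancing the dominant terms, and then verify the two signs.

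\textbf{Case $\alpha<\frac12$.} For $z$ in a compact subset of $(0,1)$, the logarithmic term is $O(\ln n)$. Therefore
\[
f(z)/n\to (1-\alpha)z^2-(1-2\alpha)z
\]
locally uniformly. The limit has the simple root $\zeta=(1-2\alpha)/(1-\alpha)\in(0,1)$ and changes sign there, so the sign criterion gives $z_n\to\zeta$. Then
\[
u_n\sim\frac{\alpha n(1-\zeta)}{\zeta\cdot\alpha n\zeta}=\frac{1-\zeta}{\zeta^2}=\frac{\alpha(1-\alpha)}{(1-2\alpha)^2}.
\]

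\textbf{Case $\alpha>\frac12$.} I substitute $z=C/n$ with $C$ fixed. Then $(1-z)\ln\frac{1-z}{1+(k-1)z}\to-\ln(1+\alpha C)$, $(n-k)z^2=O(1/n)$, and $-(n-2k)z\to(2\alpha-1)C$. So $f(C/n)\to g(C):=(2\alpha-1)C-\ln(1+\alpha C)$. The function $g$ is convex with $g(0)=0$ and $g'(0)=\alpha-1<0$, and $g\to+\infty$. Hence $g$ has a unique positive root $C_\alpha$, which is exactly the root of $C+\ln(1+\alpha C)=2\alpha C$, and $g$ changes sign there. Pointwise convergence at $(1\pm\varepsilon)C_\alpha$ then gives $z_n\sim C_\alpha/n$. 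Finally,
\[
u_n\sim\frac{\alpha n}{(C_\alpha/n)(1+\alpha C_\alpha)}=\frac{\alpha n^2}{C_\alpha(1+\alpha C_\alpha)}.
\]

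\textbf{Case $\alpha=\frac12$.} I expect this to be the main obstacle, because the balance involves a logarithm evaluated at the unknown itself. Here $n-2k=O(1)$, so
\[
f(z)=(1-z)\ln\frac{1-z}{1+(n/2-1)z}+\tfrac n2 z^2+O(z).
\]
For $z$ of order $\sqrt{\ln n/n}$, the log term equals $-\ln(nz/2)+o(1)=-\tfrac12\ln n+O(\ln\ln n)$. The quadratic term is $\tfrac n2z^2$, and the $O(z)$ term is negligible. Balancing gives $\zeta_n=\sqrt{\ln n/n}$. At $z=(1\pm\varepsilon)\zeta_n$, the quadratic term is $(1\pm\varepsilon)^2\tfrac12\ln n$ against $\tfrac12\ln n(1+o(1))$. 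So the signs are as required, provided the $O(\ln\ln n)$ corrections are controlled uniformly; I would check this carefully. Then $u_n\sim\frac{n/2}{z_n\cdot(n/2)z_n}=z_n^{-2}\sim n/\ln n$.
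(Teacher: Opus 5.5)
Your proof is correct and follows the paper's strategy. Like the paper, you bracket the unique root $z_n$ of the convex function $f$ between two points where $f$ changes sign, then read off $u_n$ from $u_n=\frac{k(1-z_n)}{z_n(1+(k-1)z_n)}$. The only difference is in the brackets: you use multiplicative ones $(1\pm\varepsilon)\zeta_n$ together with a limiting function, whereas the paper uses explicit second-order brackets such as $\frac{1-2\alpha}{1-\alpha}\pm\frac{1}{1-2\alpha}\frac{\ln(n)^2}{n}$, or $\sqrt{\ln(n)/n}\,\bigl(1+(\frac12\pm1)\frac{\ln\ln n}{\ln n}\bigr)$ when $\alpha=\frac12$, which additionally gives convergence rates.

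In the case $\alpha=\frac12$, your concern about uniformity is unnecessary, since you only evaluate $f$ at the two fixed values $\lambda=1\pm\varepsilon$. Taking $z=\lambda\sqrt{\ln(n)/n}$, one finds $f(z)=\frac{\lambda^2-1}{2}\ln n-\frac12\ln\ln n+O(1)$, and the required signs follow immediately.
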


\paragraph*{Remark.} While the lemma addresses the exact case \(k = n / 2\),
the behavior of \(z_n\) and \(u_n\) around \(\alpha = 1/2\)
depends on the second-order asymptotic term,
which is not accounted for in this setting.

\begin{proof}
  By definition,
  \(z_n\) satisfies \(f(z_n) = 0\) where \(f\) is defined in the statement of Theorem~\ref{theo:tune}.

  In the proof of the latter, we showed that \(f < 0\) on \((0,z_n)\), and \(f > 0\) on \((z_n,1)\). 
  Therefore, to approximate \(z_n\), 
  it suffices to identify two close points \(z_n^-\) and \(z_n^+\) such that \(f (z_n^-)< 0\)
  and \(f (z_n^+)> 0\).
  This ensures that \(z_n\) is sandwiched between \(z_n^-\) and \(z_n^+\).

  For \(\alpha < 1/2\), the points \(z_n^\pm\) are defined as
  \begin{equation*}
    z_n^\pm := \frac {1 - 2\alpha}{1 - \alpha} \pm \frac 1 {1 - 2\alpha} \frac{\ln(n)^2} n.
  \end{equation*}
  Evaluating \(f\) at these points yields the asymptotic behavior
  \(f\left( z_n^\pm \right) \sim \pm \ln\left( n \right)^2 + O\left( \ln n \right)\), 
  which proves the asymptotic estimate for ~\(z_n\) for this regime. 
  (Note: the term \(\frac {1 - 2\alpha}{1 - \alpha}\) arises from an asymptotic equivalent of  \(f \left( z \right)\) for fixed \(z\):
    \(f \left( z \right) \sim z\left( (1 - \alpha) z - 1 + 2\alpha  \right) n \) .)

  For \(\alpha = \frac 1 2\), we set
  \begin{equation*}
    z_n^\pm := \sqrt{\frac{\ln(n)} n} \left( 1 + \left( \frac 1 2 \pm 1 \right) \frac{ \ln \ln (n)} {\ln(n)}\right)
  \end{equation*}
  and observe that  \(f\left( z_n^\pm \right) \sim \pm \ln \ln \left( n \right)\).

  For \(\alpha > 1/2\), we first justify the existence of a unique solution $C_\alpha > 0$ to the equation \( g(C) :=  2 \alpha \,C - C - \ln \left( 1 + \alpha\,C  \right) = 0\).
  We have \(g''(C) = \frac{\alpha^2}{\left( 1 + \alpha C \right)^2 } > 0\),
  so $g$ is convex. 
  Furthermore, 
  as \(C \to 0^+\), \(g(C) \sim \left( \alpha - 1 \right)C < 0\), 
  and as \(C \to + \infty\), \(g(C) \sim \left( 2 \alpha - 1 \right)C > 0 \). 
  By the Intermediate Value Theorem, \(g(C) = 0\) has a unique solution \(C_\alpha\).

  We then define
  \begin{equation*}
    z_n^\pm := \frac {C_\alpha} n
     + \frac{\pm \left( 1 + \alpha C_\alpha \right) - \alpha^2 {C_\alpha}^3 } 
     { \alpha (2 \alpha - 1) C_\alpha + \alpha -1  } \times
     \frac {1} {n^2}
  \end{equation*}
  and find  \(f\left( z_n^\pm \right) \sim \pm \frac 1 n\).

  The analysis of $z_n$ is now complete.

  For \(u_n\), we derive the asymptotic equivalents  from the formula \( u_n = \dfrac{\alpha \, n\, (1 - z_n)}{z_n (1 + \alpha z_n n  - z_n)}\) for the three cases.
\end{proof}

The following proposition quantifies 
the fluctuations in the size and the number of \main{}s
of a random \gitgraph{} generated by 
a Boltzmann sampler where these parameters are targeted.
Three different asymptotic regimes emerge.

\begin{proposition}
  Consider a fixed ratio \(\alpha = k/n \in (0,1)\), 
  where $n$ and $k$ are the target sizes for the Boltzmann sampler.
Let \(S_n\) denote the size of a random \gitgraph{} under the \labelled{} distribution with parameters \((z_n,u_n)\) as produced by Algorithm~\ref{algo:tune}, and  let \(K_n\)  be the number of \main{}s.

As \(n\) goes to \(+ \infty\), the variances of these random variables are equivalent to
\begin{equation*}
  \mathbb V \left( S_n \right) \sim
  \begin{cases}
    \frac{1 - 2 \alpha} \alpha n^2 & \text{ if }\alpha < \frac 1 2 \\
  \sqrt{\ln(n)} \, n^{\frac 3 2} & \text{ if }\alpha = \frac 1 2 \\
   \left( 4 \alpha^2 C_\alpha + 2 \alpha - 1 \right) n  & \text{ if }\alpha > \frac 1 2
  \end{cases}
\quad
\mathbb V \left( K_n \right) \sim
  \begin{cases}
    \alpha \left( 1 - 2 \alpha \right)  n^2 & \text{ if }\alpha < \frac 1 2 \\
  \frac 1 4 \sqrt{\ln(n)} \, n^{\frac 3 2} & \text{ if }\alpha = \frac 1 2 \\
   \alpha \left(  \alpha C_\alpha + 1\right) n  & \text{ if }\alpha > \frac 1 2
  \end{cases}
\end{equation*}
where \(C_\alpha\) is the unique solution $C$ of the equation \( C + \ln \left( 1 + \alpha\,C  \right) = 2 \alpha \,C\).
\label{prop:variances}
\end{proposition}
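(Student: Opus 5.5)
The plan is to use the standard Boltzmann identities for variances. Write \(L(z,u) := \ln \widetilde G(z,u) = -\frac{1-z}{z}\ln\left(1-\frac{z^2u}{1-z}\right)\). Under the \labelled{} distribution, the cumulants are obtained by iterating the Euler operators on \(L\). In particular,
\[
\mathbb V(S_n) = \left(z\partial_z\right)^2 L\,(z_n,u_n), \qquad \mathbb V(K_n) = \left(u\partial_u\right)^2 L\,(z_n,u_n).
\]
This is the same computation as in the proof of Theorem~\ref{theo:tune}, pushed one derivative further. So the work splits into two parts. First, compute exact closed forms of these second derivatives at the tuned point. Second, plug in the asymptotics of \((z_n,u_n)\) from the preceding lemma in each of the three regimes.

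For \(K_n\) everything is explicit. Put \(D := 1-z-z^2u\). From the proof of Theorem~\ref{theo:tune}, \(u\partial_u L = \frac{uz(1-z)}{D}\). Differentiating once more gives \(u\partial_u\) of this equal to \(\frac{uz(1-z)^2}{D^2} = \frac{k(1-z)}{D}\) at the tuned point. Using the identity \(1-\frac{z^2u}{1-z} = \frac{1-z}{1+(k-1)z}\) established there, we get \(D=\frac{(1-z)^2}{1+(k-1)z}\), hence the exact formula
\[
\mathbb V(K_n) = \frac{k\,(1+(k-1)z_n)}{1-z_n}, \qquad k=\alpha n.
\]
The three cases then follow directly from the lemma on \(z_n\):
\begin{itemize}
\item For \(\alpha<\frac12\), \(1-z_n\to\frac{\alpha}{1-\alpha}\) and \(kz_n\sim \alpha n\frac{1-2\alpha}{1-\alpha}\), which gives \(\alpha(1-2\alpha)n^2\).
\item For \(\alpha=\frac12\), \(k^2z_n\sim \frac{n^2}{4}\sqrt{\ln n/n}\), which gives \(\frac14\sqrt{\ln n}\,n^{3/2}\).
\item For \(\alpha>\frac12\), \(kz_n\to\alpha C_\alpha\), which gives \(\alpha(1+\alpha C_\alpha)n\).
\end{itemize}

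For \(S_n\), I would compute \(z\partial_z\) of the expression for \(n\) in \eqref{eq:system}, namely \(\frac1z\ln\frac{D}{1-z}+\frac{uz(2-z)}{D}\), holding \(u\) fixed. Then I would substitute \(u=\frac{k(1-z)}{z(1+(k-1)z)}\). The logarithm that appears, \(\frac1z\ln\frac{1-z}{1+(k-1)z}\), is eliminated using \(f(z_n)=0\), which expresses it as \(-(kz^2-2kz-nz^2+nz)/(z(1-z))\). The result is a rational expression \(R(z_n,k,n)\) with no logarithms. Next I would insert \(z_n\) in each regime:
\begin{itemize}
\item For \(\alpha<\frac12\), \(z_n\) tends to a constant, and the leading terms are of order \(n^2\).
\item For \(\alpha>\frac12\), \(z_n\sim C_\alpha/n\). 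After using the defining equation \(C+\ln(1+\alpha C)=2\alpha C\), the \(O(n)\) terms should simplify to \((4\alpha^2C_\alpha+2\alpha-1)n\).
\end{itemize}

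The main obstacle is the critical case \(\alpha=\frac12\) for \(S_n\). There, leading terms of order \(n^2 z_n\) (and possibly larger ones) may cancel, because \(k/n=\frac12\) exactly kills the \((1-2\alpha)\) factor. The true order \(\sqrt{\ln n}\,n^{3/2}\) might then depend on second-order information about \(z_n\), beyond the first-order equivalent \(z_n\sim\sqrt{\ln n/n}\). I would first try to organize \(R\) so that \(n-2k\) appears as an explicit factor, which vanishes in this case. That should make the surviving terms of order \(k^2z_n\) visible without needing a refined expansion. If cancellations persist, I would use the sandwich points \(z_n^\pm\) from the proof of the lemma, which carry the \(\ln\ln n/\ln n\) correction, to control the error.
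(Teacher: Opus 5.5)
Your proposal is correct and follows the paper's route. In both, the variances are the second derivatives of \(\ln\widetilde G\) in \(\ln z\) and \(\ln u\) at the tuned point, put in closed form in \(z_n,u_n\) and evaluated with the asymptotics of the preceding lemma; your \(\mathbb V(K_n)=k(1+(k-1)z_n)/(1-z_n)\) is exactly the paper's \(z_nu_n(1-z_n)^2/(1-z_n-z_n^2u_n)^2\) after substituting \(u_n\).

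The critical case you worry about is harmless. Eliminate the logarithm with \(f(z_n)=0\), minding the sign: \(\frac1z\ln\frac{1-z}{1+(k-1)z}=+\frac{kz^2-2kz-nz^2+nz}{z(1-z)}\), not minus; the wrong sign would spoil the constant for \(\alpha>\frac12\). After also substituting \(u_n\), the paper's expression collapses to \(\mathbb V(S_n)=\frac{k^2z_n(2-z_n)^2}{(1-z_n)^3}+2k-n\). So at \(n=2k\) only \(z_n\sim\sqrt{\ln n/n}\) is needed, and the regimes \(\alpha\neq\frac12\) can be read off directly.
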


\begin{proof}
A convenient way to compute the variance for a Boltzmann generator is to use the formulas
  \[
\mathbb{V}(S_n) = \left. \frac{\partial^2 \ln \widetilde{G}(e^t, u_n)}{\partial t^2} \right|_{t = \ln z_n}
\quad \text{and} \quad
\mathbb{V}(K_n) = \left. \frac{\partial^2 \ln \widetilde{G}(z_n, e^t)}{\partial t^2} \right|_{t = \ln u_n},
\]
as described in~\cite[Proposition~7]{maciej-olivier-sergey}.
After computation, we obtain expressions for the variances in terms of \(z_n\) and \(u_n\)
\begin{equation*}
  \mathbb{V}(S_n) =
  -\frac 1 {z_n} \ln  \left( {\frac {1 - z_n - {z_n}^{2}u_n}{1-z_n}} \right) 
  -
  {\frac {{z_n}
^{2}u_n \left( 3\,{z_n}^{2}u_n-4\,z_nu_n+{z_n}^{2}-2\,z_n+1 \right) }{ \left( 1 - z_n - {z_n}^{2}u_n \right) ^{2} \left( 1-z_n \right) }},
\end{equation*}
\begin{equation*}
  \mathbb{V}(K_n) =
  {\frac {z_nu_n \left( 1 -z_n \right) ^{2}}{ \left( 1 - z_n - {z_n}^{2}u_n \right) ^{2}
}}.
\end{equation*}
The asymptotic equivalents then follow from the asymptotic behavior of \(z_n\) and \(u_n\) established in the previous lemma.
\end{proof}

The following theorem guarantees that, 
for a fixed ratio \(\alpha = k/n\),
the Boltzmann sampler will require only a constant number of trials (typically via rejection sampling)
to generate a \gitgraph{} of size \(n\) with \(k\) \main{}s,
with relative error proportional to \(n\).

\begin{theorem}
Fix \(\alpha \in (0,1)\) 
and let \((z_n,u_n)\) be the parameters output by Algorithm~\ref{algo:tune}
for target sizes \(n\) and \(k = \alpha \, n\).
Let \(S_n\) and \(K_n\) denote the size and the number of \main{}s,
respectively, of a random \gitgraph{} sampled under the \labelled{} distribution with parameters \((z_n,u_n)\).

For any \(\varepsilon > 0\) and \(\varepsilon' > 0\), there exists a constant \(\delta > 0\) and an integer \(n_0\) such that for every $n \geq n_0$,
both \(S_n\) and \(K_n\) concentrate around their targets with probability
\begin{equation*}
  \mathbb P\big( \left| S_n - n \right| < \varepsilon n \text{ and }
                    \left| K_n - \alpha n \right| < \varepsilon' n
    \big)
    > \delta.
\end{equation*}
\end{theorem}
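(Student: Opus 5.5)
The plan is to split according to the three regimes of Proposition~\ref{prop:variances}. In two of them a second-moment argument suffices; the case \(\alpha<\frac12\) needs a genuine limit law. Throughout, Theorem~\ref{theo:tune} gives \(\mathbb E(S_n)=n\) and \(\mathbb E(K_n)=\alpha n\), and the union bound gives
\[
\mathbb P\big(|S_n-n|\ge\varepsilon n \text{ or } |K_n-\alpha n|\ge \varepsilon' n\big)\le \frac{\mathbb V(S_n)}{\varepsilon^2n^2}+\frac{\mathbb V(K_n)}{\varepsilon'^2n^2}.
\]

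\textbf{Cases \(\alpha\ge\frac12\).} By Proposition~\ref{prop:variances}, \(\mathbb V(S_n)\) and \(\mathbb V(K_n)\) are \(O(n)\) when \(\alpha>\frac12\). They are \(O(\sqrt{\ln n}\,n^{3/2})\) when \(\alpha=\frac12\). In both cases they are \(o(n^2)\), so the right-hand side above tends to \(0\). The probability in the statement therefore tends to \(1\), and any \(\delta<1\) works for \(n\) large.

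\textbf{Case \(\alpha<\frac12\).} Here \(\mathbb V(S_n)\sim\frac{1-2\alpha}{\alpha}n^2\), so Chebyshev fails and I would prove a joint limit law for \((S_n/n,K_n/n)\) via Laplace transforms. By the Boltzmann model,
\[
\mathbb E\big(e^{sS_n/n+tK_n/n}\big)=\frac{\widetilde G(z_ne^{s/n},u_ne^{t/n})}{\widetilde G(z_n,u_n)}.
\]
Write \(X(z,u)=\frac{z^2u}{1-z}\) and \(c(z)=\frac{1-z}{z}\), so that \(\widetilde G=(1-X)^{-c}\).

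From the proof of Theorem~\ref{theo:tune}, \(1-X(z_n,u_n)=\frac{1-z_n}{1+(k-1)z_n}\sim\frac{1-z_0}{\alpha z_0 n}\), with \(z_0=\frac{1-2\alpha}{1-\alpha}\). Under a perturbation \((s/n,t/n)\), the quantity \(1-X\) changes by \(-\frac1n(\partial_{\ln z}X\,s+\partial_{\ln u}X\,t)+O(n^{-2})\). This change has the same order as \(1-X\) itself. Meanwhile the exponent \(c\) changes only by \(O(1/n)\), which multiplies a \(\ln n\) and is harmless.

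Hence the ratio converges to \((1-as-bt)^{-c_0}\) for \(s,t\) in a small real neighbourhood of \(0\), where \(c_0=\frac{\alpha}{1-2\alpha}\). The constants \(a,b\) are explicit, and matching the means forces \(a=1/c_0\) and \(b=\alpha/c_0\). This is the Laplace transform of \((\Gamma,\alpha\Gamma)\), where \(\Gamma\) is a Gamma variable with shape \(c_0\) and mean \(1\). As a consistency check, its variance \(1/c_0=\frac{1-2\alpha}{\alpha}\) agrees with Proposition~\ref{prop:variances}. Convergence of Laplace transforms near the origin gives convergence in distribution:
\[
(S_n/n,\,K_n/n)\to(\Gamma,\alpha\Gamma).
\]

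\textbf{Conclusion and main obstacle.} By the portmanteau theorem applied to the open set \(\{|x-1|<\varepsilon,\ |y-\alpha|<\varepsilon'\}\),
\[
\liminf_n \mathbb P\big(|S_n-n|<\varepsilon n,\ |K_n-\alpha n|<\varepsilon' n\big)\ \ge\ \mathbb P\big(|\Gamma-1|<\min(\varepsilon,\varepsilon'/\alpha)\big)=:2\delta>0,
\]
which is positive because \(\Gamma\) has a positive density on \((0,\infty)\). The main obstacle is the careful expansion of \(1-X\) at the perturbed point, namely checking that the \(O(1/n)\) first-order terms produce exactly the \(\alpha n\)-scaled linear form. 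I would do this by differentiating the closed form \(1-X=\frac{1-z}{1+(k-1)z}\), re-expressed in \((z,u)\), using the asymptotics of \(z_n,u_n\) from the preceding lemma.
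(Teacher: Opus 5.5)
Your proof is correct. For \(\alpha\ge\frac12\) it is exactly the paper's argument: a union bound plus Chebyshev, using the sub-quadratic variances of Proposition~\ref{prop:variances}. For \(\alpha<\frac12\) you take a genuinely different route.

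The paper works coefficient by coefficient. It does a singularity analysis of \(z\mapsto\widetilde G(z,u)\) uniformly for \(u\) near \(\lim u_n\), adapting the transfer theorem with a \(u\)-independent contour, to lower-bound \(\mathbb P(S_n=j)\) for \(|j-n|<\varepsilon n\). It then computes, by another transfer argument, the conditional variance of \(K_n\) given \(S_n=j\), which is of order \(n\), and applies Chebyshev conditionally.

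You instead read the joint Laplace transform of \((S_n/n,K_n/n)\) off the closed form of \(\widetilde G\). The step you flag as the main obstacle is in fact short:
\(\partial_{\ln z}X=X\frac{2-z}{1-z}\to\frac1\alpha\), \(\partial_{\ln u}X=X\to1\), and \(n\,(1-X(z_n,u_n))\to\frac1{1-2\alpha}\).
So the ratio tends to \(\bigl(1-\frac{1-2\alpha}{\alpha}s-(1-2\alpha)t\bigr)^{-\alpha/(1-2\alpha)}\), which gives \(a=1/c_0\) and \(b=\alpha/c_0\) directly. Only \(z_n\to z_0\) is used, and for small \(s,t\) the perturbed point stays in the convergence domain.

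What your route buys:
\begin{itemize}
\item It is more elementary and fully rigorous, whereas the paper's argument is explicitly a sketch resting on a non-standard uniform transfer theorem.
\item It yields an explicit limit law \((\Gamma,\alpha\Gamma)\) and an explicit \(\delta\).
\item It handles the coupling of \(\varepsilon\) and \(\varepsilon'\) cleanly through \(\min(\varepsilon,\varepsilon'/\alpha)\). Since \(K_n\approx\alpha S_n\), the conditional mean of \(K_n\) given \(S_n=j\) is about \(\alpha j\) rather than \(\alpha n\), so the paper's conditional Chebyshev step implicitly needs \(j\) restricted to \(|j-n|<(\varepsilon'/\alpha)n\).
\end{itemize}

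Conversely, the paper's approach, once completed, gives finer local information that a scaling limit at scale \(n\) cannot see: pointwise lower bounds on \(\mathbb P(S_n=j)\), and \(\sqrt n\)-scale concentration of \(K_n\) at fixed size.
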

\begin{proof}
  \textbf{Case  \(\boldsymbol{\alpha \geq \frac 1 2}\) }. 
  To prove the concentration result, we use a union bound to write 
  \begin{equation*}
    \mathbb P\big( \left| S_n - n \right| < \varepsilon n \text{ and }
                    \left| K_n - \alpha n \right| < \varepsilon' n
    \big)
    \geq 1 - 
    \mathbb P\big( \left| S_n - n \right| \geq \varepsilon n \big) 
    -      \mathbb P\big(
                    \left| K_n - \alpha n \right| \geq \varepsilon' n
    \big).
  \end{equation*}
  Chebyshev's inequality yields
  \begin{equation*}
    \mathbb P\big( \left| S_n - n \right|=   \left| S_n - \mathbb E \left( S_n \right) \right| \geq \varepsilon n \big) \leq  \frac{\mathbb V\left( S_n \right)}{\left( \varepsilon\, n \right)^2}
  \end{equation*}
  and similarly for \(K_n\).
  But by Proposition~\ref{prop:variances}, 
  the variances \(\mathbb V\left( S_n \right)\) and  \(\mathbb V\left( K_n \right)\)
  are sub-quadratic in~\(n\) for \(\alpha \geq \frac 1 2\).
  Consequently,
  the probability of interest tends to \(1\),
  hence is bounded below by a positive constant for large \(n\).

  \textbf{Case  \(\boldsymbol{\alpha < \frac 1 2}\) (proof sketch)}. 
  We begin by lower-bounding 
  \(\mathbb P \big( \left| S_n - n \right| < \varepsilon n \big) \).
  To do so, we derive an asymptotic lower bound for
   \begin{equation}
    \mathbb P \left( S_n = j \right) = \frac{ [z^j] \widetilde G \left( z , u_n \right) \, {z_n}^j } 
    { \widetilde G \left( z_n , u_n \right)}.
    \label{eq:proba_sn=j}
   \end{equation}
  The strategy involves analyzing the singular behavior of \(z \mapsto \widetilde G \left( z,u \right)\)
  and applying a transfer theorem (cf. \cite[Theorem VI.3]{FS2009}).
  However, the challenge lies in the fact that \(u\) is not fixed.

  The radius of convergence of \(z \mapsto \widetilde G \left( z,u \right)\) is
  \(\rho_u := \dfrac{ \sqrt{ 1 + 4 u} - 1 }{2u}\), and \(\rho_{u_n} \to \dfrac{1-2\alpha}{1-\alpha}\).
  For each \(z\) near \(\frac{1-2\alpha}{1-\alpha}\) 
  and for each \(u\) near \(\lim_{n \to +\infty} u_n = \frac{\alpha(1 - \alpha)}{(1 - 2\alpha)^2}\)
  with \(z \notin [\rho_u, +\infty)\),
  the generating function \(\widetilde G\) admits the expansion
  \begin{equation*}
    \widetilde G \left( z, u \right)
    = 
    \left(\frac{1 - \rho_u}{2 - \rho_u}\right)^{\frac{1 - \rho_u}{\rho_u}} \left( 1 - \frac z {\rho_u} \right)^{- \frac{1 - \rho_u}{\rho_u}}
    + H(z,u),
  \end{equation*}
  where \(H(z,u) = O \left(  \left( 1 - \frac z {\rho_u} \right)^{- \frac{1 - \rho_u}{\rho_u}+1} \ln \left(  1 - \frac z {\rho_u} \right) \right)\) uniformly in \(u\).

  First, this justifies that the denominator in Equation~\eqref{eq:proba_sn=j} satisfies
  \begin{equation*}
    \widetilde G (z_n,u_n) \leq 
    \text{Constant} \times 
    \left( 1 - \frac {z_n} { \rho_{u_n}} \right)^{- \frac{1 - \rho_u}{\rho_u}} \underset{n \to +\infty}\sim 
    \text{Constant}'  \times  n^{- \frac{1 - \rho_{u_n}}{\rho_{u_n}}}.
  \end{equation*}
  (The asymptotic equivalent of \(1 -  {z_n} / { \rho_{u_n}}\) can be deduced from the second formula of Equation~\eqref{eq:system}, which can be read \(z - \rho_u = -{z(1-z)}/(  {k( z+\frac{\sqrt{1 + 4 u}+1}{2u})} )\).)

  Second, we derive a uniform estimate for \([z^j] \widetilde G \left( z , u_n \right) \)
   by applying a transfer theorem to \(H(z,u)\). 
  Although Theorem~IV.3 of~\cite{FS2009} does not directly cover this case 
  (since \(u\) varies),
  we adapt its proof almost verbatim by using a contour which is independent of \(u\). 
  This yields
  \begin{equation*}
    [z^j] \widetilde G \left( z , u_n \right)  \geq 
    \text{Constant} \times  j^{\frac{1- 2 \rho_{u_n}}{\rho_{u_n}}}   {\rho_{u_n}}^{-j}.
  \end{equation*}
  Going back to Equation~\eqref{eq:proba_sn=j}, we obtain
  \begin{equation*}
    \mathbb P \left( S_n = j \right)  \geq 
    \frac{ \text{Constant} } n  \left(  \frac j n\right)^{\frac{1- 2 \rho_{u_n}}{\rho_{u_n}}}  {\left( \dfrac {z_n}{\rho_{u_n}} \right)}^{j}.
  \end{equation*}

  For \( \left| j - n \right| < \varepsilon n \),
  the term \( {\left( \dfrac {z_n}{\rho_{u_n}} \right)}^{j}\) is bounded below by a constant since
  \begin{equation*}
     {\left( \dfrac {z_n}{\rho_{u_n}} \right)}^{j} \sim \left( 1 - \frac{\alpha}{1 - 2\alpha} n^{-1} \right)^{j} \geq \left( 1 - \frac{\alpha}{1 - 2\alpha} n^{-1}\right)^{ (1 + \varepsilon) n }
  \end{equation*}
  and the right-hand side converges to a positive constant as \(n \to \infty\).

  Summing these lower bounds over \(j\) in the range \(\left| j - n \right| < \varepsilon n\), we get
  \begin{equation*}
    \mathbb P \big( \left| S_n - n \right| < \varepsilon n \big)
    \geq  
     \frac{ \text{Constant} } n \sum_{ (1- \varepsilon) n  < j < (1+ \varepsilon) n}
     \left(  \frac j n\right)^{\frac{1- 2 \rho_{u_n}}{\rho_{u_n}}}.
  \end{equation*}
  By comparing the sum to an integral, this expression is bounded below a positive constant.

  Now consider
  \begin{equation*}
    \mathbb{P}\left( |S_n - n| < \varepsilon n \text{ and } |K_n - \alpha n| < \varepsilon' n \right) = 
    \hspace{-20pt}
    \sum_{(1 - \varepsilon)n < j < (1 + \varepsilon)n} 
    \hspace{-20pt}
    \mathbb{P}(S_n = j) \mathbb{P}\left( |K_n - \alpha n| < \varepsilon' n \mid S_n = j \right).
  \end{equation*}
  The conditional variance of \(K_n\) given \(S_n = j\) is
  \begin{equation*}
    \left.
    \dfrac{ [z^j] \frac{ \partial^2 } {\partial u^2} \widetilde G(z,u) 
    + \frac{ \partial } {\partial u} \widetilde G(z,u) 
    - \left( \frac{ \partial } {\partial u} \widetilde G(z,u)   \right)^2    
    }
    { [z^j] \widetilde G(z,u) }
    \right|_{u = u_n}
    \sim
    \alpha \left( 1 - \alpha \right) \left(  1 - 2 \alpha  \right) n,
  \end{equation*}
  where the calculation relies on a transfer theorem.

  The conditional variance of \(K_n\) is sub-quadratic, so, by Chebyshev's inequality, the conditional probability
  \(\mathbb{P}\left( |K_n - \alpha n| < \varepsilon' n \mid S_n = j \right)\) is bounded below by a positive constant. 
  Combining this with the lower bound on \(\mathbb{P}\left( |S_n - n| < \varepsilon n \right)\),
  we conclude the proof.
\end{proof}

\begin{figure}[ht!]
\begin{center}
\includegraphics[width=\textwidth]{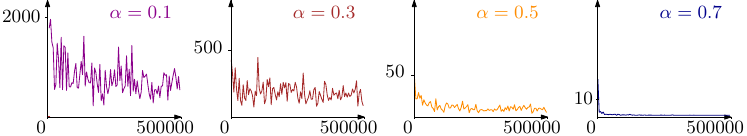}
\end{center}
\vspace{-7pt}
\caption{ $x$-axis: integer \(n\); \(y\)-axis: average number of trials (over $10$ independent runs) before generating a \gitgraph{} of size \( \in [0.99 n, 1.01n ]\) and with \(k\) \main{}s where \( k \in [0.99 \alpha n, 1.01 \alpha n ]\). }
\label{fig:nb_rejections}
\end{figure}

The previous theorem demonstrates that,
for every ratio \(\alpha \in \left( 0, \frac 1 2 \right)\),
only a constant number of rejections is required to obtain a random \gitgraph{}
of size \(n \pm \varepsilon n\) with \(\alpha \, n \pm \varepsilon' n\) \main{}s.
This theoretical result is confirmed by empirical experiments, 
as shown by Figure~\ref{fig:nb_rejections},
which plots the number of rejections to sample a \gitgraph{} with target sizes both met within a 1\% error margin. 

This proves that our Boltzmann generator operates in linear time when generating a \gitgraph{} with a linear-size window, demonstrating the generator's efficiency.
In contrast, exact sampling (without error tolerance) would require quadratic time.

\section{Perspectives}

In this paper, we have conducted a rather comprehensive enumerative study of a simple model of Git graphs.
While our results provide a solid foundation,
they are inherently preliminary,
as the model's simplicity restricts its applicability to real-world scenarios.
To bridge this gap, we have identified several key directions for future work.

First, the model could be extended to more complex workflows.
Currently, feature branches are born from 
and merged back into a single main branch.
Several extensions come to mind:
\begin{itemize}
  \item A \emph{multi-strata model}, 
  where branches can only be created or merged with branches from the immediately higher stratum.
  This would mirror large-scale projects where code modifications must pass through multiple approval layers before deployment.
  \item Allowing multiple merges of the same feature branch into the main branch, reflecting iterative updates of a same feature.
  This is the subject of a current work~\cite{en_cours}.
  \item Permitting \emph{pull merges} (also called \emph{update merges}), i.e. the operation that merges the main branch itself into a feature branch. 
  This typically occurs when developers pull the latest changes from the main branch 
  before pushing their own updates,
  ensuring their work is based on the most recent version of the codebase.
\end{itemize}

Second, the uniform distribution (as well as the \labelled{} variant)
provides a natural starting point for enumerative studies, 
but as highlighted in Section~\ref{s:limit_law},
it fails to reproduce realistic statistical properties of \gitgraph{}s 
(a limitation also noted in Blondel's internship~\cite{stage-lucas}).
To address this, alternative probabilistic models should be explored.
A first idea is to condition feature branches to have tightly coupled creation and merging dates,
mimicking the ephemeral nature of feature branches in agile workflows.
Another approach is to model Git operations as stochastic processes,
by assigning probabilities to branch creations, merges, and commits.

Third,
our work defines a subfamily of graphs modeling VCS repositories, 
raising natural algorithmic questions.
While the focus has been on enumeration, 
the practical relevance of these graphs calls for efficient algorithms. 
As already mentioned in the introduction,
key problems include reachability, lowest common ancestor, regression detection.
An interesting research direction would be then to design algorithms with provable efficiency guarantees, 
either in the worst case,
or under probabilistic models as discussed earlier.

Finally, a colleague of us pointed out that in many real-world scenarios,
free commits are rare.
This observation should motivate an enumerative study of \gitgraph{}s 
with a fixed number of free commits.
Given our existing formula (Corollary~\ref{cor:formula}),
this problem should be tractable.
Notably, the OEIS sequence A122852 (\url{https://oeis.org/A122852})
already counts \gitgraph{}s with exactly one free commit,
suggesting a bijective correspondence between these graphs and other combinatorial objects.

\section*{Statement regarding the (absence of) use of AI}

The authors declare that all the research presented here, as well as the writing of the article, was carried out without any use of large language models.

\bibliographystyle{alphaurl}
\bibliography{biblio}

\end{document}